\def\defterm{\textbf}
\newcommand{\iadj}{\;\underset{\text{i}}{\rightarrow}\;}
\newcommand{\uadj}{\;\underset{\text{u}}{\rightarrow}\;}
\newcommand{\Mat}{\operatorname{M}}
\newcommand{\charac}{\operatorname{char}}
\newcommand{\id}{\operatorname{id}}
\newcommand{\GL}{\operatorname{GL}}
\newcommand{\GP}{\operatorname{GP}}
\newcommand{\SP}{\operatorname{SP}}
\newcommand{\SL}{\operatorname{SL}}
\newcommand{\Ker}{\operatorname{Ker}}
\newcommand{\End}{\operatorname{End}}
\newcommand{\Vect}{\operatorname{span}}
\newcommand{\im}{\operatorname{Im}}
\newcommand{\tr}{\operatorname{tr}}
\renewcommand{\setminus}{\smallsetminus}
\def\F{\mathbb{F}}
\def\C{\mathbb{C}}
\def\N{\mathbb{N}}
\def\Z{\mathbb{Z}}
\def\calA{\mathcal{A}}
\def\calC{\mathcal{C}}
\def\calE{\mathcal{E}}
\def\calF{\mathcal{F}}
\def\calH{\mathcal{H}}
\def\calU{\mathcal{U}}
\def\calV{\mathcal{V}}
\def\lcro{\mathopen{[\![}}
\def\rcro{\mathclose{]\!]}}
\theoremstyle{definition}
\newtheorem{Def}{Definition}[section]
\newtheorem{Not}[Def]{Notation}
\theoremstyle{plain}
\newtheorem{theo}{Theorem}[section]
\newtheorem{prop}[theo]{Proposition}
\newtheorem{cor}[theo]{Corollary}
\newtheorem{lemma}[theo]{Lemma}
\theoremstyle{plain}
\theoremstyle{remark}
\newtheorem{Rems}{Remarks}
\title{Products of involutions in the stable general linear group}
\author{Cl\'ement de Seguins Pazzis\footnote{Universit\'e de Versailles Saint-Quentin-en-Yvelines, Laboratoire de Math\'ematiques
de Versailles, 45 avenue des Etats-Unis, 78035 Versailles cedex, France}
\footnote{e-mail address: dsp.prof@gmail.com}}
\begin{document}

\thispagestyle{plain}

\maketitle

% Relire une dernière fois la partie courte "optimalité" avant envoi
% Au mois de septembre-octobre, relire la partie longue "optimalité"

\begin{abstract}
In the stable general linear group over an arbitrary field, we prove that every element with determinant $\pm 1$
is the product of three involutions, and of no less in general. We also obtain several results of the same flavor, with applications to decompositions of
automorphisms of an infinite-dimensional vector space that are scalar multiples of finite-rank perturbations of the identity.
\end{abstract}

\vskip 2mm
\noindent
\emph{AMS Classification:} 15A24; 15B33.

\vskip 2mm
\noindent
\emph{Keywords:} General linear group, Stable general linear group, Decomposition, Involution, Unipotent matrix of index $2$,
Rational canonical form.

% relecture 4/4 achevée : il faut relire les modifications profondes (lemme 6.6 et démonstration de l'extension non-naturelle
% pour les produits de 3 unipotentes, puis retravailler et relire en profondeur les deux pages sur l'optimalité

\section{Introduction}

\subsection{The problem}

Let $\F$ be a field, whose group of units we denote by $\F^*$. Denote by $\Mat_n(\F)$ the algebra of all $n$ by $n$
square matrices with entries in $\F$, by $\GL_n(\F)$ its group of invertible elements, and by
$\SL_n(\F)$ its subgroup of all matrices with determinant~$1$.
The zero matrix of $\Mat_n(\F)$ is denoted by $0_n$, the identity matrix by $I_n$.
A matrix of $\Mat_n(\F)$ will be called \textbf{scalar} when it is a scalar multiple of $I_n$.

An element $x$ of a group $G$, with unity $1_G$, is called an \textbf{involution} whenever $x^2=1_G$.
An element $x$ of a unital ring $R$ is called \textbf{unipotent of index $2$} when $(x-1_R)^2=0_R$
(i.e.\ it is invertible, with inverse $2.1_R-x$).
In particular, a matrix $A \in \GL_n(\F)$ is an involution if and only if $A^2=I_n$, and it is
unipotent of index $2$ if and only if $(A-I_n)^2=0_n$ (in which case we say that $A$ is a $U_2$\textbf{-matrix}).
We note that the $U_2$-matrices are the involutions if $\F$ has characteristic $2$.
Every involutary matrix has determinant $\pm 1$, while every $U_2$-matrix has determinant $1$. Note also that
$I_n$ is both an involution and a $U_2$-matrix.

Our starting point is the classical problem of decomposing a square matrix into a product of involutions (with unprescribed number of factors).
Obviously, a matrix that is a product of involutions must be invertible, and more precisely its determinant must equal $\pm 1$.
The converse is easily proved by noting that any transvection matrix is the product of two involutions:
for $2$ by $2$ matrices, we note that, for all $\lambda \in \F$,
$$\begin{bmatrix}
1 & \lambda \\
0 & 1
\end{bmatrix}=\begin{bmatrix}
1 & 0 \\
0 & -1
\end{bmatrix}\begin{bmatrix}
1 & \lambda \\
0 & -1
\end{bmatrix}.$$

The next step in this problem is the so-called \emph{length problem}: given a matrix $A \in \GL_n(\F)$ with determinant $\pm 1$,
what is the minimal number of factors $\ell(A)$ (called the length of $A$) required to write $A$ as the product of $\ell(A)$ involutions?
Surprisingly, $\ell(A)$ is very small! More precisely, $\ell(A) \leq 4$; in other words, every matrix with determinant $\pm 1$
is the product of at most four involutions (see \cite{Gustafsonetal}, and \cite{Sourour} for a shorter proof over fields with large cardinality).
Yet, in general there are matrices with determinant $\pm 1$
that fail to be the product of three involutions (e.g.\ any matrix of the form $\alpha I_n$ in which $\alpha \in \F^*$
satisfies $\alpha^n=\pm 1$ and $\alpha^4 \neq 1$; see \cite{HalmosKakutani}).

The matrices that are the product of two involutions are known:
the celebrated theorem of Wonenburger \cite{Wonenburger} (for the field of complex numbers), Djokovic \cite{Djokovic}
(for the general case), and Hoffmann and Paige \cite{HoffmanPaige} (an independent discovery) states
that they are exactly the invertible matrices that are similar to their
inverse: this result is rephrased as point (i) of Theorem \ref{theo2} in the present article.
Note that, in any group, an element is the product of two involutions only if it is conjugated to its inverse.

The remaining open problem is the determination of the matrices $A$ with length $3$. Of course, the length of $A$ is invariant under
conjugation, and hence it is encoded in the invariant factors of $A$ (i.e.\ its rational canonical form).
Unfortunately, several studies in low dimension have shown that for length $3$ no neat necessary and sufficient condition in terms of
invariant factors appears possible (a famous quote by Paul Halmos even states that ``the best known characterization of products of
three involutions is being the product of three involutions"). Several interesting non-trivial necessary conditions have been found however:
for example, if $A$ is of length $3$ then it has no eigenvalue $\lambda$ with geometric multiplicity at least
$\frac{3n}{4}$ and such that $\lambda^4 \neq 1$
(see \cite{Ballantine}). This result has been improved by Liu (see Theorem 3.1 of \cite{Liu}).
Moreover, several nice sufficient conditions are also known: for example if $A$ has determinant $\pm 1$ and a sole invariant factor
then it is the product of three involutions (see \cite{Ballantine}, and also Proposition \ref{cyclicdecomp} here);
if $\F$ is the field of complex numbers, $A$ has determinant $\pm 1$
and all its eigenvalues have geometric multiplicity at most $\frac{n}{2}$, then $A$ is the product of three involutions
\cite{Liu}.
Here, we will prove a variation of that result for arbitrary fields (see Theorem \ref{unstabletheo3}).
Finally, characterizations are known for very small values of $n$.
Yet, we agree with Halmos that a full solution to the length problem should be viewed as an essentially hopeless endeavour.

In the present article,  we will not tackle the length problem \emph{per se} but the \emph{stable} length problem, which is
motivated by the length problem in the general linear group of an infinite-dimensional vector space (see Section \ref{infinitedimSection}).
Given $A \in \GL_n(\F)$ and $p \in \N$, we consider the ``augmented matrix"
$$A \oplus I_p:=\begin{bmatrix}
A & 0 \\
0 & I_p
\end{bmatrix} \in \GL_{n+p}(\F).$$
Interestingly, this new matrix has the same determinant as $A$, and hence it is a product of involutions if and only if so is $A$.
Obviously, if $A$ is of length $k$ then $A \oplus I_p$
is of length at most $k$: indeed if we split $A=S_1 \cdots S_k$ for involutions $S_1,\dots,S_k$, then
$A \oplus I_p =(S_1 \oplus I_p) \cdots (S_k \oplus I_p)$ is obviously the product of $k$ involutions.
Moreover, judging from Djokovic's theorem,
$A \oplus I_p$ is of length $2$ if and only if $A$ is of length $2$
(classically, the primary canonical form yields a cancellation rule for the similarity of matrices with respect to the direct sum).
Strikingly, there are cases when $\ell(A \oplus I_p)=3$ whereas $\ell(A)=4$!
For example, it is known that given a positive integer $p>0$ and a scalar $\alpha$ in $\F$ with $\alpha^p=\pm 1$ and $\alpha^4\neq 1$,
the matrix $\alpha I_p$ is of length $4$ (see \cite{HalmosKakutani}), yet $\alpha I_p \oplus I_p$ is of length $3$
 (see Lemma \ref{diagonal3invollemmafinal}).

Here, we shall prove that for every matrix $A \in \GL_n(\F)$ having determinant $\pm 1$,
the augmented matrix $A \oplus I_n$ is the product of three involutions.
In \cite{dSP3square}, a similar result was proved for the decomposition of a trace-zero matrix into the sum of three square-zero matrices.
The striking point here is that, in the known sufficient conditions for a matrix $A \in \GL_n(\C)$ with determinant
$\pm 1$ to be the product of three involutions, it is required that there be no eigenvalue with geometric multiplicity too large.
In contrast, here it is precisely the fact that $1$ is an eigenvalue with large geometric multiplicity that will make $A\oplus I_n$
a product of three involutions if $\det A=\pm 1$!

\vskip 3mm
The stable length problem has a nice reformulation as a statement on the stable general linear group.
Recall that this group can be defined as follows.
For $A \in \GL_n(\F)$ and $B \in \GL_p(\F)$, we say that $A$ and $B$ are stably equal whenever $A \oplus I_p=B \oplus I_n$.
This defines an equivalence relation on the union $\underset{n \in \N}{\bigcup} \GL_n(\F)$, whose quotient
set we denote by $\GL_\infty(\F)$. Noting that the class of the product $(A \oplus I_p) \times (B \oplus I_n)$
depends only on the respective classes of the matrices $A \in \GL_n(\F)$ and $B \in \GL_p(\F)$, we naturally endow
$\GL_\infty(\F)$ with a group structure.
Noting that $\det(A \oplus I_1)=\det(A)$ for all $A \in \GL_n(\F)$, we see that all the matrices in an equivalence class
share the same
determinant. This yields a group homomorphism from $\GL_\infty(\F)$ to $\F^*$, called the determinant.

Let now $A \in \GL_n(\F)$ have determinant $\pm 1$.
By the above, the sequence of lengths $\bigl(\ell(A\oplus I_k)\bigr)_{k \in \N}$ is non-increasing,
and one sees that its ultimate value is the length of the class of $A$ in $\GL_\infty(\F)$, i.e.\
the minimal number of factors required to write this class as a product of involutions.
Moreover, this length equals $2$ if and only if the length of $A$ equals $2$, which is equivalent to the class of $A$
being conjugated to its inverse in $\GL_\infty(\F)$.
Hence, as a consequence of Theorems \ref{theo2} and \ref{theo3invol} that follow, the length problem will be completely solved in the stable group $\GL_\infty(\F)$:

\begin{theo}
\begin{enumerate}[(a)]
\item An element of $\GL_\infty(\F)$ is a product of involutions if and only if its determinant equals $\pm 1$.
\item An element of $\GL_\infty(\F)$ is the product of two involutions if and only if it is conjugated to its inverse.
\item Every element of $\GL_\infty(\F)$ with determinant $\pm 1$ is the product of three involutions.
\end{enumerate}
\end{theo}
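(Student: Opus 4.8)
The plan is to deduce all three assertions from the corresponding finite-dimensional facts, moving back and forth between $\GL_\infty(\F)$ and the ordinary groups $\GL_n(\F)$ through three elementary features of the construction of $\GL_\infty(\F)$. First, every element of $\GL_\infty(\F)$ is the class $[A]$ of some $A \in \GL_n(\F)$, and then $\det[A]=\det A$ while $[A]=[A\oplus I_p]$ for every $p$. Second, for each fixed $n$ the map $M \mapsto [M]$ is a group homomorphism from $\GL_n(\F)$ to $\GL_\infty(\F)$, so every finite-dimensional factorization descends: if $A=S_1\cdots S_r$ in $\GL_n(\F)$ with each $S_i$ an involution, then $[A]=[S_1]\cdots[S_r]$ with each $[S_i]$ an involution of $\GL_\infty(\F)$, since $[S_i]^2=[S_i^2]=[I_n]=1_{\GL_\infty(\F)}$. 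Third, a conjugacy relation can be realized at a finite stage: if $[A]$ is conjugated to $[B]$ in $\GL_\infty(\F)$, then there is an integer $k$ such that $A\oplus I_k$ is similar to $B\oplus I_k$ in the usual sense; one checks this by writing the conjugating element as a class $[Q]$, expanding the relevant products with the definition of multiplication in $\GL_\infty(\F)$ so as to reach a genuine matrix identity of the form $P\,(A\oplus I_r)\,P^{-1}\oplus I_s=B\oplus I_t$ for some honest invertible $P$, and then matching the sizes on both sides.

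Granting these features, assertion (a) is immediate. The determinant is a group homomorphism from $\GL_\infty(\F)$ to $\F^*$ and every involution of $\GL_\infty(\F)$ has determinant $\pm1$, so a product of involutions has determinant $\pm1$. Conversely, if $x \in \GL_\infty(\F)$ has determinant $\pm1$, pick $A\in\GL_n(\F)$ with $[A]=x$; then $\det A=\pm1$, so $A$ is a product of involutions in $\GL_n(\F)$ (indeed of at most four, by \cite{Gustafsonetal}; alternatively use the transvection argument recalled above), and that factorization descends to $x$.

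For assertion (b), the ``only if'' part is the usual remark valid in any group: if $x=ST$ with $S^2=T^2=1$, then $x^{-1}=TS=SxS^{-1}$, so $x$ is conjugated to $x^{-1}$. For the ``if'' part, suppose $x=[A]$ is conjugated to $x^{-1}=[A^{-1}]$. By the third feature above (applied with $B=A^{-1}$, which has the same size as $A$), there is an integer $k$ for which $A\oplus I_k$ is similar to $A^{-1}\oplus I_k=(A\oplus I_k)^{-1}$ in $\GL_{n+k}(\F)$; by point (i) of Theorem \ref{theo2}, $A\oplus I_k$ is then a product of two involutions in $\GL_{n+k}(\F)$, and that factorization descends to $x$.

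Finally, assertion (c) is a direct consequence of Theorem \ref{theo3invol}. Given $x\in\GL_\infty(\F)$ with determinant $\pm1$, pick $A\in\GL_n(\F)$ with $[A]=x$; then $\det A=\pm1$, so $A\oplus I_n$ is a product of three involutions in $\GL_{2n}(\F)$ by Theorem \ref{theo3invol}, and since $[A\oplus I_n]=x$ this factorization descends to $x$. The entire argument above is soft bookkeeping; the only point demanding a little care is the finite realization of conjugacy invoked in (b), which rests on nothing more than the product rule in $\GL_\infty(\F)$ and the triviality that similar matrices have the same size. The genuine difficulty lies entirely in the two results being cited here --- above all in Theorem \ref{theo3invol}, i.e.\ in actually exhibiting, for a matrix $A$ of determinant $\pm1$, three involutions whose product equals $A\oplus I_n$, which is precisely where the large geometric multiplicity of the eigenvalue $1$ in $A\oplus I_n$ gets exploited.
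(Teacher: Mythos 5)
Your proposal is correct and follows essentially the same route as the paper, which likewise obtains (a) from the classical transvection/four-involution result, (b) from Djokovic's theorem (Theorem \ref{theo2}(i)) after realizing the conjugacy at a finite stage, and (c) as a direct descent of Theorem \ref{theo3invol}; the paper's only cosmetic difference is that it phrases (b) via the cancellation rule for similarity under direct sums, whereas you work directly at the augmented level, which is equally valid. The bookkeeping you supply (descent of factorizations, injectivity of stable equality at a common size) is exactly the content the paper leaves implicit.
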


\vskip 3mm
Actually, we will not restrict our study to decompositions into products of involutions, because
the techniques we develop here allow us to consider more general decompositions that involve
involutions and $U_2$-matrices. Here is our more general problem: let
$\calA_1 ,\dots,\calA_k$ be subsets of $\GL_n(\F)$, each of which equal to the set of all involutions or to the set of all $U_2$-matrices, and set
$$\calA_1\cdots \calA_k:=\Biggl\{\prod_{i=1}^k S_i \mid S_1 \in \calA_1,\dots,S_k \in \calA_k\Biggr\}.$$

Given a matrix $A \in \GL_n(\F)$, can we give a nice necessary and sufficient condition for $A$ to belong to
$\calA_1 \cdots \calA_k$? A full solution to this is known when $k=2$, and we will also obtain one for $k \geq 4$.
A complete solution in the case $k=3$ is of course out of reach as it would imply a characterization of products of three involutions.
In the case $k=3$, we will however give a complete solution to the stable version of this problem (see Theorems \ref{theo3invol} to \ref{theo3mixed2}).

In general, we note that, since each set $\calA_i$ is stable under conjugation, so is $\calA_1\cdots \calA_k$.

Moreover, it is crucial to observe that the order of factors is not important. To see this, consider two subsets
$\calU$ and $\calV$ of $\GL_n(\F)$ that are both stable under conjugation and transposition. Then, we claim that
$\calU\calV=\calV \calU$. First, $\calV \calU$ is stable under conjugation, obviously.
Then, given $(u,v)\in \calU \times \calV$, we write $(uv)^T=v^T u^T$ to find that $(uv)^T$ belongs to $\calV \calU$
and we conclude that so does $uv$ because every square matrix with entries in a field is similar to its transpose.
It follows that $\calA_1\cdots \calA_k=\calA_{\sigma(1)}\cdots \calA_{\sigma(k)}$ for every permutation $\sigma$ of $\lcro 1,k\rcro$.

In particular, given $k \in \{0,1,2,3\}$, if a matrix of $\GL_n(\F)$
is the product of $k$ involutions and $(3-k)$ unipotent matrices of index $2$ in some prescribed order,
then it is the product of $k$ involutions and $(3-k)$ unipotent matrices of index $2$ in \emph{any} possible order!

Hence, for the length $3$ problem, we only have four cases to consider, and for the length $4$ problem only five cases
need consideration.

\subsection{Main results}

It is time to state our main results. Here, we write $A \simeq B$ to state that two square matrices $A$ and $B$ are similar.
We start by recalling the characterization of products of two involutions, and the one of products of two $U_2$-matrices.
We will make systematic use of them.
See \cite{Djokovic} for statement (i), and \cite{Bothaunipotent} for statement (ii)
(see also the recent \cite{dSPprod2} for a more general characterization of products of two invertible matrices
with prescribed annihilated polynomials with degree $2$).

\begin{theo}\label{theo2}
Let $M \in \GL_n(\F)$.
\begin{enumerate}[(i)]
\item The matrix $M$ is the product of two involutions if and only if
$M \simeq M^{-1}$.
\item The matrix $M$ is the product of two $U_2$-matrices if and only if
$M \simeq M^{-1}$ and, if $\charac(\F) \neq 2$, all the Jordan cells of $M$ with respect to the eigenvalue $-1$
are even-sized.
\end{enumerate}
\end{theo}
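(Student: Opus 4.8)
Both statements are classical --- I would follow \cite{Djokovic} for (i), \cite{Bothaunipotent} for (ii), or simply quote the unified classification of \cite{dSPprod2} --- and here is the shape of the argument. The easy implications come with a useful reformulation: if $M = S_1 S_2$ with $S_1^2 = S_2^2 = I_n$ then $S_2 M S_2^{-1} = S_2 S_1 = (S_1 S_2)^{-1} = M^{-1}$, so $M \simeq M^{-1}$; conversely $M$ is a product of two involutions if and only if some involution $S$ satisfies $SMS^{-1} = M^{-1}$, since then $M = (MS)\,S$ with $(MS)^2 = M(SMS^{-1})S^2 = M M^{-1} = I_n$. The same two remarks, with $U_2$-matrices in place of involutions (using $U^{-1} = 2I_n - U$), give the necessity of $M \simeq M^{-1}$ in (ii); the necessity of the parity condition on the $(-1)$-Jordan cells is a separate local computation on the generalized $(-1)$-eigenspace, which I would borrow from \cite{Bothaunipotent}.

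For sufficiency I would reduce to primary blocks. Writing the primary rational canonical form $M \simeq \bigoplus_i C(p_i^{e_i})$ with each $p_i$ monic irreducible and $\neq X$, and letting $p^{*}$ denote the monic normalized reciprocal of $p$, one has $C(p^e)^{-1} \simeq C\bigl((p^{*})^{e}\bigr)$, so $M \simeq M^{-1}$ is equivalent to the multiset $\{(p_i, e_i)\}$ being invariant under $(p,e) \mapsto (p^{*}, e)$. Since the sets of involutions and of $U_2$-matrices are conjugation-stable and a direct sum of factorizations factorizes the direct sum, it suffices to treat two building blocks: (1) $C(p^e) \oplus C\bigl((p^{*})^{e}\bigr)$ with $p^{*} \neq p$ (which forces $p \neq X \pm 1$), a block similar to $A \oplus A^{-1}$ with $A := C(p^e)$ invertible and having no eigenvalue $\pm 1$; and (2) a single cyclic block $C(q)$ with $q := p^e$ self-reciprocal, $q = q^{*}$ (this subsumes $p = X-1$ and $p = X+1$).

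Type (1) for statement (i) is settled by the hyperbolic identity
$$\begin{bmatrix} A & 0 \\ 0 & A^{-1} \end{bmatrix} = \begin{bmatrix} 0 & A \\ A^{-1} & 0 \end{bmatrix}\begin{bmatrix} 0 & I_m \\ I_m & 0 \end{bmatrix};$$
for statement (ii) one instead uses the conjugate $\begin{bmatrix} I_m + R & R \\ I_m & I_m \end{bmatrix}$ of $A \oplus A^{-1}$, where $R := A + A^{-1} - 2 I_m$ is invertible, which equals the product of the two $U_2$-matrices $\begin{bmatrix} I_m & R \\ 0 & I_m \end{bmatrix}$ and $\begin{bmatrix} I_m & 0 \\ I_m & I_m \end{bmatrix}$, and one checks by matching elementary divisors that it is similar to $A \oplus A^{-1}$ (a root $\lambda$ of $A$ pairing with the root $\lambda + \lambda^{-1} - 2 \neq 0, -4$ of $R$). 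Type (2) is the technical heart: one must exhibit an involution (resp.\ a $U_2$-matrix, when admissible) conjugating $C(q)$ to $C(q)^{-1} \simeq C(q)$. I would realize $C(q)$ on $\F[X]/(q)$ and use the bilinear form $b(a,b) := \ell\bigl(a\,\sigma(b)\bigr)$, where $\sigma$ is the ring involution $X \mapsto X^{-1}$ of $\F[X]/(q)$ (well defined precisely because $q = q^{*}$) and $\ell$ is a suitably chosen linear form: then $C(q)$ is an isometry of $b$, the form $b$ can be arranged symmetric and nondegenerate, and combining $C(q)^{-1} \simeq C(q)^{T}$ with the similarity $C(q)^{T} \simeq C(q)$ (realized by a symmetric matrix, as for any companion matrix) produces a conjugating matrix, which one then adjusts inside the centralizer $\F[C(q)]$ so that it becomes an involution. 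The $U_2$-version runs the same way, except that when $q = (X+1)^{k}$ with $k$ odd and $\charac(\F) \neq 2$ a parity argument shows the conjugating matrix cannot be taken unipotent of index $2$; that is precisely the exceptional case in (ii), and it is why odd $(-1)$-cells are forbidden there.

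The real difficulty is concentrated in the type-(2) blocks: unlike the hyperbolic case there is no one-line identity, so one must genuinely work with the twisted bilinear form on $\F[X]/(q)$ --- verifying it can be symmetrized over an arbitrary field, controlling the parity phenomenon that is harmless for (i) but produces the eigenvalue-$(-1)$ obstruction for (ii), and forcing the final intertwiner to be an involution (resp.\ a $U_2$-matrix) rather than merely invertible. Once the blocks are factorized, taking direct sums and conjugating back yields the sufficiency in both parts; alternatively, (i) and (ii) are the instances $(X^2-1,\,X^2-1)$ and $((X-1)^2,\,(X-1)^2)$ of the general theorem of \cite{dSPprod2}.
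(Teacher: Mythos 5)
The paper itself offers no proof of this theorem: it is recalled as a classical result with exactly the citations you invoke (\cite{Djokovic} for (i), \cite{Bothaunipotent} for (ii), and \cite{dSPprod2} for the unified statement), so deferring to those references is precisely the paper's own treatment. Your sketch of what lies behind them --- primary decomposition, the pairing of $C(p^e)$ with $C((p^*)^e)$, the hyperbolic identity for a block paired with its reciprocal, and the self-reciprocal cyclic block as the technical core where the eigenvalue-$(-1)$ parity obstruction appears --- is a reasonable outline of the classical arguments.

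One step of the sketch is genuinely wrong, though. The computation $S_2MS_2^{-1}=S_2S_1=M^{-1}$ uses $S_2^{-1}=S_2$ in an essential way and does \emph{not} transfer to $U_2$-matrices: if $M=U_1U_2$ then $U_2MU_2^{-1}=U_2U_1$, which is conjugate to $M$ itself (via $U_1$), not to $M^{-1}=U_2^{-1}U_1^{-1}$. Likewise the reformulation ``$M=(MS)S$ with $MS$ an involution iff $SMS^{-1}=M^{-1}$'' has no clean $U_2$-analogue, since $(MV-I_n)^2=0$ does not reduce to $VMV^{-1}=M^{-1}$. What the substitution $U\mapsto 2I_n-U=U^{-1}$ actually gives is only that $M^{-1}=U_2^{-1}U_1^{-1}$ is again a product of two $U_2$-matrices, i.e.\ that the set of such products is inversion-stable --- which is strictly weaker than $M\simeq M^{-1}$. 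The necessity of $M\simeq M^{-1}$ in (ii) is true but requires a separate argument (in \cite{Bothaunipotent} and \cite{dSPprod2} it falls out of the same elementary-divisor analysis that yields the parity condition at $-1$). Since you are quoting those references for part (ii) anyway, this does not sink the proposal, but the claim that ``the same two remarks give the necessity'' should be deleted or replaced.
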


Note in particular that a matrix that is the product of two $U_2$-matrices is also the product of two involutions!

The matrices that are the product of an involution and a $U_2$-matrix are also known: see
\cite{Wang1} for the field of complex numbers, and \cite{dSPprod2} for the general case.
We will only use the following two sufficient conditions:

\begin{theo}\label{theo2mixed}
Let $M \in \GL_n(\F)$. Assume that $M \simeq -M^{-1}$ and that,
for any $\alpha \in \F \setminus \{1\}$ such that $\alpha^2=-1$, the Jordan cells
of $M$ associated to the eigenvalue $\alpha$ are all even-sized. Then, $M$
is the product of a $U_2$-matrix and an involution.
\end{theo}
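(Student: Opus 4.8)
The plan is to reduce the statement to a collection of elementary building blocks by exploiting the rational canonical form, and then to verify each block separately. Since the hypothesis $M \simeq -M^{-1}$ is a similarity-invariant condition and both ``being the product of a $U_2$-matrix and an involution'' and the hypothesis on the Jordan cells at the square roots of $-1$ are stable under conjugation and under direct sums, I would first observe that it suffices to treat the indecomposable cases: namely, the primary components of $M$ must be permuted among each other by $x \mapsto -x^{-1}$, so I would group the companion blocks of $M$ accordingly. Concretely, for a monic irreducible polynomial $p \neq p^{*}$ (where $p^{*}$ denotes the polynomial whose roots are $-\lambda^{-1}$ for the roots $\lambda$ of $p$), the sum of the $p$-primary and $p^{*}$-primary parts of $M$ is conjugate to a matrix of the block form $\begin{bmatrix} C & 0 \\ 0 & C'\end{bmatrix}$ where $C' \simeq -C^{-1}$; such a block is handled by the generic identity $\begin{bmatrix} C & 0 \\ 0 & -C^{-1}\end{bmatrix} = \begin{bmatrix} 0 & C \\ C^{-1} & 0 \end{bmatrix}\begin{bmatrix} 0 & I \\ I & 0 \end{bmatrix}$, where the first factor is a $U_2$-matrix (in fact its square is $I$ when $\charac \F = 2$, and in general $\begin{bmatrix} 0 & C \\ C^{-1} & 0 \end{bmatrix}^2 = I$, so it is an involution; one should instead split off a $U_2$ factor by writing $\begin{bmatrix} 0 & C \\ C^{-1} & 0\end{bmatrix} = \bigl(I + N\bigr)\,S$ appropriately) — the point being that a block of the shape $C \oplus (-C^{-1})$ is easily seen to be a product of a $U_2$-matrix and an involution, regardless of $C$.

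The remaining, and harder, case is that of the self-paired primary components, i.e.\ the $p$-primary part of $M$ when $p = p^{*}$. The polynomials fixed by $x \mapsto -x^{-1}$ are $x+1$ (root $-1$), $x-1$ is not fixed unless... actually $x-1 \mapsto$ root $-1$, so $x-1$ pairs with $x+1$; the genuinely self-paired irreducibles are $x^2+1$ over fields where it is irreducible, together with $p(x) = x^{\deg p} p(-1/x) \cdot (\text{unit})$ of higher degree. So the delicate analysis concerns: (a) the eigenvalue $-1$ (handled only when $\charac \F = 2$, since for $\charac \F \neq 2$ the hypothesis $M \simeq -M^{-1}$ forces the $(-1)$-eigenvalue structure to be paired with that of $+1$? — no: $\lambda = -1$ satisfies $-\lambda^{-1} = 1 \neq \lambda$, so in odd characteristic $-1$ pairs with $+1$ and is not self-paired, which is why the theorem's extra hypothesis is about square roots of $-1$, not about $-1$ itself); and (b) the eigenvalues $\alpha$ with $\alpha^2 = -1$, $\alpha \neq 1$, i.e.\ the roots of $x^2+1$, where the even-sized Jordan cell hypothesis is exactly what is needed. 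For each self-paired companion block — a single Jordan-type block $J$ with $J \simeq -J^{-1}$ — I would write down an explicit factorization $J = U S$ with $(U - I)^2 = 0$ and $S^2 = I$, using the classical device that conjugating by the ``reversal'' permutation turns a lower-triangular unipotent-type structure into the transpose, combined with a careful bookkeeping of signs dictated by the relation $J \simeq -J^{-1}$.

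The main obstacle I expect is precisely the bookkeeping at the eigenvalues $\alpha$ with $\alpha^2 = -1$: one must show that a Jordan block of odd size at such an $\alpha$ genuinely cannot be a product of a $U_2$-matrix and an involution (so that the hypothesis is not gratuitous), while an even-sized one can — and the construction for the even case will require pairing up two cells of equal size and producing a $4 \times 4$-type block model that one then ``inflates'' to arbitrary even size. A clean way to organize this is to first prove the result for companion matrices of powers of irreducible polynomials (the cyclic case), perhaps invoking the spirit of Proposition \ref{cyclicdecomp}, and then to assemble the general case by the direct-sum reduction above; the even-size parity constraint at $\sqrt{-1}$ will emerge from the determinant/spectral obstruction attached to the involution factor, which contributes a controlled multiplicity of the eigenvalue $-1$ that must be compatible with the $U_2$ factor contributing only the eigenvalue $1$.
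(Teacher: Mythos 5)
The paper does not actually prove Theorem \ref{theo2mixed} — it is quoted as a known result and attributed to \cite{Wang1} (complex case) and \cite{dSPprod2} (general case). So there is no internal proof against which to compare your proposal; what follows is a review of the proposal on its own merits.

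Your overall plan — reduce via the primary/rational canonical form, and treat separately the primary parts that are paired under $\lambda \mapsto -\lambda^{-1}$ versus those that are self-paired — is the right general architecture for this kind of structure theorem (one does verify easily that the property ``product of a $U_2$-matrix and an involution'' is preserved under direct sums, and that $M \simeq -M^{-1}$ forces the $p^*$-primary part to be similar to minus the inverse of the $p$-primary part). Moreover your observation that the extra hypothesis about square roots of $-1$ can only be relevant for self-paired irreducibles is correct: if $\alpha$ is a root of an irreducible $p$ with $\alpha^2=-1$, then $-\alpha^{-1}=\alpha$, so $p=p^*$, which means the ``paired'' case never meets the hypothesis.

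However, the proof as written has two genuine gaps and one misfire. First, your claim that $C\oplus(-C^{-1})$ is ``easily seen'' to be a product of a $U_2$-matrix and an involution is not established. The identity you display gives $\begin{bmatrix}0 & C\\ C^{-1} & 0\end{bmatrix}\begin{bmatrix}0 & I\\ I & 0\end{bmatrix}=\begin{bmatrix}C & 0\\ 0 & C^{-1}\end{bmatrix}$, which has $C^{-1}$ (not $-C^{-1}$) in the lower-right, and the anti-diagonal factor with the sign put in is not a $U_2$-matrix (its square is $-I$, so it is not unipotent). Already for $C=(\lambda)$ one sees that the needed construction is a nontrivial computation: one finds the involution factor must have trace zero and one solves for its $(1,1)$-entry via $a(\lambda+\lambda^{-1})=2$, which is only possible when $\lambda^2\neq -1$ — exactly as it should be, but this is not an obvious block identity and you do not supply one for a general companion block. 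Second, the self-paired case is deferred to an unwritten ``explicit factorization'' using a ``reversal permutation'' device; this is where all the real work of the theorem lives (especially the matching at $\alpha^2=-1$ of even-sized cells), and the sketch does not make it clear that the construction will close. Finally, a minor logical slip: you say you ``must show that a Jordan block of odd size at such an $\alpha$ genuinely cannot be a product of a $U_2$-matrix and an involution''. That is the converse (i.e.\ the necessity of the hypothesis); the theorem only claims sufficiency, so the converse is not needed to prove the statement, and including it would only lengthen the argument without advancing it. In short, the decomposition scheme is the right one, but as it stands neither of the two building blocks (the paired block $C\oplus(-C^{-1})$ nor the self-paired block) is actually shown to decompose, so the proposal is an outline rather than a proof.
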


\begin{theo}\label{theo2mixedbis}
Let $k,l$ be non-negative integers such that $|k-l| \leq 2$, and let $M$ be
the direct sum of a Jordan cell with size $k$ for the eigenvalue $1$ and of a Jordan cell with size $l$
for the eigenvalue $-1$. Then, $M$ is the product
of a $U_2$-matrix and an involution.
\end{theo}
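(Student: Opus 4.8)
The plan is to isolate the balanced case $k=l$, which follows from Theorem \ref{theo2mixed}, from the two unbalanced cases $|k-l|\in\{1,2\}$, which need an explicit construction; a short symmetry argument first cuts down the number of cases.

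Write $J_m(\lambda)$ for the Jordan cell of size $m$ and eigenvalue $\lambda$, and $N_m$ for its nilpotent part, so $M=J_k(1)\oplus J_l(-1)$. First I would note that the class of products of a $U_2$-matrix and an involution is stable not only under conjugation but also under $X\mapsto -X$: if $X=US$ with $U$ a $U_2$-matrix and $S$ an involution, then $-X=U(-S)$ and $-S$ is again an involution. Since $-N_m\simeq N_m$ we have $-J_m(\lambda)\simeq J_m(-\lambda)$, hence $-M\simeq J_l(1)\oplus J_k(-1)$; combining the two stability properties, $M$ is a product of a $U_2$-matrix and an involution if and only if $J_l(1)\oplus J_k(-1)$ is. So we may assume $k\geq l$, i.e. $k\in\{l,l+1,l+2\}$.

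For the balanced case $k=l$: a Jordan cell is similar to the inverse of the Jordan cell with the inverse eigenvalue and the same size, so $M^{-1}\simeq J_k(1)\oplus J_l(-1)$, whence $-M^{-1}\simeq J_k(-1)\oplus J_l(1)\simeq M$ (using $k=l$). As $M$ has no eigenvalue $\alpha\neq 1$ with $\alpha^2=-1$, the hypotheses of Theorem \ref{theo2mixed} are met and the conclusion follows. For $k\in\{l+1,l+2\}$ one has $M\not\simeq -M^{-1}$, so Theorem \ref{theo2mixed} is useless and I would argue by an explicit construction: exhibit an involution $S$ for which $MS$ is a $U_2$-matrix, and then write $M=(MS)S^{-1}=(MS)\,S$. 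The involution $S$ should be the identity on most of the Jordan chain at the eigenvalue $1$ together with a correction of small rank coupling the last one or two vectors of that chain with the chain at $-1$; the hypothesis $|k-l|\leq 2$ is exactly what allows this correction to be chosen so that $(MS-I_n)^2=0_n$. Checking this identity, and that $MS$ is unipotent, is a finite computation on basis vectors (already for $k=3,\ l=1$ one is led to $S=I_4-2uv^{T}$ with $u$ the sum of the eigenvector at $1$ and the eigenvector at $-1$, and $v$ a suitable combination of the top vector of the chain at $1$ and the eigenvector at $-1$). Alternatively, one may invoke the full characterization of products of an involution and a $U_2$-matrix (see \cite{Wang1} over $\C$, and \cite{dSPprod2} in general) and merely verify that $M$ meets it: in that language the statement says precisely that a single Jordan cell at $1$ and a single Jordan cell at $-1$ whose sizes differ by at most $2$ form a compatible pair.

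The main obstacle is the explicit involution $S$ in the unbalanced cases and the bookkeeping in $(MS-I_n)^2=0_n$. The difficulty is that no ``block'' choice of $S$ works in general: the naive $S=I_k\oplus(-I_l)$ turns $M$ into $J_k(1)\oplus(I_l-N_l)$, which is a $U_2$-matrix only when $k,l\leq 2$, and one cannot hope to peel off the extra part of the chain at $1$ as a separate small summand, since (in characteristic $\neq 2$) $J_3(1)$ by itself is not a product of a $U_2$-matrix and an involution. Hence the correction in $S$ must genuinely mix the two Jordan chains, and it is precisely the slack $|k-l|\leq 2$ that makes such a mixing possible.
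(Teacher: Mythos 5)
The paper does not prove Theorem \ref{theo2mixedbis}: it is stated, together with Theorem \ref{theo2mixed}, as a known \emph{sufficient condition} that follows from the full characterization of products of an involution and a $U_2$-matrix in \cite{Wang1} (over $\C$) and \cite{dSPprod2} (in general). So the paper's ``proof'' is simply a citation-and-verification. Your proposal aims for something more self-contained, and it is worth separating what works from what is left hanging.

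Your symmetry reduction is correct (replacing $S$ by $-S$, and noting $-J_m(\lambda)\simeq J_m(-\lambda)$, lets you assume $k\geq l$), and your treatment of the balanced case $k=l$ is both correct and a genuine shortcut the paper does not state: $-M^{-1}\simeq J_k(-1)\oplus J_l(1)\simeq M$ when $k=l$, the eigenvalue condition in Theorem \ref{theo2mixed} is vacuous because $\mathrm{Sp}(M)\subset\{1,-1\}$, and the conclusion follows. The side remark that the ``block'' involution $I_k\oplus(-I_l)$ cannot work, and that $J_3(1)$ alone is not such a product so the chains must really mix, is a sound sanity check on why the slack $|k-l|\leq 2$ matters.

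The gap is in the unbalanced cases $k\in\{l+1,l+2\}$, which are the whole content of the statement once the balanced case is dispatched. You propose ``exhibit an involution $S$ for which $MS$ is a $U_2$-matrix,'' name one candidate in the single instance $(k,l)=(3,1)$, and then defer the verification of $(MS-I_n)^2=0_n$ as ``a finite computation on basis vectors.'' As written there is no such $S$ produced uniformly in $k$ and $l$, no verification for the one candidate you do name, and no argument that a rank-one perturbation of the identity even has the right determinant parity to serve as $S$ in all the subcases. The alternative you then offer --- invoke the classification from \cite{Wang1}/\cite{dSPprod2} and check that $M$ satisfies it --- is exactly what the paper does, but you also leave that verification unstated. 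To turn this into a proof you would either need to write down $S$ explicitly for both $k=l+1$ and $k=l+2$ (with the degenerate subcases $l=0$) and carry out the bookkeeping, or quote the relevant clause of the characterization and check $M$ against it; at present neither route is completed, so the heart of the theorem remains unproved.
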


Now, we turn to the new results. First, our result on the length $4$ problem in the general linear group (not the stable one!):

\begin{theo}\label{theo4}
Let $\calA_1,\dots,\calA_4$ be subsets of $\GL_n(\F)$, in which each $\calA_i$ equals the
set of all involutions or the one of all $U_2$-matrices.

If at least one $\calA_i$ equals the set of all involutions, then
$$\calA_1\calA_3\calA_3\calA_4=\bigl\{M \in \GL_n(\F) : \; \det M=\pm 1\}.$$
Otherwise,
$$\calA_1\calA_3\calA_3\calA_4=\SL_n(\F).$$
\end{theo}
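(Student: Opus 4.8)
The plan is to reduce the statement to the already-known length-$2$ results (Theorems \ref{theo2}, \ref{theo2mixed}, \ref{theo2mixedbis}) by grouping the four factors into two pairs and by exploiting the augmentation trick that is already implicitly available in this setting. First I would record the two easy inclusions $\calA_1\calA_2\calA_3\calA_4\subseteq\{M:\det M=\pm1\}$ in general and $\subseteq\SL_n(\F)$ when every $\calA_i$ is the set of $U_2$-matrices; these follow because an involution has determinant $\pm1$, a $U_2$-matrix has determinant $1$, and products respect determinants. So the whole content is the reverse inclusion: every matrix with the prescribed determinant is a product of four factors of the prescribed types. Since the order of factors is irrelevant (the remark just above on conjugation- and transposition-stability of $\calA_1\cdots\calA_k$), I may group $\calA_1\calA_2$ and $\calA_3\calA_4$, and by the same remark it suffices to treat one representative arrangement in each of the few cases: (a) at least one $\calA_i$ is the involution set, (b) all four are $U_2$-sets.

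For the core case, fix $M\in\GL_n(\F)$ with $\det M=\pm1$, and look for a splitting $M=PQ$ where $P$ lies in $\calA_1\calA_2$ and $Q$ lies in $\calA_3\calA_4$; by Theorem \ref{theo2}, a matrix lies in (involutions)(involutions) iff it is similar to its inverse, and lies in ($U_2$)($U_2$) iff additionally its $-1$-Jordan cells are all even (in odd characteristic). The key idea is to engineer $P$ and $Q$ to be very simple: I would try to take $Q$ of the form $N\oplus(\text{something scalar-ish})$ with $N$ nilpotent-plus-identity — concretely a single large Jordan cell for eigenvalue $1$ — so that $Q$ is manifestly a product of two $U_2$-matrices (a unicellular matrix for eigenvalue $1$ is similar to its inverse and has no $-1$-cells, so Theorem \ref{theo2}(ii) applies), and then show $P:=MQ^{-1}$ can be arranged, by a preliminary conjugation of $M$ and a judicious choice of the cell size, to be similar to its inverse as well, with the $-1$-cell parity under control. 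The determinant bookkeeping is where the single allowed "involution" slot is spent: when $\det M=-1$ one needs one genuine involution somewhere to absorb the sign, whereas when all four factors must be $U_2$-matrices the hypothesis $\det M=1$ is exactly what makes the construction close up. For the $2\times2$ seed computations — writing a transvection, or a block $\begin{bmatrix}1&\lambda\\0&1\end{bmatrix}$, or $\begin{bmatrix}0&a\\-a^{-1}&0\end{bmatrix}$-type blocks — as products of two matrices of the relevant kinds, I would lean on the displayed identity in the introduction and on Theorems \ref{theo2mixed}–\ref{theo2mixedbis} for the mixed blocks.

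Concretely I expect the argument to run as follows. Step 1: reduce to $M$ in rational canonical form and, using $M\oplus I_p$-type padding (legitimate here since augmenting by $I_p$ changes neither membership in $\GL_n$ nor the determinant, and $I_p=I_p\cdot I_p$ is harmlessly absorbed), assume $M$ has a convenient shape. Step 2: peel off from $M$ a large unicellular $1$-block to form the factor $Q$ that is handled by Theorem \ref{theo2}(ii). Step 3: show the complementary factor $P$ is similar to its inverse — this is the place where one invokes that conjugacy classes are read off from invariant factors and where one does the parity accounting for eigenvalue $-1$, invoking Theorems \ref{theo2mixed} and \ref{theo2mixedbis} to merge awkward small blocks. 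Step 4: handle the determinant sign: in case (a) replace one $U_2$-slot by an actual involution (e.g.\ $\operatorname{Diag}(1,\dots,1,-1)$ type) to contribute the needed $-1$; in case (b) verify that $\det M=1$ forces all block-level sign obstructions to vanish. The main obstacle I anticipate is Step 3 together with the parity bookkeeping in characteristic $\ne2$: controlling the sizes of the Jordan cells of $P=MQ^{-1}$ for the eigenvalue $-1$ so that they are all even (when an even number of the $\calA_i$ are $U_2$-sets and I want to route $-1$-cells through a $(U_2)(U_2)$ factor), versus routing them through a factor containing an involution when they are odd — essentially, matching the parity constraints of Theorem \ref{theo2}(ii) against the given distribution of involution/$U_2$ slots. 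Getting the cell-size arithmetic to work uniformly in $n$, and checking the few genuinely small cases by hand (small $n$, or $M$ with very few invariant factors, where padding is forced and Theorems \ref{theo2mixed}–\ref{theo2mixedbis} do the heavy lifting), is where the real work lies.
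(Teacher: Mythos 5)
Your Step~1 runs backwards. Padding $M$ to $M\oplus I_p$ is a \emph{consequence} of having a length-$k$ decomposition of $M$ (each factor $S_i$ is replaced by $S_i\oplus I_p$), not a legitimate \emph{reduction}: proving that $M\oplus I_p\in\GL_{n+p}(\F)$ is a product of four matrices of the prescribed types says nothing about the original $M\in\GL_n(\F)$. Theorem~\ref{theo4} is the unstable, fixed-size statement; the augmentation trick is only available for the stable Theorems~\ref{theo3invol}--\ref{theo3mixed2}, and invoking it here is circular. (Your parenthetical ``changes neither membership in $\GL_n$'' already signals the confusion: $M\oplus I_p$ lies in $\GL_{n+p}$, not in $\GL_n$.)

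The second gap is that your core device --- peel off a single unicellular Jordan block $Q$ for the eigenvalue $1$ and hope $P=MQ^{-1}$ lands in the right length-$2$ class --- cannot reach the scalar matrices $\alpha I_n$ with $\alpha^4\neq 1$, which are precisely what forces the answer to be $4$ rather than $3$ (Halmos--Kakutani). For such $M$ the eigenvalue $1$ need not occur at all, and no unicellular unipotent $Q$ makes $\alpha Q^{-1}$ similar to its inverse with controlled $-1$-cells. The paper accordingly handles the scalar case by a separate construction (Section~\ref{scalar4section}, using direct sums such as $\bigoplus_k C_1(\alpha^{2k})$ and $\bigoplus_k C_2((-\alpha^2)^k)$), and for the non-scalar non-cyclic case proves $u$-adjacency to a well-partitioned matrix (Proposition~\ref{adj4prop}) and then applies the length-$3$ result for well-partitioned matrices (Proposition~\ref{wellpartdecomp}). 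Your $M=PQ$ grouping is in the spirit of the Sourour triangular split that the paper mentions as a shortcut at the end of its strategy section, but the paper explicitly warns that this shortcut covers only three of the five combinations of factor types (four involutions, four $U_2$-matrices, two of each) and leaves out the scalar case; your sketch does not explain how to arrange the two remaining mixed cases so that Theorem~\ref{theo2mixed} or~\ref{theo2mixedbis} applies uniformly, which is exactly where the paper resorts to the well-partitioned machinery instead.
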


Here, the case when all the $\calA_i$'s equal the set of all involutions of $\GL_n(\F)$ was already known, as stated in the introduction
(\cite{Gustafsonetal}), and the case when all the $\calA_i$'s equal the set of all $U_2$-matrices of $\GL_n(\F)$ was known
over the field of complex numbers (see \cite{WangWu}).

Next, we have a new sufficient condition for the decomposability into the product of three matrices, either
unipotent of index $2$ or involutary:

\begin{theo}\label{unstabletheo3}
Let $M \in \GL_n(\F)$ be such that $\det M=\pm 1$.
Assume that $M$ has at most one Jordan cell of size $1$ for each one of its eigenvalues in $\F$,
and that the characteristic polynomial of $M$ is not a power of some irreducible polynomial.

Then, for all $k \in \{0,1,2\}$, the matrix $M$ is the product of $k$ unipotent matrices of index $2$
and $3-k$ involutions.
Moreover, if $\det M=1$ then $M$ is the product of three $U_2$-matrices.
\end{theo}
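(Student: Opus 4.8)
The plan is to reduce to the cyclic case by decomposing $M$ as a direct sum of matrices each of which has a single invariant factor, then to apply Proposition \ref{cyclicdecomp} (and the two-factor results from Theorems \ref{theo2}, \ref{theo2mixed}, \ref{theo2mixedbis}) blockwise, taking care of the determinant constraint globally.

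First I would invoke the rational canonical form: write $M \simeq C_1 \oplus \dots \oplus C_r$ where each $C_i$ is the companion matrix of a polynomial $p_i$ and $p_1 \mid p_2 \mid \dots \mid p_r$ are the invariant factors. The hypothesis that $M$ has at most one Jordan cell of size $1$ for each eigenvalue in $\F$ means that for each $\lambda \in \F$, the factor $(X-\lambda)$ divides at most one $p_i$; and the hypothesis that the characteristic polynomial $\prod_i p_i$ is not a prime power forces $r \geq 1$ with $p_r$ itself not a prime power, OR $r \geq 2$. The key structural move is to regroup the companion blocks into a direct sum $M \simeq N_1 \oplus \dots \oplus N_s$ in which each $N_j$ is a single companion matrix (hence has a sole invariant factor), and — crucially — at least two of the $N_j$ have coprime characteristic polynomials of degree $\geq 1$, so that the determinants can be balanced. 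Concretely, since $\chi_M$ is not a prime power, I can split off one block $N_1$ whose characteristic polynomial is a prime power $q^a$ and collect the rest into blocks whose characteristic polynomials are coprime to $q$; by the "at most one Jordan cell of size $1$" condition each such piece can be arranged to be cyclic.

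Next I would handle each block. By Proposition \ref{cyclicdecomp}, any companion matrix with determinant $\pm 1$ is a product of three involutions — but here I need more: I need to be able to prescribe, for each block, that it be a product of (say) $k$ $U_2$-matrices and $3-k$ involutions, and I need the flexibility to adjust determinants. The mechanism is: if $N_j = S_1 S_2 S_3$ with $S_1$ an involution, then $\det S_1 \in \{1,-1\}$ is something I can potentially flip by replacing $N_j$ with a similar block, but more robustly, since the order of factors is irrelevant and $\calA_1\calA_2\calA_3$ is conjugation-stable, I group the blocks so that the involution factors' determinants multiply to the correct global sign $\det M = \pm 1$. The case $\det M = 1$ with all three factors $U_2$ is the subtlest: here I cannot use any genuine involution with determinant $-1$, so I need each block to be expressible as a product of three $U_2$-matrices; for a companion matrix with determinant $1$ this should again follow from the cyclic case, and the even-sizedness conditions in Theorem \ref{theo2}(ii) and \ref{theo2mixed} are what must be verified — this is where the "at most one Jordan cell of size $1$" hypothesis does its real work, since it is exactly the obstruction-free situation.

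The main obstacle will be the bookkeeping of determinants and of the parity conditions on $(-1)$- and $i$-eigenvalue Jordan cells when assembling the blockwise decompositions into a global one with a prescribed profile $(k, 3-k)$. In particular, for a companion block $N_j$ one does not a priori control whether its three-involution decomposition uses factors of determinant $-1$; the remedy is to show that one may always convert a product of three involutions into a product $\widetilde S_1 \widetilde S_2 \widetilde S_3$ of three $U_2$-matrices when $\det N_j = 1$ (using $\chi_{N_j}$ being cyclic together with Theorem \ref{theo2} applied to a suitable factorization $N_j = U \cdot (U^{-1}N_j)$), and to reserve exactly one block — the one whose characteristic polynomial is a prime power coprime to the others — to absorb the global sign by choosing its decomposition's first factor to be an involution of the appropriate determinant when $\det M = -1$. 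I would carry this out by first proving the $\det M = 1$ statement with all $U_2$-factors (the strongest assertion), and then deriving the $(k, 3-k)$ statements for $\det M = \pm 1$ by replacing, in one block, a $U_2$-factor with a genuine involution (every $U_2$-matrix is an involution in characteristic $2$, and in odd characteristic one trades a block's factorization for one furnished by Theorem \ref{theo2mixedbis} or \ref{theo2mixed}).
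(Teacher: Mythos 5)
Your proposed reduction has a fatal gap at its core: you plan to decompose $M$ blockwise into companion matrices and apply Proposition \ref{cyclicdecomp} (or the two-factor theorems) to each block separately, with only a global "sign bookkeeping" to reconcile determinants. But the individual companion blocks of $M$ need not have determinant $\pm 1$ at all — only the global determinant is $\pm 1$. For instance, $M=C_1(2)\oplus C_1(1/2)$ over $\Q$ satisfies all the hypotheses, yet neither block is a product of involutions and $U_2$-matrices, since any such product has determinant $\pm 1$. No grouping or sign-flipping of involution factors can repair this, because the obstruction is the magnitude of the block determinants, not their sign, and the invariant factors (hence the block determinants) are similarity invariants that cannot be renormalized. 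This is exactly the difficulty the paper's Adaptation Theorem (Theorem \ref{AdaptationTheorem}) is designed to overcome: after putting $M$ in well-partitioned form (Lemma \ref{VWPlemmaunstable}, which is essentially your structural step), one multiplies the \emph{whole} direct sum by a single quadratic matrix $S$ with off-diagonal coupling between the blocks; the product is block-quasi-companion, hence cyclic, and the Polynomial Fit Lemma lets one prescribe its characteristic polynomial $r$ freely subject only to $N(r)=\pm\det M$. That single step redistributes the determinant across the blocks, and only then does one recognize $C(r)$ as a product of two involutions or two $U_2$-matrices. Your strategy has no substitute for this coupling mechanism.

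A secondary inaccuracy: the hypothesis of at most one Jordan cell of size $1$ per eigenvalue does not make the primary pieces cyclic (e.g.\ a piece $C_2(1)\oplus C_3(1)$ is perfectly admissible and non-cyclic); its actual role is to guarantee that at most one invariant factor on each side of the coprime splitting has degree $1$, which is what the definition of a well-partitioned matrix requires. The correct route is: Lemma \ref{VWPlemmaunstable} to get a well-partitioned model of $M$, then Proposition \ref{wellpartdecomp} (i.e.\ the Adaptation Theorem plus Theorems \ref{theo2}, \ref{theo2mixed}, \ref{theo2mixedbis}) applied to that model as a whole, not block by block.
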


Note that our assumptions imply that every eigenvalue of $M$ in $\F$ has geometric multiplicity at most $\frac{n}{2}\cdot$
Hence, in the case when $k=0$ and $\F$ is the field of complex numbers, our result is weaker than the
result of Liu recalled in the introduction (theorem 2.5 of \cite{Liu}).

Our most demanding results deal with the stable length $3$ problem.
First, the case of three involutions:

\begin{theo}\label{theo3invol}
Let $A \in \GL_n(\F)$ have determinant $\pm 1$. Then, $A \oplus I_n$ is the product of three involutions.
\end{theo}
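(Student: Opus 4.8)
The plan is to reduce $A\oplus I_n$, up to similarity, to one clean shape and then to exhibit that shape explicitly as a single involution times a matrix similar to its inverse, so that Theorem~\ref{theo2}(i) finishes the job.

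\emph{Step 1: removing the unipotent part.} Using the rational canonical form, write $A\simeq A_1\oplus A_2$, where $A_1$ gathers the invariant factors that are powers of $X-1$ (so $A_1$ is unipotent, of some size $s$, with $\det A_1=1$) and $A_2$ gathers the remaining ones (so $1$ is not an eigenvalue of $A_2$, and $\det A_2=\det A=\pm1$). Splitting the identity summands as $I_n\simeq I_s\oplus I_{n-s}$ gives
$$A\oplus I_n\;\simeq\;(A_1\oplus I_s)\;\oplus\;(A_2\oplus I_{n-s}).$$
The set of products of three involutions is stable under direct sums (a direct sum of involutions is an involution) and contains every identity matrix, so it suffices to treat the two summands separately. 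For the first, each Jordan cell $J_k(1)$ is similar to its inverse, hence $A_1\simeq A_1^{-1}$, hence $A_1\oplus I_s\simeq(A_1\oplus I_s)^{-1}$, and Theorem~\ref{theo2}(i) even makes it a product of two involutions. Writing $B:=A_2$ and $m:=n-s$, everything reduces to the following: if $\det B=\pm1$ and $I_m-B$ is invertible, then $B\oplus I_m$ is a product of three involutions.

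\emph{Step 2: the factorisation.} Since $I_m-B$ is invertible, conjugating $B\oplus I_m$ by the block-unipotent matrix $\begin{bmatrix}I_m&C\\0&I_m\end{bmatrix}$ replaces the upper-right corner by $(I_m-B)C$, which ranges over all of $\Mat_m(\F)$; hence $B\oplus I_m\simeq\begin{bmatrix}B&D\\0&I_m\end{bmatrix}$ for \emph{every} $D\in\Mat_m(\F)$. Now
$$\begin{bmatrix}B&D\\0&I_m\end{bmatrix}=\begin{bmatrix}0&I_m\\I_m&0\end{bmatrix}\begin{bmatrix}0&I_m\\B&D\end{bmatrix},$$
where the first factor is an involution, so it is enough to choose $D$ for which $N_D:=\begin{bmatrix}0&I_m\\B&D\end{bmatrix}$ is similar to its inverse. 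I would take $D$ block-diagonal with respect to the primary decomposition $B\simeq\bigoplus_{\pi,j}C(\pi^{e_{\pi,j}})$, scalar $\delta_{\pi,j}I$ on each block. A direct computation with $\det\bigl(X^2I-\delta XI-C(\pi^e)\bigr)=\pi^e(X^2-\delta X)$ identifies $N_D$ with $\bigoplus_{\pi,j}C\bigl(\pi^{e_{\pi,j}}(X^2-\delta_{\pi,j}X)\bigr)$ (for generic $\delta$'s the substitution $X\mapsto X^2-\delta X$ is separable on the roots involved, which keeps the matrix in companion shape), and a second computation shows that the reciprocal polynomial of $\pi^e(X^2-\delta X)$ equals $\prod_i\bigl(X^2+\tfrac{\delta}{\rho_i}X-\tfrac1{\rho_i}\bigr)$, where $\rho_1,\dots$ are the roots of $\pi^e$. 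Thus $N_D\simeq N_D^{-1}$ becomes a statement purely about the multiset of quadratics $X^2-\delta X-\mu$ produced as $\mu$ runs over the eigenvalues of $B$ (with multiplicity) and $\delta$ over our free parameters: the union of their root-pairs must be invariant under $t\mapsto t^{-1}$, block sizes respected.

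\emph{Step 3: the matching argument (the crux).} The key observation is that the roots of $X^2-\delta X-\mu$ have product $-\mu$ and sum the free parameter $\delta$, so they can be made ``anything with product $-\mu$''. Eigenvalue $-1$ is harmless (choose roots $r,r^{-1}$, since $X^2-\delta X+1$ is self-reciprocal); a reciprocal pair $\{\mu,\mu^{-1}\}$ of eigenvalues of $B$ goes to a pair of reversal-dual blocks; and the remaining unbalanced eigenvalues $\mu_1,\dots,\mu_t$ are arranged in a cycle, the roots being chosen so that consecutive blocks are reversal-dual and the cycle closes up — which works because $\prod_i(-\mu_i)=\pm1$ (a consequence of $\det B=\pm1$), the self-reciprocal values $\pm1$ serving as junctions whenever a sign forces it. I expect the main difficulties to be: (i) making this matching rigorous when $B$ is not semisimple, i.e.\ checking that $X\mapsto X^2-\delta X$ interacts correctly with powers of irreducible polynomials, so that the blocks $C(\pi^e(X^2-\delta X))$, their sizes and their reciprocals are exactly what the cycle argument requires; (ii) the characteristic~$2$ case, where involutions and $U_2$-matrices coincide and $1$ and $-1$ merge, so the ``harmless eigenvalue'' and ``junction'' devices must be reorganised; and (iii) a handful of small or degenerate spectra that fall outside the chaining scheme, which I would settle directly using Proposition~\ref{cyclicdecomp}, Theorem~\ref{theo2mixedbis} and Lemma~\ref{diagonal3invollemmafinal}.
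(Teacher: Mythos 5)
Your Steps 1 and 2 are correct and take a genuinely different route from the paper: the reduction to $B\oplus I_m$ with $I_m-B$ invertible, the similarity $B\oplus I_m\simeq\left[\begin{smallmatrix}B&D\\0&I_m\end{smallmatrix}\right]$ for \emph{every} $D$, and the factorisation through the swap involution are all valid, and they correctly reduce the theorem to producing $D$ with $N_D\simeq N_D^{-1}$. The gap is in Step 3, and it is not the point (i) you flag (non-semisimplicity) but something structural that already occurs for semisimple $B$: with $D$ scalar on each primary block, all $d$ Galois-conjugate roots $\mu_1,\dots,\mu_d$ of an irreducible factor $\pi$ of degree $d\geq 2$ are forced to share a single linear parameter $\delta$, whereas your own computation shows that the reversal of $X^2-\delta X-\mu$ is $X^2+\frac{\delta}{\mu}X-\frac1\mu$, whose \emph{linear coefficient depends on} $\mu$. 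Hence for $\delta\neq 0$ the $d$ reversed quadratics have $d$ distinct linear coefficients $\delta/\mu_i$ and cannot be reassembled into any block of the form $\sigma^{f}(X^2-\delta'X)$ with a single $\delta'$; while $\delta=0$ collapses the condition $N_D\simeq N_D^{-1}$ to reversal-invariance of the multiset $\{\pi_j^{e_j}\}$, i.e.\ to $B\simeq B^{-1}$, which is exactly what you cannot assume. Concretely, over $\Q$ take $B=C(X^2+X+2)\oplus C\bigl(X^2+X+\tfrac12\bigr)$, so $\det B=1$ and $1$ is not an eigenvalue. A necessary condition for $N_D\simeq N_D^{-1}$ is that $\chi_{N_D}=\pi_1(X^2-\delta_1X)\,\pi_2(X^2-\delta_2X)$ be self-reciprocal; equating the coefficients of $X^7$ and $X^1$ forces $\delta_1=0$, and then matching the (irreducible) degree-$4$ factors forces $\delta_2^2+1=\tfrac12$, which is impossible. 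So no block-scalar $D$ works for this $B$, although the theorem of course holds for it.

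This obstruction is precisely what the paper's machinery is built to overcome. In the Adaptation Theorem (Theorem \ref{AdaptationTheorem}) the involution attached to a well-partitioned matrix is \emph{not} block-diagonal: its subdiagonal blocks couple the companion summands so that the product becomes a single block-quasi-companion, hence cyclic, matrix, and the Polynomial Fit Lemma (Lemma \ref{polyfitgen}) then lets one prescribe its entire characteristic polynomial freely, subject only to the norm condition. That global control is what redistributes the ``norm defect'' of a non-reciprocal irreducible factor across the other blocks; a $D$ preserving the primary decomposition cannot do this. To rescue your approach you would need $D$ to couple distinct primary components (or to be a non-scalar, Galois-equivariant polynomial in $B$ on each block, with chaining conditions threading through Galois orbits of possibly different sizes), at which point you would essentially be rebuilding Lemma \ref{polyfitgen} and Theorem \ref{AdaptationTheorem}. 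The scalar-eigenvalue part of your matching (cycles of root-pairs, $\pm1$ as junctions) is sound and is in substance the paper's Lemma \ref{C1cycle} and Lemma \ref{diagonal3invollemmafinal}.
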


Then, the case of three $U_2$-matrices, which turns out to be easier to deal with:

\begin{theo}\label{theo3U2}
Let $A \in \SL_n(\F)$. Then, $A \oplus I_n$ is the product of three $U_2$-matrices.
\end{theo}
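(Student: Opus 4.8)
The plan is to write $A\oplus I_n$ as the product of one $U_2$-matrix with a product of two $U_2$-matrices. By Theorem~\ref{theo2}(ii), a matrix is a product of two $U_2$-matrices as soon as it is similar to its inverse and, when $\charac(\F)\neq 2$, has all its Jordan cells for the eigenvalue $-1$ even-sized. So it suffices to exhibit a $U_2$-matrix $U\in\GL_{2n}(\F)$ such that $U\,(A\oplus I_n)$ has those two properties: then $A\oplus I_n=U^{-1}\cdot\bigl(U(A\oplus I_n)\bigr)$ is a product of three $U_2$-matrices, $U^{-1}$ being a $U_2$-matrix. Since the set of products of three $U_2$-matrices is stable under conjugation and under block-diagonal sums, I may replace $A$ by any similar matrix and, given a decomposition $A\simeq A'\oplus A''$, treat $A'\oplus I_{n'}$ and $A''\oplus I_{n''}$ independently (splitting $I_n$ accordingly).

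The first step removes the eigenvalue $1$ of $A$. Write $A\simeq A_1\oplus A_u$, where $A_1$ is the restriction of $A$ to its generalized $1$-eigenspace, so $A_1$ is unipotent and $\det A_1=1$, and $A_u$ has no eigenvalue $1$; then $\det A_u=\det A=1$ and $A\oplus I_n\simeq(A_1\oplus I_{n_1})\oplus(A_u\oplus I_{n_u})$. The summand $A_1\oplus I_{n_1}$ is unipotent, hence similar to its inverse and devoid of the eigenvalue $-1$, so by Theorem~\ref{theo2}(ii) it is already a product of two $U_2$-matrices. Thus I may and will assume that $A$ has no eigenvalue $1$, i.e.\ $\chi_A(1)\neq 0$, where $\chi_A$ denotes the characteristic polynomial of $A$.

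Under that assumption, my target for $U\,(A\oplus I_n)$ is the companion matrix $C\bigl(\chi_A\,\chi_A^{\#}\bigr)$ of $\chi_A(x)\,\chi_A^{\#}(x)$, where $\chi_A^{\#}(x):=x^{n}\chi_A(1/x)/\chi_A(0)$ is the reciprocal polynomial. The polynomial $\chi_A\,\chi_A^{\#}$ is monic of degree $2n$, equal to its own reciprocal polynomial (it is palindromic), has constant term $1$, does not vanish at $1$, and contains $x+1$ to an even multiplicity — namely twice that of $x+1$ in $\chi_A$. Consequently $C(\chi_A\chi_A^{\#})$ is similar to its inverse and has all its Jordan cells for $-1$ even-sized, so by Theorem~\ref{theo2}(ii) it is a product of two $U_2$-matrices. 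It therefore remains to show that there is a $U_2$-matrix $U$ with $U\,(A\oplus I_n)\simeq C(\chi_A\chi_A^{\#})$, equivalently that $A\oplus I_n$ is the product of a $U_2$-matrix with a matrix similar to $C(\chi_A\chi_A^{\#})$. I would prove this by constructing an adapted basis of $\F^{2n}$: since $A$ has at most $n$ invariant factors whereas the target is cyclic, and the $n$ additional coordinates of the $I_n$ block afford a square-zero matrix $U-I$ of rank as large as $n$, there is room to collapse $A\oplus I_n$ onto such a cyclic matrix, the free eigenvalue $1$ of $I_n$ being used to build up the eigenvalue pattern of $\chi_A^{\#}$.

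The main obstacle is exactly this last construction, that is, checking that a cyclic matrix with the prescribed self-reciprocal characteristic polynomial is the product of a $U_2$-matrix and a matrix similar to $A\oplus I_n$: one must simultaneously control the characteristic polynomial of the product, its cyclicity, and the index-$2$ nilpotency of $U-I$, the hypothesis $\chi_A(1)\neq 0$ being what keeps the $I_n$ block available for the purpose. The case $\charac(\F)=2$ needs only a remark: there the $U_2$-matrices are the involutions and, since $\det A=\pm 1$ forces $\det A=1$, the conclusion also follows from Theorem~\ref{theo3invol}. The remaining verifications — the stated properties of $\chi_A\chi_A^{\#}$ and the reductions above — are routine.
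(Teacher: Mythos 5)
Your reduction to the case where $A$ has no eigenvalue $1$ is fine, and your target $C(\chi_A\chi_A^{\#})$ is indeed a product of two $U_2$-matrices. But the step you defer — the existence of a $U_2$-matrix $U$ with $U\,(A\oplus I_n)\simeq C(\chi_A\chi_A^{\#})$ — is not merely the ``main obstacle'': it is false in general. Take $A=\alpha I_n$ with $\alpha$ of order $n\geq 4$ and $\alpha^3\neq 1$ (such $A$ lies in $\SL_n(\F)$, has no eigenvalue $1$, and survives your reduction). Then $D:=A\oplus I_n$ is annihilated by $(t-\alpha)(t-1)$, and by the classification of products of two quadratic matrices (Lemma \ref{prodquadspectrumlemma}(a), from \cite{dSPprod2}), for any $U_2$-matrix $U$ and any eigenvalue $\lambda$ of $UD$ outside $\{\alpha,1\}$, the scalar $\alpha\lambda^{-1}$ must also be an eigenvalue of $UD$. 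Your target $C\bigl((t-\alpha)^n(t-\alpha^{-1})^n\bigr)$ has spectrum $\{\alpha,\alpha^{-1}\}$; applying the constraint to $\lambda=\alpha^{-1}$ forces $\alpha^2\in\{\alpha,\alpha^{-1}\}$, i.e.\ $\alpha=1$ or $\alpha^3=1$, a contradiction. So no such $U$ exists, and the heuristic that the $I_n$ block gives ``room to collapse'' onto a cyclic matrix does not survive the rigidity of products $U\cdot D$ with $D$ quadratic.

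This is precisely why the paper does not aim at a single companion matrix. Its only tool for hitting an arbitrary cyclic target (the Adaptation Theorem) requires the source to be \emph{well-partitioned}, whereas $A\oplus I_n$ is maximally far from that: it has at least $n$ Jordan cells of size $1$ for the eigenvalue $1$. The paper's proof instead uses Proposition \ref{VWPcor} to write $A\oplus I_n\simeq N\oplus\alpha I_q\oplus I_r$ with $N$ very-well-partitioned, applies the Adaptation Theorem only to $N$, and sends the scalar blocks $\alpha I_q\oplus I_q$ by a $U_2$-matrix onto a \emph{long cycle} of $2\times 2$ Jordan blocks $\calC_{\lfloor q/2\rfloor,2}(\alpha)$ (Lemmas \ref{C2basic}, \ref{C2cycle}, \ref{U2cycles}), whose many eigenvalues $\alpha^{-k},\alpha^{k+1}$ are exactly what the constraint above permits. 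If you want to repair your argument, you would need to replace the single companion target by such a direct sum adapted to the eigenvalue structure of $U(A\oplus I_n)$, which is essentially the paper's route.
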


Finally, the results on ``mixed" products, the latter of which is the most difficult of all:

\begin{theo}\label{theo3mixed1}
Let $A \in \GL_n(\F)$ have determinant $\pm 1$.
Then, $A \oplus I_n$ is the product of two involutions and one $U_2$-matrix.
\end{theo}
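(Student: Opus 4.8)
I would begin with two easy reductions. If $\charac(\F)=2$ then every $U_2$-matrix is an involution and the statement is exactly Theorem~\ref{theo3invol}; so assume henceforth $\charac(\F)\neq 2$. Moreover, when $\det A=1$ the result already follows from Theorem~\ref{theo3U2}: there $A\oplus I_n=U_1U_2U_3$ for some $U_2$-matrices $U_i$, and since a product of two $U_2$-matrices is a product of two involutions (the remark following Theorem~\ref{theo2}) we get $A\oplus I_n=S_1S_2U_3$ with $S_1,S_2$ involutions. Thus the genuinely new case is $\det A=-1$, although the method below works as soon as $\det A=\pm 1$.

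Since the order of the factors is irrelevant, it suffices to write $A\oplus I_n=S\cdot M$ with $S$ an involution and $M$ a product of a $U_2$-matrix and an involution. By Theorem~\ref{theo2mixed} it is therefore enough to exhibit an invertible $M\in\GL_{2n}(\F)$ such that (i) $M\simeq -M^{-1}$, (ii) every Jordan cell of $M$ for an eigenvalue $\alpha\in\F$ with $\alpha^2=-1$ is even-sized, and (iii) $M(A\oplus I_n)^{-1}M=A\oplus I_n$: indeed (iii) forces $S:=(A\oplus I_n)M^{-1}$ to be an involution with $SM=A\oplus I_n$, while (i) and (ii) make $M$ a product of a $U_2$-matrix and an involution. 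To build such an $M$ I would use the block lower-triangular ansatz
$$M=\begin{bmatrix} AJ_1 & 0 \\ Z & J_2\end{bmatrix},$$
where $J_1,J_2\in\GL_n(\F)$ are involutions and $Z\in\Mat_n(\F)$ satisfies $J_2Z+ZJ_1=0$; a direct computation then gives $M(A\oplus I_n)^{-1}M=A\oplus I_n$, so (iii) holds automatically and the problem reduces to choosing $J_1$, $J_2$ and $Z$ so that (i) and (ii) hold. The same device applies verbatim to any matrix of the form $B\oplus I_m$.

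To carry this out, decompose $A$ up to similarity as $A_{\mathrm{pal}}\oplus A_{\mathrm{def}}$, where $A_{\mathrm{pal}}$ is the largest direct summand similar to its own inverse (it collects the eigenvalue-$\pm 1$ part of $A$ together with, for each reciprocal pair $\{p,p^\#\}$ of irreducible factors, the blocks shared by $p$ and $p^\#$) and $A_{\mathrm{def}}$ is a sum of companion blocks in which, for each reciprocal pair, only one member occurs. Then $\det A_{\mathrm{pal}}=\pm 1$ (self-reciprocal irreducible factors have constant term $\pm 1$), hence $\det A_{\mathrm{def}}=\pm 1$, and splitting $I_n$ accordingly gives $A\oplus I_n\simeq (A_{\mathrm{pal}}\oplus I_{\deg A_{\mathrm{pal}}})\oplus (A_{\mathrm{def}}\oplus I_{\deg A_{\mathrm{def}}})$; the first summand is similar to its inverse, hence a product of two involutions by Theorem~\ref{theo2}(i), so the whole problem reduces to showing $A_{\mathrm{def}}\oplus I_{\deg A_{\mathrm{def}}}$ is a product of two involutions and a $U_2$-matrix (if $\det A_{\mathrm{def}}=1$ this is Theorem~\ref{theo3U2} once more). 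For this I would apply the ansatz above to $A_{\mathrm{def}}\oplus I_{\deg A_{\mathrm{def}}}$: the point of the coupling block $Z$ is that, after mixing each companion block of $A_{\mathrm{def}}$ with an equal-degree identity block, one can choose $J_1$ so that $A_{\mathrm{def}}J_1$ becomes symmetric under $\mu\mapsto -\mu^{-1}$ away from the eigenvalues $\pm 1$, and then absorb the remaining eigenvalues $\pm 1$ through a $J_2$ of carefully chosen signature and a nilpotent $Z$, invoking Theorem~\ref{theo2mixedbis} for the resulting small cell configurations at $1$ and $-1$. I expect the heart of the proof — and its main obstacle — to be exactly this construction: showing that a suitable involution $J_1$ exists (equivalently, that $A_{\mathrm{def}}$ times some involution has spectrum invariant under $\mu\mapsto -\mu^{-1}$ up to its $\pm 1$-part), controlling the nilpotent Jordan structure of $M$ at $\pm 1$ together with the parity constraint imposed by the signature of $J_2$ (when $\deg A_{\mathrm{def}}$ or $n$ is odd a $1\times 1$ or $2\times 2$ correction is needed), and ensuring condition (ii) at the square roots of $-1$ without breaking (i).
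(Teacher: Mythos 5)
Your framework is logically sound: the identity $M(A\oplus I_n)^{-1}M = A\oplus I_n$ does make $S:=(A\oplus I_n)M^{-1}$ an involution with $SM=A\oplus I_n$, the ansatz $M=\begin{bmatrix} AJ_1 & 0 \\ Z & J_2\end{bmatrix}$ with $J_1^2=J_2^2=I$ and $J_2Z+ZJ_1=0$ does satisfy that identity, and the initial reductions (to $\charac\F\neq 2$, to $\det A=-1$ via Theorem~\ref{theo3U2}, and to the ``deficient'' summand $A_{\mathrm{def}}$) are all correct. Note also that this route is genuinely different from the paper's, which instead reuses the well-partitioned/cyclic machinery: Proposition~\ref{VWPcor} to split $A\oplus I_n$ into a very-well-partitioned piece plus scalar blocks, the Adaptation Theorem for the former, and an explicit cycle construction (via Lemmas~\ref{C1cycle} and~\ref{I2cycles}) for the scalar case $\alpha I_q\oplus I_q$.

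However, there is a genuine gap, and you essentially concede it yourself in the last sentence. The proposal never constructs $J_1$, $J_2$, $Z$; it only describes what properties they ought to have. The central unproved claim is the existence of an involution $J_1$ making the non-$\{\pm 1\}$ spectrum of $A_{\mathrm{def}}J_1$ closed under $\mu\mapsto -\mu^{-1}$, and this is not at all obvious --- it is a statement about $A_{\mathrm{def}}$ being i-adjacent to a matrix with prescribed spectral symmetry, which is roughly the level of difficulty of the original problem. Moreover, even granting such a $J_1$, the conditions in Theorem~\ref{theo2mixed} concern the full Jordan structure of $M$, not just its spectrum: since $J_2$ contributes only eigenvalues $\pm 1$ and $A_{\mathrm{def}}J_1$ may also have eigenvalues $\pm 1$ (nothing prevents this even if $A_{\mathrm{def}}$ itself has none), the characteristic polynomials of the diagonal blocks are not coprime, Roth's theorem does not apply, and the Jordan cells of $M$ at $\pm 1$ depend delicately on $Z$. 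You would have to control those cell sizes, the signature of $J_2$, and the even-size condition at square roots of $-1$ simultaneously, within a rigid ansatz whose characteristic polynomial is forced to be $\chi_{A_{\mathrm{def}}J_1}(t)\,(t-1)^a(t+1)^b$. None of this is carried out, and it is unclear whether the ansatz is flexible enough to do so. As it stands this is a plausible plan with the hard part missing, not a proof.
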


\begin{theo}\label{theo3mixed2}
Let $A \in \GL_n(\F)$ have determinant $\pm 1$.
Then, $A \oplus I_n$ is the product of one involution and two $U_2$-matrices.
\end{theo}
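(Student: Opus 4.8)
\emph{Proof proposal.} The plan is to break $A$ up via the primary decomposition, to dispose cheaply of the unipotent part and of the part on which $-1$ is the only eigenvalue, and to concentrate all the difficulty on the remaining part, which is then handled by multiplying by a single carefully chosen involution.

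I would first reduce. By the order-independence pointed out in the introduction, the set $\calP$ of all products of one involution by two $U_2$-matrices (in any order) is stable under conjugation, and it is clearly stable under direct sums. In characteristic $2$ the $U_2$-matrices are precisely the involutions, so $\calP$ is the set of products of three involutions and the result is Theorem \ref{theo3invol}; so I may assume $\charac(\F)\neq 2$. Taking the Fitting decomposition of $A$ relative to the eigenvalues $1$ and $-1$, write, up to similarity, $A=U\oplus B_-\oplus B$ with $U$ unipotent, $B_-$ having $-1$ as sole eigenvalue, and $B$ having no eigenvalue in $\{1,-1\}$; since $\det U=1$ and $\det B_-=\pm1$, the hypothesis gives $\det B=\pm1$. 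Splitting $I_n$ along $n=\dim U+\dim B_-+\dim B$ yields
$$A\oplus I_n\simeq(U\oplus I_{\dim U})\oplus(B_-\oplus I_{\dim B_-})\oplus(B\oplus I_{\dim B}).$$
The first summand is unipotent, hence similar to its inverse and without the eigenvalue $-1$, hence a product of two $U_2$-matrices by Theorem \ref{theo2}(ii), hence in $\calP$. The second satisfies
$$B_-\oplus I_{\dim B_-}=\bigl((-I_{\dim B_-})\oplus I_{\dim B_-}\bigr)\bigl((-B_-)\oplus I_{\dim B_-}\bigr),$$
an involution times a unipotent matrix, the latter again a product of two $U_2$-matrices; so this summand also lies in $\calP$. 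Since $\calP$ is closed under direct sums, everything reduces to: $(\star)$ if $B\in\GL_b(\F)$ has no eigenvalue in $\{1,-1\}$ and $\det B=\pm1$, then $B\oplus I_b\in\calP$.

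For $(\star)$ I would look for an involution $S\in\GL_{2b}(\F)$ such that $(B\oplus I_b)S$ is a product of two $U_2$-matrices. The target is that $(B\oplus I_b)S$ have no eigenvalue $-1$ and be similar to $B\oplus B^{-1}$, which, having no eigenvalue $-1$, is a product of two $U_2$-matrices by Theorem \ref{theo2}(ii); equivalently (Theorem \ref{theo2}(i)), that $(B\oplus I_b)S$ be similar to its inverse, the model being a direct sum of companion matrices of self-reciprocal polynomials with no root $-1$. One constructs $S$ from the rational canonical form of $B$, assembling it from explicit blocks: for instance, with a block of the shape $\begin{bmatrix}A_0&A_1\\A_2&-A_0\end{bmatrix}$ where $A_0A_2=A_2A_0$ and $A_0^2+A_1A_2=I$ (so $S^2=I$), the corresponding piece of $(B\oplus I_b)S$ has characteristic polynomial $\det\bigl(t^2I+t(I-B)A_0-B\bigr)$, and one tunes the entries — using the invertibility of $B$ and of $I-B$ — so as to land in the similarity class of $B\oplus B^{-1}$; the diagonalizable contributions and a handful of small configurations are then treated directly via Theorem \ref{theo2mixedbis} and the cyclic case (Proposition \ref{cyclicdecomp}).

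The main obstacle — and the reason this theorem is harder than Theorems \ref{theo3mixed1} and \ref{theo3U2} — will be the clash between this scheme and the determinant constraint: $\det B=\pm1$ bounds only the whole of $B$, whereas an individual companion block $C(r)$ of $B$ may satisfy $\det C(r)\notin\{1,-1\}$, and then $C(r)\oplus I_{\deg r}$ cannot lie in $\calP$ at all (elements of $\calP$ have determinant $\pm1$). Thus $(\star)$ must be proved without splitting $B$ into cyclic blocks one at a time; the invariant factors of $B$ have to be grouped so that the determinants balance, the involution $S$ being produced for each group, and throughout one must keep control of the full invariant-factor structure (not merely characteristic polynomials) and of the sign of $\det B$ (prescribing $\det S=\det B$). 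Once the right grouping and the right $S$ are found, the remaining verifications should be routine.
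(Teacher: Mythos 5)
Your reduction is sound: in characteristic $2$ the statement follows from a previously proved theorem, $\calP$ is indeed stable under conjugation and direct sums, the unipotent summand and the summand with sole eigenvalue $-1$ are disposed of correctly, and the residual claim $(\star)$ (for $B$ with no eigenvalue in $\{1,-1\}$ and $\det B=\pm1$) is a true special case of the theorem. The problem is that $(\star)$ is where essentially all of the difficulty lives, and your proposed mechanism for it does not work. Specifically, the target you name --- an involution $S$ with $(B\oplus I_b)S\simeq B\oplus B^{-1}$ --- is provably unreachable in general. Take $B=\alpha I_b$ with $\alpha$ of order $5$ and $b=5$ (so $\det B=1$). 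If $S$ is an involution and $D$ is annihilated by $(t-\alpha)(t-1)$, then every eigenvalue $\lambda$ of $SD$ outside $\{\pm\alpha,\pm1\}$ is paired with the eigenvalue $-\alpha\lambda^{-1}$ (this is Lemma \ref{prodquadspectrumlemma}(b), recalled from \cite{dSPprod2}). Applied to $\lambda=\alpha^{-1}$, this forces $-\alpha^{2}\in\{\alpha,\alpha^{-1}\}$, i.e.\ $\alpha=-1$ or $\alpha^{3}=-1$, neither of which holds for $\alpha$ of order $5$. So $S(\alpha I_b\oplus I_b)$ is never similar to $\alpha I_b\oplus\alpha^{-1}I_b$. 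The same pairing constraint shows that the spectrum of $S(\alpha I_q\oplus I_q)$ must be organized as a geometric progression in $\pi=-\alpha$ --- exactly the cycles $\calC_{q,1}(\pi)$ of Lemmas \ref{C1cycle} and \ref{mixed2lastlemma1} --- and making such a cycle similar to its inverse \emph{and} free of odd-sized Jordan cells at $-1$ requires the case analysis on the order of $-\alpha$ carried out in Lemmas \ref{mixed2lastlemma1} and \ref{mixed2lastlemma2}. Your fallback formulation (``make $(B\oplus I_b)S$ similar to its inverse by tuning the entries and grouping invariant factors so the determinants balance'') is a restatement of the problem, not a solution: controlling the full similarity class of $SA_U$, not just its characteristic polynomial, is the content of the Polynomial Fit Lemma, the BQC-cyclicity Lemma \ref{BQClemma}, and the Adaptation Theorem.

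For comparison, the paper does not use the primary decomposition at $\pm1$ at all. It exploits the fact that $A\oplus I_n$ has at least $n$ size-one Jordan cells at the eigenvalue $1$ to write it, via Proposition \ref{VWPcor}, as $N\oplus\alpha I_q\oplus I_r$ with $r\geq q$ and $N$ very-well-partitioned (or trivial); the block $N$ is then converted by the Adaptation Theorem into a companion matrix with an essentially arbitrary characteristic polynomial of the right norm, which is used to ``close the cycle'' created from $\alpha I_q\oplus I_q$, and the leftover scalar part is handled by Lemma \ref{mixed2lastlemma2}. If you want to salvage your outline, you would need to prove $(\star)$ by importing that entire apparatus; as written, the proposal establishes only the (correct but comparatively easy) reduction.
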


Using the same techniques, we will also prove three additional results of the same flavor in which we augment the matrix $A$
not by an identity matrix, but by a scalar multiple of an identity matrix.

The motivation for tackling such results is related to the characterization of the scalar matrices that are of length $3$.
It can indeed be proved that, given a scalar $\alpha$ and a positive integer $n$:
\begin{itemize}
\item The matrix $\alpha I_n$ is the product of three involutions if and only if $\alpha=\pm 1$, or
$\alpha^4=1$ and $n$ is even. The same holds for the decomposition into the product of one involution and two $U_2$-matrices.
\item The matrix $\alpha I_n$ is the product of three $U_2$-matrices if and only if $\alpha=1$, or
$\alpha=-1$ and $n$ is even.
\item The matrix $\alpha I_n$ is the product of two involutions and one $U_2$-matrix if and only if $\alpha=\pm 1$.
\end{itemize}

\begin{theo}\label{skew3involutions}
Assume that $\F$ has characteristic not $2$, and let $i$ be an element of $\F$ such that $i^2=-1$.
Let $A \in \GL_n(\F)$, and let $r \geq n$ be an integer such that $i^r \det A=\pm 1$.
Then, $A \oplus (i I_r)$ is the product of three involutions.
\end{theo}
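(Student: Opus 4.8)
The plan is as follows. Since $\det(A\oplus(iI_r))=i^r\det A=\pm1$, the required determinant condition is satisfied, so the whole point is to exhibit the decomposition. I would start from the identity
\[A\oplus(iI_r)=(iI_{n+r})\,C,\qquad C:=(i^{-1}A)\oplus I_r,\]
valid because $i\cdot i^{-1}=1$. The matrix $C$ has exactly the shape to which Theorem~\ref{theo3invol} applies — a square matrix of size $n$ direct-summed with a copy of $I_r$, and $r\geq n$ — and $\det C=i^{-n}\det A$ is a fourth root of unity (the hypothesis $i^r\det A=\pm1$ forces $(\det A)^4=1$), equal to $\pm1$ when $n+r$ is even and to $\pm i$ when $n+r$ is odd.

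The heart of the matter is the skew analogue of what lies behind Theorem~\ref{theo3invol}: \emph{there is an involution $S\in\GL_{n+r}(\F)$ such that $Y:=SC$ satisfies $Y\simeq-Y^{-1}$.} Granting this, I finish as follows: since $iI_{n+r}$ is central and $S^{2}=I_{n+r}$, we have $C=S\,(SC)=S\,Y$, hence
\[A\oplus(iI_r)=(iI_{n+r})\,S\,Y=S\,(iY);\]
and, writing $P$ for an invertible matrix with $PYP^{-1}=-Y^{-1}$, we get $P(iY)P^{-1}=-iY^{-1}=i^{-1}Y^{-1}=(iY)^{-1}$, so $iY\simeq(iY)^{-1}$, whence by Theorem~\ref{theo2}(i) the matrix $iY$ is a product of two involutions, and $A\oplus(iI_r)=S\,(iY)$ is a product of three involutions.

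To prove the skew statement one must build an involution $S$ with $SC$ similar to the negative of its inverse. The proof of Theorem~\ref{theo3invol} produces, in the analogous situation, an involution $S$ with $SC\simeq(SC)^{-1}$ (equivalently, it exhibits $C$ as a product of three involutions), by assembling cyclic/companion building blocks together with suitable pairings. I would run that construction again, replacing the relevant blocks $C_0$ by $-C_0$: since $\charac\F\ne2$ and $i\in\F$, so that $-1$ is a square, these sign modifications keep every block inside the similarity classes the construction needs, and they turn each relation $\simeq(\cdot)^{-1}$ into $\simeq-(\cdot)^{-1}$. The only cost is an extra sign in the determinant bookkeeping, which the hypothesis $i^r\det A=\pm1$ is precisely tailored to absorb — and this same mechanism handles the two cases $\det C=\pm1$ and $\det C=\pm i$ uniformly. (Alternatively, when $\det C=\pm1$ one may start from a factorization $C=S_1S_2u$ into two involutions and a $U_2$-matrix furnished by Theorem~\ref{theo3mixed1} and re-slice it, via Theorems~\ref{theo2mixed}--\ref{theo2mixedbis}, as $S\cdot Y$ with $Y\simeq-Y^{-1}$.)

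The main obstacle is exactly this skew statement, and it cannot be deduced from Theorem~\ref{theo3invol} used as a black box. For instance, the simplest attempts — taking $S$ to be a swap-type involution — produce an $SC$ whose similarity class is controlled by $i^{-1}A$ alone, and such an $SC$ is similar to minus its inverse only in degenerate cases (it would force $A\simeq-A^{-1}$), which fails in general. So one really has to re-enter the block-by-block construction behind Theorem~\ref{theo3invol} and carry the sign twists through it; doing this while keeping the determinant and size-parity bookkeeping consistent — this is where $i\in\F$ and the hypothesis $i^r\det A=\pm1$ are genuinely used — is the delicate part.
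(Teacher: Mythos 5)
Your outer reduction is correct but it is only a reformulation, not progress. Writing $A\oplus(iI_r)=S\,(iY)$ with $Y=SC$ and checking that $Y\simeq -Y^{-1}$ is equivalent to $iY\simeq (iY)^{-1}$ is sound algebra (indeed $(iY)^{-1}=-iY^{-1}$), but asking for an involution $S$ with $SC\simeq -(SC)^{-1}$ is \emph{exactly} the same as asking for an involution $S$ with $S\,(A\oplus iI_r)$ a product of two involutions, which is the standard opening move of every proof in this circle of problems (it is how Theorem \ref{theo3invol} itself begins). So the entire content of the theorem is concentrated in your ``skew statement'', and that statement is not proved: it is dispatched with the claim that one can ``run the construction of Theorem \ref{theo3invol} again, replacing the relevant blocks $C_0$ by $-C_0$'' and that the sign twists propagate harmlessly.

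That claim does not survive contact with the actual construction. The proof of Theorem \ref{theo3invol} is not a uniform block machine into which one can feed a global sign; it is a case analysis driven by Proposition \ref{VWPcor} applied at the eigenvalue of large geometric multiplicity, and when that eigenvalue is $i$ rather than $1$, genuinely new phenomena appear that are not sign twists of anything in the natural case. Concretely: (a) when the non-scalar part $N$ satisfies $N-iI_p$ nilpotent, the matrix is no longer unipotent-like and is \emph{not} similar to its inverse, so one needs the new Lemmas \ref{iI2lemma1} and \ref{iI2lemma2} (the latter via an explicit $2\times 2$ block construction for $C_{2k}(i)$) rather than a direct appeal to Theorem \ref{theo2}; (b) the scalar-block analysis now involves the condition $(-i\alpha)^a=\pm i$, which has no analogue when $\beta=1$ and requires Lemma \ref{skew3invollastlemma2} and the order-$4q$ arithmetic of Lemma \ref{skew3invollastlemma1}; (c) the augmentation sizes split into $r=q$ and $r=q+1$, the latter needing a separate decomposition result (Proposition \ref{3I2decomppropskew}) for $N\oplus\alpha I_q\oplus\beta I_q\oplus iI_1$. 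None of this is produced by negating blocks in the proof of Theorem \ref{theo3invol}, and your proposal gives no mechanism for handling it. As it stands the argument is a correct one-line reduction followed by an unsupported assertion of the theorem's actual content.
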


\begin{theo}\label{skew3unipotent}
Assume that $\F$ has characteristic not $2$.
Let $A \in \GL_n(\F)$, and let $k \geq n$ be an integer such that $(-1)^k \det A=1$.
Then, $A \oplus (- I_k)$ is the product of three $U_2$-matrices.
\end{theo}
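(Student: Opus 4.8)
The plan is to run a reduction in the spirit of the proof of Theorem~\ref{theo3U2}, replacing $I_n$ throughout by $-I_k$ and carefully tracking the sign changes this induces in determinants and in the structure at the eigenvalue $-1$. Note first that $(-1)^k\det A=1$ is exactly the condition $A\oplus(-I_k)\in\SL_{n+k}(\F)$, which is obviously necessary. The set of products of at most three $U_2$-matrices (over all sizes) is closed under direct sums — pad each factor with $I$ to get exactly three factors and take block direct sums, each $X_i\oplus X_i'$ being again a $U_2$-matrix — and it is closed under conjugation, so one may replace $A$ by any similar matrix and argue blockwise. Using these closures together with the fact that $-I_{2p}$ is a product of three $U_2$-matrices, I would first split off an even block $-I_{2p}$ from $-I_k$, reducing to $k\in\{n,n+1\}$ (choosing the parity so that the determinant stays $1$); and then split $A\simeq A_{-1}\oplus A'$, with $A_{-1}$ the $(x+1)$-primary part and $A'$ having no eigenvalue $-1$, redistributing the copies of $-I_1$ so that the $A_{-1}$-block and the $A'$-block each have determinant $1$ and each still carries at least as many copies of $-I_1$ as its own size. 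This leaves two cases.

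\textbf{The part without the eigenvalue $-1$.} Here the tool is the classical reciprocal-polynomial device. For $A'$ free of the eigenvalue $-1$, neither $\chi_{A'}$ nor its monic reciprocal $\chi_{A'}^{\ast}$ is divisible by $x+1$, so $\chi_{A'}\chi_{A'}^{\ast}$ is self-reciprocal with no root at $-1$; hence the companion matrix $C(\chi_{A'}\chi_{A'}^{\ast})$ is similar to its inverse and has no Jordan cell for $-1$, so it is a product of two $U_2$-matrices by Theorem~\ref{theo2}(ii). It then suffices to write $A'\oplus(-I_m)$ — with $m$ equal to $\dim A'$, or $\dim A'+1$ when a parity correction is needed — as $X\cdot N$ with $X$ a $U_2$-matrix and $N$ similar to a companion matrix of this self-reciprocal shape; the copies of $-I_1$ attached to $A'$ supply precisely the extra dimensions in which to carry out this ``reciprocal doubling''. (In particular, when $\det A'=1$, the argument of Theorem~\ref{theo3U2} can be adapted with only cosmetic changes.)

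\textbf{The $(x+1)$-primary part.} Here $A_{-1}$ is a direct sum of Jordan cells for $-1$ and the corresponding block has even size. Each even-sized cell $J_{2c}(-1)$ is by itself a product of two $U_2$-matrices (Theorem~\ref{theo2}(ii)). The number of odd-sized cells in $A_{-1}\oplus(-I_{\dim A_{-1}})$ is even — it is congruent modulo $2$ to $\dim A_{-1}+\dim A_{-1}$ — so they can all be matched in pairs, each pair $J_a(-1)\oplus J_b(-1)$ (with $a,b$ odd, hence determinant $1$) being a product of at most three $U_2$-matrices via an explicit low-dimensional factorization, the model case being that $-I_2$ is a product of three $U_2$-matrices. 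Closure under direct sums reassembles everything.

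\textbf{Main obstacle.} The genuinely delicate point is the first case, namely producing the matrix $N$ that is \emph{simultaneously} (a) similar to its inverse and (b) free of odd-sized Jordan cells for the eigenvalue $-1$. Condition (b) is vacuous in characteristic $2$ but a real constraint otherwise, and it is precisely there that the parity in $(-1)^k\det A=1$ is spent; making the accounting work — how many copies of $-I_1$ go to which invariant factor of $A'$, and with which parity, so that the companion polynomial one lands on is self-reciprocal of the needed shape and $X$ comes out a $U_2$-matrix — is the delicate bookkeeping. The same phenomenon, the need to absorb odd-sized $(-1)$-cells, is what must be handled by hand in the $(x+1)$-primary part, and it is exactly what distinguishes this statement from Theorem~\ref{theo3U2}, where the eigenvalue $1$ never causes this difficulty.
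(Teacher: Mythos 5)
Your proposed decomposition (primary split at $-1$, then handle each piece with enough copies of $-I_1$) is a different and quite natural reduction from the one in the paper, which instead applies Proposition~\ref{VWPcor} directly to $A\oplus(-I_k)$ and lands on a decomposition $N\oplus\alpha I_q\oplus(-I_r)$ with $N$ very-well-partitioned (or a nilpotent perturbation of $-I_p$, or void). The handling of the $(x+1)$-primary part you sketch --- even cells directly via Theorem~\ref{theo2}(ii), odd cells paired with $-I_1$ and absorbed by a $U_2$-transvection --- is essentially the paper's Lemma~\ref{skewU3lemma1} and is sound, and your parity accounting (the number of odd-sized Jordan cells of a nilpotent matrix is congruent to its size mod~$2$, so $\dim A_{-1}+m'$ even forces an even number of unmatched odd cells) checks out.

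The genuine gap is in the $A'$ branch. You correctly observe that $C(\chi_{A'}\chi_{A'}^{\ast})$ is the right target (self-reciprocal, no root at $-1$, hence a product of two $U_2$-matrices), but you do not establish the step on which everything hinges: that $A'\oplus(-I_m)$ is u-adjacent to a cyclic matrix with a prescribed characteristic polynomial. When $m>1$, the matrix $A'\oplus(-I_m)$ is not cyclic, and it is not even well-partitioned (it has $m$ companion blocks of size $1$ for the same eigenvalue), so this conversion is precisely the content of the paper's Adaptation Theorem (Theorem~\ref{AdaptationTheorem}), which itself rests on the block-quasi-companion cyclicity lemma (Lemma~\ref{BQClemma}) and the polynomial-fit lemma (Lemma~\ref{polyfitgen}) --- none of which you reconstruct or replace. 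The phrase ``the copies of $-I_1$ supply precisely the extra dimensions'' names the available room but not the construction. Two further concrete problems: the ``parity correction'' with $m=\dim A'+1$ cannot be carried out as you describe, because every monic self-reciprocal polynomial of odd degree has $-1$ as a root (palindromic coefficients force $p(-1)=0$ when $\deg p$ is odd), so a companion matrix of that odd total size can never be free of $-1$-cells; and the parenthetical claim that when $\det A'=1$ the argument of Theorem~\ref{theo3U2} transfers ``with only cosmetic changes'' is at odds with your own closing observation --- the parity constraint at the eigenvalue $-1$ from Theorem~\ref{theo2}(ii) propagates through the whole argument, which is exactly why the paper's Section~\ref{skewunipotent} introduces a full set of additional lemmas (\ref{skewU3lemma1}--\ref{skewU3lemma3}) and a dedicated Proposition~\ref{3U2decompPropskew} rather than reusing the natural-extension reduction.
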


\begin{theo}\label{skew1involution2unipotents}
Assume that $\F$ has characteristic not $2$, and let $i$ be an element of $\F$ such that $i^2=-1$.
Let $A \in \GL_n(\F)$, and let $r \geq n$ be an integer such that $i^r \det A=\pm 1$.
Then, $A \oplus (i I_r)$ is the product of one involution and two $U_2$-matrices.
\end{theo}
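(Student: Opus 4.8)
The plan is to run the same machine used for Theorem~\ref{theo3mixed2}, with the scalar $i$ playing the role there played by $1$. Write $\calL$ for the set of products of one involution by two $U_2$-matrices. It is stable under conjugation (as recalled in the introduction) and under finite direct sums --- a direct sum of involutions is an involution, and a direct sum of $U_2$-matrices is a $U_2$-matrix --- and it contains every product of \emph{at most} one involution by \emph{at most} two $U_2$-matrices (pad with identity blocks). Grouping the two $U_2$-factors and invoking point~(ii) of Theorem~\ref{theo2}, the goal becomes: produce an involution $S\in\GL_{n+r}(\F)$ such that $S\,(A\oplus iI_r)$ is similar to its inverse and has only even-sized Jordan cells for the eigenvalue~$-1$ (the latter condition being vacuous in characteristic~$2$, which is excluded here).

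First I would trim the reserve of copies of $i$. If $r\geq n+2$, choose $m\in\{n,n+1\}$ with $m\equiv r\pmod 2$; then $i^{r-m}=\pm1$, so $i^m\det A=\pm1$ too, and $A\oplus iI_r$ is similar to $(A\oplus iI_m)\oplus iI_{r-m}$ where $r-m$ is a positive even integer. By the facts on scalar matrices recalled before the statement $iI_{r-m}\in\calL$, so, $\calL$ being closed under direct sums, it is enough to treat $A\oplus iI_m$. Hence from now on $r\in\{n,n+1\}$ and $\det(A\oplus iI_r)=i^r\det A=\pm1$; in particular $\det A$ is a fourth root of unity.

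Next I would put $A$ into rational canonical form and use the reduction lemmas of the preceding sections to split the problem, up to similarity, into a direct sum of pieces organised by the reciprocal pairing of elementary divisors. A primary part at a self-reciprocal irreducible factor $q\notin\{x-1,x+1\}$ is already similar to its inverse and has no eigenvalue in $\{-1,1\}$, hence is a product of two $U_2$-matrices and lies in $\calL$. For a pair $\{q,q^{*}\}$ of distinct reciprocal irreducible factors outside $\{x-i,x+i\}$, the largest summand of the shape $B\oplus B^{-1}$ lies in $\calL$, and the residual ``unbalanced'' summand --- constrained by the fact that $\det A$ is a root of unity --- is dealt with, after being merged with reserved copies of $i$, via Proposition~\ref{cyclicdecomp} and the results on scalar matrices. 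The primary parts at $x\pm1$ are treated using Theorem~\ref{theo2mixedbis}, Theorem~\ref{theo3mixed2} and the facts on scalar matrices. What remains is the part of $A$ attached to $\{x-i,x+i\}$ together with \emph{all} the copies of $i$ not yet used up. Throughout, the spare $U_2$-factor and, decisively, the involution factor absorb the imbalances, and it is the parity relation $i^r\det A=\pm1$ together with the slack $r\geq n$ that makes the bookkeeping close up; Theorem~\ref{theo2mixed} enters for the blocks at $\pm i$.

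The hard part is this last piece. The block $iI_r$ pours all $r$ of its eigenvalues onto the prime $x-i$, so, far from balancing the $x-i$/$x+i$ part of $A$, it creates a large asymmetry there, which must then be reconciled with the parity condition on the Jordan cells at $-1$. Settling both at once is where the involution factor has to be built explicitly: one writes the relevant block as $S\cdot P$ for an explicit involution $S$ of ``rotation'' type that mixes the copies of $i$ with the part coming from $A$, arranged so that $P$ is patently similar to its inverse with only even-sized $-1$-cells (hence a product of two $U_2$-matrices by Theorem~\ref{theo2}(ii)); typically $P$ is not semisimple although the given matrices are, and it is this extra room that makes the construction go through. This is exactly the step that makes Theorem~\ref{skew1involution2unipotents} more delicate than Theorems~\ref{skew3involutions} and~\ref{skew3unipotent}, where the corresponding blocks may be left unbalanced. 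Once this piece is settled, stability of $\calL$ under direct sums assembles everything and concludes.
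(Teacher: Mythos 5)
Your plan diverges from the paper's at the structural level. The paper adapts the strategy of Theorem~\ref{skew3involutions}: apply Proposition~\ref{VWPcor} to rewrite $A\oplus iI_r$, up to similarity, as $N\oplus\alpha I_q\oplus iI_{r'}$ with $N$ void, or $N-iI_p$ nilpotent and $q=0$, or $N$ very-well-partitioned, and then let the Adaptation Theorem convert $N$, up to i- or u-adjacency, into a companion matrix whose characteristic polynomial is \emph{chosen on the fly} to balance the residual cyclic and scalar pieces. That freedom to pick the characteristic polynomial is precisely what lets the paper avoid any case analysis over elementary divisors; you replace it by a split along reciprocal pairs and then have to settle each piece in isolation, which forces exactly the awkward bookkeeping the well-partitioned machinery was built to bypass. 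Some of those local steps are already unclear as stated: you invoke Theorem~\ref{theo3mixed2} for the $x\pm 1$ parts, but that theorem needs an $I_{n'}$ surplus block, and here the only surplus is $iI_m$; no involution nor $U_2$-matrix has determinant $\pm i$, so there is no elementary conversion of an $iI$ block into an $I$ block to feed it.

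The decisive gap, though, is at the very step you call hard: the block attached to $\{x-i,x+i\}$ together with the leftover copies of $i$. You assert that one ``writes the relevant block as $S\cdot P$ for an explicit involution $S$ of rotation type'' with $P$ ``patently similar to its inverse with only even-sized $-1$-cells'', but no such $S$ is exhibited and no argument is given for why the even-$(-1)$-cell condition can be arranged; this is exactly where the content of the theorem lies and what separates it from Theorems~\ref{skew3involutions} and~\ref{skew3unipotent}. In the paper this reduces to Lemma~\ref{mixed2skewlastlemma1} (that $\alpha I_q\oplus iI_q$ is a product of one involution and two $U_2$-matrices when $(-i\alpha)^q=-1$), whose proof is genuinely delicate: it rests on the explicit adjacency $\alpha I_2\oplus iI_1\iadj C_1(-i)\oplus C_2(\alpha)$ of Lemma~\ref{lastadjacencylemma} and a parity-of-$q$ case split through the cycle lemmas of Section~\ref{commonSection}. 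Without a concrete replacement for that lemma (or an equivalent explicit construction), what you have is an outline that names the obstruction but does not remove it.
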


\subsection{Application to the general linear group of an infinite-dimensional vector space}\label{infinitedimSection}

Our motivation for tackling the stable length problem comes from the length problem in infinite-dimensional vector spaces.
Let $V$ be an infinite-dimensional vector space over $\F$. Denote by $\End(V)$ the algebra of all endomorphisms of $V$, and by
$\GL(V)$ its group of invertible elements (i.e.\ the automorphisms of $V$).
It can be shown that every element of $\GL(V)$ is a product of involutions, and, better still,
every element of $\GL(V)$ is the product of four involutions (this will be proved in a subsequent article).
Over fields with more than $3$ elements, there are automorphisms that are not the product of three involutions however,
which motivates us to characterize the automorphisms that are the product of three involutions.

In considering this problem, it turns out that a special kind of automorphism needs to be singled out:
the ones that equal $\alpha \id_V+u$ for some nonzero scalar $\alpha$ and some \emph{finite-rank} endomorphism $u$.
Denote by $\End_f(V)$ the two-sided ideal of $\End(V)$ consisting of the finite-rank endomorphisms of $V$.
Then, $\F\id_V \oplus \End_f(V)$ is a subalgebra of $\End(V)$, denoted by $\calA(V)$, and every element of it that is invertible in $\End(V)$
has its inverse in $\calA(V)$. To every $f \in \calA(V)$, we assign the unique $\lambda(f) \in \F$
such that $f-\lambda(f) \id_V$ has finite rank, thereby defining a morphism of $\F$-algebras from $\calA(V)$ to $\F$.
We denote by $\GP_f(V)$ the group of all invertible elements of the algebra $\calA(V)$, and by
$\SP_f(V)$ the subgroup of all elements of $\GP_f(V)$ of the form $\id_V+u$ for some $u \in \End_f(V)$
(i.e.\ the kernel of $f \in \GP_f(V) \mapsto \lambda(f) \in \F^*$).
Hence, $\GP_f(V)$ is isomorphic to the direct product of $\SP_f(V)$ with $\F^*$.

For every $u \in \SP_f(V)$ and every finite-dimensional linear subspace $W$ of $V$ that includes $\im(u-\id_V)$,
the determinant of the induced endomorphism $u_{|W}$ depends only on $u$ (not on the choice of $W$).
By assigning this quantity to $u$, one obtains a group homomorphism from $\SP_f(V)$ to $\F^*$, called the determinant.

Here, we shall derive the following results from the theorems stated in the preceding section:

\begin{prop}
Let $u \in \SP_f(V)$ have determinant $\pm 1$.
Then, in the algebra $\calA(V)$, $u$ is the product of three involutions, and also of
one unipotent element of index $2$ and two involutions, and also
of one involution and two unipotent elements of index $2$ (in any prescribed order).

Moreover, if $u$ has determinant $1$ then it is the product of three unipotent elements of index $2$.
\end{prop}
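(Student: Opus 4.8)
The plan is to reduce every assertion to the stable matrix statements of Theorems~\ref{theo3invol}, \ref{theo3U2}, \ref{theo3mixed1} and \ref{theo3mixed2}. Write $u=\id_V+w$ with $w\in\End_f(V)$. The first step is to isolate a finite-dimensional $u$-invariant subspace carrying the whole perturbation. Since $\im w$ is finite-dimensional and $\Ker w$ has finite codimension, I would choose a finite-dimensional subspace $W_2$ of $V$ with $V=W_2\oplus\Ker w$ and set $W:=\im w+W_2$. Then $W$ is finite-dimensional, $u(W)\subseteq W+\im w=W$, and $\im(u-\id_V)=\im w\subseteq W$; moreover $V=W+\Ker w$, so choosing a complement $C$ of $W\cap\Ker w$ inside $\Ker w$ yields a decomposition $V=W\oplus C$ with $u_{|C}=\id_C$. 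Put $n:=\dim W$, and let $A\in\GL_n(\F)$ be the matrix of $u_{|W}$ in some basis of $W$. Since $W$ contains $\im(u-\id_V)$, the very definition of the determinant on $\SP_f(V)$ gives $\det A=\det u$, so $\det A=\pm 1$ (and $\det A=1$ when $\det u=1$).

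Next, since $C$ is infinite-dimensional, I would pick a subspace $Z_0\subseteq C$ with $\dim Z_0=n$ and write $C=Z_0\oplus Z_1$. Identifying $W\oplus Z_0$ with $\F^{2n}$ via the concatenation of the chosen basis of $W$ with a basis of $Z_0$, the restriction of $u$ to $W\oplus Z_0$ is represented by the matrix $A\oplus I_n$, while $u_{|Z_1}=\id_{Z_1}$. Now I would apply the relevant matrix theorem to $A$: when $\det u=\pm 1$, Theorem~\ref{theo3invol} (resp.\ Theorem~\ref{theo3mixed1}, resp.\ Theorem~\ref{theo3mixed2}) yields a factorization $A\oplus I_n=B_1B_2B_3$ in $\GL_{2n}(\F)$ into three involutions (resp.\ two involutions and one $U_2$-matrix, resp.\ one involution and two $U_2$-matrices); when moreover $\det u=1$, Theorem~\ref{theo3U2} yields such a factorization into three $U_2$-matrices. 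In every case, the order-independence principle recalled in the introduction (the sets of involutions and of $U_2$-matrices are both stable under conjugation and transposition) allows the three factors to be arranged in any prescribed pattern.

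Finally, for any such factorization I would define $\widetilde{B_i}\in\End(V)$ to act as $B_i$ on $W\oplus Z_0$ and as $\id_{Z_1}$ on $Z_1$. Then $\widetilde{B_i}-\id_V$ has rank at most $2n$, hence $\widetilde{B_i}\in\GP_f(V)$, and a direct check shows that $\widetilde{B_i}$ is an involution (resp.\ unipotent of index $2$) in the algebra $\calA(V)$ exactly when $B_i$ is an involution (resp.\ a $U_2$-matrix). Since $\widetilde{B_1}\,\widetilde{B_2}\,\widetilde{B_3}=(B_1B_2B_3)\oplus\id_{Z_1}=u$, all the claimed decompositions of $u$ follow. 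This argument contains no serious obstacle; the only points needing care are the construction of the invariant splitting $V=W\oplus C$ in the first step and the bookkeeping that the extended factors $\widetilde{B_i}$ stay inside $\calA(V)$ and retain their type.
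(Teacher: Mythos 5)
Your argument is correct and follows essentially the same route as the paper: isolate a finite-dimensional $u$-invariant subspace $W$ containing $\im(u-\id_V)$ with a complement inside $\Ker(u-\id_V)$, borrow an $n$-dimensional block from that complement so that $u$ is represented by $A\oplus I_n$ there, apply the appropriate stable theorem (\ref{theo3invol}, \ref{theo3U2}, \ref{theo3mixed1}, \ref{theo3mixed2}), and extend the factors by the identity. The only differences are cosmetic: you construct $W$ explicitly rather than asserting its existence, and you spell out the order-independence remark from the introduction, both of which the paper leaves implicit.
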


\begin{proof}
We prove the first claimed result. The proof is similar for the other three, and consequently left to the reader.

We choose a finite-dimensional linear subspace $W$ of $V$ such that $\im(u-\id_V) \subset W$ and $W +\Ker(u-\id_V)=V$.
Then, we choose a linear subspace $H$ of $\Ker(u-\id_V)$ such that $W \oplus H=V$. Set $n:=\dim W$.
Then, $H$ is infinite-dimensional, and hence we can re-split $H=H_1 \oplus H_2$ where $\dim H_1=n$.
Choose a matrix $A$ that represents the automorphism of $W$ induced by $u$. Then, $\det A=\det u=\pm 1$.
Since $u$ is the identity on $H_1$, the automorphism $v$ of $W \oplus H_1$ induced by $u$ is represented by
$A \oplus I_n$ in some basis. Hence, by Theorem \ref{theo3invol}, $v=abc$ for some involutions $a,b,c$ in $\GL(W \oplus H_1)$.
Now, extend $a,b,c$ to automorphisms $\tilde{a},\tilde{b},\tilde{c}$ of $V$ that are the identity on $H_2$.
Obviously, $\tilde{a},\tilde{b},\tilde{c}$ are involutions that belong to $\SP_f(V)$, and $u=\widetilde{a}\widetilde{b}\widetilde{c}$.
\end{proof}

Noting that the opposite of an involution is an involution, we deduce the following corollary:

\begin{cor}
Let $u \in \SP_f(V)$ have determinant $\pm 1$, and let $\varepsilon \in \{-1,1\}$.
Then, in the algebra $\End(V)$, $\varepsilon u$ is the product of three involutions, and also of
one unipotent element of index $2$ and two involutions, and also
of one involution and two unipotent elements of index $2$ (in any prescribed order).
\end{cor}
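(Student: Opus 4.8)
The plan is to derive this directly from the preceding Proposition, the only extra ingredient being that the opposite of an involution of $\GL(V)$ is again an involution of $\GL(V)$: for $a \in \GL(V)$ with $a^2 = \id_V$ and $\varepsilon \in \{-1,1\}$ one has $(\varepsilon a)^2 = \varepsilon^2 a^2 = \id_V$ and $\varepsilon a$ is invertible, hence $\varepsilon a$ is an involution of $\GL(V) \subseteq \End(V)$.

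Concretely, I would fix $u \in \SP_f(V)$ with $\det u = \pm 1$, fix $\varepsilon \in \{-1,1\}$, and fix one of the three prescribed decomposition types — three involutions; one $U_2$-element and two involutions; one involution and two $U_2$-elements — together with a prescribed order on its three factors. Applying the Proposition with exactly that type and order, I would write $u = f_1 f_2 f_3$, where each $f_i$ has the type assigned to the $i$-th slot. In every one of the three cases at least one factor, say $f_j$, is an involution; I would then replace $f_j$ by $\varepsilon f_j$ and leave the other two factors untouched. By the first paragraph $\varepsilon f_j$ is still an involution, the remaining factors keep their assigned types, and, scalars being central, the product of the new triple equals $\varepsilon f_1 f_2 f_3 = \varepsilon u$. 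This gives the claimed decomposition of $\varepsilon u$ in $\End(V)$.

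The only thing that requires a moment's attention — and the closest thing to an obstacle — is that $\varepsilon$ must be absorbed into an \emph{involutory} factor and not into a $U_2$-factor: for $\varepsilon = -1$ and $a$ unipotent of index $2$, the element $-a$ need not be unipotent of index $2$. This is precisely why the corollary, unlike the Proposition, makes no claim about decompositions into three $U_2$-elements (indeed $-\id_V$ is not such a product), and why one should check that each of the three decomposition types supplied by the Proposition does contain at least one involutory factor — which it visibly does.
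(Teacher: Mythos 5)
Your proof is correct and is exactly the argument the paper intends: the paper dispatches this corollary with the single remark ``noting that the opposite of an involution is an involution,'' and your write-up is precisely the careful unpacking of that remark, including the right caveat that $\varepsilon$ must be absorbed into an involutory factor rather than a $U_2$-factor (which also explains why the three-$U_2$ case is dropped from the corollary).
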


Here are the corresponding results for special extensions:

\begin{prop}\label{infinitedimi}
Let $i \in \F^*$ be of order $4$.
Let $u \in \SP_f(V)$ have its determinant in $\{\pm 1,\pm i\}$.
Then, in the algebra $\End(V)$, the automorphism $iu$ is the product of three involutions,
and also the product of one involution and two unipotent elements of index $2$ (in any prescribed order).
\end{prop}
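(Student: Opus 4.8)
The plan is to reduce both assertions to their finite-dimensional counterparts, Theorems~\ref{skew3involutions} and~\ref{skew1involution2unipotents}, by the same kind of argument as in the proof of the preceding proposition; the new ingredient is that $iu$ no longer acts as the identity on the ``tail'' that one peels off, so an extra decomposition of a scalar operator $i\,\id$ is needed there.

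As in that proof, pick a finite-dimensional subspace $W$ of $V$ with $\im(u-\id_V)\subseteq W$ and $W+\Ker(u-\id_V)=V$, and a subspace $H$ of $\Ker(u-\id_V)$ with $W\oplus H=V$; then $W$ is $u$-invariant, $u=(u_{|W})\oplus\id_H$, $H$ is infinite-dimensional, and, writing $n:=\dim W$ and letting $A\in\GL_n(\F)$ represent $u_{|W}$, we have $\det A=\det u\in\{\pm 1,\pm i\}$. Because $i$ has order~$4$, we may fix an integer $r\geq n$ with $i^{\,r+n}\det A=\pm 1$ (choose $r\equiv n \pmod 2$ if $\det A=\pm 1$, and $r\not\equiv n\pmod 2$ if $\det A=\pm i$). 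Re-split $H=H_1\oplus H_2$ with $\dim H_1=r$, so that $H_2$ is still infinite-dimensional. With respect to $V=(W\oplus H_1)\oplus H_2$, the subspaces $W\oplus H_1$ and $H_2$ are $iu$-invariant, $iu$ is represented on $W\oplus H_1$ by $(iA)\oplus(iI_r)$ in a suitable basis, and $iu$ acts on $H_2$ as $i\,\id_{H_2}$.

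Now apply Theorem~\ref{skew3involutions} to $iA\in\GL_n(\F)$ and the integer $r\geq n$: since $i^{\,r}\det(iA)=i^{\,r+n}\det A=\pm 1$, the matrix $(iA)\oplus(iI_r)$ is a product $abc$ of three involutions, which we view as involutions $a,b,c$ of $\GL(W\oplus H_1)$. For the tail, decompose $H_2=\bigoplus_{j\in J}L_j$ into two-dimensional subspaces (possible as $H_2$ is infinite-dimensional); on each $L_j$ the restriction of $iu$ is the scalar operator $iI_2$, and $iI_2$ is a product of three involutions of $\GL_2(\F)$: taking an involution $s$ similar to $\Diag(1,-1)$, one gets $is\simeq\Diag(i,-i)\simeq(is)^{-1}$, so $is$ is a product of two involutions by Theorem~\ref{theo2}(i), whence $iI_2=s\cdot(is)$. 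Assembling the resulting involutions over $j$ gives involutions $a',b',c'$ of $\GL(H_2)$ with $i\,\id_{H_2}=a'b'c'$, and then $iu=(a\oplus a')(b\oplus b')(c\oplus c')$ is a product of three involutions of $\GL(V)$.

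The second assertion is proved the same way, using Theorem~\ref{skew1involution2unipotents} (which has the same hypotheses on $iA$ and $r$) to decompose $(iA)\oplus(iI_r)$, and decomposing the tail via $iI_2=s\cdot(is)$ where, since $i\neq\pm 1$, the matrix $is\simeq\Diag(i,-i)$ has no eigenvalue $-1$ and is similar to its inverse, hence is a product of two $U_2$-matrices by Theorem~\ref{theo2}(ii); order-independence, as established in the introduction, then lets one realise $iu$ as a product of one involution and two $U_2$-matrices in any prescribed order. I expect the main point to watch is precisely the handling of the tail together with the admissible choice of $r$: one must peel off from $H$ a block $H_1$ of dimension $r\geq n$ with $i^{\,r+n}\det A=\pm 1$ so that the skew theorems apply to $iA$, and then absorb the residual $i\,\id_{H_2}$ through the two-dimensional decomposition — this is where the weaker hypothesis $\det u\in\{\pm 1,\pm i\}$, rather than $\det u=\pm 1$, is used.
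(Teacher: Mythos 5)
Your proof is correct and follows essentially the same route as the paper's: the same choice of $W$, $H$, the same re-splitting $H = H_1 \oplus H_2$ with $\dim H_1 = r$, the same appeal to Theorems~\ref{skew3involutions} and~\ref{skew1involution2unipotents} for the block $(iA)\oplus(iI_r)$, and the same decomposition of $H_2$ into $2$-dimensional pieces on which $iu$ is $i\,\id$. The only point of divergence is that you re-derive the factorisation of $iI_2$ by hand (as $s\cdot(is)$ with $s\simeq\Diag(1,-1)$ and $is\simeq\Diag(i,-i)$), whereas the paper simply cites Corollary~\ref{iI2cor}; your version is slightly more self-contained but records the same fact, and your explicit determination of the admissible $r$ (by parity of $r+n$) spells out what the paper leaves to the reader.
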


\begin{prop}\label{infinitedim-1}
Let $u \in \SP_f(V)$ have determinant $\pm 1$.
Then, in the algebra $\End(V)$, $-u$ is the product of three unipotent elements of index $2$.
\end{prop}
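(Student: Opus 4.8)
The plan is to mimic the reduction used for the first proposition of this subsection, but to adapt it to the fact that $-u$ is no longer a finite-rank perturbation of the identity: on a cofinite-dimensional subspace of $V$ the map $-u$ equals $-\id_V$, so instead of extending a decomposition ``by the identity'' I must also produce a decomposition of $-\id$ on (most of) that subspace into three $U_2$-elements. The whole argument will rest on Theorem \ref{skew3unipotent}. I would first dispose of the case $\charac(\F)=2$: then $-u=u$ and the $U_2$-matrices coincide with the involutions, so the statement is just the decomposition of $u$ into three involutions provided by the first proposition of this subsection. So from now on assume $\charac(\F)\neq 2$.

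Next, exactly as in the proof of the first proposition, I would choose a finite-dimensional $u$-invariant subspace $W$ with $\im(u-\id_V)\subseteq W$, together with a subspace $H\subseteq\Ker(u-\id_V)$ such that $V=W\oplus H$; then $H$ is infinite-dimensional and $u$ acts as the identity on it. Put $n:=\dim W$ and let $A\in\GL_n(\F)$ represent the automorphism of $W$ induced by $u$, so $\det A=\det u=\pm 1$. The key preparatory step is to choose an integer $k\geq n$ whose parity agrees with that of $n$ when $\det A=1$, and is opposite to it when $\det A=-1$; in both cases $(-1)^k\det(-A)=(-1)^{k+n}\det A=1$, which is precisely the numerical hypothesis of Theorem \ref{skew3unipotent}. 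Then I would split $H=H_1\oplus\bigoplus_{j\in J}P_j$ with $\dim H_1=k$ and each $P_j$ two-dimensional; this is possible because $H$ remains infinite-dimensional after removing a $k$-dimensional subspace, and a basis of an infinite-dimensional space can be partitioned into pairs.

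In the resulting decomposition $V=(W\oplus H_1)\oplus\bigoplus_{j\in J}P_j$, the automorphism of $W\oplus H_1$ induced by $-u$ is represented by $(-A)\oplus(-I_k)$, while $-u$ induces $-\id$ on each $P_j$. Applying Theorem \ref{skew3unipotent} to the matrix $-A\in\GL_n(\F)$ and the integer $k$ writes $(-A)\oplus(-I_k)$ as a product $abc$ of three $U_2$-matrices; applying Theorem \ref{skew3unipotent} once more, to the $1\times1$ matrix $(-1)$ and $k=1$ (for which $(-1)^1\det(-1)=1$), writes $-I_2$ as a product $a_0b_0c_0$ of three $U_2$-matrices. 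Reading these as automorphisms of $W\oplus H_1$ and of the $P_j$'s respectively, I would set $\widehat a:=a\oplus\bigoplus_{j\in J}a_0$, and likewise $\widehat b$ and $\widehat c$. Each of $\widehat a,\widehat b,\widehat c$ is a $U_2$-element of $\End(V)$ because $(\widehat a-\id_V)^2$ vanishes block by block, and $-u=\widehat a\,\widehat b\,\widehat c$ by construction.

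The only genuinely delicate point is the parity-and-dimension bookkeeping: Theorem \ref{skew3unipotent} is available only under the joint constraints $k\geq n$ and $(-1)^k\det(-A)=1$, and the fact that both can always be met — using that $H$ supplies infinitely many ``spare'' dimensions to tune $k$ — is what makes the reduction go through uniformly in the cases $\det u=1$ and $\det u=-1$. Everything else (the $u$-invariance of $W$, the blockwise verification of the $U_2$ property, and the passage from matrices to endomorphisms) is routine and parallels the proof of the first proposition of this subsection.
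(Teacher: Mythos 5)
Your proof is correct and follows essentially the same reduction the paper indicates, namely adapting the proof of Proposition~\ref{infinitedimi} by replacing Theorem~\ref{skew3involutions} with Theorem~\ref{skew3unipotent} and Corollary~\ref{iI2cor} with Lemma~\ref{-I23U2lemma}. You re-derive the decomposition of $-I_2$ from Theorem~\ref{skew3unipotent} instead of citing Lemma~\ref{-I23U2lemma} directly, and you make the characteristic-$2$ reduction explicit, but these are inessential variants of the argument the paper sketches.
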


We only prove Proposition \ref{infinitedimi}, since the proof of Proposition \ref{infinitedim-1} is essentially similar.

\begin{proof}
We choose a finite-dimensional linear subspace $W$ of $V$ such that $\im(u-\id_V) \subset W$ and $W +\Ker(u-\id_V)=V$.
Then, we choose a linear subspace $H$ of $\Ker(u-\id_V)$ such that $W \oplus H=V$. Set $n:=\dim W$.

Choose a matrix $A$ that represents the automorphism of $W$ induced by $u$. Then $\det (iA)=i^n \det u \in \{\pm 1,\pm i\}$.
By Theorem \ref{skew3involutions}, there is an integer $k \geq 0$ such that $i A \oplus i I_k$ is the product of three involutions in $\GL_{n+k}(\F)$.
Then, we resplit $H=H_1 \oplus H_2$ where $\dim H_1=k$.

Since $u$ is the identity on $H_1$, the automorphism $v$ of $W \oplus H_1$ induced by $i u$ is represented by
$i A \oplus i I_k$ in some basis. Hence, $v=abc$ for some involutions $a,b,c$ in $\GL(W \oplus H_1)$.
Next, we can write $H_2=\underset{x \in X}{\bigoplus} P_x$ in which each $P_x$ is a $2$-dimensional linear subspace
of $V$. By Corollary \ref{iI2cor}, for each $x \in X$ we can find involutions $a_x$, $b_x$ and $c_x$ in $\GL(P_x)$ such that
$a_xb_xc_x=i \id_{P_x}$. Now, consider the endomorphism $a$ of $V$ whose restriction to
$W \oplus H_1$ is $a$ and whose restriction to $P_x$ is $a_x$ for all $x \in X$: this is obviously an involution.
Likewise, we define $\tilde{b}$ and $\tilde{c}$, and we obtain $iu=\tilde{a}\tilde{b}\tilde{c}$.

In a similar fashion, one deduces from Theorem \ref{skew1involution2unipotents} and Corollary \ref{iI2cor}
that $iu$ is the product of one involution and two unipotent endomorphisms of index $2$ (in any prescribed order).
\end{proof}

The proof of Proposition \ref{infinitedim-1} is an easy adaptation of the previous one, where instead of Theorems \ref{skew3involutions} and
\ref{skew1involution2unipotents}, one uses Theorem \ref{skew3unipotent}, and
instead of Corollary \ref{iI2cor} one uses Lemma \ref{-I23U2lemma}.

Finally, it can be proved that the above results yield all the elements of $\GP_f(V)$
that are the product of three involutions (respectively, of two involutions and a unipotent element of index $2$,
of one involution and two unipotent elements of index $2$, of three unipotent elements of index $2$) in the group $\GL(V)$.
This is however another story to be told.

In a further article, the above results will be used to complete the classification of the products of three involutions in $\GL(V)$,
as well as for the other three types of decompositions we have considered earlier.

\subsection{Strategy, and structure of the article}

Let us start from the problem of decomposing a matrix $A \in \GL_n(\F)$ into the product of three involutions.
Note that this problem is invariant under replacing $A$ with a similar matrix $B$, and that it amounts
to finding an involution $S$ such that $SA$ is the product of two involutions. The following
notion and notation will thus be very convenient:

\begin{Def}
Let $A,B$ be matrices of $\GL_n(\F)$.

We say that $A$ is \textbf{i-adjacent} to $B$ whenever there exists an involution $S \in \GL_n(\F)$ such that
$SA \simeq B$: then, we write $A \iadj B$.

We say that $A$ is \textbf{u-adjacent} to $B$ whenever there exists a $U_2$-matrix $U$ such that
$UA\simeq B$; then, we write $A \uadj B$.
\end{Def}

\begin{Rems}
\begin{enumerate}[(i)]
\item The inverse of an involution is itself. The inverse of a $U_2$-matrix is a $U_2$-matrix.
It follows that both relations $\iadj$ and $\uadj$ are symmetric.
\item If $A \iadj B$, $A' \simeq A$ and $B' \simeq B$, then $A' \iadj B'$.
\item If $A \iadj B$ and $A' \iadj B'$ then $A \oplus A' \iadj B \oplus B'$.
\item If $A \uadj B$, $A' \simeq A$ and $B' \simeq B$, then $A' \uadj B'$.
\item If $A \uadj B$ and $A' \uadj B'$ then $A \oplus A' \uadj B \oplus B'$.
\end{enumerate}
\end{Rems}

If $A$ is i-adjacent to the product of two involutions, then it is the product of three involutions.
If $A$ is u-adjacent to the product of two $U_2$-matrices, then it is the product of three $U_2$-matrices.
And so on. This suggests a basic strategy:
\begin{enumerate}[(1)]
\item Devise ways to construct suitable matrices that are i-adjacent (or u-adjacent) to a given matrix.
\item Recognize products of two involutions, and products of two $U_2$-matrices, from their Jordan canonical form (or their rational canonical form).
\end{enumerate}

Point (2) is settled: we have already recalled the characterizations in Theorem \ref{theo2}: yet they require
a bit of caution with respect to the products of two $U_2$-matrices, because of the possible eigenvalue $-1$
in the characteristic not $2$ case.

Most of our efforts, in the first half of this article, will be geared towards problem (1). There has already been some good work on the matter
in the literature (see e.g. \cite{Liu}): in particular, the fact that any invertible cyclic matrix is i-adjacent to
any cyclic matrix of the same size and with opposite determinant has been already recognized and used with success by other authors
\cite{Ballantine,Liu}. Our key contribution here is the generalization of this idea to the so-called \emph{well-partitioned} matrices that were
introduced in \cite{dSPidempotentLC}: in short, a well-partitioned matrix is a block-diagonal matrix $A \oplus B$ in which
the matrices $A$ and $B$ have coprime characteristic polynomials and are themselves direct sums of companion matrices, with
at most one block of size $1$ in each.
While they are not truly generalizations of cyclic matrices,
well-partitioned matrices are extremely convenient to solve our problem: indeed, with the exception of the matrices
with characteristic polynomial having a sole monic irreducible divisor, any matrix is similar
to the direct sum of a well-partitioned matrix and a diagonalisable matrix with at most two eigenvalues.
Hence, after we give general results on well-partitioned matrices,
the rest of our effort will focus on transforming matrices that are diagonalisable with two eigenvalues, and even
more specifically those in which the eigenvalues have the same multiplicity.

In a recent work \cite{dSP3square}, a similar strategy was used to prove that for any matrix $A \in \Mat_n(\F)$ with trace $0$, the augmented
matrix $A \oplus 0_n$ is the sum of three square-zero matrices. We will use similar ideas, but things tend to be substantially more
complicated in the present context. One part of the additional complexity comes from
the elements of finite order in the multiplicative group $\F^*$. The other major source of additional difficulty comes from the necessity, in the
study of the matrices that are i-adjacent or u-adjacent to a well-partitioned matrix, to recognize some that are cyclic:
this has lead us to identify a very large class of matrices that are cyclic but not in an obvious way: see Lemma \ref{BQClemma}.

\vskip 3mm
The remainder of the article is laid out as follows.

In Section \ref{cyclicSection}, we introduce some additional notation, we recall
some basic results on cyclic matrices, and we develop the groundwork for the next part. The key new concept in this section
is the notion of a block-quasi-companion matrix, to be used in Section \ref{wellpartSection}.

In Section \ref{wellpartSection}, we explore well-partitioned matrices:
we prove various decomposition theorems involving well-partitioned matrices (mostly variations of known results, but better suited
to the present study); we finish the section with the Adaptation Theorem, a major result on
matrices that are i-adjacent or u-adjacent to a well-partitioned matrix
(Theorem \ref{AdaptationTheorem}). We conclude the section by obtaining decomposition results for cyclic or well-partitioned matrices, as easy consequences
of the previous groundwork.

In Section \ref{Length4Section}, we prove Theorem \ref{theo4}.
The proof we will give is certainly not the shortest one in some cases, but it has the main upside of requiring
little discussion on the five types of decompositions! The strategy is simple: we start from a matrix $A \in \GL_n(\F)$
with determinant $\pm 1$. When $A$ is cyclic, the result is known (see Proposition \ref{cyclicdecomp}).
When $A$ is scalar, its diagonal entry has finite order:
we write $A$ as the product of two well-chosen diagonal matrices (whose diagonal elements form cycles or half-cycles)
that are the product of two $U_2$-matrices, or of two involutions, or of one involution and one $U_2$-matrix.
When $A$ is neither scalar nor cyclic, we prove that it is u-adjacent to a well-partitioned matrix,
and then we use the decomposition theorems of Section \ref{wellpartSection} for well-partitioned matrices.
Note that a more elementary strategy is possible in three situations: for products of four involutions, products of four $U_2$-matrices,
and products of two involutions and two $U_2$-matrices, one can prove that any non-scalar matrix is similar
to the product of a lower-triangular matrix with only $1$'s on the diagonal, and an upper-triangular matrix in which all the diagonal
entries equal $1$ with the possible exception of the last one (see \cite{Sourour}). Then, each such matrix is the product of two involutions, the first
one is the product of two $U_2$-matrices, and ditto for the second one if its determinant equals $1$.

The remaining sections deal with the proofs of Theorems \ref{theo3invol} to \ref{skew1involution2unipotents}.
We start by establishing results that are largely common to all four situations in the stable length $3$ problem:
in Section \ref{commonSection}, we first prove adjacency results for matrices of the form $\alpha I_n \oplus \beta I_n$
where $\alpha$ and $\beta$ are distinct nonzero scalars, and then we combine them with the Adaptation Theorem
to obtain decomposition results in specific situations when we have the direct sum of such a matrix with a well-partitioned matrix
(Section \ref{generaldecompsection}).

Then, we turn to the specific situations in the stable length $3$ problem. First, we deal with products of three $U_2$-matrices
(Section \ref{3U2Section}, where we successively prove Theorems \ref{theo3U2} and \ref{skew3unipotent}). Then, we deal with products of three involutions
(Section \ref{3invSection}). Products of one involution and two $U_2$-matrices are easily dealt with in Section \ref{1U22invSection}.
We finish, in Section \ref{1inv2U2Section}, with the most difficult situation: products of one involution and two $U_2$-matrices.
In those sections, the extension of $A$ to $A\oplus I_k$ is called \emph{natural}, whereas
the extension of $A$ to $A \oplus -I_k$ (in Theorem \ref{skew3unipotent}) or to $A \oplus i I_k$
(in Theorems \ref{skew3involutions} and \ref{skew1involution2unipotents})
is called \emph{unnatural}.

Given an integer $n \geq 2$, one could seek to find the least integer $k \geq 0$ for which, for every field $\F$ and every
matrix $A \in \GL_n(\F)$ with determinant $\pm 1$,
the augmented matrix $A \oplus I_k$ is the product of three involutions. It turns out that $n$ is not the right answer
but very close to it. In Section \ref{optimalitySection}, we will briefly discuss the corresponding problem
in Theorems \ref{theo3U2} to \ref{skew1involution2unipotents}. It turns out that the optimal augmentation size always corresponds to a special case when $A$
is scalar. Improving our theorems involves a hefty dose of additional technicalities, and proving the optimality of the improved
statements is a tedious task that requires a careful use of the classification of products of two quadratic matrices (see \cite{dSPprod2}).
Moreover, our primary motivation for the present study comes from the infinite-dimensional setting, in which the size of the augmentation is a non-issue.
Hence, in that ultimate section we will state the optimal results but we will offer no proof.

A final word about mixed decompositions: it is seen in Theorem \ref{theo2} that any product of two $U_2$-matrices is also the product of
two involutions. Hence, a matrix that is the product of one involution and two $U_2$-matrices is also the product of three involutions.
In particular, Theorem \ref{theo3invol} is a corollary of Theorem \ref{theo3mixed2}, and in Theorem \ref{theo4}
one could reduce the situation to only three problems (products of two involutions and two $U_2$-matrices, products of one involution and three $U_2$-matrices,
and products of four $U_2$-matrices). We have used this trick to shorten the proof of Theorem \ref{theo4}.
However, as far as the stable length $3$ problem is concerned, we have chosen not to start from the most difficult situation, for two reasons:
firstly, we suspect that most readers will be more interested in the decompositions into involutions only, and hence they will more
quickly grasp the techniques if we focus first on them; secondly, the proofs that involve the recognition of products of two $U_2$-matrices tend to be
substantially more technical, and as a consequence we prefer to save them for later parts of the article.

\section{On cyclic matrices}\label{cyclicSection}

\subsection{Additional notation}\label{additionalNotSection}

We denote by $\N$ the set of all natural numbers, i.e.\ non-negative integers, and by $\Z$ the set of all integers.

\vskip 2mm
Given a square matrix $M \in \Mat_n(\F)$, we denote its characteristic polynomial by $\chi_M(t):=\det(t I_n-M)$.

\vskip 2mm
Let $p(t)=t^n-\underset{k=0}{\overset{n-1}{\sum}}a_k\,t^k \in \F[t]$ be a monic polynomial with degree $n$.
We define its \textbf{trace} by $a_{n-1}$, denoted by $\tr p$, and its \textbf{norm} by $(-1)^{n-1} a_0$, denoted by
$N(p)$.
The \defterm{companion matrix} of $p$ is defined as
$$C\bigl(p(t)\bigr):=\begin{bmatrix}
0 &   & & (0) & a_0 \\
1 & 0 & &   & a_1 \\
0 & \ddots & \ddots & & \vdots \\
\vdots & \ddots & & 0 & a_{n-2} \\
(0) & \cdots & 0 &  1 & a_{n-1}
\end{bmatrix} \in \Mat_n(\F).$$
The characteristic polynomial of $C(p(t))$ is precisely $p(t)$, and so is its minimal polynomial.
Given $n \in \N^*$ and $\alpha \in \F$, we simply write
$$C_n(\alpha):=C\bigl((t-\alpha)^n\bigr),$$
and we note that this matrix is similar to a Jordan cell with size $n$ with respect to the eigenvalue $\alpha$.
\vskip 2mm
Let $A \in \Mat_n(\F)$ and $X \in \F^n$. We say that $X$ is \defterm{cyclic for $A$} whenever
$(A^kX)_{k \in \N}$ spans $\F^n$. This is equivalent to $(A^k X)_{0 \leq k <n}$ being a basis of $\F^n$, and in that
case $A$ is similar to the companion matrix of $\chi_A$. More precisely, we have
$$P^{-1}AP=C(\chi_A) \quad \text{for}\; P:=
\begin{bmatrix}
X & AX & \cdots & A^{n-1}X
\end{bmatrix}.$$
We note that if $A$ is invertible, then for $X \in \F^n$ to be a cyclic vector of $A$ it suffices that $ \Vect\{A^kX \mid k \in \Z\}=\F^n$:
indeed, as $V:=\Vect\{A^kX \mid k \in \N\}$ is finite-dimensional and stable under $A$, it is also stable under $A^{-1}$,
and hence it contains $A^kX$ for every negative integer $k$, yielding $ \Vect\{A^kX \mid k \in \Z\}=V$.

A \textbf{good cyclic} matrix is a matrix of the form
$$A=\begin{bmatrix}
a_{1,1} & a_{1,2}  & \cdots &  & a_{1,n} \\
1 & a_{2,2} & &   &  \\
0 & \ddots & \ddots & & \vdots \\
\vdots & \ddots & \ddots & a_{n-1,n-1} & a_{n-1,n} \\
(0) & \cdots & 0 &  1 & a_{n,n}
\end{bmatrix}$$
with no specific requirement on the $a_{i,j}$'s for $j \geq i$.
Classically, such a matrix is always cyclic: more precisely the first vector of the standard basis is a cyclic vector for it.

\vskip 2mm
Finally, we denote by $\Mat_{n,p}(\F)$ the vector space of all $n$ by $p$ matrices with entries in $\F$,
and in this space we consider the matrix units
$$H_{n,p}:=\begin{bmatrix}
0 & \cdots & 1 \\
\vdots & (0) & \vdots \\
0 & \cdots & 0
\end{bmatrix}, \quad
K_{n,p}:=
\begin{bmatrix}
0 & \cdots & 0 \\
\vdots & (0) & \vdots \\
0 & \cdots & 1
\end{bmatrix} \quad \text{and} \quad
L_{n,p}:=\begin{bmatrix}
0 & \cdots & 0  \\
\vdots & (0) & \vdots  \\
1 & \cdots & 0
\end{bmatrix}.
$$

\subsection{Basic lemmas}

The first lemma is folklore and is an easy consequence of Roth's theorem (see \cite{Roth}):

\begin{lemma}\label{Rothlemma}
Let $A \in \Mat_n(\F)$, $B \in \Mat_p(\F)$, and $C \in \Mat_{n,p}(\F)$.
Assume that $\chi_A$ and $\chi_B$ are coprime. Then,
$$\begin{bmatrix}
A & C \\
0 & B
\end{bmatrix} \simeq \begin{bmatrix}
A & 0 \\
0 & B
\end{bmatrix}.$$
\end{lemma}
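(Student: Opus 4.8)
The plan is to realize the similarity by an explicit conjugation with a block-unitriangular matrix, thereby reducing everything to the solvability of a Sylvester equation. For $X \in \Mat_{n,p}(\F)$ set $P_X:=\begin{bmatrix} I_n & X \\ 0 & I_p \end{bmatrix}$, so that $P_X^{-1}=\begin{bmatrix} I_n & -X \\ 0 & I_p \end{bmatrix}$. A direct block computation yields
$$P_X \begin{bmatrix} A & 0 \\ 0 & B \end{bmatrix} P_X^{-1} = \begin{bmatrix} A & XB-AX \\ 0 & B \end{bmatrix}.$$
Hence it suffices to find $X \in \Mat_{n,p}(\F)$ with $XB-AX=C$, i.e.\ with $AX-XB=-C$. (This is exactly the ``if'' half of Roth's theorem, and it is the only half we need; the computation above makes it elementary.)

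The remaining task is to show that the linear endomorphism $\Phi\colon X \mapsto AX-XB$ of the finite-dimensional space $\Mat_{n,p}(\F)$ is onto, for which it is enough to check that $\Phi$ is injective. Assume $AX=XB$. Then $A^kX=XB^k$ for all $k \in \N$, whence $f(A)X=Xf(B)$ for every $f \in \F[t]$; taking $f=\chi_A$ and using $\chi_A(A)=0$ gives $X\,\chi_A(B)=0$. Since $\chi_A$ and $\chi_B$ are coprime, Bézout provides $u,v \in \F[t]$ with $u\chi_A+v\chi_B=1$; evaluating at $B$ and using $\chi_B(B)=0$ shows $u(B)\,\chi_A(B)=I_p$, so $\chi_A(B)$ is invertible and therefore $X=0$. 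Thus $\Phi$ is injective, hence surjective, and a matrix $X$ with $XB-AX=C$ exists, which completes the argument.

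There is essentially no genuine obstacle in this proof; the only point that deserves a word of care is that we work over an arbitrary field, so the familiar ``eigenvalues of $\Phi$ are the differences $\lambda_i-\mu_j$'' heuristic for the invertibility of the Sylvester operator is replaced by the characteristic-free annihilator/Bézout argument above. Alternatively, one may simply invoke Roth's theorem \cite{Roth} as a black box, in which case the statement follows at once from the classical fact that coprimality of $\chi_A$ and $\chi_B$ forces the equation $AX-XB=-C$ to be solvable.
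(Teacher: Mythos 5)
Your proof is correct and complete. The paper offers no proof of this lemma at all (it merely labels it folklore and cites Roth's theorem), and your argument --- conjugating by the block-unitriangular matrix $P_X$ and then showing that the Sylvester operator $X \mapsto AX-XB$ is injective, hence bijective, via Cayley--Hamilton and B\'ezout --- is precisely the standard self-contained folklore argument the authors have in mind; it even has the small advantage of needing only the easy direction of Roth's theorem rather than invoking it as a black box.
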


The next lemma will be crucial:

\begin{lemma}[Polynomial fit lemma]\label{polyfitgen}
Let $A \in \Mat_n(\F)$ and $B \in \Mat_m(\F)$ be cyclic matrices, and
$p(t)$ be a monic polynomial of degree $n+m$ such that $N(p)=(\det A)\,(\det B)$. \\
Let $X \in \F^m$ be a cyclic vector for $B$, and $Y\in \F^n$ be a cyclic vector for $A^T$.
Then, there exists a matrix $D \in \Mat_{n,m}(\F)$ such that
$$\begin{vmatrix}
tI_n-A & -D \\
tXY^T & tI_m-B
\end{vmatrix}=p(t).$$
\end{lemma}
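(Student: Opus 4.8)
The plan is to convert the determinant into an explicit polynomial that depends affinely on $D$, and then to solve the resulting linear problem by exploiting the cyclicity of $X$ and of $Y$.

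\emph{An explicit formula for the determinant.} Over the field $\F(t)$ the matrices $tI_n-A$ and $tI_m-B$ are invertible, since $\chi_A$ and $\chi_B$ are nonzero polynomials. I would expand the block determinant by the Schur complement with respect to the upper-left block, getting $\chi_A(t)\,\det\bigl((tI_m-B)+t\,XY^T(tI_n-A)^{-1}D\bigr)$, and then apply the rank-one update formula for determinants, noting that $XY^T(tI_n-A)^{-1}D=X\cdot\bigl(Y^T(tI_n-A)^{-1}D\bigr)$ has rank at most one. After clearing the factor $\chi_A(t)$ this yields, first in $\F(t)$ and then (both sides being polynomials) in $\F[t]$, the identity
$$\begin{vmatrix} tI_n-A & -D \\ tXY^T & tI_m-B \end{vmatrix}=\chi_A(t)\,\chi_B(t)+t\cdot\bigl(Y^T\operatorname{adj}(tI_n-A)\bigr)\,D\,\bigl(\operatorname{adj}(tI_m-B)\,X\bigr).$$
Write $P(t):=Y^T\operatorname{adj}(tI_n-A)$ (a row of $n$ polynomials of degree $<n$) and $R(t):=\operatorname{adj}(tI_m-B)\,X$ (a column of $m$ polynomials of degree $<m$). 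We are done once we find $D\in\Mat_{n,m}(\F)$ with $P(t)\,D\,R(t)=g(t)$, where $g(t):=\bigl(p(t)-\chi_A(t)\chi_B(t)\bigr)/t$, provided the latter is a polynomial.

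\emph{The target polynomial.} Both $p$ and $\chi_A\chi_B$ are monic of degree $n+m$, so $p-\chi_A\chi_B$ has degree $<n+m$; and its constant term equals $p(0)-\chi_A(0)\chi_B(0)=(-1)^{n+m}N(p)-(-1)^{n+m}(\det A)(\det B)=0$ by hypothesis. Hence $g$ is a polynomial of degree $<n+m-1$. Moreover, as $p$ ranges over the monic polynomials of degree $n+m$ with $N(p)=(\det A)(\det B)$, only its leading and constant coefficients are constrained, so $g$ ranges over the whole space $V$ of polynomials of degree $<n+m-1$.

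\emph{Spanning, via cyclicity.} Writing $q_1,\dots,q_n$ and $r_1,\dots,r_m$ for the entries of $P$ and $R$, the set $\{P(t)\,D\,R(t):D\in\Mat_{n,m}(\F)\}$ is exactly $\operatorname{span}_\F\{q_i r_j\}$, i.e.\ the set of all products $u(t)w(t)$ with $u\in\operatorname{span}\{q_i\}$ and $w\in\operatorname{span}\{r_j\}$. The crucial ingredient — and the step I expect to be the main obstacle — is: \emph{if $X$ is a cyclic vector for $B\in\Mat_m(\F)$, then the entries of $\operatorname{adj}(tI_m-B)\,X$ span the space of all polynomials of degree $<m$.} Since these $m$ entries have degree $<m$, it suffices to prove they are linearly independent. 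If $c^T\operatorname{adj}(tI_m-B)\,X=0$ for some $c\in\F^m$, then, expanding $\operatorname{adj}(tI_m-B)=\chi_B(t)(tI_m-B)^{-1}=\chi_B(t)\sum_{\ell\ge0}t^{-\ell-1}B^\ell$ in the field $\F((t^{-1}))$ of formal Laurent series in $t^{-1}$, we get $0=\chi_B(t)\sum_{\ell\ge0}t^{-\ell-1}(c^TB^\ell X)$; as $\chi_B\ne0$ this forces $c^TB^\ell X=0$ for every $\ell\ge0$, and cyclicity of $X$ gives $c=0$. Applying this both to $B$ with cyclic vector $X$ and to $A^T$ with cyclic vector $Y$ (note $Y^T\operatorname{adj}(tI_n-A)=\bigl(\operatorname{adj}(tI_n-A^T)\,Y\bigr)^T$), we obtain that $\operatorname{span}\{q_i\}$ is the space of all polynomials of degree $<n$ and $\operatorname{span}\{r_j\}$ the space of all polynomials of degree $<m$.

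\emph{Conclusion.} The products $u(t)w(t)$ with $\deg u<n$ and $\deg w<m$ comprise in particular every monomial $t^{a+b}$ with $0\le a<n$ and $0\le b<m$, hence $1,t,\dots,t^{n+m-2}$, so they span $V$. Thus $g\in\operatorname{span}_\F\{q_i r_j\}$; writing $g=\sum_{i,j}D_{ij}\,q_i r_j$ and setting $D:=(D_{ij})$ gives $P(t)\,D\,R(t)=g(t)$, and by the identity above this $D$ satisfies $\begin{vmatrix} tI_n-A & -D \\ tXY^T & tI_m-B \end{vmatrix}=p(t)$, as desired.
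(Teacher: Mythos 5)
Your proof is correct, but it follows a genuinely different route from the paper's. The paper reduces the general statement to the special case of good cyclic matrices coupled by the fixed block $tH_{m,n}$: it conjugates the characteristic matrix by $(P^T)^{-1}\oplus Q$ built from the cyclic vectors (so that $XY^T$ becomes $H_{m,n}$), and then handles that special case by comparing, via linearity of the determinant in the $(n+1)$-th row, the two determinants with coupling blocks $tH_{m,n}$ and $-H_{m,n}$ through a common minor, the latter case being a black-boxed lemma from \cite{dSPidempotentLC} that prescribes the trace rather than the norm. You instead compute the determinant directly: the Schur complement plus the rank-one determinant update give the exact affine dependence
$\det = \chi_A\chi_B + t\,\bigl(Y^T\operatorname{adj}(tI_n-A)\bigr)D\bigl(\operatorname{adj}(tI_m-B)X\bigr)$,
and you then show that cyclicity of $X$ for $B$ (resp.\ of $Y$ for $A^T$) makes the entries of $\operatorname{adj}(tI_m-B)X$ (resp.\ of $Y^T\operatorname{adj}(tI_n-A)$) a basis of the polynomials of degree $<m$ (resp.\ $<n$), so the products span everything of degree $\le n+m-2$ and the target $g=(p-\chi_A\chi_B)/t$ is reachable; the norm hypothesis is exactly what makes $g$ a polynomial. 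All the steps check out, including the formal Laurent series argument for linear independence of the adjugate entries. What each approach buys: yours is self-contained (no appeal to the external polynomial-fit lemma), and it makes completely transparent that the achievable characteristic polynomials are precisely the monic ones of the right degree and norm, since the correction term ranges over all of $t\cdot\F[t]_{\le n+m-2}$; the paper's is shorter modulo the cited lemma and stays within the combinatorics of companion-like matrices, which is the form in which the result is actually consumed later (in the Adaptation Theorem).
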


To prove this, our starting point is a result of similar flavor that was proved in \cite{dSPidempotentLC} (see lemma 11 there):

\begin{lemma}\label{polyfit1}
Let $A \in \Mat_n(\F)$ and $B \in \Mat_m(\F)$ be good cyclic matrices, and
$p(t)$ be a monic polynomial of degree $n+m$ such that $\tr p=\tr(A)+\tr(B)$. \\
Then, there exists a matrix $D \in \Mat_{n,m}(\F)$ such that
$$\begin{vmatrix}
tI_n-A & -D \\
-H_{m,n} & tI_m-B
\end{vmatrix}=p(t).$$
\end{lemma}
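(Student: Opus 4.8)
The plan is to read the stated identity as a computation of a characteristic polynomial: the displayed determinant is exactly $\chi_M(t)$, where
$$M:=\begin{bmatrix} A & D \\ H_{m,n} & B\end{bmatrix}\in\Mat_{n+m}(\F),$$
so the goal is to find $D$ with $\chi_M=p$. Since $A$ and $B$ are good cyclic and $H_{m,n}$ has its unique nonzero entry in position $(1,n)$ of its block, the matrix $M$ is itself good cyclic for every choice of $D$, and $\tr M=\tr A+\tr B=\tr p$; hence the coefficient of $t^{n+m-1}$ in $\chi_M$ agrees with that of $p$ no matter what $D$ is, and only the remaining $n+m-1$ coefficients need to be matched.

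First I would reduce the problem to a single column. Working over $\F(t)$ and taking the Schur complement with respect to the invertible block $tI_m-B$, and writing $P:=tI_n-A$, one gets
$$\chi_M(t)=\det(tI_m-B)\cdot\det\bigl(P-D\,(tI_m-B)^{-1}H_{m,n}\bigr).$$
Now $H_{m,n}=e_1^{(m)}(e_n^{(n)})^T$ is a matrix unit, so the correction term $D\,(tI_m-B)^{-1}H_{m,n}$ is supported in its $n$-th column only; expanding $\det$ by multilinearity in that column, and using $\det(tI_m-B)\,(tI_m-B)^{-1}=\operatorname{adj}(tI_m-B)$, the identity collapses to
$$\chi_M(t)=\chi_A(t)\chi_B(t)-\det\bigl[\,P_1,\dots,P_{n-1},\,D\,\widehat w(t)\,\bigr],$$
an equality of polynomials, where $P_1,\dots,P_{n-1}$ are the first $n-1$ columns of $tI_n-A$ and $\widehat w(t):=\operatorname{adj}(tI_m-B)\,e_1^{(m)}$ is the first column of $\operatorname{adj}(tI_m-B)$. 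So it suffices to choose $D$ with $\det[P_1,\dots,P_{n-1},D\widehat w(t)]=\chi_A\chi_B-p=:q$; the trace hypothesis forces the $t^{n+m}$ and $t^{n+m-1}$ terms of $q$ to cancel, so $\deg q\le n+m-2$.

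The two structural facts I would then establish, both read off from the good cyclic shape via the defining identity of the adjugate, are: (i) $\widehat w(t)=(w_1(t),\dots,w_m(t))^T$ with $w_m=1$ and $w_j$ monic of degree $m-j$ --- obtained by solving $(tI_m-B)\widehat w=\chi_B\,e_1^{(m)}$ from the bottom row upward --- so that $(w_1,\dots,w_m)$ is a basis of the polynomials of degree $\le m-1$ and hence $D\mapsto D\widehat w(t)$ ranges over all $\F^n$-valued polynomials of degree $\le m-1$ as $D$ ranges over $\Mat_{n,m}(\F)$; and (ii), setting $\Gamma_i(t):=\det[P_1,\dots,P_{n-1},e_i^{(n)}]$, which is the $(n,i)$ entry of $\operatorname{adj}(tI_n-A)$, the identity $(\Gamma_1,\dots,\Gamma_n)(tI_n-A)=\chi_A\,(e_n^{(n)})^T$ yields $\Gamma_1=1$ together with the recursion $\Gamma_{j+1}=(t-a_{jj})\Gamma_j-\sum_{i<j}a_{ij}\Gamma_i$, so that each $\Gamma_i$ is monic of degree $i-1$ and $(\Gamma_1,\dots,\Gamma_n)$ is a basis of the polynomials of degree $\le n-1$.

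Assembling these is then purely formal. Writing $\det[P_1,\dots,P_{n-1},g(t)]=\sum_{i=1}^n g_i(t)\Gamma_i(t)$ for an unknown $\F^n$-valued polynomial $g$, I note that $\Gamma_1,\dots,\Gamma_{n-1},\Gamma_n,t\Gamma_n,\dots,t^{m-1}\Gamma_n$ have pairwise distinct degrees $0,1,\dots,n+m-2$ and hence form a basis of the polynomials of degree $\le n+m-2$; expanding $q$ in it produces scalars $c_1,\dots,c_{n-1}$ and a polynomial $g_n$ of degree $\le m-1$ with $q=\sum_{i<n}c_i\Gamma_i+g_n\Gamma_n$. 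Putting $g_i:=c_i$ for $i<n$ gives an $\F^n$-valued polynomial $g$ of degree $\le m-1$ with $\sum_i g_i\Gamma_i=q$; finally, expanding each $g_i$ in the basis $(w_1,\dots,w_m)$ as $g_i=\sum_j d_{ij}w_j$ and setting $D:=(d_{ij})$ yields $D\widehat w=g$, whence $\chi_M=\chi_A\chi_B-q=p$. The only genuinely substantive work is the verification of (i) and (ii) --- in particular that the last row of $\operatorname{adj}(tI_n-A)$ consists of monic polynomials of degrees $0,1,\dots,n-1$, for which one needs the recursion above together with the direct minor computation $\Gamma_1=1$ --- together with keeping the cofactor signs and the column-$n$ expansion straight; the degenerate cases $n=1$ and $m=1$, where one of the two bases is just $\{1\}$, are covered verbatim.
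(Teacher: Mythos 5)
Your argument is correct; note first that the paper itself offers no proof of Lemma \ref{polyfit1} to compare against --- it is imported verbatim from \cite{dSPidempotentLC} (Lemma 11 there), and only Lemmas \ref{polyfit2} and \ref{polyfitgen} are derived from it in the text. Your route is a clean, self-contained interpolation argument: reading the determinant as $\chi_M$ for $M=\left[\begin{smallmatrix} A & D \\ H_{m,n} & B\end{smallmatrix}\right]$, the Schur complement with respect to $tI_m-B$ together with multilinearity in the $n$-th column correctly reduces the problem to solving $\det[P_1,\dots,P_{n-1},D\widehat{w}(t)]=\chi_A\chi_B-p$, and the trace hypothesis gives $\deg(\chi_A\chi_B-p)\le n+m-2$ exactly as needed. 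The two structural facts check out: the subdiagonal of $1$'s makes the relevant corner minors triangular, giving $w_m=1$ and $\Gamma_1=1$, and the recursions extracted from $(tI_m-B)\widehat{w}=\chi_B\,e_1$ and from the $n$-th row of $\operatorname{adj}(tI_n-A)\,(tI_n-A)=\chi_A I_n$ show that the $w_j$ and $\Gamma_i$ are monic of degrees $m-j$ and $i-1$ respectively, so that $(w_1,\dots,w_m)$ and $(\Gamma_1,\dots,\Gamma_{n-1},\Gamma_n,t\Gamma_n,\dots,t^{m-1}\Gamma_n)$ are bases of the appropriate spaces of polynomials; the final assembly and the degenerate cases $n=1$, $m=1$ are handled correctly. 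The only presentational quibble is that solving $(tI_m-B)\widehat{w}=\chi_B e_1$ ``from the bottom row upward'' only expresses $w_1,\dots,w_{m-1}$ in terms of $w_m$; the normalization $w_m=1$ needs the separate cofactor (or first-row) computation, exactly as you acknowledge for $\Gamma_1=1$.
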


This allows us first to obtain a weaker version of Lemma \ref{polyfitgen}, in which the matrices $A$, $B$, $X$ and $Y$ are much more specific:

\begin{lemma}\label{polyfit2}
Let $A \in \Mat_n(\F)$ and $B \in \Mat_m(\F)$ be good cyclic matrices, and
$p(t)$ be a monic polynomial of degree $n+m$ such that $N(p)=(\det A)(\det B)$.
Then, there exists a matrix $D \in \Mat_{n,m}(\F)$ such that
$$\begin{vmatrix}
tI_n-A & -D \\
 tH_{m,n} & tI_m-B
\end{vmatrix}=p(t).$$
\end{lemma}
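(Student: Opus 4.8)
The plan is to compute, in the spirit of Lemma \ref{polyfit1}, the determinant
\[
f_D(t):=\begin{vmatrix} tI_n-A & -D\\ tH_{m,n} & tI_m-B\end{vmatrix}
\]
as an explicit, fully controllable function of $D$, and to recognize that the hypothesis $N(p)=(\det A)(\det B)$ is exactly the single compatibility condition a target $p$ must satisfy. First I would isolate the one troublesome entry: the matrix $M(t):=\begin{bmatrix} tI_n-A & -D\\ tH_{m,n} & tI_m-B\end{bmatrix}$ agrees with $M_0(t):=\begin{bmatrix} tI_n-A & -D\\ 0 & tI_m-B\end{bmatrix}$ except in column $n$, to which $M(t)$ adds $t$ times the $(n+1)$-th vector of the standard basis of $\F^{n+m}$ (this is where $tH_{m,n}$ sits). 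Multilinearity of the determinant in that column, followed by a one-line cofactor expansion of the extra term, yields $f_D(t)=\chi_A(t)\,\chi_B(t)-t\,\Delta(t)$, where $\Delta(t)$ is the minor of $M_0(t)$ obtained by deleting row $n+1$ and column $n$.

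Next I would open up $\Delta(t)$ using that $A$ and $B$ are good cyclic, i.e.\ upper Hessenberg with $1$'s on the subdiagonal. Since the lower-left block of $M_0(t)$ is $0$, a generalized Laplace expansion of $\Delta(t)$ along its first $n$ rows retains only the terms in which those rows occupy all the ``left'' columns together with exactly one ``right'' column $j$, giving $\Delta(t)=\sum_{j=1}^{m}\pm\det\bigl[\,P(t)\mid(-D)_{\bullet,j}\,\bigr]\cdot q_j(t)$, where $P(t)$ is the $n\times(n-1)$ matrix of the first $n-1$ columns of $tI_n-A$, $(-D)_{\bullet,j}$ is the $j$-th column of $-D$, and $q_j(t)$ is the complementary minor of the lower block — equal, up to sign, to the characteristic polynomial of the trailing $(m-j)\times(m-j)$ principal submatrix of $B$, hence of degree exactly $m-j$ with leading coefficient $\pm1$. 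Because $A$ is good cyclic, expanding $\det\bigl[P(t)\mid c\bigr]$ along its last column gives $\det\bigl[P(t)\mid c\bigr]=\sum_{i=1}^{n}c_i\,\chi_{A_{i-1}}(t)$, where $A_{i-1}$ is the leading $(i-1)\times(i-1)$ principal submatrix of $A$ (so $\chi_{A_0}=1$); as the $\chi_{A_{i-1}}$ are monic of degrees $0,1,\dots,n-1$ they form a basis of the space of polynomials of degree $<n$, so $\det[P(t)\mid(-D)_{\bullet,j}]$ can be set to an arbitrary polynomial of degree $\leq n-1$, and independently so for each $j$ since the columns of $D$ are free.

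Finally I would conclude. As $D$ ranges over $\Mat_{n,m}(\F)$, the quantity $\Delta(t)$ ranges over all sums $\sum_{j=1}^{m} g_j(t)q_j(t)$ with each $g_j$ of degree $\leq n-1$; since the $q_j$ have the distinct degrees $m-1,\dots,1,0$ and invertible leading coefficients, iterated Euclidean division shows that these sums exhaust every polynomial of degree $\leq n+m-2$. Hence $f_D$ ranges over $\chi_A\chi_B$ plus an arbitrary polynomial of degree $\leq n+m-1$ with zero constant term. Now $p-\chi_A\chi_B$ has degree $\leq n+m-1$ (the leading terms $t^{n+m}$ cancel) and constant term $p(0)-\chi_A(0)\chi_B(0)=(-1)^{n+m}\bigl(N(p)-(\det A)(\det B)\bigr)=0$ by hypothesis; so $p-\chi_A\chi_B$ is of the required shape, and the desired $D$ exists.

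The real work — and the main obstacle — is the combinatorial bookkeeping of the middle step: for good cyclic $A$ and $B$, pinning down exactly which minors survive the two Laplace expansions and verifying the two structural facts that drive everything (the $A$-side minors $\chi_{A_{i-1}}$ form a basis of $\F[t]_{<n}$, and the $B$-side minors $q_j$ have strictly decreasing degrees with invertible leading terms). I would stress that the argument is uniform — it requires no invertibility or genericity assumption on $A$ or $B$ — and that the norm hypothesis enters only at the very end, solely to kill the constant term of $p-\chi_A\chi_B$. As a sanity check one may note that in fact $f_D(t)=\chi_G(t)$ for $G:=\begin{bmatrix}A & D\\ -H_{m,n}A & B-H_{m,n}D\end{bmatrix}$, and that $\det G=(\det A)(\det B)$ regardless of $D$; this already explains why the constant term of $f_D$ is beyond our control and must equal $(-1)^{n+m}(\det A)(\det B)$.
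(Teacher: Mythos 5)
Your proof is correct, but it takes a genuinely different route from the paper's. Both arguments begin identically, by using multilinearity in the row/column carrying the single entry of $tH_{m,n}$ to write the target determinant as $\chi_A(t)\chi_B(t)-tR(t)$, where $R(t)$ is the $(n+1,n)$ cofactor of the block upper-triangular characteristic matrix. From there the paper does \emph{not} compute $R(t)$: it observes that the determinant of the variant with lower-left block $-H_{m,n}$ equals $\chi_A(t)\chi_B(t)+R(t)$ for the \emph{same} $D$, writes $p=\chi_A\chi_B+tq$ (using the norm hypothesis to get $\deg q\leq n+m-2$), and invokes Lemma \ref{polyfit1} to produce a $D$ realizing $\chi_A\chi_B-q$ in the $-H_{m,n}$ setting; that forces $R=-q$ and hence the $tH_{m,n}$ determinant equals $p$. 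You instead control $R(t)$ (your $\Delta(t)$) directly, via a generalized Laplace expansion exploiting the Hessenberg structure of good cyclic matrices, showing it sweeps out all polynomials of degree at most $n+m-2$ as $D$ varies — I checked the two structural facts you isolate (the $A$-side minors are $\chi_{A_{i-1}}$, monic of degrees $0,\dots,n-1$, and the $B$-side complementary minors are, up to sign, characteristic polynomials of trailing principal submatrices, of strictly decreasing degrees $m-1,\dots,0$), and the iterated Euclidean division that finishes the surjectivity claim is sound. What your approach buys is self-containedness (it does not rely on Lemma \ref{polyfit1}, which the paper imports from \cite{dSPidempotentLC}) and a transparent explanation of why the norm is the only obstruction; the cost is precisely the combinatorial bookkeeping that the paper's reduction sidesteps. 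In effect you have re-proved the content of Lemma \ref{polyfit1} in the $tH_{m,n}$ normalization rather than transferring it.
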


\begin{proof}[Proof of Lemma \ref{polyfit2}]
We write $p(t)=\chi_A(t)\,\chi_B(t)+t\, q(t)$ for some $q(t) \in \F[t]$ with degree at most $n+m-2$.
It follows that $\chi_A(t)\,\chi_B(t)-q(t)$ is monic with degree $n+m$ and trace $\tr A+\tr B$.
By Lemma \ref{polyfit1}, we can find a matrix
$D \in \Mat_{n,m}(\F)$ such that
$$\begin{vmatrix}
tI_n-A & -D \\
-H_{m,n} & tI_m-B
\end{vmatrix}=\chi_A(t)\,\chi_B(t)- q(t).$$
Denote by $R(t)$ the minor of the characteristic matrix $\begin{bmatrix}
tI_n-A & -D \\
0_{m,n} & tI_m-B
\end{bmatrix}$ in the entry $(n+1,n)$.
Using the linearity of the determinant with respect to the $(n+1)$-th row leads to the two formulas
$$\begin{vmatrix}
tI_n-A & -D \\
 tH_{m,n} & tI_m-B
\end{vmatrix}=\chi_A(t)\,\chi_B(t)-tR(t)$$
and
$$\begin{vmatrix}
tI_n-A & -D \\
-H_{m,n} & tI_m-B
\end{vmatrix}=\chi_A(t)\,\chi_B(t)+R(t).$$
The second result yields $R(t)=-q(t)$, and hence we deduce from the first one that
$$\begin{vmatrix}
tI_n-A & -D \\
tH_{m,n} & tI_m-B
\end{vmatrix}=\chi_A(t)\,\chi_B(t)+t\, q(t)=p(t).$$
\end{proof}

We are now ready to prove Lemma \ref{polyfitgen}.

\begin{proof}[Proof of Lemma \ref{polyfitgen}]
We will reduce the situation to the one covered by Lemma \ref{polyfit2}.
Set $Q:=\begin{bmatrix}
X & BX & \cdots & B^{m-1}X
\end{bmatrix}$ and
$P:=\begin{bmatrix}
(A^T)^{n-1} Y & \cdots & A^T Y & Y
\end{bmatrix}$.
Our assumptions show that $P$ and $Q$ are invertible and that
$Q^{-1}BQ=C(\chi_B)$. Moreover, in denoting by $(E_1,\dots,E_n)$ the standard basis of $\F^n$, we see that the last $n-2$ columns of the matrix
$P^{-1}A^T P$ are $E_1,\dots,E_{n-1}$. Thus,
$(P^{-1}A^T P)^T=P^T A (P^T)^{-1}$ is a good cyclic matrix (it is actually very close to a companion matrix, but instead of having potential nonzero entries
in the last column it has potential nonzero entries in the first row).

The matrix $R:=(P^T)^{-1} \oplus Q \in \Mat_{n+m}(\F)$ is then invertible, and one checks that, for all $D \in \Mat_{n,m}(\F)$,
$$R^{-1} \begin{bmatrix}
tI_n-A & -D \\
t XY^T & tI_m-B
\end{bmatrix} R=\begin{bmatrix}
t I_n-P^T A (P^T)^{-1} & -P^T D Q \\
t Q^{-1} XY^T (P^T)^{-1} & tI_m-Q^{-1} BQ
\end{bmatrix}.$$
Note that $Q^{-1} X Y^T (P^T)^{-1}=H_{m,n}$. Indeed, $Q^{-1}X$ is the first vector of the standard basis of $\F^m$,
and $P^{-1}Y$ is the last vector of the one of $\F^n$.
Hence,
$$R^{-1} \begin{bmatrix}
tI_n-A & -D \\
t XY^T & tI_m-B
\end{bmatrix} R=\begin{bmatrix}
t I_n-P^T A (P^T)^{-1} & -P^T D Q \\
t H_{m,n} & tI_m-Q^{-1} BQ
\end{bmatrix}.$$

Remember that $P^T A (P^T)^{-1}$ and $Q^{-1} BQ$ are good cyclic matrices. As they are similar to $A$ and $B$, respectively,
their respective characteristic polynomials are $\chi_A$ and $\chi_B$.
Hence, by Lemma \ref{polyfit2} there exists $D' \in \Mat_{n,m}(\F)$ such that
$$\begin{vmatrix}
t I_n-P^T A (P^T)^{-1} & -D' \\
t H_{m,n} & t I_m-Q^{-1} BQ
\end{vmatrix} = p(t).$$
Setting $D:=(P^T)^{-1} D' Q^{-1}$, we deduce that
$$\begin{vmatrix}
t I_n-A & -D \\
t XY^T & tI_m-B
\end{vmatrix}=\begin{vmatrix}
tI_n- P^T A (P^T)^{-1} & -D' \\
t H_{m,n} & t I_m-Q^{-1} BQ
\end{vmatrix}=p(t).$$
\end{proof}

\subsection{Block quasi-companion matrices}

\begin{Def}
A square matrix $M=(m_{i,j}) \in \Mat_n(\F)$ is called \textbf{quasi-companion} whenever
$m_{i,j}=0$ for all $(i,j)\in \lcro 1,n-1\rcro^2$ such that $i\neq j+1$, i.e.\ when $M$ has the following shape:
$$M=\begin{bmatrix}
0 &   & & (0) & ? \\
? & 0 & &   & ? \\
0 & \ddots & \ddots & & \vdots \\
\vdots & \ddots & ? & 0 & ? \\
? & \cdots & ? &  ? & ?
\end{bmatrix}.$$

A matrix $M \in \Mat_n(\F)$ is called \textbf{block quasi-companion} (in abbreviated form: \textbf{BQC})
whenever there are quasi-companion matrices $D_1,\dots,D_N$ with respective sizes $d_1,\dots,d_N$,
and nonzero scalars $\beta_1,\dots,\beta_{N-1}$
such that
$$M=\begin{bmatrix}
D_1 & ? & & & (?) \\
\beta_1 K_{d_2,d_1} & D_2 & & \\
0 & \beta_2 K_{d_3,d_2} & \ddots & & \\
\vdots & & \ddots & D_{N-1} & ? \\
(0) & \cdots & 0 & \beta_{N-1} K_{d_N,d_{N-1}} & D_N
\end{bmatrix},$$
where the question marks represent unspecified blocks.
In that case $(d_1,\dots,d_N)$ is called a \textbf{characteristic list} of $M$
(in general there can be several such lists attached to $M$).
\end{Def}

Now, we prove that any \emph{invertible} BQC matrix is cyclic. More precisely, we establish the following result:

\begin{lemma}\label{BQClemma}
Let $A \in \GL_n(\F)$ be an invertible BQC matrix, and $(d_1,\dots,d_N)$ be an associated characteristic list.
Then:
\begin{enumerate}[(a)]
\item The $d_1$-th vector of the standard basis of $\F^n$ is cyclic for $A$.
\item The $(n-d_N+1)$-th vector of the standard basis of $\F^n$ is cyclic for $A^T$.
\end{enumerate}
\end{lemma}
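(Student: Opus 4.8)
The plan is to prove (a) by an explicit generation argument and then obtain (b) by the mirror argument applied to $A^{T}$. Write $e_{1},\dots,e_{n}$ for the standard basis of $\F^{n}$, let $V_{k}\subseteq\F^{n}$ be the coordinate subspace of the $k$-th block (so $\dim V_{k}=d_{k}$), and set $d_{k}^{*}:=d_{1}+\dots+d_{k}$, so that $e_{d_{1}}=e_{d_{1}^{*}}$ is the \emph{last} coordinate vector of the first block. For (a) it suffices to show $W:=\Vect\{A^{j}e_{d_{1}}\mid j\in\Z\}$ equals $\F^{n}$; negative powers are allowed here because $W$ is a finite-dimensional $A$-invariant subspace and $A$ is invertible, hence $W$ is $A^{-1}$-invariant as well.

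First I would record the block dynamics: the BQC shape gives $A(V_{1}\oplus\dots\oplus V_{k})\subseteq V_{1}\oplus\dots\oplus V_{k+1}$ for all $k$, and for $v\in V_{k}$ the $V_{k+1}$-component of $Av$ equals $\beta_{k}\,v_{d_{k}^{*}}\,e_{d_{k+1}^{*}}$ (the last coordinate of $v$ in block $k$, scaled by $\beta_{k}$, placed in the last coordinate of block $k{+}1$). An immediate induction gives, for $1\le k\le N$, that $A^{k-1}e_{d_{1}}\in V_{1}\oplus\dots\oplus V_{k}$ with $V_{k}$-component exactly $\beta_{1}\cdots\beta_{k-1}\,e_{d_{k}^{*}}\neq0$, so the iterates of $e_{d_{1}}$ propagate through all the blocks, reaching the last coordinate of each.

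The core is the extraction. For $N=1$, $A$ is an invertible quasi-companion matrix and expanding $\det A$ along the first row shows at once that its $(1,n)$-entry and all its subdiagonal entries inside the top-left $(n-1)\times(n-1)$ corner are nonzero. Then: $Ae_{d_{1}}$ is the last column of $A$, so after discarding its $e_{d_{1}}$-component we obtain a vector of $W$ supported on $e_{1},\dots,e_{d_{1}-1}$ with nonzero $e_{1}$-coefficient; applying $A$ and again discarding the $e_{d_{1}}$-part shifts the support up by one index and multiplies the leading coefficient by the next subdiagonal entry; iterating produces $d_{1}-1$ further vectors of $W$ which, together with $e_{d_{1}}$, are in echelon form for the ordering $(e_{d_{1}},e_{1},e_{2},\dots,e_{d_{1}-1})$ with nonzero pivots, whence $W=\F^{n}$. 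For general $N$ one performs the analogous but now \emph{global} extraction: the connectors $\beta_{k}$ carry one from block $k$ to block $k{+}1$, the arbitrary blocks above the block-diagonal only contribute error terms which get absorbed into previously produced vectors of $W$, and once sufficiently many vectors spanning across the blocks have been assembled one recovers the remaining coordinates block by block. The one genuinely delicate point, and what I expect to be the main obstacle, is to check that every pivot encountered is nonzero: in the multi-block case these pivots are products of the $\beta_{i}$'s and of entries of the $D_{k}$'s that need not individually be nonzero, so one has to verify that the global hypothesis "$A$ invertible" forces each such product to be nonzero (equivalently, that $\det A$ factors through precisely these quantities). A possibly cleaner variant for (a) is to compute $\det\bigl[e_{d_{1}}\mid Ae_{d_{1}}\mid\dots\mid A^{n-1}e_{d_{1}}\bigr]$ outright and identify it, up to sign, as $\det A$ times a monomial in the $\beta_{i}$'s and the subdiagonal entries of the $D_{k}$'s — again nonzero by invertibility.

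Finally, (b) is the mirror statement: $A^{T}$ is block-lower-Hessenberg, its diagonal blocks have the reflected ("transpose-companion") shape, and its rank-one connectors are again concentrated at a single corner, so $A^{T}$ propagates the reversed block filtration $V_{k}\oplus\dots\oplus V_{N}$ backward by at most one block; running the above extraction for $A^{T}$ starting from $e_{n-d_{N}+1}$ — the \emph{first} coordinate vector of the last block — and propagating toward the first block shows that $e_{n-d_{N}+1}$ is cyclic for $A^{T}$.
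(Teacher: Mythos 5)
Your setup --- $W := \Vect\{A^j e_{d_1} \mid j \in \Z\}$, which is $A^{-1}$-invariant, together with the observation that $A^{k-1}e_{d_1}$ lands in the first $k$ blocks with a nonzero coefficient (namely $\beta_1\cdots\beta_{k-1}$) on the last coordinate of block $k$ --- is correct, and so is the $N=1$ case. But the extraction step for $N\geq 2$ has a genuine gap, and the resolution you count on is false. You hope that invertibility of $A$ forces the pivots to be nonzero, and more precisely that $\det\bigl[e_{d_1}\mid Ae_{d_1}\mid\cdots\mid A^{n-1}e_{d_1}\bigr]$ is $\pm\det A$ times a monomial in the $\beta_i$'s and the subdiagonal entries of the $D_k$'s. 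The subdiagonal entries of $D_1$ are indeed forced to be nonzero (columns $1,\dots,d_1-1$ of $A$ are supported on rows $2,\dots,d_1$, and the relevant $(d_1-1)\times(d_1-1)$ minor is triangular with those entries on the diagonal), but for $k\geq 2$ a column inside block $k$ picks up contributions from the unconstrained blocks sitting above the block-diagonal, and nothing forces the subdiagonal of $D_k$ to vanish nowhere. For instance, with $N=2$, $(d_1,d_2)=(1,2)$ and $\beta_1=1$,
$$A=\begin{bmatrix} 0 & 1 & 0 \\ 0 & 0 & 1 \\ 1 & 0 & 0 \end{bmatrix},\qquad D_1=(0),\quad D_2=\begin{bmatrix} 0 & 1 \\ 0 & 0 \end{bmatrix}$$
is an invertible BQC matrix with $(D_2)_{2,1}=0$, and $e_1$ is cyclic ($Ae_1=e_3$, $A^2e_1=e_2$). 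For the general such shape $A=\begin{bmatrix} a & p & q \\ 0 & 0 & b \\ \beta_1 & c & d \end{bmatrix}$ one computes $\det A=b(\beta_1 p-ac)$ while $\det\bigl[e_1\mid Ae_1\mid A^2e_1\bigr]=-\beta_1^2\,b$, so the proposed Krylov-determinant identity is simply wrong, and the subdiagonal $c$ appears in neither.

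The paper closes this gap by running a \emph{backward} pass inside each block, which is where the $A^{-1}$-invariance of $W$ does genuine work --- you record it, but never use it. With $a_k=d_1+\cdots+d_k$ and $V_k=\Vect(e_1,\dots,e_{a_k})$, one shows $V_k\subseteq W$ by induction. Crossing into block $k+1$ is forward and uses only the hypothesis $\beta_k\neq0$: from $Ae_{a_k}\equiv\beta_k\,e_{a_{k+1}}\pmod{V_k}$ one gets $e_{a_{k+1}}\in W$. Filling in block $k+1$ is \emph{backward}: for $1\le i<d_{k+1}$, one has $Ae_{a_k+i}\equiv\lambda_i\,e_{a_k+i+1}\pmod{V_k+\Vect(e_{a_{k+1}})}$ with $\lambda_i=(D_{k+1})_{i+1,i}$ possibly zero; since $V_k+\Vect(e_{a_{k+1}})\subseteq W$ and $W$ is $A^{-1}$-invariant, applying $A^{-1}$ gives $e_{a_k+i}\equiv\lambda_i\,A^{-1}e_{a_k+i+1}\pmod{W}$, and a descending induction starting from $e_{a_{k+1}}\in W$ yields $e_{a_k+i}\in W$ for every $i$ --- a vanishing $\lambda_i$ makes the step trivial rather than fatal. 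This backward pass inside each block is what your forward-only extraction lacks, and without it the zero pivots are unavoidable.
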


\begin{proof}
Throughout the proof, we denote by $(e_1,\dots,e_n)$ the standard basis of $\F^n$.
For $k \in \lcro 0,N\rcro$, set
$$a_k:=\sum_{i=1}^k d_i, \quad V_k:=\Vect(e_i)_{1 \leq i \leq a_k} \quad \text{and} \quad
V'_k:=\Vect(e_i)_{a_k<i \leq n.}$$

To prove point (a), we set $W:=\Vect(A^ke_{d_1})_{k \in \Z}$. Note that this subspace is obviously stable under both $A$ and $A^{-1}$.

We prove by induction that $V_k \subset W$ for all $k \in \lcro 0,N\rcro$.
This inclusion is trivial for $k=0$. Let $k \in \lcro 0,N-1\rcro$ be such that $V_k \subset W$.
First, we claim that $W$ contains $e_{a_{k+1}}$: if $k=0$ this comes from having $e_{d_1}$ in $W$;
otherwise we use the assumptions on $A$ to obtain $Ae_{a_k}=\lambda e_{a_{k+1}}$ mod $V_k$
for some $\lambda \in \F \setminus \{0\}$, which yields the claimed result since $W$
is stable under $A$ and includes $V_k$.
Next, for all $i \in \lcro 1,d_{k+1}-1\rcro$, we have $Ae_{a_k+i}=\lambda e_{a_k+i+1}$ mod $V_k+\Vect(e_{a_{k+1}})$ for some
$\lambda \in \F$, whence
$Ae_{a_k+i}=\lambda e_{a_k+i+1}$ mod $W$, and we deduce that $e_{a_k+i}=\lambda A^{-1}e_{a_k+i+1}$ mod $W$ because $W$ is stable under $A^{-1}$.
Hence, by downward induction we get that $e_{a_k+i} \in W$ for all $i \in \lcro 1,d_{k+1}\rcro$, and we conclude that
$V_{k+1} \subset W$.

Therefore, by induction $W$ includes $V_N=\F^n$, which completes the proof of point (a) (see the basic considerations in
Section \ref{additionalNotSection}).

\vskip 3mm
To prove point (b), we set $B:=A^T$ and $W'=\Vect(B^ke_{a_{N-1}+1})_{k \in \Z}$. Note again that $W'$ is stable under $B$ and $B^{-1}$.
We prove by downward induction that $V'_k \subset W'$ for all $k \in \lcro 0,N\rcro$.
This inclusion is trivial if $k=N$. Now, we let $k \in \lcro 0,N-1\rcro$ be such that $V'_{k+1} \subset W$, and we
prove that $V'_k \subset W$.

First, we claim that $e_{a_{k+1}} \in W'$. Indeed:
\begin{itemize}
\item if $k=N-1$ and $d_N=1$ then this is known because $W$ contains $e_{a_{N-1}+1}$;
\item if $k=N-1$ and $d_N>1$, then $B e_{a_{N-1}+1}= \lambda e_{a_N}$ for some $\lambda \in \F$, and
since $B$ is invertible we find $\lambda \neq 0$ and hence $e_{a_N} \in W'$;
\item if $k<N-1$ then we see that $B e_{a_{k+2}}=\lambda e_{a_{k+1}}$ mod $V'_{k+1}$ for some $\lambda \in \F^*$,
and hence $e_{a_{k+1}} \in W'$ because $W'$ includes $V'_{k+1}$, contains
in particular $e_{a_{k+2}}$, and is stable under $B$.
\end{itemize}
If $d_{k+1}=1$, then the above is enough to see that $W$ includes $V'_k$.
Now, assume that $d_{k+1}>1$.
We see that $Be_{a_k+1} =\lambda e_{a_{k+1}}$ mod $V'_{k+1}$ for some scalar $\lambda$. Since
$W'$ is stable under $B^{-1}$, contains $e_{a_{k+1}}$ and includes $V'_{k+1}$, this yields $e_{a_k+1} \in W'$.
Finally, for all $i \in \lcro 2,d_{k+1}-1\rcro$, we have $Be_{a_k+i}=\lambda e_{a_k+i-1}$ mod $\Vect(e_{a_{k+1}})+V'_{k+1}$
for some $\lambda \in \F$, whence
$Be_{a_k+i}=\lambda e_{a_k+i-1}$ mod $W'$. Using once more the fact that $W'$ is stable under $B^{-1}$, we obtain by
induction that $e_{a_k+i} \in W'$ for all $i \in \lcro 1,d_{k+1}-1\rcro$.
Hence we have shown that $V'_k \subset W'$.

Therefore, by downward induction we find $\F^n=V'_0 \subset W'$, which shows that $e_{a_{N-1}+1}$ is a cyclic vector for the invertible matrix $A^T$.
\end{proof}

\section{Well-partitioned matrices}\label{wellpartSection}

\subsection{Definition}

\begin{Def}
A square matrix $M$ is called \textbf{well-partitioned} if there are positive integers $r$ and $s$
and monic polynomials $p_1,\dots,p_r,q_1,\dots,q_s$ in $\F[t]$ such that:
\begin{enumerate}[(i)]
\item $M=C(p_1) \oplus \cdots \oplus C(p_r) \oplus C(q_1) \oplus \cdots \oplus C(q_s)$;
\item $\deg p_i \geq 2$ for all $i \in \lcro 2,r\rcro$;
\item $\deg q_j \geq 2$ for all $j \in \lcro 1,s-1\rcro$;
\item Each polynomial $p_i$ is coprime to each polynomial $q_j$.
\end{enumerate}
Note that the polynomials $p_1,\dots,p_r,q_1,\dots,q_s$ are then uniquely determined by $M$
(beware that in (i) we really require an \emph{equality} and not a mere similarity).

If in addition at most one of $p_1$ and $q_s$ has degree $1$, we say that $M$ is \textbf{very-well-partitioned}.
\end{Def}

\subsection{Reducing a square matrix with the help of a well-partitioned matrix}

Here, we prove the following results. They are variations of a lemma that was proved in \cite{dSP3square} (lemma 3.1 there).

\begin{lemma}\label{VWPlemma}
Let $M \in \Mat_n(\F)$. Assume that $M$ has at least $\frac{n}{2}$ Jordan cells of size $1$ for the eigenvalue $0$.
Then, there exist non-negative integers $p,q,r$ such that $p+q+r=n$, a matrix $N \in \Mat_p(\F)$ and a scalar $\alpha \in \F \setminus \{0\}$
such that
$$M\simeq N \oplus \alpha\,I_q\oplus 0_{r}, \quad r \geq q,$$
and either $N$ is void, or $N$ is nilpotent and $q=0$, or $N$ is very-well-partitioned.
\end{lemma}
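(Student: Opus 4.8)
The plan is to work with the rational canonical form of $M$ and peel off the "small" pieces carefully. First I would put $M$ in its primary canonical decomposition, isolating the part with eigenvalue $0$. Since $M$ has at least $n/2$ Jordan cells of size $1$ for the eigenvalue $0$, we may write $M \simeq M' \oplus 0_s$ where $0_s$ collects those $s \geq n/2$ trivial cells and $M'$ is an $(n-s)$-dimensional matrix carrying everything else (including any nontrivial nilpotent part, which now lives entirely inside $M'$). The key numerical fact is then $\dim M' = n - s \leq n/2 \leq s$, which is what ultimately supplies the inequality $r \geq q$ we want in the conclusion.

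Next I would analyze $M'$. Writing its invariant factors, or rather grouping companion blocks by the monic irreducible dividing them, I would separate out: a diagonalizable part $\alpha_1 I_{q_1} \oplus \cdots \oplus \alpha_m I_{q_m}$ coming from linear invariant factors $(t-\alpha_i)$ with $\alpha_i \neq 0$ that occur with multiplicity $\geq 1$, and the rest, call it $N_0$, built from companion matrices of polynomials that are either nonlinear or, if linear, occur only once. The subtlety is that for a matrix to be \emph{very-well-partitioned} we are allowed \emph{at most one} companion block of size $1$ among $C(p_1)$ and $C(q_s)$ combined, and those two blocks may belong to eigenvalue $0$ or not. So if $N_0$ still contains two or more size-$1$ companion blocks (necessarily for distinct eigenvalues, since repeated linear factors were shunted into the diagonal part), I would keep exactly one of them inside $N_0$ and move the surplus ones into the diagonal summand; a size-$1$ block $C(t-\beta)$ with $\beta \neq 0$ is literally the $1\times 1$ scalar $\beta I_1$, so it merges harmlessly into the $\alpha\,I_q$ term — except we need a \emph{single} scalar $\alpha$, which forces a case split.

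Here is where the real case analysis happens, and this is the main obstacle. The statement only allows $N \oplus \alpha I_q \oplus 0_r$ with one scalar $\alpha$, so I must argue that at most one distinct nonzero scalar's worth of "extra" diagonal entries survives. Concretely: if after forming $N_0$ (keeping at most one size-$1$ block) there is still more than one distinct nonzero eigenvalue appearing with a genuinely diagonal block, I would instead absorb all but one such eigenvalue's blocks \emph{into $N_0$ as honest companion blocks of size $1$} — but that is forbidden by the "at most one size-$1$ block" rule, so instead I absorb them by pairing. The cleaner route, following lemma 3.1 of \cite{dSP3square}, is: let $q$ be the total number of size-$1$ companion blocks of $M'$ associated to nonzero eigenvalues, \emph{minus at most one} that we choose to retain in $N$; if the retained block exists, put it as $C(p_1)$ (or as $C(q_s)$) and let $N$ be the very-well-partitioned matrix assembled from $N_0$ together with that block (checking coprimality of the $p_i$'s against the $q_j$'s, which is automatic since distinct monic irreducibles are coprime); if no such block is retained, $N$ is well-partitioned with \emph{both} $p_1$ and $q_s$ of degree $\geq 2$, hence a fortiori very-well-partitioned. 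Then the remaining $q$ diagonal entries are scalars $\beta_1,\dots,\beta_q$ which need \emph{not} be equal — so to get a single $\alpha I_q$ I would finally observe that we are free to \emph{not} extract them as a scalar block at all but instead feed them into $N$ as size-$1$ companion blocks, unless doing so violates the size-$1$ constraint; the honest resolution is that the degenerate cases — $N$ void, or $N$ nilpotent with $q=0$ — are exactly the escape hatches for when the eigenvalue structure is too poor (all of $M'$ nilpotent) or too rigid (everything forced into scalars). I would enumerate: (1) $M'$ void: take $N$ void, $q=0$; (2) $M'$ nilpotent and nonzero: then $N := M'$ is nilpotent, set $q=0$, and $r = s \geq n/2 \geq n-s = p$; (3) otherwise $M'$ has a nonzero eigenvalue, and I build $N$ very-well-partitioned as above with $q$ chosen so that $\alpha I_q$ collects a maximal block of equal nonzero scalar entries not needed to make $N$ very-well-partitioned, pushing any mismatched scalars back into $N$ as the two allowed size-$1$ companion blocks (at eigenvalues $0$ and the odd one out). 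Finally $p+q+r = n$ by construction and $r \geq q$ follows from $r \geq s - q' \geq n/2 - q'$ combined with $q \leq n/2$; I would verify this inequality bookkeeping explicitly at the end.

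The step I expect to fight hardest with is precisely reconciling the "single scalar $\alpha$" requirement with the "at most one size-$1$ companion block" requirement: these two constraints pull in opposite directions, and the degenerate alternatives ($N$ void, or $N$ nilpotent with $q=0$) must be shown to cover exactly the configurations where neither a clean very-well-partitioned $N$ nor a clean single-scalar extraction is possible. Tracking the inequality $r \geq q$ through all the reshuffling is routine but needs care.
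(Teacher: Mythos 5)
Your overall skeleton (split off the size-one nilpotent cells, a nontrivial nilpotent part, an invertible ``well-partitioned'' core, and a scalar block, then use $\dim B\leq n/2\leq s$ for $r\geq q$) is the same as the paper's. But the step you yourself flag as the hardest --- reconciling the single scalar $\alpha$ in $\alpha I_q$ with the ``at most one size-$1$ companion block'' constraint on a very-well-partitioned matrix --- is never actually resolved, and your proposed workarounds do not work. Absorbing surplus scalar eigenvalues into $N$ as extra size-$1$ companion blocks violates conditions (ii)--(iii) of the definition, and ``absorbing them by pairing'' is left unexplained. Consider $M'\simeq 2I_2\oplus 3I_2$: your scheme produces two competing scalar blocks and no legal way to place either.

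The missing idea is to take the rational canonical form (invariant factors), not the elementary-divisor decomposition, of the invertible part $B$. Since the invariant factors form a divisibility chain $p_1\mid p_2\mid\cdots$, any two of them that have degree $1$ are \emph{equal}; hence all degree-$1$ invariant factors of $B$ share a single root $\alpha$, and $B\simeq B'\oplus\alpha I_q$ where $B'$ is a direct sum of invertible companion blocks of size at least $2$. This is exactly why one scalar suffices: in the example above, $2I_2\oplus 3I_2\simeq C\bigl((t-2)(t-3)\bigr)\oplus C\bigl((t-2)(t-3)\bigr)$, i.e.\ the two distinct eigenvalues combine into repeated degree-$2$ companion blocks that go into $B'$ (note that the $q_j$'s of a well-partitioned matrix need not be pairwise coprime --- only coprimality against the $p_i$'s is required, and that is automatic since the $p_i$'s here are powers of $t$ while the $q_j$'s have nonzero constant term). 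With $A\simeq 0_m\oplus A'$ ($A'$ nilpotent with cells of size $\geq 2$, $m\geq n/2$) and $B\simeq B'\oplus\alpha I_q$ in hand, the four cases according to whether $A'$ and $B'$ are void (stealing a $0_1$ or an $\alpha I_1$ to keep $N$ very-well-partitioned when needed) and the inequality $r\geq q$ are indeed routine, as you anticipated; but without the invariant-factor observation the proof does not go through.
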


\begin{lemma}\label{VWPlemmaunstable}
Let $M \in \Mat_n(\F)$. Assume that $M$ has at most one Jordan cell of size $1$ for each one of its eigenvalues in $\F$,
and that the characteristic polynomial of $M$ is not a power of some irreducible polynomial.
Then, $M$ is similar to a well-partitioned matrix.
\end{lemma}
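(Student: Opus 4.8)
The plan is to regroup the blocks of a primary canonical form of $M$ into two families with coprime characteristic polynomials. Since $\chi_M$ is not a power of an irreducible polynomial, it has at least two distinct monic irreducible divisors, and I choose a partition of the set of these divisors into two nonempty subsets $S_1$ and $S_2$. Writing $M$ in its primary canonical form, i.e.\ as a direct sum $\bigoplus_k C(f_k^{e_k})$ with each $f_k$ monic irreducible, I gather the blocks according to whether $f_k \in S_1$ or $f_k \in S_2$. This yields $M \simeq M_1 \oplus M_2$, where $\chi_{M_1}$ and $\chi_{M_2}$ are coprime and $M_1$, $M_2$ are both nonvoid (each $S_i$ contains at least one irreducible divisor of $\chi_M$, hence $M_i$ has at least one block).

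Next I would put each $M_i$ into the required shape. A primary block $C(f^e)$ has degree $1$ exactly when $f = t-\lambda$ for some $\lambda \in \F$ and $e=1$, that is, when it is a $1$ by $1$ block $[\lambda]$, which is the same as a Jordan cell of size $1$ for the eigenvalue $\lambda$. By hypothesis there is at most one such block for each $\lambda$, so the degree-$1$ blocks occurring in $M_1$ are of the form $[\lambda_1],\dots,[\lambda_a]$ with the $\lambda_j$ pairwise distinct. Using the Chinese Remainder Theorem in the form $C(g)\oplus C(h)\simeq C(gh)$ for coprime $g,h$, I merge these blocks: if $a\geq 2$, I pair them off as $[\lambda_1]\oplus[\lambda_2]\simeq C\bigl((t-\lambda_1)(t-\lambda_2)\bigr)$, and so on, throwing three together once when $a$ is odd, so that after the merges no degree-$1$ block survives; the merges are legitimate precisely because the $\lambda_j$ are distinct. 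After this, $M_1$ is similar to a direct sum of companion blocks at most one of which has degree $1$, and placing that exceptional block (if present) first we obtain $M_1 \simeq C(p_1)\oplus\cdots\oplus C(p_r)$ with $\deg p_i \geq 2$ for $2 \leq i \leq r$. Running the same argument on $M_2$, but placing its exceptional degree-$1$ block last, gives $M_2 \simeq C(q_1)\oplus\cdots\oplus C(q_s)$ with $\deg q_j \geq 2$ for $1 \leq j \leq s-1$.

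Finally, each $p_i$ divides $\chi_{M_1}$ and each $q_j$ divides $\chi_{M_2}$, and these two polynomials are coprime, so $p_i$ is coprime to $q_j$ for all $i,j$; hence $C(p_1)\oplus\cdots\oplus C(p_r)\oplus C(q_1)\oplus\cdots\oplus C(q_s)$ is well-partitioned and similar to $M$. The argument is essentially bookkeeping; the only point that genuinely uses the hypotheses is the combination of "at least two irreducible divisors'' (ensuring both families are nonempty, so $r,s\geq 1$) with "at most one size-$1$ Jordan cell per eigenvalue'' (ensuring the degree-$1$ blocks of each $M_i$ come from distinct eigenvalues and can therefore be merged away). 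I do not anticipate any serious obstacle beyond tracking these constraints and the several degenerate cases ($r=1$ or $s=1$, $a\leq 1$, families consisting only of degree-$1$ blocks), where conditions (ii)--(iii) of the definition hold vacuously.
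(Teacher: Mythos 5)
Your proof is correct and follows the same high-level outline as the paper's: split $M \simeq M_1 \oplus M_2$ into nonvoid pieces with coprime characteristic polynomials via the primary decomposition, then exhibit each piece as a direct sum of companion blocks with at most one block of degree~$1$, placed at the appropriate end. The two proofs differ in the mechanism used for the second step. The paper passes directly to the invariant-factor decomposition of each piece: because invariant factors form a divisibility chain of monic polynomials, at most one of them can have degree~$1$ (two such would have to be equal, yielding two identical size-$1$ Jordan cells and contradicting the hypothesis), so the list is already in the desired shape. You instead keep the primary blocks and merge the degree-$1$ blocks together via CRT, exploiting that — by the hypothesis — those blocks are attached to pairwise distinct eigenvalues and are therefore mergeable. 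Both routes are valid; the paper's use of invariant factors avoids the merging step and a couple of the degenerate cases you flag at the end. One small simplification of your version: rather than pairing off the degree-$1$ blocks two at a time and throwing three together once when $a$ is odd, simply merge all $a \geq 2$ of them in one go into $C\bigl((t-\lambda_1)\cdots(t-\lambda_a)\bigr)$, which has degree $a \geq 2$, so that after the merge $M_1$ has no degree-$1$ block left at all.
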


We start with the proof of Lemma \ref{VWPlemmaunstable} as it is easier:

\begin{proof}[Proof of Lemma \ref{VWPlemmaunstable}]
Since the characteristic polynomial of $M$ is not a power of an irreducible polynomial,
we deduce from the primary canonical form that we can split
$M \simeq A \oplus B$ in which $A$ and $B$ are nonvoid square matrices with coprime characteristic polynomials.
We write the invariant factors of $A$ as $p_1,\dots p_a$ and the ones of $B$ as $q_1,\dots,q_b$.
There is at most one integer $k$ for which $p_k$ is constant, otherwise $M$ would have several Jordan cells
of size $1$ for one of its eigenvalues in $\F$. Likewise, there is at most one integer $k$ such that $q_k$ is nonconstant.
Hence, the matrix
$$M':=C(p_a) \oplus \cdots \oplus C(p_1) \oplus C(q_1) \oplus \cdots \oplus C(q_b)$$
is well-partitioned, and obviously $M \simeq M'$.
\end{proof}

\begin{proof}[Proof of Lemma \ref{VWPlemma}]
The proof strategy is similar to the previous one, only the details differ.
If $0$ is the sole eigenvalue of $M$ in an algebraic closure of $\F$, then we take $N:=M$, $q=r=0$ and $\alpha=1$.
Assume now that the contrary holds. Then, $M \simeq A \oplus B$
in which $A$ is nilpotent and $B$ is invertible, both of them nonvoid.
The assumptions on $M$ show that the size of $A$ is at least $\frac{n}{2}$, and hence the one of $B$ is at most $\frac{n}{2}\cdot$
Using the rational canonical form, we find that $A \simeq 0_m \oplus A'$, where
$A'$ is the direct sum of companion matrices associated with polynomials of the form $t^i$ with $i \geq 2$
(possibly $A'$ is void).
Note that $m$ is the number of Jordan cells of size $1$ for the eigenvalue $0$ of $M$, whence $m \geq \frac{n}{2}$.
Moreover, the rational canonical form of $B$ can be written $B \simeq B' \oplus \alpha I_q$, in which $B'$ is the direct sum of invertible companion matrices with size at least $2$,
and $\alpha$ is a nonzero scalar (possibly $q=0$ here, in which case we take $\alpha=1$).
\begin{itemize}
\item If $A'$ and $B'$ are both nonvoid, then $A' \oplus B'$ is very-well-partitioned and
$M \simeq (A' \oplus B') \oplus \alpha I_q \oplus 0_m$.
Note that $q \leq \frac{n}{2} \leq m$ in that case.
\item If $A'$ is void but $B'$ is not, then $0_1 \oplus B'$
is very-well-partitioned and
$M \simeq (0_1 \oplus B') \oplus \alpha I_q \oplus 0_{m-1}$.
Note that $q\leq \frac{n}{2}-2 \leq m-1$ in that case.
\item If $A'$ is nonvoid but $B'$ is void, then $q>0$, $A' \oplus \alpha I_1$ is very-well-partitioned,
$M \simeq (A' \oplus \alpha I_1) \oplus \alpha I_{q-1} \oplus 0_m$, and again $q-1 \leq m$.
\item If $A'$ and $B'$ are both void, then $M \simeq \alpha I_q \oplus 0_m$ with $q\leq \frac{n}{2} \leq m$, and
the first possible outcome is satisfied.
\end{itemize}
\end{proof}

Actually, we will not use Lemma \ref{VWPlemma} directly but in the form of the following corollary.
It is easily deduced from the standard observation that, for every monic polynomial $p(t)\in \F[t]$ with degree $k$,
and every $\beta \in \F$, the matrix $C(p(t))+\beta I_k$ is similar to $C(p(t-\beta))$.

\begin{prop}\label{VWPcor}
Let $M \in \GL_n(\F)$. Assume that, for some nonzero scalar $\beta$, $M$ has at
least $\frac{n}{2}$ Jordan cells of size $1$ for the eigenvalue $\beta$.
Then, there exist non-negative integers $p,q,r$ such that $p+q+r=n$, a matrix $N \in \GL_p(\F)$ and a scalar $\alpha \in \F \setminus \{\beta\}$
such that
$$M\simeq N \oplus \alpha\,I_q\oplus \beta I_{r}, \quad r \geq q,$$
and either $N$ is void, or $N-\beta I_p$ is nilpotent and $q=0$, or $N$ is very-well-partitioned.
\end{prop}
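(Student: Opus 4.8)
The plan is to deduce Proposition \ref{VWPcor} from Lemma \ref{VWPlemma} by the elementary device of translating by $\beta I_n$. First I would pass from $M$ to $M':=M-\beta I_n$: the Jordan cells of $M$ of size $1$ for the eigenvalue $\beta$ are precisely the Jordan cells of $M'$ of size $1$ for the eigenvalue $0$, so the hypothesis guarantees that $M'$ has at least $\frac n2$ Jordan cells of size $1$ for $0$. Lemma \ref{VWPlemma} then supplies non-negative integers $p,q,r$ with $p+q+r=n$, a matrix $N_0\in\Mat_p(\F)$ and a scalar $\alpha_0\in\F\setminus\{0\}$ such that $M'\simeq N_0\oplus\alpha_0 I_q\oplus 0_r$ with $r\geq q$, and such that $N_0$ is either void, or nilpotent with $q=0$, or very-well-partitioned.

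Next I would translate back by adding $\beta I_n$, which gives $M\simeq (N_0+\beta I_p)\oplus(\alpha_0+\beta)I_q\oplus\beta I_r$. I set $\alpha:=\alpha_0+\beta$; then $\alpha-\beta=\alpha_0\neq 0$, so $\alpha\in\F\setminus\{\beta\}$, as required. The integers $p,q,r$ and the inequality $r\geq q$ are inherited verbatim from Lemma \ref{VWPlemma}. Invertibility of the $p\times p$ block is automatic: $\det M$ is the product of $\det(N_0+\beta I_p)$, $\alpha^q$ and $\beta^r$, and since $\det M\neq 0$ each factor is nonzero, whence $N_0+\beta I_p\in\GL_p(\F)$.

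It then remains to transport the structural trichotomy on $N_0$ to the block $N_0+\beta I_p$, and this is the only point that requires any care, precisely because being (very-)well-partitioned is defined through an \emph{equality} of matrices rather than a similarity. If $N_0$ is void there is nothing to do; if $N_0$ is nilpotent with $q=0$, then $(N_0+\beta I_p)-\beta I_p=N_0$ is nilpotent with $q=0$. If $N_0$ is very-well-partitioned, write $N_0=C(p_1)\oplus\cdots\oplus C(p_r)\oplus C(q_1)\oplus\cdots\oplus C(q_s)$ as in the definition; by the standard observation recalled just before the statement, $C(p_i)+\beta I\simeq C\bigl(p_i(t-\beta)\bigr)$ and likewise for the $q_j$'s, so $N_0+\beta I_p$ is similar to $N:=C(p_1(t-\beta))\oplus\cdots\oplus C(p_r(t-\beta))\oplus C(q_1(t-\beta))\oplus\cdots\oplus C(q_s(t-\beta))$. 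Since $t\mapsto t-\beta$ is a degree-preserving ring automorphism of $\F[t]$, conditions (ii)--(iv) of the definition of a well-partitioned matrix, as well as the ``at most one degree-$1$ block among $p_1,q_s$'' condition, all carry over, so $N$ is genuinely very-well-partitioned, and $M\simeq N\oplus\alpha I_q\oplus\beta I_r$, which finishes the argument. Thus the real content is entirely contained in Lemma \ref{VWPlemma}, and I do not expect any serious obstacle beyond the routine verification that the shift $t\mapsto t-\beta$ preserves the ``very-well-partitioned'' structure.
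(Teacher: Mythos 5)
Your proof is correct and is exactly the argument the paper intends: the paper does not write out a proof of Proposition \ref{VWPcor} but states that it "is easily deduced" from Lemma \ref{VWPlemma} together with the observation that $C(p(t))+\beta I_k \simeq C(p(t-\beta))$, which is precisely the translation device you use. Your verification that the shift $t\mapsto t-\beta$ preserves degrees, coprimality, and hence the (very-)well-partitioned structure, and that invertibility of the blocks follows from $\det M\neq 0$, fills in the routine details correctly.
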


\subsection{Adjacency results for cyclic or well-partitioned matrices}

\begin{prop}\label{cyclicfit1}
Let $A \in \GL_n(\F)$ be an invertible cyclic matrix and $p$ be a monic polynomial of degree $n$ such that $N(p)=\pm \det A$.

If $n$ is odd or $N(p)=-\det A$ then $A$ is i-adjacent to $C(p)$.
\end{prop}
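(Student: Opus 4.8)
The plan is to reduce to the case $A=C(\chi_A)$ and then exhibit a completely explicit involution. Since $A$ is cyclic we have $A\simeq C(\chi_A)$, and since the relation $\iadj$ is invariant under similarity on both arguments, it is enough to produce an involution $S\in\GL_n(\F)$ such that $S\,C(\chi_A)=C(p)$; then $C(\chi_A)\iadj C(p)$, whence $A\iadj C(p)$.

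First I would settle the case $N(p)=-\det A$ (with no parity hypothesis). Write $\chi_A(t)=t^n-\sum_{k=0}^{n-1}a_k t^k$ and $p(t)=t^n-\sum_{k=0}^{n-1}b_k t^k$. Then $a_0=(-1)^{n-1}\det A\neq 0$ and $b_0=(-1)^{n-1}N(p)$, so the hypothesis amounts to $b_0=-a_0$. Because $C(\chi_A)$ and $C(p)$ differ only in their last column, we have $C(p)-C(\chi_A)=c\,e_n^T$, where $c:=(b_0-a_0,\,b_1-a_1,\,\dots,\,b_{n-1}-a_{n-1})^T$. Now set
$$S:=I_n+\frac{1}{a_0}\,c\,e_1^T .$$
Since the first row of $C(\chi_A)$ is $a_0\,e_n^T$, we get $S\,C(\chi_A)=C(\chi_A)+\frac{1}{a_0}\,c\,(a_0\,e_n^T)=C(\chi_A)+c\,e_n^T=C(p)$. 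Writing $S=I_n+v\,e_1^T$ with $v:=\frac{1}{a_0}c$, one has $S^2=I_n+\bigl(2+e_1^T v\bigr)\,v\,e_1^T$, and here $2+e_1^T v=2+\frac{b_0-a_0}{a_0}=\frac{a_0+b_0}{a_0}=0$; hence $S^2=I_n$ and $S$ is an involution. (Equivalently, $C(p)\,C(\chi_A)^{-1}$ is an involution precisely because $b_0=-a_0$; note that this single computation is valid in every characteristic, so no separate treatment of $\charac(\F)=2$ is needed.)

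It remains to handle the case where $n$ is odd and $N(p)=\det A$. Here I would simply apply the previous case to $-A$ in place of $A$: it is still an invertible cyclic matrix, and $\det(-A)=(-1)^n\det A=-\det A$ because $n$ is odd, so that $N(p)=-\det(-A)$. The previous case then gives an involution $T$ with $T(-A)\simeq C(p)$; since $-T$ is again an involution and $(-T)A=T(-A)\simeq C(p)$, we conclude that $A\iadj C(p)$. (Conceptually: $A\iadj C(p)$ if and only if $(-A)\iadj C(p)$, since $S\mapsto -S$ permutes the involutions; the oddness of $n$ is exactly what lets the sign flip turn the ``same determinant'' situation into the ``opposite determinant'' one.)

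I do not expect a genuine obstacle here. The heart of the matter is the elementary identity $2+\frac{b_0-a_0}{a_0}=\frac{a_0+b_0}{a_0}$, which forces $S^2=I_n$ exactly under the minus-sign norm condition, combined with the $A\mapsto -A$ reduction for odd $n$. The only points requiring a little care are the two bookkeeping facts used above, namely that the first row of the companion matrix $C(\chi_A)$ equals $a_0\,e_n^T$ and that $C(\chi_A)$ and $C(p)$ differ only in their last columns.
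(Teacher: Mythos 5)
Your proof is correct and follows essentially the same route as the paper's: the involution $S=I_n+\frac{1}{a_0}c\,e_1^T$ you construct is literally the matrix the paper writes down entrywise, and your reduction of the case ($n$ odd, $N(p)=\det A$) to the first case by replacing $A$ with $-A$ is the same sign-flip trick the paper performs on the polynomial (via $q:=-p(-t)$) rather than on the matrix.
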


\begin{proof}
Assume first that $N(p)=-\det A$, and write $p=t^n-\underset{k=0}{\overset{n-1}{\sum}} b_k\, t^k$.
Without loss of generality, we can assume that $A=C(r)$ for some monic polynomial $r=t^n-\underset{k=0}{\overset{n-1}{\sum}} a_k\, t^k$.
Hence, $a_0=-b_0$. Note that $a_0 \neq 0$ since $A$ is invertible.
Define then $S=(s_{i,j}) \in \Mat_n(\F)$ as the matrix such that $s_{i,1}=\frac{b_{i-1}-a_{i-1}}{a_0}$ for all $i \in \lcro 2,n\rcro$,
$s_{1,1}=-1$, $s_{i,i}=1$ for all $i \in \lcro 2,n\rcro$, and all the other entries equal zero. Then,
it is easily seen that $S^2=I_n$ and that $S\,C(r)=C(p)$.

Assume now that $n$ is odd and $N(p)=\det A$. Set $q:=-p(-t)$, so that $N(q)=-\det A$.
Then, there is an involution $S$ such that $SA \simeq C(q)$.
Hence, $(-S)A \simeq -C(q) \simeq C(p)$.
\end{proof}

With a similar proof, we obtain the following result (in the definition of $S$ from the above proof, it suffices to replace
the entry at the $(1,1)$-spot with $1$).

\begin{prop}\label{cyclicfit2}
Let $A \in \GL_n(\F)$ be an invertible cyclic matrix and $p$ be a monic polynomial of degree $n$ such that $N(p)=\det A$.
Then, $A$ is u-adjacent to $C(p)$.
\end{prop}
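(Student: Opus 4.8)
The plan is to follow the proof of Proposition~\ref{cyclicfit1} almost verbatim, with the single modification flagged in the text (the $(1,1)$-entry of the transforming matrix becomes $1$ instead of $-1$). Since $\uadj$ is invariant under similarity (see the Remarks following the definition of adjacency) and $A$ is cyclic, I would first reduce to the case $A=C(r)$ where $r=\chi_A$; write $r=t^n-\sum_{k=0}^{n-1}a_k t^k$ and $p=t^n-\sum_{k=0}^{n-1}b_k t^k$. The hypothesis $N(p)=\det A$ then reads $a_0=b_0$, and $a_0\neq 0$ because $A$ is invertible.

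Next I would exhibit the explicit matrix $U=(u_{i,j})\in \Mat_n(\F)$ with $u_{1,1}=1$, with $u_{i,i}=1$ for all $i\in\lcro 2,n\rcro$, with $u_{i,1}=\dfrac{b_{i-1}-a_{i-1}}{a_0}$ for all $i\in\lcro 2,n\rcro$, and with all remaining entries zero. Then $U-I_n$ has all of its (possibly) nonzero entries in the first column and strictly below the diagonal, whence $(U-I_n)^2=0_n$, i.e.\ $U$ is a $U_2$-matrix. It remains to verify that $U\,C(r)=C(p)$: the first $n-1$ columns of $C(r)$ are $e_2,\dots,e_n$, and each column of $U$ of index at least $2$ equals the corresponding standard basis vector, so these columns are left unchanged; applying $U$ to the last column $(a_0,\dots,a_{n-1})^T$ of $C(r)$ gives $a_0$ times the first column of $U$ plus $\sum_{k=1}^{n-1}a_k e_{k+1}$, which simplifies to $(a_0,b_1,\dots,b_{n-1})^T=(b_0,b_1,\dots,b_{n-1})^T$ using $a_0=b_0$, which is exactly the last column of $C(p)$. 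Hence $U\,C(r)=C(p)$, so $A\uadj C(p)$.

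There is no real obstacle: the argument is a short, self-contained matrix computation, entirely parallel to that of Proposition~\ref{cyclicfit1}. The only conceptual point worth noting is that, in contrast with Proposition~\ref{cyclicfit1}, no parity assumption on $n$ and no substitution $p(t)\mapsto -p(-t)$ is needed here, since a $U_2$-matrix has determinant $1$, so that $\det(UA)=\det A$ automatically; thus $N(p)=\det A$ is precisely the compatibility condition between the two sides, and a single uniform construction suffices.
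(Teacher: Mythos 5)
Your proof is correct and is exactly the argument the paper intends: the paper itself states that one obtains Proposition \ref{cyclicfit2} from the proof of Proposition \ref{cyclicfit1} by replacing the $(1,1)$-entry of the transforming matrix with $1$, and your verification that the resulting matrix is a $U_2$-matrix satisfying $U\,C(r)=C(p)$ (using $a_0=b_0$) fills in the same computation. Nothing is missing.
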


Now, we arrive at the main key of the present study, that can be viewed as a variation of the above two results:

\begin{theo}[Adaptation Theorem]\label{AdaptationTheorem}
Let $M \in \GL_n(\F)$ be an invertible well-partitioned matrix.
\begin{enumerate}[(a)]
\item For every monic polynomial $r \in \F[t]$ with degree $n$ such that $N(r)=\det M$,
the matrix $M$ is u-adjacent to $C(r)$.
\item There exists $\eta \in \{1,-1\}$ such that, for every monic polynomial $r \in \F[t]$ with degree $n$ such that $N(r)=\eta \det M$,
the matrix $M$ is i-adjacent to $C(r)$.
\item If in addition $M$ is very-well-partitioned, then for every monic polynomial $r \in \F[t]$ with degree $n$ such that $N(r)=\pm \det M$,
the matrix $M$ is i-adjacent to $C(r)$.
\end{enumerate}
\end{theo}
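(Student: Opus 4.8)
The strategy is to peel off the companion blocks of the well-partitioned matrix $M$ one at a time, using the Polynomial Fit Lemma (Lemma~\ref{polyfitgen}) together with Roth's Lemma (Lemma~\ref{Rothlemma}) and the BQC recognition result (Lemma~\ref{BQClemma}) to show that at each stage we stay within a controlled class of cyclic-like matrices. Write $M = C(p_1) \oplus \cdots \oplus C(p_r) \oplus C(q_1) \oplus \cdots \oplus C(q_s)$ as in the definition, and set $A := C(p_1) \oplus \cdots \oplus C(p_r)$ and $B := C(q_1) \oplus \cdots \oplus C(q_s)$, so that $\chi_A$ and $\chi_B$ are coprime and both $A$ and $B$ are direct sums of companion matrices with at most one block of size~$1$ (the first one for $A$, the last one for $B$). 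The key observation is that, after multiplying $M$ on the left by a suitable involution or $U_2$-matrix supported ``at the seam'' between the two halves $A$ and $B$, and after conjugating, one lands on an invertible BQC matrix with characteristic list $(d_1,\dots,d_N)$ recording the block sizes; by Lemma~\ref{BQClemma} such a matrix is cyclic, hence similar to the companion matrix of its characteristic polynomial, and that polynomial can be made to equal any prescribed $r$ with the correct norm.

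For part~(a), I would proceed as follows. First reduce to the case $r = s = 1$, i.e.\ $M = C(p_1) \oplus C(q_1)$ with $\gcd(p_1,q_1)=1$: indeed, grouping the $p_i$'s and $q_j$'s shows $M$ is similar to a BQC matrix whose characteristic list consists of the degrees $\deg p_r, \dots, \deg p_1, \deg q_1, \dots, \deg q_s$, so by Lemma~\ref{BQClemma} the first vector $e_{d_1}$ (with $d_1 = \deg p_r$) and the last vector $e_{n-d_N+1}$ (with $d_N = \deg q_s$) are cyclic for $M$ and $M^T$ respectively. Now apply the Polynomial Fit Lemma with $A := C(p_1)\oplus\cdots$, $B := C(q_1)\oplus\cdots$ (each is cyclic since it is BQC and invertible), the cyclic vector $X$ for $B$, and the cyclic vector $Y$ for $A^T$: one obtains $D$ with
$$\begin{vmatrix} tI - A & -D \\ tXY^T & tI - B \end{vmatrix} = r(t),$$
using $N(r) = \det M = (\det A)(\det B)$. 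The matrix $\begin{bmatrix} A & D \\ -XY^T & B\end{bmatrix}$ then has characteristic polynomial $r$; it remains to exhibit a $U_2$-matrix $U$ and a similarity turning $UM$ into this matrix. The point is that $\begin{bmatrix} A & D \\ -XY^T & B\end{bmatrix} = U' \begin{bmatrix} A & 0 \\ 0 & B\end{bmatrix}$ where $U'$ is unipotent of index~$2$ once one checks (using $Y^T A = $ appropriate scalar times first standard covector, which holds because $Y$ is a cyclic vector for $A^T$ of the specific ``good cyclic'' type) that the off-diagonal correction squares to zero, or alternatively one absorbs the block $D$ by a Roth-type conjugation first. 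Either way, the explicit rank-one (or low-rank) nature of the perturbation $tXY^T$ is what makes the relevant matrix $U_2$.

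For parts~(b) and~(c), the same machinery applies but now the ``seam'' matrix must be an \emph{involution} rather than a $U_2$-matrix, which forces a parity or sign constraint: an involution supported near the seam has determinant $\pm 1$, and reconciling this with a prescribed $N(r)$ is only possible for one sign $\eta \in \{1,-1\}$ in general — this $\eta$ is determined by $M$ (concretely, by whether the size $n$ and the relevant block structure allow a seam-involution of determinant $+1$ or only $-1$). For part~(c), the extra hypothesis that $M$ is very-well-partitioned means at most one of $C(p_1), C(q_s)$ has size~$1$; this gives an extra degree of freedom — one can route the seam-involution so as to realize \emph{either} sign of the determinant (using the trick from the proof of Proposition~\ref{cyclicfit1}, replacing $p$ by $-p(-t)$ on a suitable odd-sized cyclic block, which is available precisely because a block of size $\geq 2$ can be split to produce an odd piece). \textbf{The main obstacle} I anticipate is exactly this bookkeeping of signs and parities in parts~(b) and~(c): one must carefully track the determinant of the seam-involution across the reduction to the two-block case, verify that conjugations do not change it, and pin down $\eta$ unambiguously — and then, for~(c), show that the very-well-partitioned hypothesis genuinely removes the constraint rather than merely weakening it. The cyclic case $n=1$ and the degenerate cases where one half is a single size-$1$ block will need to be handled separately as base cases.
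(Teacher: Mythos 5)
Your overall blueprint (multiply $M$ by a quadratic matrix, recognize the product as cyclic via Lemma~\ref{BQClemma}, and tune its characteristic polynomial with Lemma~\ref{polyfitgen} after a Roth conjugation) is the right one, but the execution breaks down at the very first step. You reduce to $r=s=1$ by asserting that $M$ itself is similar to a BQC matrix with characteristic list $(\deg p_r,\dots,\deg q_s)$, so that each of the two coprime halves $A$ and $B$ is cyclic. This is false: a direct sum of companion matrices has \emph{zero} blocks exactly where a BQC matrix requires the nonzero couplings $\beta_k K_{d_{k+1},d_k}$, and a well-partitioned matrix need not be cyclic at all (the definition does not make the $p_i$ pairwise coprime; take $p_1=p_2$ of degree $2$ and any admissible $q_1$, so that $A=C(p_1)\oplus C(p_2)$ is derogatory). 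Consequently Lemma~\ref{polyfitgen}, which requires $A$ and $B$ cyclic, cannot be applied to the two halves of $M$, and no quadratic matrix supported only ``at the seam'' can turn a derogatory direct sum into a cyclic matrix. The paper's proof avoids this by building the involution/$U_2$-matrix $S$ with a nonzero corner block $L$ between \emph{every} pair of consecutive companion blocks, not just between the two halves; these couplings are what make $S_1M_1$ and $S_2M_2$ (and then $SA_U$) block-quasi-companion, hence cyclic, and the Polynomial Fit Lemma is applied to $S_1M_1$ and $S_2M_2$, not to $M_1$ and $M_2$.

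Two further points. First, the matrix $\begin{bmatrix} A & D\\ -XY^T & B\end{bmatrix}$ does not have characteristic polynomial $r$: because of the factor $t$ in the block $tXY^T$, the determinant produced by Lemma~\ref{polyfitgen} equals $\det(tE-F)$ with $E=\begin{bmatrix} I & 0\\ XY^T & I\end{bmatrix}$ unipotent and $F=\begin{bmatrix} A & D\\ 0 & B\end{bmatrix}$, i.e.\ it is the characteristic polynomial of $E^{-1}F$, the product of a genuine $U_2$-matrix with a Roth-conjugate of $A\oplus B$. This is in fact the mechanism you want for part (a) and it disposes of your worry about whether the seam correction ``squares to zero'' (it does, being block strictly triangular of rank one), but you would still have to prove that $E^{-1}F$ is cyclic, which again requires the BQC structure you have not established. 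Second, for (b) and (c) you only name the sign bookkeeping as an anticipated obstacle without resolving it; the paper settles it by taking the diagonal blocks of $S$ to be $I_{k-1}\oplus(-I_1)$, so that $\eta=\det S$ is a definite power of $-1$ determined by the block structure, and, in the very-well-partitioned case, by flipping one further diagonal entry of $S$ inside a block of size at least $2$ to reach the opposite sign.
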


\begin{proof}
Let $\varepsilon \in \{1,-1\}$.
Denote by $p_1,\dots,p_u,q_1,\dots,q_v$ the polynomials associated with the well-partitioned matrix $M$,
and by $n_1,\dots,n_u,m_1,\dots,m_v$ their respective degrees.
For $k \in \N^*$, set $U_k:=I_{k-1} \oplus (\varepsilon I_1)$.
Set
$$S:=\begin{bmatrix}
U_{n_1} & 0_{n_1 \times n_2} &  & & & & & (0) \\
L_{n_2,n_1} & U_{n_2} & \ddots & \\
(0) & \ddots & \ddots &   \\
 &  & L_{n_u,n_{u-1}} & U_{n_u} & 0_{n_u \times m_1} &  \\
 & & & L_{m_1,n_u} & U_{m_1} & 0_{m_1 \times m_2} & &  \\
\vdots & & & & L_{m_2,m_1} & U_{m_2} & \ddots &  \\
 & & & &   & \ddots & \ddots &  0_{m_{v-1} \times m_v} \\
(0) & & & \cdots &   & (0)  & L_{m_{v},m_{v-1}} & U_{m_v}
\end{bmatrix}.$$
Using the fact that $n_2>1,\dots,n_u>1,m_1>1,\dots,m_{v-1}>1$,
it is easily seen that $(S-I_n)(S-\varepsilon I_n)=0$.
Note that $\det S$ is a power of $\varepsilon$.
From now on, we let $r(t) \in \F[t]$ be an arbitrary monic polynomial with degree $n$ such that $N(r)=\alpha\det M$
for some $\alpha \in \{1,-1\}$.

Next, set $a=\underset{k=1}{\overset{u}{\sum}}n_k$ and $b=\underset{k=1}{\overset{v}{\sum}}m_k$, and let
$U \in \Mat_{a,b}(\F)$.
We can rewrite
$$M=\begin{bmatrix}
M_1 & 0_{a \times b} \\
0_{b \times a} & M_2
\end{bmatrix} \quad \text{and} \quad S=\begin{bmatrix}
S_1 & 0_{a \times b} \\
? & S_2
\end{bmatrix},$$
where $S_1,M_1$ belong to $\GL_a(\F)$, and $S_2,M_2$ belong to $\GL_b(\F)$.
Along the same format, set
$$A_U:=\begin{bmatrix}
M_1 & U \\
0_{b \times a} & M_2
\end{bmatrix}.$$
In order to conclude, it would suffice to prove that $U$ can be chosen so that
$$SA_U \simeq C(r).$$
Assume indeed that such a matrix $U$ exists. Lemma \ref{Rothlemma} shows that
$A_U=Q^{-1}MQ$ for some $Q \in \GL_n(\F)$. The matrix $\widetilde{S}:=QSQ^{-1}$ is then annihilated by $(t-1)(t-\varepsilon)$ and it satisfies
$$\widetilde{S}M=Q(SA_U)Q^{-1} \simeq C(r),$$
which will conclude the proof.

In order to obtain the claimed existence, we look more closely at $SA_U$.
Note first that $\det (SA_U)=\det S \det M$.
One computes that
$$S M=\begin{bmatrix}
S_1 M_1 & 0_{a \times b} \\
L & S_2 M_2
\end{bmatrix}$$
where $$L:=\begin{bmatrix}
0_{m_1 \times (a-n_u)} & -p_u(0)\,K_{m_1,n_u} \\
0_{(b-m_1) \times (a-n_u)} & 0_{(b-m_1) \times n_u}
\end{bmatrix}.$$
Moreover, one computes that both $S_1M_1$ and $S_2M_2$ are BQC matrices with respective characteristic lists
$(n_1,\dots,n_u)$ and $(m_1,\dots,m_v)$.

Finally, and this is crucial, one carefully checks that $SA_U$ is itself block-quasi-companion with characteristic list
$(n_1,\dots,n_u,m_1,\dots,m_v)$. Hence, by Lemma \ref{BQClemma} the invertible matrix $SA_U$ is cyclic.
In order to conclude, it suffices to prove that $U$ can be adjusted so that
the characteristic polynomial of $SA_U$ be $r(t)$.

We can split $S=N S'$ where
$$S':=\begin{bmatrix}
S_1 & 0_{a \times b} \\
0_{b \times a} & S_2
\end{bmatrix}$$
and $N$ is the transvection matrix that acts on rows by adding to the $(a+m_1)$-th row the product of $\lambda$ with the $(a-n_u+1)$-th
row for some fixed nonzero scalar $\lambda \in \F \setminus \{0\}$.
Denote by $X$ the $m_1$-th vector of the standard basis of $\F^b$, and by $Y$ the $(a-n_u+1)$-th vector of the one of $\F^a$.
Then,
$$\det (t I_{a+b}-S A_U)=\det(t N^{-1} -S'A_U)
=\begin{vmatrix}
t I_a-S_1M_1 & -S_1 U \\
t (-\lambda X Y^T)   &  t I_b-S_2M_2 \\
\end{vmatrix}.$$
By Lemma \ref{BQClemma}, $X$ is cyclic for $S_2M_2$, and hence so is $-\lambda X$, and $Y$ is cyclic for $(S_1M_1)^T$.
If $\alpha=\det S$, Lemma \ref{polyfitgen} yields a matrix $U' \in \Mat_{a,b}(\F)$ such that
$$\begin{vmatrix}
t I_a-S_1M_1 & U' \\
t (-\lambda X Y^T)   &  t I_b-S_2M_2 \\
\end{vmatrix}=r(t)$$
and hence the matrix $U:=-S_1^{-1}U'$ satisfies the required conditions.

Now, we can conclude.
\begin{itemize}
\item If $\alpha=1$, then we take $\varepsilon:=1$ and we obtain $M \uadj C(r)$.
\item If $\varepsilon=-1$ and $\alpha=\det S$, then we obtain $M \iadj C(r)$.
\item Assume finally that $M$ is very-well-partitioned, that $\varepsilon=-1$ and that $\alpha=-\det S$.
Then, we can do a simple modification in the matrix $S$ that leaves all the arguments
of the above proof intact but yields a new involution $S$ of $\GL_n(\F)$ such that $\det S=\alpha$:
if $n_1>1$, we can safely replace the $n_1$-th diagonal entry of $S$ with its opposite;
otherwise $m_v>1$ because $M$ is very-well-partitioned, and then we can safely replace the $(n-m_v+1)$-th diagonal entry of $S$
with its opposite.
\end{itemize}
Hence, points (a), (b) and (c) are proved.
\end{proof}

\subsection{Decomposition of cyclic or well-partitioned matrices}

We start with a result that is widely known in the case of products of three involutions.

\begin{prop}\label{cyclicdecomp}
Let $p\in \F[t]$ be a monic polynomial with norm $\pm 1$. Let $k \in \{0,1,2\}$.
Then, $C(p)$ is the product of $k$ unipotent matrices of index $2$ and $3-k$ involutions.

Moreover, if $p$ has norm $1$ then $C(p)$ is the product of three $U_2$-matrices.
\end{prop}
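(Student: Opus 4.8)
The plan is to reduce the decomposition of $C(p)$ to an adjacency problem and then feed it into the Adaptation Theorem applied to a suitable well-partitioned (in fact cyclic) matrix. Note first that $C(p)$ is itself cyclic, hence a trivially well-partitioned matrix with $r=s=1$; but cyclic matrices are not necessarily \emph{very}-well-partitioned in the sense of the definition (the sole companion block may have degree $1$). To sidestep this, I would separate the degenerate cases. If $\deg p = 1$, then $C(p) = (\pm 1)$ is a $1$ by $1$ matrix equal to $I_1$ or $-I_1$; in either case it is simultaneously an involution and (when it equals $I_1$) a $U_2$-matrix, so writing it as $I_1 \cdot I_1 \cdot C(p)$ settles all the claims at once. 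So assume $\deg p = n \geq 2$; then $C(p)$ is cyclic \emph{and} its unique companion block has degree $\geq 2$, which is exactly the hypothesis making Proposition \ref{cyclicfit1}, Proposition \ref{cyclicfit2} and the very-well-partitioned clause of the Adaptation Theorem fully available.

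The core step is the following: I want an involution $S$ (resp.\ a $U_2$-matrix $U$) such that $SC(p)$ (resp.\ $UC(p)$) is similar to a matrix that is manifestly a product of two involutions (resp.\ the appropriate mixed or $U_2$ product), so that $C(p)$ becomes a product of three factors of the prescribed types. By Theorem \ref{theo2}(i), a matrix is a product of two involutions iff it is similar to its inverse; an easy way to produce such a target from a companion matrix is to arrange $SC(p) \simeq C(q)$ where $q$ is a palindromic-type polynomial forcing $C(q) \simeq C(q)^{-1}$ — but more robustly, I would instead target $SC(p) \simeq A \oplus A^{-1}$ or a single cyclic matrix similar to its inverse, whose explicit two-involution decomposition is standard. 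Concretely, using Proposition \ref{cyclicfit1} with $A := C(p)$: since $\det C(p) = N(p) = \pm 1$, for any monic $q$ of degree $n$ with $N(q) = -\det C(p)$ (and, when $n$ is odd, also $N(q) = +\det C(p)$) we get $C(p) \iadj C(q)$. I then pick $q$ so that $C(q)$ is visibly a product of two involutions — e.g.\ $q(t) = (t - \varepsilon)^n$ with $\varepsilon = \pm 1$ chosen to make the norm condition hold, since $C_n(\varepsilon)$ is a single Jordan block at $\pm 1$ and such a block, being similar to its inverse, is a product of two involutions (and of two $U_2$-matrices when the eigenvalue is $1$, or when it is $-1$ and $n$ is even, by Theorem \ref{theo2}(ii)). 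For the $U_2$ variants I use Proposition \ref{cyclicfit2}: $C(p) \uadj C(q)$ whenever $N(q) = \det C(p)$; choosing $q(t) = (t-1)^n$ (possible exactly when $\det C(p) = N(p) = 1$) makes $C(q) = C_n(1)$ a product of two $U_2$-matrices, giving the "three $U_2$" conclusion in that case.

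The bookkeeping I expect to be fussiest — and hence the main obstacle — is juggling the sign constraints: the norm of the target companion polynomial is pinned to $\pm\det C(p)$, but which sign is available depends on the parity of $n$ and on whether we are producing involutions or $U_2$-matrices, and simultaneously the eigenvalue $\varepsilon \in \{1,-1\}$ of the chosen Jordan target must be compatible (since $N((t-\varepsilon)^n) = \varepsilon^{n-1}\cdot(-1)\cdot(-\varepsilon)\cdot(-1)^{?}$ type computations — really $N((t-\varepsilon)^n) = (-1)^{n-1}(-\varepsilon)^n\cdot(-1)=\dots$, to be computed carefully). For each of the three values $k \in \{0,1,2\}$ I would handle the sub-cases $\det C(p) = 1$ versus $\det C(p) = -1$ and, where needed, $n$ even versus odd, at each node invoking whichever of Theorems \ref{theo2}, \ref{theo2mixed}, \ref{theo2mixedbis} applies to the Jordan-block target $C_a(1) \oplus C_b(-1)$ (with $|a-b|$ small) that I build. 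I would organize this as a short table rather than prose. Apart from this sign-and-parity case analysis, every step is an appeal to an already-proved result, so no genuinely new idea is needed; the proposition is essentially a corollary of the Adaptation Theorem together with Theorem \ref{theo2}.
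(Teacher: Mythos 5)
Your plan is correct and essentially the same as the paper's proof: use Propositions \ref{cyclicfit1} and \ref{cyclicfit2} to pass from $C(p)$ to a companion target of the form $C\bigl((t-1)^a(t+1)^b\bigr)$ with a prescribed norm, then recognize that target as a product of two factors of the right kind via Theorem \ref{theo2} (and Theorem \ref{theo2mixedbis} for the mixed case $k=2$). Two small slips, neither of which derails the argument: (i) a lone companion block $C(p)$ is \emph{not} well-partitioned in the paper's sense, since the definition requires $r\geq 1$ and $s\geq 1$ (i.e.\ at least two blocks), and "$r=s=1$" already means two blocks; but you never actually use the Adaptation Theorem, only \ref{cyclicfit1}/\ref{cyclicfit2}, so this is moot. (ii) Your primary target $q=(t-\varepsilon)^n$ has $N(q)=\varepsilon^n$, which is always $1$ when $n$ is even, so a single Jordan block cannot realize the norm $-1$ that \ref{cyclicfit1} forces in the case $n$ even, $N(p)=1$; your fallback family $(t-1)^a(t+1)^b$, with $N=(-1)^b$, is exactly what the paper uses and resolves this.
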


\begin{proof}
Denote by $d$ the degree of $p$.
By Lemma \ref{cyclicfit1}, $C(p)$ is u-adjacent to $C(q)$ where $q:=(t-1)^{d-1}(t-\lambda)$ for some $\lambda \in \{1,-1\}$.
By Theorem \ref{theo2}, $C(q)$ is the product of two involutions.
Hence, $C(p)$ is the product of one $U_2$-matrix and two involutions.

Likewise, $C(p)$ is i-adjacent to $C(r)$ where $r(t):=(t-1)^{d-1}(t-\mu)$ for some $\mu \in \{1,-1\}$, and hence
$C(p)$ is the product of three involutions.

If $d$ is even, $C(p)$ is u-adjacent to $C(q_1)$ or to $C(q_2)$, where
$q_1:=(t-1)^{d/2} (t+1)^{d/2}$ and $q_2:=(t-1)^{d/2+1} (t+1)^{d/2-1}$, and
both matrices $C(q_1)$ and $C(q_2)$ are the product of a $U_2$-matrix and an involution (by Theorem \ref{theo2mixedbis}).
If $d$ is odd, then $C(p)$ is u-adjacent to $C(r_1)$ or to $C(r_2)$, where
$r_1:=(t-1)^{(d-1)/2} (t+1)^{(d+1)/2}$ and $r_2:=(t-1)^{(d+1)/2} (t+1)^{(d-1)/2}$, and
again both matrices $C(r_1)$ and $C(r_2)$ are the product of a $U_2$-matrix and an involution.
Hence, $C(p)$ is the product of two $U_2$-matrices and an involution.

Assume finally that $p$ has norm $1$. By Proposition \ref{cyclicfit1}, $C(p)$ is u-adjacent to $C((t-1)^d)$,
a matrix which is the product of two $U_2$-matrices by Theorem \ref{theo2}. Hence, $C(p)$
is the product of three $U_2$-matrices.
\end{proof}

Using Proposition \ref{AdaptationTheorem} instead of Propositions \ref{cyclicfit1}
and \ref{cyclicfit2}, the same line of reasoning yields the following new result:

\begin{prop}\label{wellpartdecomp}
Let $A \in \GL_n(\F)$ be such that $\det A=\pm 1$.
Assume that $A$ is similar to a well-partitioned matrix.
Then, for all $k \in \{0,1,2\}$, the matrix $A$ is the product of $k$ unipotent matrices of index $2$
and $3-k$ involutions.
Moreover, if $\det A=1$ then $A$ is the product of three $U_2$-matrices.
\end{prop}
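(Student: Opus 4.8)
The plan is to mimic, almost verbatim, the proof of Proposition \ref{cyclicdecomp}, but with the Adaptation Theorem (Theorem \ref{AdaptationTheorem}) playing the role that Propositions \ref{cyclicfit1} and \ref{cyclicfit2} played there. First I would fix a well-partitioned matrix $M$ with $M \simeq A$ and $\det M = \det A = \pm 1$, and set $d := n$. The engine of the argument is that each of the three adjacency conclusions in Theorem \ref{AdaptationTheorem} reduces the problem to decomposing a \emph{companion} matrix $C(r)$, where we have a good deal of freedom in choosing the monic polynomial $r$ of degree $n$ subject only to a constraint on its norm $N(r)$. Since $\det M = \pm 1$, the permissible norms are exactly $\pm 1$ (or $1$ alone when $\det M = 1$), and every polynomial of the form $(t-1)^{n-1}(t-\lambda)$ with $\lambda \in \{1,-1\}$ or the split polynomials $(t-1)^{a}(t+1)^{b}$ with $a+b = n$ have norm $\pm 1$, so they are always available.

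The steps, in order: (1) For the product of three involutions, apply part (b) of Theorem \ref{AdaptationTheorem}: there is $\eta \in \{1,-1\}$ such that $M$ is i-adjacent to $C(r)$ for every monic $r$ of degree $n$ with $N(r) = \eta \det M$; choose $r := (t-1)^{n-1}(t-\mu)$ with $\mu \in \{1,-1\}$ picked so that $N(r) = \eta\det M$, which is possible because $N\bigl((t-1)^{n-1}(t-\mu)\bigr) = \mu$ ranges over $\{1,-1\}$. By Theorem \ref{theo2}(i), $C(r) \simeq C(r)^{-1}$ is a product of two involutions, so $M$ — being i-adjacent to it — is a product of three involutions. (2) For one $U_2$-matrix and two involutions, apply part (a): $M$ is u-adjacent to $C(r)$ for every monic $r$ of degree $n$ with $N(r) = \det M$; taking $r := (t-1)^{n-1}(t-\mu)$ with $\mu = \det M \in \{1,-1\}$, Theorem \ref{theo2}(i) again makes $C(r)$ a product of two involutions, hence $M$ is a $U_2$-matrix times two involutions. (3) For two $U_2$-matrices and one involution, again use part (a) but now pick $r$ split: if $n$ is even take $r := (t-1)^{n/2}(t+1)^{n/2}$ or $(t-1)^{n/2+1}(t+1)^{n/2-1}$, and if $n$ is odd take $r := (t-1)^{(n-1)/2}(t+1)^{(n+1)/2}$ or $(t-1)^{(n+1)/2}(t+1)^{(n-1)/2}$, in each case choosing the exponent parities so that the norm matches $\det M$; by Theorem \ref{theo2mixedbis} each such $C(r)$ is the product of a $U_2$-matrix and an involution, so $M$ is the product of two $U_2$-matrices and one involution. (4) Finally, if $\det M = 1$, use part (a) with $r := (t-1)^{n}$ (which has norm $1$): by Theorem \ref{theo2}(ii) $C(r)$ is the product of two $U_2$-matrices (all Jordan cells are for the eigenvalue $1$, so the eigenvalue $-1$ condition is vacuous), hence $M$ is the product of three $U_2$-matrices.

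There is essentially no obstacle here: the only point requiring a moment's care is the norm bookkeeping — checking that for each target decomposition one can actually select the polynomial $r$ with $N(r)$ equal to the value dictated by the relevant part of Theorem \ref{AdaptationTheorem} while simultaneously having $C(r)$ fall under the hypotheses of Theorem \ref{theo2} or \ref{theo2mixedbis}. Since $\det M \in \{1,-1\}$ and the available families of polynomials realize every norm in $\{1,-1\}$ (and the norm of a split polynomial $(t-1)^a(t+1)^b$ is $(-1)^b$), the matching always succeeds; for the "$\det M = 1$ implies three $U_2$'s" claim one specifically uses that $r=(t-1)^n$ has norm $1 = \det M$. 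All of this is the verbatim analogue of the proof of Proposition \ref{cyclicdecomp}, with "cyclic" upgraded to "similar to a well-partitioned matrix" and Propositions \ref{cyclicfit1}–\ref{cyclicfit2} upgraded to Theorem \ref{AdaptationTheorem}, exactly as the sentence preceding the statement advertises.
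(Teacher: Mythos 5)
Your proposal is correct and is essentially the paper's own argument: the paper proves this proposition by stating that "the same line of reasoning" as in Proposition \ref{cyclicdecomp} applies, with the Adaptation Theorem replacing Propositions \ref{cyclicfit1} and \ref{cyclicfit2}, which is exactly what you carry out. Your norm bookkeeping ($N\bigl((t-1)^{n-1}(t-\mu)\bigr)=\mu$ and $N\bigl((t-1)^a(t+1)^b\bigr)=(-1)^b$) and the choices of target polynomials match the paper's intended proof step for step.
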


Combining this last result with Lemma \ref{VWPlemmaunstable} yields Theorem \ref{unstabletheo3}.

\section{The length $4$ problem in $\GL_n(\F)$}\label{Length4Section}

Here, we give a proof of Theorem \ref{theo4}.
This is done in three steps. First, we consider the case of scalar matrices (Section \ref{scalar4section}).
Then, we prove that any invertible matrix that is neither scalar nor cyclic is u-adjacent to
a well-partitioned matrix (Section \ref{adj4section}). We will complete the proof of Theorem \ref{theo4} by
using Propositions \ref{cyclicdecomp} and \ref{wellpartdecomp}.

\subsection{The case of scalar matrices}\label{scalar4section}

\begin{lemma}\label{scalar4lemma1}
Let $\alpha \in \F^*$, and $n \geq 1$ be an integer such that $\alpha^n=\pm 1$.
Then, the matrix $\alpha I_n$ is the product of four involutions, and it is also
the product of two involutions and two $U_2$-matrices.
\end{lemma}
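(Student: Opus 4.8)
The plan is to reduce the scalar matrix $\alpha I_n$ to a convenient block form and then invoke the already-established decomposition results for cyclic and well-partitioned matrices. Since $\alpha I_n$ is not cyclic in general, the key idea is to write it as a product of a small number of \emph{cyclic} factors whose norms are $\pm 1$, and to arrange those factors so that Proposition \ref{cyclicdecomp} (and the known characterizations of products of two involutions and of an involution times a $U_2$-matrix) applies. Concretely, because $\alpha \in \F^*$ has finite order (as $\alpha^n = \pm 1$ forces $\alpha^{2n}=1$), I would first handle the genuinely scalar obstruction: $\alpha I_n$ is similar to itself only, so I cannot directly apply adjacency results. Instead I would split $\alpha I_n = (\alpha I_n) \cdot I_n$ trivially is useless; the right move is to write $\alpha I_n$ as a product of \emph{two} matrices $D_1 D_2$ where $D_1, D_2$ are similar to companion matrices of polynomials with norm $\pm 1$. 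The simplest realization: take $D_1 = \Diag(\alpha,\alpha,\dots,\alpha,\beta)$ and $D_2 = \Diag(1,\dots,1,\alpha^{n}\beta^{-1})$ for a suitable $\beta$; but to make each $D_i$ \emph{cyclic} I would rather choose $D_1, D_2$ to be companion matrices directly, $D_1 = C(p_1)$, $D_2 = C(p_2)$, with $p_1 p_2$-data chosen so that $C(p_1) C(p_2)$ is similar to $\alpha I_n$.

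The cleanest approach avoids the similarity-matching headache altogether: observe that $\alpha I_n = (\alpha I_n \oplus \text{nothing})$ and that $\alpha I_n$ is the product of two matrices each of which is \emph{similar} to $\alpha C_1(\alpha)^{-1}\cdots$ — this is getting complicated, so instead I would use the factorization through a single non-scalar matrix. Recall (this is the trick used repeatedly in the paper, e.g.\ in Lemma \ref{diagonal3invollemmafinal}) that for any $\gamma \in \F^*$ and $m \geq 1$, $\gamma I_m$ equals the product $E \cdot F$ where $E$ is a lower unitriangular-type matrix times a diagonal and $F$ likewise, but more to the point: $\alpha I_2 = \begin{bmatrix} \alpha & 0 \\ 0 & \alpha \end{bmatrix}$ is similar to $C\bigl((t-\alpha)^2\bigr) = C_2(\alpha)$ only if $\alpha$'s Jordan structure is a single $2$-block, which is false. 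So the honest route is: write $n = 2a$ or $n = 2a+1$, pair up the diagonal entries, and on each $2\times 2$ block $\alpha I_2$ use the identity $\alpha I_2 = \begin{bmatrix} 0 & -\alpha^2 \\ 1 & 0\end{bmatrix}\cdot\begin{bmatrix} \alpha & 0 \\ 0 & \alpha^{-1}\end{bmatrix}$? — no, that product is $\begin{bmatrix} 0 & -\alpha \\ \alpha & 0\end{bmatrix} \neq \alpha I_2$. The correct elementary identity is $\alpha I_2 = \begin{bmatrix} \alpha & 1 \\ 0 & \alpha\end{bmatrix}\begin{bmatrix} 1 & -\alpha^{-1} \\ 0 & 1\end{bmatrix}$ — still not $\alpha I_2$. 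I will therefore not chase a $2\times 2$ miracle and instead take the structural route below.

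Here is the route I would actually commit to. Since $\alpha^{n} = \pm 1$, the characteristic polynomial of $\alpha I_n$ is $(t-\alpha)^n$, whose norm is $(-1)^{n-1}(-\alpha)^n = \alpha^n = \pm 1$. Now write $\alpha I_n = B_1 B_2$ where $B_1 := \alpha I_{n} $ conjugated is no good — so instead I would prove the statement by strong induction on $n$, peeling off $2\times 2$ or $1\times 1$ scalar blocks and combining with the base cases $n=1$ ($\alpha = \pm 1$, trivial) and $n = 2$. For $n=2$ with $\alpha^2 = \pm 1$: if $\alpha^2 = 1$ then $\alpha = \pm 1$ and $\alpha I_2$ is itself an involution, done; if $\alpha^2 = -1$, then $\alpha I_2$ is similar to $C\bigl((t-\alpha)(t-\alpha)\bigr)$? no — but $\alpha I_2 \oplus (\text{nothing})$: here one uses $\alpha I_2 = S U$ with $S$ an involution and $U$ a $U_2$-matrix fails since $\det(\alpha I_2) = \alpha^2 = -1 \neq 1$. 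So for $\alpha^2 = -1$, $\alpha I_2$ has determinant $-1$ and I claim it is a product of two involutions: it is similar to $\Diag(\alpha,\alpha)$, and $\Diag(\alpha,\alpha) = \Diag(\alpha,\alpha^{-1})\cdot\Diag(1,\alpha^2) = \Diag(\alpha,\alpha^{-1})\cdot\Diag(1,-1)$, where $\Diag(1,-1)$ is an involution and $\Diag(\alpha,\alpha^{-1})$ is similar to $\begin{bmatrix} 0 & 1 \\ -1 & 0 \end{bmatrix}^{\pm}$... $\Diag(\alpha,\alpha^{-1}) \simeq C\bigl(t^2 - (\alpha+\alpha^{-1})t + 1\bigr)$, a cyclic matrix of norm $N = (-1)^1 \cdot 1 \cdot (-1)^{?}$ — its norm is $1$ with determinant $1$, and being cyclic with norm $1$, by Proposition \ref{cyclicfit1} (since its size $2$ is even we need $N(p) = -\det$, not satisfied) hmm. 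The cleaner statement: $\Diag(\alpha,\alpha^{-1})$ is conjugate to its inverse $\Diag(\alpha^{-1},\alpha)$, hence by Theorem \ref{theo2}(i) it is a product of two involutions, and therefore $\alpha I_2 = \Diag(\alpha,\alpha^{-1})\cdot\Diag(1,-1)$ is a product of three involutions — but I need four, so I can pad with $I_2 = S\cdot S$, giving four. For the "two involutions and two $U_2$" claim with $\alpha^2 = -1$: similarly $\Diag(\alpha,\alpha^{-1})$ being a product of two $U_2$-matrices requires its $-1$-Jordan cells to be even-sized (Theorem \ref{theo2}(ii)); it has no eigenvalue $-1$ (since $\alpha \neq -1$), so it IS a product of two $U_2$-matrices, and $\Diag(1,-1)$ is a product of two involutions, giving two $U_2$ and two involutions, as wanted.

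For general $n$, I would split off pairs: group the $n$ copies of $\alpha$ into $\lfloor n/2\rfloor$ pairs (plus one leftover if $n$ is odd), on each pair use the above $n=2$ argument to get a factorization as (product of two involutions)$\cdot$(involution) respectively (product of two $U_2$)$\cdot$(two involutions), and on the leftover $\alpha I_1 = (\alpha)$ with $\alpha$ of odd order times... — actually cleaner: observe $\alpha I_n \simeq \Diag(\alpha,\dots,\alpha) = D \cdot J$ where $J := \Diag(1,\dots,1,\varepsilon)$ with $\varepsilon = \alpha^n \in \{1,-1\}$ is an involution, and $D := \Diag(\alpha,\alpha^{-1},\alpha,\alpha^{-1},\dots)$ — choosing the pattern so that $\det D = 1$ and $D \simeq D^{-1}$ — so $D$ is a product of two involutions (and a product of two $U_2$-matrices provided no Jordan cell for $-1$ of odd size, which holds as $D$ is diagonalizable with all Jordan cells size $1$, so the $-1$-cells, if any, are size $1$: this forces us to avoid $\alpha = -1$, but if $\alpha = -1$ then $\alpha I_n$ is already an involution and we just pad). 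Thus $\alpha I_n$ is the product of three involutions, and padding with $I_n = S^2$ gives four; and it is the product of two $U_2$-matrices and one involution $J$, and since $J$ (an involution with $J^2 = I$) is itself a product of two $U_2$-matrices when its $-1$-cells are even — they are size $1$, odd! — so instead I pair $J$ with a trivial $U_2$ factor $I_n$: $J = J \cdot I_n$ is a product of one involution and... this doesn't immediately give the parity. The fix for the second claim: arrange $D$ to also absorb the sign, i.e.\ take $D := \Diag(\alpha,\alpha^{-1},\dots)$ on the first $n-1$ or $n$ slots and put the single needed $\pm 1$ into a $U_2$ block. I expect the main obstacle to be precisely this bookkeeping with the eigenvalue $-1$ and the parity constraints in Theorem \ref{theo2}(ii) and Theorem \ref{theo2mixedbis}: one must choose the diagonal reshuffling so that every factor meets the exact hypotheses of those theorems, handling separately the cases $\alpha = \pm 1$ (where $\alpha I_n$ is already involutory or $-$involutory), $\alpha^2 = 1$, and $\alpha$ of larger order. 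Modulo that case analysis, the lemma follows by combining Theorem \ref{theo2}, Theorem \ref{theo2mixedbis}, and padding with $I = S\cdot S = U \cdot U$.
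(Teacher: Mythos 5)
Your final construction is broken at its core: you set $D:=\Diag(\alpha,\alpha^{-1},\alpha,\alpha^{-1},\dots)$ and $J:=\Diag(1,\dots,1,\varepsilon)$ and assert $\alpha I_n = D\cdot J$, but the product $D\cdot J$ has diagonal entries $\alpha,\alpha^{-1},\alpha,\alpha^{-1},\dots$ (with the last one multiplied by $\varepsilon$), which is not $\alpha I_n$ unless $\alpha^2=1$. In fact any factorization $\alpha I_n = D\cdot J$ with $J$ an involution forces $D=\alpha J$, so all but one of $D$'s diagonal entries must equal $\alpha$; for $n\geq 3$ the requirement $D\simeq D^{-1}$ then forces $\alpha=\alpha^{-1}$. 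A second red flag you should have noticed: your argument concludes ``Thus $\alpha I_n$ is the product of three involutions,'' but this is exactly the classical false statement -- by Halmos--Kakutani (cited in the introduction as \cite{HalmosKakutani}), $\alpha I_n$ with $\alpha^n=\pm 1$ and $\alpha^4\neq 1$ is \emph{not} a product of three involutions. Any line of reasoning that reaches that conclusion for general $\alpha$ of order greater than $4$ must contain an error, and indeed it does.

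The idea you are missing is the correct choice of diagonal factor. The paper takes
$$A := \underset{k=0}{\overset{n-1}{\bigoplus}} C_1(\alpha^{2k}),$$
so that $\alpha I_n = (\alpha A)\cdot A^{-1}$ trivially. The point is that the exponents $0,2,4,\dots,2(n-1)$ form a complete cycle modulo $2n$ (since $\alpha^{2n}=1$), and so do the exponents $1,3,\dots,2n-1$ appearing in $\alpha A$. Consequently \emph{each} of $A^{-1}$ and $\alpha A$ is similar to its own inverse (the map $k\mapsto n-k$ permutes the multiset of exponents), so each is a product of two involutions by Theorem \ref{theo2}(i) -- giving four involutions total. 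Your alternating pattern $\alpha,\alpha^{-1},\alpha,\alpha^{-1},\dots$ is similar to its inverse, but the quotient $\alpha I_n \cdot D^{-1}$ is then not an involution, so you cannot close the factorization. For the ``two involutions and two $U_2$'' claim, the paper then observes that $-1$ cannot simultaneously be an odd power $\alpha^p$ and an even power $\alpha^q$ of $\alpha$ (that would force $\alpha^{q-p}=1$ with $q-p$ odd, hence $\alpha$ of odd order, hence $-1\notin\langle\alpha\rangle$, a contradiction), so at least one of $\alpha A$, $A^{-1}$ has no eigenvalue $-1$ and is therefore a product of two $U_2$-matrices by Theorem \ref{theo2}(ii). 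Your attempt to handle this by ``absorbing the sign into a $U_2$ block'' and fretting about odd-sized $-1$-cells is exactly the bookkeeping that this eigenvalue-counting trick makes unnecessary.
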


\begin{proof}
Set $A:=\underset{k=0}{\overset{n-1}{\bigoplus}}\, C_1(\alpha^{2k})$.
Noting that $\alpha^{2n}=1$, we see that $A$ is similar to $A^{-1}$, and we deduce from Theorem \ref{theo2}
that $A^{-1}$ is the product of two involutions.
Likewise $\alpha A=\underset{k=0}{\overset{n-1}{\bigoplus}} C_1(\alpha^{2k+1})$
is similar to its inverse (note that $C_1(\alpha^{2k+1})=C_1(\alpha^{2n-2k-1})^{-1}$ for all $k \in \lcro 0,n-1\rcro$),
and hence it is the product of two involutions.
Hence, $\alpha I_n=(\alpha A)A^{-1}$ is the product of four involutions.

We also claim that one of the matrices $\alpha A$ and $A^{-1}$ is the product of two $U_2$-matrices.
This is immediate if $\F$ has characteristic $2$, and hence in the remainder of the proof we assume that the characteristic of $\F$ is not $2$.

By Theorem \ref{theo2}, it suffices to prove that $-1$ is not an eigenvalue of one of $\alpha A$ and $A^{-1}$.
Assume on the contrary that $-1$ is an eigenvalue of both. Then, $-1=\alpha^p=\alpha^q$ for some pair $(p,q)$ of integers, with $p$
odd and $q$ even. Thus $\alpha^{q-p}=1$ with $q-p$ odd, which yields that $\alpha$ has finite odd order and shows that $-1$ is not a power of $\alpha$!
This is a contradiction. Hence, one of the matrices $\alpha A$ and $A^{-1}$ is the product of two $U_2$-matrices,
and the other one is the product of two involutions. Hence, their product $\alpha I_n$ is the product of two $U_2$-matrices
and two involutions.
\end{proof}

\begin{lemma}\label{scalar4lemma2}
Let $\alpha \in \F^*$, and $n \geq 1$ be an integer such that $\alpha^n=1$.
Then, the matrix $\alpha I_n$ is the product of four $U_2$-matrices.
\end{lemma}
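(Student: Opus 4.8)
The plan is to reduce to the case where $n$ equals the multiplicative order $d$ of $\alpha$ (finite since $\alpha^n=1$), and then to argue according to the parity of $d$. Since $\alpha^n=1$ we have $d\mid n$; writing $n=d\ell$ yields $\alpha I_n=(\alpha I_d)^{\oplus \ell}$, and, since being a product of four $U_2$-matrices is stable under similarity and under direct sums, it suffices to treat $\alpha I_d$. Besides, if $\charac(\F)=2$ then $1=-1$ and the $U_2$-matrices are exactly the involutions, so the statement follows from Lemma~\ref{scalar4lemma1} (whose hypothesis $\alpha^n=\pm 1$ holds here); \emph{we may therefore assume $\charac(\F)\neq 2$}. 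In each of the two remaining cases I shall produce an invertible matrix $B$ with $\alpha I_d=B\cdot(\alpha B^{-1})$ such that both $B$ and $\alpha B^{-1}$ are products of two $U_2$-matrices; to check the latter I shall use Theorem~\ref{theo2}(ii), that is, verify that the matrix is similar to its inverse and that all its Jordan cells for the eigenvalue $-1$ are even-sized. I shall freely use the standard facts that $C_m(\lambda)^{-1}\simeq C_m(\lambda^{-1})$ and $\mu\, C_m(\lambda)\simeq C_m(\mu\lambda)$ for nonzero scalars $\lambda,\mu$.

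When $d$ is odd, I take $B:=\underset{k=0}{\overset{d-1}{\bigoplus}}C_1(\alpha^{2k})$; then $\alpha B^{-1}=\underset{k=0}{\overset{d-1}{\bigoplus}}C_1(\alpha^{1-2k})$. Since $\gcd(2,d)=1$, multiplication by $2$ permutes $\Z/d\Z$, so both $B$ and $\alpha B^{-1}$ are similar to the diagonal matrix $\underset{j=0}{\overset{d-1}{\bigoplus}}C_1(\alpha^{j})$, whose multiset of eigenvalues is the subgroup $\langle\alpha\rangle$, each element appearing once. This multiset is stable under inversion, so $B$ and $\alpha B^{-1}$ are similar to their own inverses; and, as $\charac(\F)\neq 2$ and $d$ is odd, the element $-1$ (of order $2$) lies outside $\langle\alpha\rangle$, so neither $B$ nor $\alpha B^{-1}$ has $-1$ as an eigenvalue. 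Theorem~\ref{theo2}(ii) then shows that $B$ and $\alpha B^{-1}$ are each products of two $U_2$-matrices, whence $\alpha I_d$ is a product of four.

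When $d$ is even, $\alpha^{d/2}$ has order $2$ in $\F^*$ and therefore equals $-1$. Here I take $B:=\underset{k=0}{\overset{d/2-1}{\bigoplus}}C_2(\alpha^{2k})$, a matrix of size $d$; using the standard similarities above, $B^{-1}\simeq\underset{k=0}{\overset{d/2-1}{\bigoplus}}C_2(\alpha^{-2k})$, $\alpha B^{-1}\simeq\underset{k=0}{\overset{d/2-1}{\bigoplus}}C_2(\alpha^{1-2k})$ and $(\alpha B^{-1})^{-1}=\alpha^{-1}B\simeq\underset{k=0}{\overset{d/2-1}{\bigoplus}}C_2(\alpha^{2k-1})$. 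Now $\{2k \bmod d:0\leq k<d/2\}$ is the set of all even residues modulo $d$, each occurring once, and $\{1-2k \bmod d:0\leq k<d/2\}$ is the set of all odd residues modulo $d$, each occurring once; in particular both multisets are stable under negation modulo $d$. It follows that $B\simeq B^{-1}$ and $\alpha B^{-1}\simeq(\alpha B^{-1})^{-1}$. Finally every Jordan cell of $B$ and of $\alpha B^{-1}$ has size $2$, so \emph{a fortiori} all their Jordan cells for the eigenvalue $-1$ are even-sized. By Theorem~\ref{theo2}(ii), $B$ and $\alpha B^{-1}$ are each products of two $U_2$-matrices, and $\alpha I_d$ is a product of four.

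The only genuinely delicate point is the even-order case: when $-1\in\langle\alpha\rangle$, there is no way to write $\alpha I_d$ as a product of two \emph{diagonal} matrices that are both products of two $U_2$-matrices, since such a factor would necessarily have $-1$ among its eigenvalues with a Jordan cell of size $1$, which Theorem~\ref{theo2}(ii) forbids in characteristic different from $2$. Replacing the one-by-one blocks $C_1(\alpha^{k})$ by the two-by-two blocks $C_2(\alpha^{2k})$ forces every Jordan cell to have even size while preserving the symmetry of the spectra needed to apply Theorem~\ref{theo2}(ii); this is the crux of the argument.
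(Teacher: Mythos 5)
Your proof is correct and follows essentially the same route as the paper: the paper also reduces to characteristic $\neq 2$ via Lemma~\ref{scalar4lemma1}, writes $\alpha I_n=(\alpha A)A^{-1}$ with $A=\bigoplus_k C_1(\alpha^{2k})$ in the odd case and $A=\bigoplus_k C_2(\alpha^{2k})$ in the even case, and applies Theorem~\ref{theo2}(ii). The only cosmetic difference is that you first reduce to $n$ equal to the order of $\alpha$ and split on the parity of that order, whereas the paper splits directly on the parity of $n$; both variants work.
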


\begin{proof}
Because of Lemma \ref{scalar4lemma1}, we only consider the case when the characteristic of $\F$ is not $2$.

Assume first that $n$ is odd. Then, $\alpha$ has odd order and hence $-1 \not\in \langle \alpha \rangle$.
Then, we set $A:=\underset{k=0}{\overset{n-1}{\bigoplus}}\, C_1(\alpha^{2k})$.
With the same method as in the proof of Lemma \ref{scalar4lemma1}, we find that both $A^{-1}$ and $\alpha A$
are products of two $U_2$-matrices (using the fact that $-1$ is not a power of $\alpha$),
and we conclude that $\alpha I_n$ is the product of four $U_2$-matrices.

Assume now that $n$ is even, and write $n=2m$. Note that $\alpha^m=\pm 1$.
Then, we set
$$A:=\underset{k=0}{\overset{m-1}{\bigoplus}}\, C_2(\alpha^{2k}).$$
This time around, we see that both $A^{-1}$ and $\alpha A$
are products of two $U_2$-matrices (indeed, like in the proof of Lemma \ref{scalar4lemma1}, we see that both are similar to their inverse, and all the Jordan cells for the eigenvalue $-1$
have size $2$). Hence, $\alpha I_n$ is the product of four $U_2$-matrices.
\end{proof}

\begin{lemma}\label{scalar4lemma3}
Let $\alpha \in \F^*$ and $n \geq 1$ be an integer such that $\alpha^n=\pm 1$.
Then, $\alpha I_n$ is the product of one $U_2$-matrix and three involutions.
Moreover, it is the product of three $U_2$-matrices and one involution.
\end{lemma}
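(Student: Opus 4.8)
First I would dispose of the easy cases. If $\charac(\F)=2$, then involutions and $U_2$-matrices are the same thing, so both assertions are just Lemma~\ref{scalar4lemma1}; and if $\alpha=\pm1$, then $\alpha I_n$ is an involution, and one may write $\alpha I_n=I_n\cdot I_n\cdot I_n\cdot(\alpha I_n)$ as well as $\alpha I_n=I_n\cdot(\alpha I_n)\cdot(\alpha I_n)\cdot(\alpha I_n)$. So assume $\charac(\F)\neq2$ and that $\alpha$ has finite order $\geq 3$. In the spirit of the proofs of Lemmas~\ref{scalar4lemma1} and~\ref{scalar4lemma2}, I would look for $A\in\GL_n(\F)$ such that $\alpha A$ is a product of two $U_2$-matrices while $A^{-1}$ is a product of a $U_2$-matrix and an involution. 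Since, by Theorem~\ref{theo2}, a product of two $U_2$-matrices is also a product of two involutions, the single factorisation $\alpha I_n=(\alpha A)(A^{-1})$ would then yield both decompositions at once.

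To build such an $A$, I would take it to be a direct sum of companion matrices $C_1(s)$ and $C_2(\mu)$ and read off the two requirements from Theorems~\ref{theo2} and~\ref{theo2mixed}. For diagonalisable $A$ with eigenvalue multiset $S\subseteq\F^*$, the requirements amount to: $S$ invariant under $s\mapsto-s^{-1}$ and under $s\mapsto-\alpha^{-2}s$ (giving $A^{-1}\simeq-A$ and $\alpha A\simeq(\alpha A)^{-1}$), $S$ containing no square root of $-1$, and $-\alpha^{-1}\notin S$ — that is, $S$ should be an orbit-union (with multiplicities, of total size $n$) for the finite dihedral group generated by those two maps, avoiding finitely many forbidden eigenvalues. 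When $n$ is even this can be arranged: $(-\alpha^{-2})^n=(\alpha^n)^{-2}=1$, so the order of $-\alpha^{-2}$ divides $n$, hence so does every orbit size, and the forbidden eigenvalues occupy only finitely many orbits; a forbidden eigenvalue $\mu$ that one cannot avoid (because it lies in every orbit, which can happen over a small field) is handled by letting it occur in Jordan cells $C_2(\mu)$ of even size rather than in cells $C_1(\mu)$, which Theorems~\ref{theo2} and~\ref{theo2mixed} still allow, or, in the worst case, by showing instead that the relevant diagonalisable matrix is u-adjacent to a product of two $U_2$-matrices and i-adjacent to a product of two involutions via the Adaptation Theorem~\ref{AdaptationTheorem} and the polynomial-fit lemmas (Lemma~\ref{polyfitgen} and its variants).

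The hard case, which I expect to be the main obstacle, is $n$ odd: then $(-\alpha^{-2})^n=-1$, so no orbit-union of size $n$ exists and, more generally, the two-factor scheme above breaks down. Here I would instead peel off a single factor. For the product of one $U_2$-matrix and three involutions, write $\alpha I_n=U\cdot(\alpha U^{-1})$ with $U$ a $U_2$-matrix and reduce to showing that $\alpha U^{-1}$ — which has $\alpha$ as its sole eigenvalue, with Jordan type governed by $U$ — is a product of three involutions. For the product of three $U_2$-matrices and one involution, write $\alpha I_n=S\cdot(\alpha S)$ with $S$ an involution chosen so that $\det(\alpha S)=1$ (possible since $n$ is odd, by adjusting the dimension of the $(-1)$-eigenspace of $S$), and reduce to showing that the diagonalisable matrix $\alpha S\simeq\alpha I_a\oplus(-\alpha)I_b$ is a product of three $U_2$-matrices. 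In both sub-cases the decomposition would be obtained by exhibiting an adjacency (i-adjacency, resp.\ u-adjacency) from the matrix in question to a cyclic or well-partitioned matrix of determinant $\pm1$, resp.\ $1$ — constructed with the polynomial-fit lemmas and the Adaptation Theorem — and then applying Proposition~\ref{cyclicdecomp} or~\ref{wellpartdecomp}. Pinning down the Jordan structure (the multiplicities $a,b$, or the rank of $U-I_n$) so that the auxiliary matrix is genuinely cyclic or well-partitioned and carries the correct determinant is the delicate point of the whole argument.
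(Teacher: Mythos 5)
Your opening reduction is exactly the paper's: it suffices to treat the decomposition into three $U_2$-matrices and one involution, and one looks for $A$ with $\alpha I_n=(\alpha A)A^{-1}$, one factor a product of two $U_2$-matrices and the other a product of a $U_2$-matrix and an involution. Your even case is also the paper's in spirit, except that the paper avoids all the small-field caveats you are forced to hedge about by taking $A=\bigoplus_{k=0}^{m-1}C_2\bigl((-\alpha^2)^k\bigr)$ with $n=2m$ (size-$2$ cells throughout, so forbidden eigenvalues never matter), and by \emph{not} requiring both similarity conditions of the same factor: the identity $(-\alpha^2)^k(-\alpha^2)^{m-k}=(-1)^m\alpha^n=\pm1$ shows that, according to the sign, one of $A^{-1},\alpha A$ is similar to its inverse and the other to the opposite of its inverse, and Theorems \ref{theo2}, \ref{theo2mixed} and \ref{theo2mixedbis} then apply with the roles swapped as needed. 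Your stricter demand of simultaneous invariance under the full dihedral group, together with avoidance of $-\alpha^{-1}$ and of square roots of $-1$, genuinely fails over small fields (e.g.\ $\F=\F_7$, $\alpha=2$, $n=6$: the dihedral orbit is all of $\F_7^*$ and contains $-\alpha^{-1}=3$), so the ``fallbacks'' are carrying real weight there.

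The genuine gap is in your odd case. The inference ``no orbit-union of size $n$ exists, hence the two-factor scheme breaks down'' is wrong: the scheme survives once one drops full dihedral invariance. After reducing to $\alpha^n=1$ (replace $\alpha$ by $-\alpha$) and then to $n=q$, the odd order of $\alpha$, the paper takes $A=C_1(1)\oplus A_1$ with $A_1=\bigoplus_{k=1}^{q-1}C_1\bigl((-\alpha^2)^k\bigr)$; the relation $(-\alpha^2)^q=-1$ makes $A_1^{-1}$ similar to $-A_1$ (so Theorem \ref{theo2mixed} applies) while $\alpha A$ is similar to its inverse and one of $\pm\alpha A$ is a product of two $U_2$-matrices, the stray $C_1(1)$ being harmless. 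By contrast, your replacement reductions are at least as hard as the original statement and partly false. Writing $\alpha I_n=S(\alpha S)$ with $\alpha S\simeq\alpha I_a\oplus(-\alpha)I_b$: the admissible choice $b=0$ returns $\alpha I_n$ itself, which is \emph{not} a product of three $U_2$-matrices (see the characterisation of scalar matrices of length $3$ recalled in the introduction), and strongly unbalanced choices of $(a,b)$ are ruled out by the eigenvalue-propagation arguments for products of two quadratic matrices (cf.\ \cite{dSPprod2}); so you would still have to prove that some balanced admissible $(a,b)$ exists and works, which is the whole difficulty. Worse, writing $\alpha I_n=U(\alpha U^{-1})$ reduces to showing that a matrix with sole eigenvalue $\alpha$, characteristic polynomial $(t-\alpha)^n$ and geometric multiplicity at least $\lceil n/2\rceil$ is a product of three involutions: none of the tools you cite (cyclicity, well-partitioned matrices, the Adaptation Theorem, Liu's sufficient condition, which requires geometric multiplicities at most $n/2$) applies to such a matrix, and this single-eigenvalue question is precisely what the paper's machinery is designed to avoid.
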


\begin{proof}
Due to Lemma \ref{scalar4lemma1}, we only consider the case when the characteristic of $\F$ is not $2$.
Moreover, by Theorem \ref{theo2}, it suffices to prove that $\alpha I_n$ is the
product of three $U_2$-matrices and one involution.

We split the discussion into two main cases.

\noindent \textbf{Case 1: $n$ is even.}

We write $n=2m$.
Set
$$B_1:=\underset{k=1}{\overset{m-1}{\bigoplus}}\, C_2\bigl((-\alpha^2)^{k}\bigr) \quad \text{and} \quad
B:=\underset{k=0}{\overset{m-1}{\bigoplus}}\, C_2\bigl((-\alpha^2)^{k}\bigr)=C_2(1) \oplus B_1,$$
so that
$$\alpha B \simeq \underset{k=0}{\overset{m-1}{\bigoplus}}\, C_2\bigl(\alpha (-\alpha^2)^{k}\bigr).$$
Note that both matrices $B^{-1}$ and $\alpha B$ only have Jordan cells of size $2$.
For every integer $k$, we see that
$$(-\alpha^2)^k (-\alpha^2)^{m-k}=(-\alpha^2)^m =(-1)^m \alpha^n$$
and
$$\alpha (-\alpha^2)^k \alpha (-\alpha^2)^{m-1-k}=(-1)^{m-1} \alpha^n.$$
\begin{itemize}
\item Assume first that $(-1)^m \alpha^n=1$. Then,
$B_1^{-1}$ is similar to its inverse and $\alpha B$ is similar to the opposite of its inverse.
Since both matrices only have Jordan cells of size $2$, we deduce from Theorems \ref{theo2} and
\ref{theo2mixed} that $B^{-1}$ is the product of two $U_2$-matrices
and that $\alpha B$ is the product of one $U_2$-matrix and one involution.
\item Assume next that $(-1)^m \alpha^n=-1$.  Then,
$B_1^{-1}$ is similar to the opposite of its inverse and $\alpha B$ is similar to its inverse.
This time around, we combine Theorems \ref{theo2mixed} and \ref{theo2mixedbis} to see that
$B^{-1}$ is the product of one $U_2$-matrix and one involution, whereas Theorem \ref{theo2}
shows that $\alpha B$ is the product of two $U_2$-matrices.
\end{itemize}
In any case $\alpha I_n=B^{-1}(\alpha B)$ is the product of three $U_2$-matrices and one involution.

\vskip 4mm
\noindent \textbf{Case 2: $n$ is odd.}

If $\alpha^n=-1$, we see that $(-\alpha)^n=1$. Moreover, if $-\alpha I_n$ is the product of three $U_2$-matrices
and one involution, then so is $\alpha I_n$. Hence, it suffices to deal with the case when $\alpha^n=1$.
In that case, we see that $\alpha$ has odd order, which we denote by $q$, and
$n$ is a multiple of $q$. Hence, it suffices to prove that $\alpha I_q$ is the product of three $U_2$-matrices
and one involution.

Set
$$A_1:=\underset{k=1}{\overset{q-1}{\bigoplus}}\, C_1\bigl((-\alpha^2)^{k}\bigr) \quad \text{and} \quad
A:=\underset{k=0}{\overset{q-1}{\bigoplus}}\, C_1\bigl((-\alpha^2)^{k}\bigr)=C_1(1)\oplus A_1,$$
so that
$$\alpha A \simeq \underset{k=0}{\overset{q-1}{\bigoplus}}\, C_1\bigl(\alpha (-\alpha^2)^{k}\bigr).$$
With the same line of reasoning as in the beginning of the proof, one sees that $\alpha A$
is similar to its inverse, whereas $A_1^{-1}$ is similar to the opposite of its inverse.
Moreover, we note that no eigenvalue of $A_1^{-1}$ is a square root of $-1$: indeed
otherwise there would be an integer $k$ such that $(-\alpha^2)^{2k}=-1$, whence $-1 =\alpha^{4k}$,
whereas $-1$ is not in the subgroup generated by $\alpha$ because the order of $\alpha$ is odd.
Hence, by Theorem \ref{theo2mixed} the matrix $A^{-1}$ is the product of a $U_2$-matrix and an involution.

Next, we claim that one of the matrices $\alpha A$ and $-\alpha A$ is the product of two $U_2$-matrices.
Assume that the contrary holds. Since $\alpha A$ and $-\alpha A$ are both similar to their inverse,
$-1$ must be an eigenvalue of both, yielding two elements $k,l$ of $\lcro 0,q-1\rcro$ such that
$\alpha (-\alpha^2)^k=-1=-\alpha (-\alpha^2)^l$. Then $\alpha^{2k+1}=(-1)^{k+1}$ and
$\alpha^{2l+1}=(-1)^l$. Since $\alpha$ has odd order, $-1$ is not a power of it and hence $k$ is odd and $l$ is even, whence
they are distinct and $\alpha^{2k+1}=\alpha^{2l+1}$. Then, $q$ divides $2(k-l)$, and hence it divides $k-l$, which is absurd
because $k,l$ are distinct elements of $\lcro 0,q-1\rcro$.

Therefore, one of $\alpha A$ and $-\alpha A$ is the product of two $U_2$-matrices.
Yet, both $A^{-1}$ and $-A^{-1}$ are products of one $U_2$-matrix and one involution (using once more the fact that the opposite
of an involution is an involution). Hence, by writing $\alpha I_q=(\alpha A)\,A^{-1}=(-\alpha A)\,(-A^{-1})$,
we conclude that $\alpha I_q$ is the product of three $U_2$-matrices and one involution.
\end{proof}

\subsection{Converting non-scalar matrices into well-partitioned matrices}\label{adj4section}

Our aim here is to prove the following result:

\begin{prop}\label{adj4prop}
Let $M \in \GL_n(\F)$ be a matrix that is neither scalar nor cyclic. Then, $M$ is u-adjacent to a well-partitioned matrix.
\end{prop}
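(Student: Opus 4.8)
The goal is to show that every invertible matrix $M$ that is neither scalar nor cyclic is u-adjacent to a well-partitioned matrix, i.e. there is a $U_2$-matrix $U$ such that $UM$ is similar to a well-partitioned matrix. My plan is to work from the rational canonical form of $M$. Write the invariant factors of $M$ as $f_1 \mid f_2 \mid \cdots \mid f_s$, so that $M \simeq C(f_1) \oplus \cdots \oplus C(f_s)$; since $M$ is not cyclic we have $s \geq 2$. The aim is to replace this decomposition, after multiplying by a suitable $U_2$-matrix, with a block-diagonal matrix $C(p_1)\oplus\cdots\oplus C(p_r)\oplus C(q_1)\oplus\cdots\oplus C(q_s)$ satisfying the well-partitioned conditions: the $p$'s form one ``coprime cluster'' and the $q$'s another, with the degree-$1$ blocks confined to at most one end of each cluster. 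The key tool is Proposition \ref{cyclicfit2} together with the direct-sum compatibility of u-adjacency (Remark (v) after the Definition): if I can handle the building blocks, I can assemble them.

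\textbf{Main steps.} First, using the primary/rational canonical form I would look for a way to split the spectrum of $M$ (over an algebraic closure) into two nonempty parts with coprime characteristic polynomials, so that $M \simeq A \oplus B$ with $\chi_A,\chi_B$ coprime and both parts nonvoid. This is possible exactly when $\chi_M$ is not a power of a single irreducible polynomial; that ``bad'' case must be handled separately, and there the hypothesis that $M$ is not scalar and not cyclic is what gives room to maneuver (one has at least two Jordan-type blocks all attached to the same irreducible, of which at most finitely many can be forced to be degree-$1$; in fact if $\chi_M = \pi^k$ with $\pi$ irreducible of degree $\geq 2$ there is nothing to fix, and if $\pi = t-\lambda$ then since $M$ is non-scalar and non-cyclic there are at least two Jordan cells of which at least one has size $\geq 2$, and one can peel off cyclic pieces). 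Second, once I have $M \simeq A \oplus B$ with coprime characteristic polynomials, each of $A$ and $B$ is a direct sum of companion matrices $C(g)$ coming from its invariant factors; I then want, within each summand, to absorb all but at most one of the degree-$1$ companion blocks into a larger block. The mechanism: a degree-$1$ block $C(g) = (\lambda)$ sitting next to a block $C(h)$ with $\deg h \geq 1$ can be merged — the matrix $(\lambda) \oplus C(h)$ is cyclic if $\lambda$ is not a root of $h$ (Lemma \ref{Rothlemma}-type reasoning / coprimality), hence by Proposition \ref{cyclicfit2} is u-adjacent to $C(r)$ for any monic $r$ of the right degree and correct norm, and in particular to a companion matrix with the degree profile I want. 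Iterating, I reduce the number of degree-$1$ blocks in each cluster to at most one, and I can push that lone block to the required end. Third, I reassemble: with $A$ now presented as $C(p_1)\oplus\cdots\oplus C(p_r)$ (the $p_i$ for $i\geq 2$ of degree $\geq 2$) and $B$ as $C(q_1)\oplus\cdots\oplus C(q_s)$ (the $q_j$ for $j \leq s-1$ of degree $\geq 2$), with the $p$'s coprime to the $q$'s by construction, the direct sum is well-partitioned; and u-adjacency is preserved under direct sums and similarity, so $M$ is u-adjacent to this well-partitioned matrix. Throughout, the norm condition $N(r) = \det(\text{block})$ in Proposition \ref{cyclicfit2} must be tracked so that the $U_2$-multiplications are legitimate; since $U_2$-matrices have determinant $1$, u-adjacency preserves determinant, and this is automatically consistent.

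\textbf{Expected main obstacle.} The subtle part is not the coprime-splitting but the bookkeeping when merging degree-$1$ blocks: I must ensure that each merge step genuinely produces a \emph{cyclic} matrix (so that Proposition \ref{cyclicfit2} applies) and that after all merges the degree-$1$ blocks end up in the correct positions relative to conditions (ii)--(iii) in the definition of well-partitioned (at most one degree-$1$ block $p_1$ at the ``front'' of the $p$-cluster, at most one degree-$1$ block $q_s$ at the ``back'' of the $q$-cluster). The case analysis — whether each cluster initially has zero, one, or several degree-$1$ invariant factors, and how the non-scalar, non-cyclic hypothesis forbids the degenerate configurations — is where the real care is needed; this parallels the case split in the proof of Lemma \ref{VWPlemma}, and I expect the cleanest route is to first dispose of the case $\chi_M = \pi^k$ (using $M$ non-scalar, non-cyclic to produce at least one block of size $\geq 2$, then u-adjacency into a manifestly well-partitioned shape), and then in the generic case argue that after splitting off a coprime piece, the invariant factors already give a well-partitioned layout or can be fixed by a single u-adjacency via Proposition \ref{cyclicfit2}.
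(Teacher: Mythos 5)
Your toolkit is the right one (rational canonical form, block-by-block u-adjacency via the direct-sum remark, Proposition \ref{cyclicfit2} combined with root avoidance), but two steps of the plan break down. First, the claim that when $\chi_M=\pi^k$ with $\pi$ irreducible of degree $\geq 2$ ``there is nothing to fix'' is false: a matrix such as $C(\pi)\oplus C(\pi)$ violates condition (iv) of the definition of a well-partitioned matrix (the two clusters must have mutually coprime polynomials), so even in this case a u-adjacency is needed to move at least one block's characteristic polynomial off the roots of the others.

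Second, and more seriously, your merging mechanism cannot get started. The invariant factors form a divisibility chain, so every degree-$1$ invariant factor equals the same $t-\alpha$, and $\alpha$ is a root of \emph{every} invariant factor. Consequently $(\alpha)\oplus C(h)$ is never cyclic for the blocks you actually have, and $(\alpha)\oplus(\alpha)=\alpha I_2$ is not cyclic either, so Proposition \ref{cyclicfit2} applies to none of your merges. You flag this as the expected obstacle but do not resolve it, and the natural fix --- first alter $h$ by one u-adjacency so that $h(\alpha)\neq 0$, then merge by a second --- is illegitimate, because $\uadj$ is not transitive: composing the two witnesses yields a product of two $U_2$-matrices, not one, and the proposition requires a single u-adjacency. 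The paper avoids merging altogether. It writes $M\simeq C(p_1)\oplus\cdots\oplus C(p_r)\oplus\alpha I_s$ with all $\deg p_i\geq 2$, converts the scalar part by the direct observation $\alpha I_2\uadj C_2(\alpha)$ (one $U_2$-factor, no cyclicity needed), keeps the resulting $C_2(\alpha)$-blocks as their own cluster, and uses Lemma \ref{polynomialrootslemma} together with Proposition \ref{cyclicfit2} to replace each $C(p_i)$ by a $C(q_i)$ of the same norm and degree with $q_i(\alpha)\neq 0$: the coprimality between the two clusters is manufactured, not extracted from a coprime splitting of $M$. (When $s=0$ one has $r\geq 2$, and the same trick works by sending $C(p_1)$ to $C\bigl((t-1)^{d-1}(t-N(p_1))\bigr)$ while steering the other $q_i$ away from $1$ and $N(p_1)$.) Reworking your argument around that idea gives the statement in a single u-adjacency.
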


With a similar method, one can prove that $M$ is also i-adjacent to a well-partitioned matrix, but we will not use this result.

We start with a basic result on polynomials:

\begin{lemma}\label{polynomialrootslemma}
Let $I$ be a finite subset of $\F^*$, and let $\lambda \in \F^*$. Let $n$ be an integer greater than $1$.
Then, there exists a monic polynomial $q$ of degree $n$ such that $N(q)=\lambda$ and $q$ has no root in $I$.
\end{lemma}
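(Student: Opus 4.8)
The statement asks, given a finite set $I \subset \F^*$, a target norm $\lambda \in \F^*$, and an integer $n \geq 2$, for a monic polynomial $q$ of degree $n$ with $N(q) = \lambda$ and no root in $I$. Recall that the norm of $q = t^n - \sum_{k=0}^{n-1} a_k t^k$ is $(-1)^{n-1}a_0$, so prescribing $N(q) = \lambda$ is the same as prescribing the constant term $q(0) = -a_0 = (-1)^n \lambda$, which is a fixed nonzero scalar; call it $c := (-1)^n\lambda$. So the problem reduces to: produce a monic degree-$n$ polynomial with constant term $c \neq 0$ that avoids all the finitely many values of $I$ as roots. Note $0 \notin I$ is automatic from $I \subseteq \F^*$, so we never need to worry about $0$ being a forbidden root.

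The plan is to write $q(t) = (t^n + \mu t) + c$ and choose the scalar $\mu \in \F$ appropriately, or, if $\F$ is too small for that to work, to fall back on a factored construction. The cleanest uniform approach: for each $\alpha \in I$, the condition $q(\alpha) = 0$ reads $\alpha^n + \mu\alpha + c = 0$, i.e. $\mu = -(\alpha^n + c)/\alpha$ (legitimate since $\alpha \neq 0$), which pins down at most $|I|$ "bad" values of $\mu$. So any $\mu$ outside this finite bad set works — and such a $\mu$ exists as long as $|\F| > |I|$. This already handles all but finitely many small fields. For the genuinely small finite fields $\F = \F_q$ with $q \leq |I|$ (so $I$ might be all of $\F^*$), I would instead build $q$ as a product of irreducible factors of degree $\geq 2$ (which have no roots in $\F$ at all, a fortiori none in $I$) times a correction factor to fix the constant term: e.g. take an irreducible quadratic $g$ (one exists over every field), and write $q$ as a suitable product of shifted copies / powers arranged so that $\deg q = n$ and $q(0) = c$; concretely one can use $q(t) = t^{n-2} g(t) \cdot (\text{unit adjustment})$ is not quite monic-friendly, so better: pick an irreducible monic $g$ of degree $2$ with $g(0) = c_0$ for a convenient $c_0$, then multiply by $(t^{n-2} + \text{lower terms})$ chosen monic with constant term $c/c_0$ and — recursively or by the $\mu$-trick applied in the larger field extension — with no roots in $\F$. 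Actually the slickest fix for small fields: pass to thinking of roots in $\overline{\F}$; since $I$ is finite, choose $q$ to be (a monic polynomial all of whose irreducible factors have degree $> 1$) times (a single linear factor $t - \beta$ with $\beta \notin I$), adjusting $\beta$ and one more coefficient to hit $q(0) = c$; when $n = 2$ just take $q$ irreducible with $q(0) = c$, which exists over any field since there are infinitely many monic quadratics but they can't all be reducible... — here one should just verify the count directly.

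The main obstacle is precisely the small-field bookkeeping: over $\F_2, \F_3$ with $n$ small, the space of monic degree-$n$ polynomials with prescribed constant term is small and one must check by hand (or by a clean counting argument: the number of such polynomials is $|\F|^{n-1}$, while the number that acquire a given root $\alpha \in I$ is at most $|\F|^{n-2}$, so a union bound needs $|\F|^{n-1} > |I|\cdot|\F|^{n-2}$, i.e. $|\F| > |I|$ — the same condition again). So the honest structure of the proof is: (1) the $\mu$-trick / union-bound argument works whenever $|\F| > |I|$; (2) when $|\F| \leq |I|$, $\F$ is a finite field and one exhibits $q$ explicitly using an irreducible factor of degree $\geq 2$ to kill \emph{all} roots in $\F$, then fixes the constant term and the degree by multiplying by an appropriate monic polynomial — the existence of which, with the right constant term, is where one does a short direct check. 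I expect step (2) to be the only place requiring care, and it is genuinely finite casework rather than a conceptual difficulty.
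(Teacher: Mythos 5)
Your approach is essentially the paper's: both are union-bound arguments over the space of unconstrained coefficients, with the constant term pinned down by the norm condition. The paper varies all $n-1$ middle coefficients $(x_1,\dots,x_{n-1})$ and invokes a general lemma that fewer than $|\F|$ proper affine subspaces cannot cover $\F^{n-1}$; your one-parameter version $q(t)=t^n+\mu t+c$ is a legitimate and slightly more economical specialization of the same idea, since each $\alpha\in I$ forbids exactly one value $\mu=-(\alpha^n+c)/\alpha$.

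The one thing you missed is that your entire second case is vacuous: since $I\subseteq\F^*$, you always have $|I|\leq|\F|-1<|\F|$, so the condition $|\F|>|I|$ under which your $\mu$-trick succeeds holds for \emph{every} field, finite or not. (The paper makes exactly this observation: ``$|I|<|\F|$ since $I\subset\F^*$.'') Consequently the small-field fallback with irreducible factors of degree $\geq 2$ -- which is the only part of your proposal that is genuinely incomplete and hand-wavy -- is never needed, and your case (1) alone is a complete proof. Had the hypothesis been $I\subseteq\F$ rather than $I\subseteq\F^*$, you would indeed have needed to worry about $I=\F$ for finite fields, but then the requirement $\alpha\neq 0$ in solving for $\mu$ would also break down; the restriction to $\F^*$ is doing double duty here.
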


This result is deduced from the following one, which is folklore:

\begin{lemma}\label{coverlemma}
Let $\calF$ be a finite-dimensional affine space over $\F$, and $\calF_1,\dots,\calF_n$
be proper affine subspaces of $\calF$ (possibly void), with $n<|\F|$. Then, $\calF_1,\dots,\calF_n$ do not cover $\calF$.
\end{lemma}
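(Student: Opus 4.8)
The plan is to reduce the statement to the case where each $\calF_i$ is an affine hyperplane, and then apply the standard polynomial argument. First I would dispose of the void subspaces (they contribute nothing to the union and number at most $n<|\F|$, so may simply be discarded) and of the degenerate case $\dim\calF=0$, where $\calF$ is a single point whose only proper affine subspace is void. So assume $d:=\dim\calF\geq 1$ and every $\calF_i$ is non-void; since a non-void proper affine subspace is contained in some affine hyperplane, replacing each $\calF_i$ by such a hyperplane $\calH_i$ only enlarges $\bigcup_i\calF_i$, and it suffices to show that $\calH_1,\dots,\calH_n$ do not cover $\calF$.

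Next I would choose an origin and a basis to identify $\calF$ with $\F^d$; then each $\calH_i$ is the zero locus of a degree-$1$ polynomial $f_i\in\F[x_1,\dots,x_d]$, in particular a nonzero polynomial. Form $P:=f_1\cdots f_n$. Since $\F[x_1,\dots,x_d]$ is an integral domain, $P\neq 0$, and $\deg P=n<|\F|$. It then remains to invoke the folklore fact that a nonzero polynomial $g\in\F[x_1,\dots,x_d]$ with $\deg g<|\F|$ cannot vanish at every point of $\F^d$: applied to $P$, this yields a point $y\in\F^d$ with $f_i(y)\neq 0$ for all $i$, that is $y\notin\bigcup_i\calH_i\supseteq\bigcup_i\calF_i$, which is the desired conclusion.

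For the folklore fact I would induct on $d$. For $d=1$, a nonzero univariate polynomial of degree $<|\F|$ has fewer than $|\F|$ roots, hence does not vanish on all of $\F$. For $d\geq 2$, write $g=\sum_{k=0}^{m}g_k(x_1,\dots,x_{d-1})\,x_d^k$ with $g_m\neq 0$ and $m\leq\deg g$; since $\deg g_m<|\F|$, the induction hypothesis produces a tuple $a=(a_1,\dots,a_{d-1})$ with $g_m(a)\neq 0$, so the univariate polynomial $g(a_1,\dots,a_{d-1},x_d)$ is nonzero of degree $m<|\F|$ and thus has a non-root $a_d\in\F$, giving $g(a_1,\dots,a_d)\neq 0$.

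The argument has no serious obstacle; the one point that must not be overlooked is that over a finite field a nonzero polynomial can vanish identically on $\F^d$ (e.g.\ $x^2-x$ over $\F_2$), so the hypothesis $n<|\F|$ is genuinely needed and enters exactly at the inductive step above. One could alternatively give a purely geometric proof by induction on $d$: apply an affine change of coordinates so that $\calF_1=\{x_d=0\}$, use the induction hypothesis on the images in $\F^{d-1}$ of the \emph{vertical} hyperplanes among $\calF_2,\dots,\calF_n$ to locate a vertical line contained in none of them, and count its at most $n<|\F|$ points of intersection with $\bigcup_i\calF_i$. I would keep this as a backup but present the polynomial proof as the main one.
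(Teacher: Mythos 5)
Your proof is correct, but it takes a genuinely different route from the paper's. You first enlarge each non-void $\calF_i$ to an affine hyperplane and then run the polynomial method: the union of $n$ affine hyperplanes in $\F^d$ is the zero set of a nonzero polynomial of degree $n<|\F|$, and a nonzero polynomial of total degree less than $|\F|$ cannot vanish on all of $\F^d$ (your induction on the number of variables is the standard and correct proof of that fact). The paper instead argues geometrically by induction on $\dim\calF$: it chooses a hyperplane $\calH$ containing $\calF_1$ and slices $\calF$ by the translates of $\calH$; either some translate $\calH'$ is contained in no $\calF_i$, in which case the traces $\calF_i\cap\calH'$ are proper affine subspaces of $\calH'$ and the induction hypothesis applies, or else every translate equals some $\calF_i$, which forces $n\ge|\F|$. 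Both arguments ultimately rest on the same counting input (a one-parameter family of points or of parallel translates has at least $|\F|$ members, so fewer than $|\F|$ objects cannot exhaust it). Your version makes the sharpness of the hypothesis $n<|\F|$ transparent (as you note, $x^2-x$ over $\F_2$, and more generally $x^q-x$ over $\F_q$), at the cost of choosing coordinates; the paper's stays coordinate-free and purely affine. Your sketched geometric backup is essentially a variant of the paper's argument.
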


\begin{proof}[Proof of Lemma \ref{coverlemma}]
The result is obvious if $\calF$ is void. Assume now that it is not.
We prove the result by induction on the dimension of $\calF$. If it is less than or equal to $1$,
then the result is obvious (the $\calF_i$'s being either void or singletons).
Assume now that the dimension of $\calF$ is at least $2$.
We choose an affine hyperplane $\calH$ of $\calF$ that includes $\calF_1$.
Assume first that some affine hyperplane $\calH'$ that is parallel to $\calH$ is included in none of the $\calF_i$'s.
Then, $\calF_1 \cap \calH',\dots,\calF_n \cap \calH'$ are proper affine subspaces of $\calH'$
and hence by induction they do not cover $\calH'$; hence, $\calF_1,\dots,\calF_n$ do not cover $\calF$.

If the converse holds every affine hyperplane $\calH'$ of $\calE$ that is parallel to $\calH$
is included in $\calF_i$ for some $i$, and then it equals $\calF_i$, which leads to
$n \geq |\F|$. This contradicts our assumptions.
\end{proof}

\begin{proof}[Proof of Lemma \ref{polynomialrootslemma}]
For each $\alpha \in I$, consider the nonconstant affine map
$$f_\alpha : (x_k)_{1 \leq k \leq n-1} \in \F^{n-1} \mapsto \biggl(\sum_{k=1}^{n-1} x_k\, \alpha^k\biggr) +\alpha^n+(-1)^n\lambda.$$
We note that $|I| <|\F|$ since $I \subset \F^*$. Hence, the proper affine subspaces
$f_\alpha^{-1}\{0\}$, for $\alpha \in I$, do not cover $\F^{n-1}$. This yields a list
$x \in  \F^{n-1}$ such that $f_\alpha(x) \neq 0$ for all $\alpha \in I$. Hence, the polynomial
$t^n+\underset{k=1}{\overset{n-1}{\sum}} x_k t^k+ (-1)^n\lambda$ has the required properties.
\end{proof}

We are now ready to prove Proposition \ref{adj4prop}.

\begin{proof}[Proof of Proposition \ref{adj4prop}]
Using the rational canonical form of $M$, we lose no generality in assuming that
$$M = C(p_1) \oplus \cdots \oplus C(p_r) \oplus \alpha I_s$$
where $p_1,\dots,p_r$ are polynomials, all with degree at least $2$ and such that $N(p_1) \neq 0$, \ldots,
$N(p_r) \neq 0$, $r \geq 1$, $\alpha \in \F \setminus \{0\}$, and potentially $s=0$.
Moreover if $s=0$ then $r \geq 2$ since $M$ is not cyclic.

Now, we split the discussion into two cases.

\noindent \textbf{Case 1:} $s>0$. \\
Set
$$B:=\begin{cases}
\underset{i=1}{\overset{s/2}{\bigoplus}}\, C_2(\alpha) & \text{if $s$ is even} \\
\biggl[\underset{i=1}{\overset{(s-1)/2}{\bigoplus}} C_2(\alpha)\biggr] \oplus C_1(\alpha)
& \text{if $s$ is odd.}
\end{cases}$$
In any case, noting that $\alpha\, C_2(1) \simeq C_2(\alpha)$,
we see that $\alpha I_s \uadj B$.

Then, by using Lemma \ref{polynomialrootslemma}, we find, for each $i \in \lcro 1,r\rcro$,
a monic polynomial $q_i$ such that $N(q_i)=N(p_i)$, $q_i(\alpha) \neq 0$ and $\deg(p_i)=\deg(q_i)$.
By Lemma \ref{cyclicfit2}, we see that $C(p_i) \uadj C(q_i)$ for all $i \in \lcro 1,r\rcro$.
Hence, $A:=\bigl[C(q_1) \oplus \cdots \oplus C(q_r)\bigr] \oplus B$ is well-partitioned and $M \uadj A$.

\noindent \textbf{Case 2:} $s=0$. \\
Then, $r>1$. Using Lemma \ref{polynomialrootslemma}, we find, for each $i \in \lcro 2,r\rcro$,
a monic polynomial $q_i$ such that $N(p_i)=N(q_i)$, $\deg(q_i)=\deg(p_i)$ and
$q_i$ has no root in $\{1,N(p_1)\}$.
Set $q_1:=(t-1)^{d-1}(t-N(p_1))$, where $d:=\deg p_1$.
Then, $q_1$ is coprime to $q_2,\dots,q_r$, and hence
$A:=C(q_1) \oplus \cdots \oplus C(q_r)$ is well-partitioned.
Yet, by Lemma \ref{cyclicfit2}, we have $C(p_i) \uadj C(q_i)$ for all $i \in \lcro 1,r\rcro$.
Hence, $M \uadj A$, which completes the proof.
\end{proof}

\subsection{Concluding the proof of Theorem \ref{theo4}}

We are now ready to complete the proof of Theorem \ref{theo4}.

Let $M \in \SL_n(\F)$. We prove that $M$ is the product of four $U_2$-matrices.
It is known by Lemma \ref{scalar4lemma2} if $M$ is scalar, and by Proposition \ref{cyclicdecomp} if $M$ is cyclic (because in that case
$M$ is the product of three $U_2$-matrices).
Assume now that $M$ is neither scalar nor cyclic.
Then, by Proposition \ref{adj4prop}, there is a well-partitioned matrix $A \in \GL_n(\F)$ such that $M \uadj A$. Hence, $\det A=\det M=1$.
Then, by Proposition \ref{wellpartdecomp},
$A$ is the product of three $U_2$-matrices, and hence $M$ is the product of four $U_2$-matrices.

Next, let $M \in \GL_n(\F)$ be such that $\det M=\pm 1$. Let $k \in \{1,2,3\}$. We wish to prove that $M$
is the product of $k$ unipotent matrices of index $2$
and $4-k$ involutions. Again, it is known by Lemmas \ref{scalar4lemma1} and \ref{scalar4lemma3} if $M$
is scalar, and by Lemma \ref{cyclicdecomp} if $M$ is cyclic (in that case $M$ is the product of $k-1$ unipotent matrices of index $2$
and $4-k$ involutions). Assume now that $M$ is neither scalar nor cyclic.
Then, by Proposition \ref{adj4prop}, $M \uadj A$ for some well-partitioned matrix $A \in \Mat_n(\F)$.
Hence, $\det A=\det M=\pm 1$.
Then, by Proposition \ref{wellpartdecomp}, $A$ is the product of $k-1$ unipotent matrices of index $2$ and $4-k$ involutions,
and hence $M$ is the product of $k$ unipotent matrices of index $2$ and $4-k$ involutions.

In particular, $M$ is the product of two $U_2$-matrices and two involutions, and we deduce from Theorem \ref{theo2} that it is also the product
of four involutions. Hence, Theorem \ref{theo4} is now proved.

\section{Common results for the stable length $3$ problem}\label{commonSection}

In the present section, we gather some technical results that are more or less common to all four cases in the stable length $3$ problem.
Most of our results are concerned with matrices of the form $\alpha I_n \oplus \beta I_n$ with distinct nonzero scalars $\alpha$ and $\beta$.

\subsection{Adjacency results on specific diagonal matrices}

\begin{lemma}\label{basicblocklemma}
Let $\alpha,\beta,\gamma,\delta$ be nonzero scalars, with $\alpha \neq \beta$, and let
$x \in \F \setminus \{0\}$. Set $\pi:=\alpha\beta\gamma\delta$.
Let $n$ be a positive integer.
Then, there is a matrix $S \in \GL_{2n}(\F)$ that is annihilated by the polynomial $(t-\gamma)(t-\delta)$ and such that
$$S\,(\alpha I_n \oplus \beta I_n) \simeq C\bigl((t-x)^n(t-\pi x^{-1})^n\bigr).$$
\end{lemma}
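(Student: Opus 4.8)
The plan is the following. Since $C(p)$, with $p(t):=(t-x)^n(t-\pi x^{-1})^n$, is a companion matrix, hence cyclic, and $N(p)=\pi^n$, it is enough to construct $S$ annihilated by $(t-\gamma)(t-\delta)$ for which $S(\alpha I_n\oplus\beta I_n)$ is cyclic with characteristic polynomial $p$ (two cyclic matrices with the same characteristic polynomial being similar). I would look for $S$ in the form $S=\delta I_{2n}+(\gamma-\delta)\Pi$ with $\Pi$ an idempotent of rank $n$: then $(S-\gamma I_{2n})(S-\delta I_{2n})=0$ automatically and $\det S=(\gamma\delta)^n$, which is moreover forced since $\det\bigl(S(\alpha I_n\oplus\beta I_n)\bigr)/(\alpha\beta)^n=\pi^n/(\alpha\beta)^n=(\gamma\delta)^n$. (When $\gamma=\delta$ one takes instead $S=\gamma I_{2n}+N$ with $N^2=0_{2n}$ and proceeds in the same way.)

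The crucial point is that $S(\alpha I_n\oplus\beta I_n)$ has exactly the same zero-pattern as $S$ (its first $n$ columns are rescaled by $\alpha$, the last $n$ by $\beta$), so for $n\geq 2$ it can never itself be a block quasi-companion matrix: that would force $S$ to be an invertible BQC matrix, hence cyclic by Lemma \ref{BQClemma}, contradicting that its minimal polynomial divides $(t-\gamma)(t-\delta)$. To create room I would use Roth's Lemma (Lemma \ref{Rothlemma}): as $(t-\alpha)^n$ and $(t-\beta)^n$ are coprime, $\left[\begin{smallmatrix}\alpha I_n & C\\ 0 & \beta I_n\end{smallmatrix}\right]$ is similar to $\alpha I_n\oplus\beta I_n$ for every $C\in\Mat_n(\F)$, so it suffices to find $\widetilde S$ annihilated by $(t-\gamma)(t-\delta)$ and a matrix $C$ with $\widetilde S\left[\begin{smallmatrix}\alpha I_n & C\\ 0 & \beta I_n\end{smallmatrix}\right]$ cyclic of characteristic polynomial $p$; transporting $\widetilde S$ through the Roth similarity then yields the required $S$. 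Writing $\widetilde S$ in $n\times n$ blocks,
$$\widetilde S\begin{bmatrix}\alpha I_n & C\\ 0 & \beta I_n\end{bmatrix}=\begin{bmatrix}\alpha\widetilde S_{11} & \widetilde S_{11}C+\beta\widetilde S_{12}\\ \alpha\widetilde S_{21} & \widetilde S_{21}C+\beta\widetilde S_{22}\end{bmatrix},$$
so the right-hand half is flexible while the left-hand half is a mere rescaling of $\widetilde S_{11}$ and $\widetilde S_{21}$. I would then choose $\widetilde S_{11}$ to be a cyclic (in fact BQC) quasi-companion-type matrix with nonzero $(1,n)$ entry, $\widetilde S_{21}$ to have a single nonzero entry in position $(1,n)$ — so that $\alpha\widetilde S_{21}$ is the matrix-unit block required by the definition of a BQC matrix —, and $\widetilde S_{12},\widetilde S_{22}$ so that $\Pi=(\gamma-\delta)^{-1}(\widetilde S-\delta I_{2n})$ is idempotent of rank $n$, with $\widetilde S_{22}$ invertible; finally $C$ is chosen so that $\widetilde S_{21}C+\beta\widetilde S_{22}$ is quasi-companion. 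The hard part is exactly this bookkeeping: the naive characteristic list $(n,n)$ is doomed, because then the quasi-companion conditions on the two diagonal blocks, combined with $\tr\Pi=n$ and the idempotency relations, force the spurious identity $\gamma+\delta=0$; so one must use a characteristic list containing a size-$1$ part just after the coarse boundary $n\mid n$ and distribute the free diagonal entries of $\Pi$ to absorb the constraint $\tr\Pi=n$, and then verify that the idempotency equations for $\Pi$ really do admit a solution under all these shape constraints.

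With such choices in hand, $\widetilde S\left[\begin{smallmatrix}\alpha I_n & C\\ 0 & \beta I_n\end{smallmatrix}\right]$ is an invertible BQC matrix, hence cyclic by Lemma \ref{BQClemma}. To pin down its characteristic polynomial I would reproduce the mechanism of the proof of the Adaptation Theorem (Theorem \ref{AdaptationTheorem}): factor the product as $N\,R$ with $N$ the transvection responsible for the single coupling entry and $R=\left[\begin{smallmatrix}R_1 & D_0\\ 0 & R_2\end{smallmatrix}\right]$ block upper triangular, so that $\det(tI_{2n}-NR)=\det(tN^{-1}-R)$ acquires the shape $\left[\begin{smallmatrix}tI_n-R_1 & -D_0\\ tXY^T & tI_n-R_2\end{smallmatrix}\right]$ to which the Polynomial Fit Lemma (Lemma \ref{polyfitgen}) applies, $R_1,R_2$ being cyclic and $X,Y$ the cyclic vectors furnished by Lemma \ref{BQClemma}. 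Since $N(p)=\pi^n=(\det R_1)(\det R_2)$, that lemma lets us choose $D_0$ — realized through the still-unconstrained entries of $\widetilde S_{12}$ and $C$ — so that $\det\bigl(tI_{2n}-\widetilde S\left[\begin{smallmatrix}\alpha I_n & C\\ 0 & \beta I_n\end{smallmatrix}\right]\bigr)=p(t)$, which completes the argument.
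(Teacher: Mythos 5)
Your opening reduction is fine (it suffices to make $S(\alpha I_n\oplus\beta I_n)$ cyclic with characteristic polynomial $(t-x)^n(t-\pi x^{-1})^n$, since $N\bigl((t-x)^n(t-\pi x^{-1})^n\bigr)=\pi^n$), and so is the Roth step. But the entire content of the lemma is the existence of $\widetilde S$ with the shape you prescribe, and you defer exactly that (``the hard part is exactly this bookkeeping\dots then verify that the idempotency equations\dots really do admit a solution''). Worse, that shape admits no solution. Write $T:=\widetilde S$ and suppose $P:=T\left[\begin{smallmatrix}\alpha I_n & C\\ 0 & \beta I_n\end{smallmatrix}\right]$ is BQC for a list whose blocks cover columns $1,\dots,n$ exactly and then continue (your size-$1$ part after the boundary $n\mid n$), with $T_{11}$ quasi-companion and $\alpha T_{21}$ the required corner unit. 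Since the first $n$ columns of $P$ are $\alpha$ times those of $T$, the pattern forces $Te_1=\lambda e_2+\mu e_n$ and $Te_2\in\Vect(e_3,e_n)$, while column $n$ of $T$ carries the \emph{nonzero} coupling entry at row $n+1$ and vanishes below. Now apply $T^2=(\gamma+\delta)T-\gamma\delta I_{2n}$ to $e_1$: the coefficient of $e_{n+1}$ in $T^2e_1=\lambda Te_2+\mu Te_n$ is $\mu\,T_{n+1,n}$, while it is $0$ on the right-hand side, so $\mu=0$; but then $T^2e_1=\lambda Te_2\in\Vect(e_3,e_n)$ has zero $e_1$-coefficient, whereas the right-hand side has $e_1$-coefficient $-\gamma\delta\neq 0$ (for $n\geq 3$; for $n=2$ one gets $Te_1=0$ instead). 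The same two-step extraction (first the coefficient of $e_{a_{i+1}}$, then of a basis vector below the boundary) kills every characteristic list having a block boundary $a_i\leq n$ that is followed by further blocks, so no amount of redistribution of the free entries of $\Pi$ can rescue a BQC structure aligned with the partition $n\mid n$.

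The obstruction is structural and explains why the Adaptation Theorem's mechanism does not transfer here: in that proof the subdiagonal $1$'s and corner entries of $SA_U$ are supplied by the companion blocks of $M$, so $S$ itself only needs diagonal blocks $I_{k-1}\oplus(\varepsilon I_1)$, which are compatible with a degree-two annihilating polynomial. When the factor is the diagonal matrix $\alpha I_n\oplus\beta I_n$, the whole left half of the BQC pattern must be carried by $\widetilde S$ itself, and a matrix annihilated by $(t-\gamma)(t-\delta)$ with $\gamma\delta\neq 0$ cannot do that. The paper therefore takes a different route: it sets $A:=\left[\begin{smallmatrix}\gamma I_n & 0_n\\ \alpha^{-1}I_n & \delta I_n\end{smallmatrix}\right]$ (visibly annihilated by $(t-\gamma)(t-\delta)$) and $B:=\left[\begin{smallmatrix}\alpha I_n & C(r)\\ 0_n & \beta I_n\end{smallmatrix}\right]$ (diagonalisable, hence similar to $\alpha I_n\oplus\beta I_n$), conjugates $AB$ into the form $\left[\begin{smallmatrix}0_n & -\pi I_n\\ I_n & N\end{smallmatrix}\right]$ with $N$ cyclic, and reads off the similarity class $C\bigl(t^n s(t+\pi t^{-1})\bigr)$ from Lemma \ref{blockmatrixlemma}, choosing $r$ so that this equals $C\bigl((t-x)^n(t-\pi x^{-1})^n\bigr)$; neither Lemma \ref{BQClemma} nor the Polynomial Fit Lemma is needed. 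If you wish to persist with a BQC argument you must abandon the boundary at $n$ entirely (the computation above leaves open essentially only lists of the form $(1,d_2,\dots)$ with $d_2\geq n$), and then the clean $n\mid n$ block form to which you want to apply Lemma \ref{polyfitgen} is no longer available.
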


This lemma is a consequence of the following result, which was proved in \cite{dSPprod2}
(see lemma 4.5 there):

\begin{lemma}\label{blockmatrixlemma}
Let $r\in \F[t]$ be a monic polynomial with degree $n>0$, and $d$ be a nonzero scalar.
Let $N \in \Mat_n(\F)$ be cyclic with characteristic polynomial $r$.
Then,
$$\begin{bmatrix}
0_n & -d I_n \\
I_n & N
\end{bmatrix}\simeq C\bigl(t^n\,r(t+d t^{-1})\bigr).$$
\end{lemma}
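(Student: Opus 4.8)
The plan is to show that the $2n\times 2n$ matrix $M:=\begin{bmatrix}0_n & -dI_n\\ I_n & N\end{bmatrix}$ is a cyclic matrix whose characteristic polynomial is $t^n\,r(t+dt^{-1})$; since every cyclic matrix is similar to the companion matrix of its characteristic polynomial, this yields $M\simeq C\bigl(t^n\,r(t+dt^{-1})\bigr)$ at once.

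First I would compute $\chi_M$. Working over the field of rational fractions $\F(t)$, in which $tI_n$ is invertible, the Schur complement formula gives
$$\chi_M(t)=\det\begin{bmatrix}tI_n & dI_n\\ -I_n & tI_n-N\end{bmatrix}=\det(tI_n)\,\det\bigl((tI_n-N)+dt^{-1}I_n\bigr)=t^n\,\det\bigl((t+dt^{-1})I_n-N\bigr),$$
and the last determinant is $\chi_N(t+dt^{-1})=r(t+dt^{-1})$. Writing $r(s)=s^n-\sum_{k=0}^{n-1}a_k s^k$, one checks that $t^n\,r(t+dt^{-1})=(t^2+d)^n-\sum_{k=0}^{n-1}a_k\,t^{n-k}(t^2+d)^k$ is a genuine monic polynomial of degree $2n$, so the identity above actually holds in $\F[t]$ over an arbitrary (possibly finite) field. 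This passage from $\F(t)$ back to $\F[t]$ is the only mildly technical point.

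The core of the argument is the cyclicity of $M$. Note that $M$ is invertible, since $\chi_M(0)=d^n\neq 0$. Identifying $\F^{2n}$ with $\F^n\oplus\F^n$, one has $M(x,y)=(-dy,\,x+Ny)$, hence $M(x,0)=(0,x)$ and $M(0,x)=(-dx,Nx)$; consequently $(M^2+dI_{2n})(x,0)=(0,Nx)=M(Nx,0)$, so that $(M+dM^{-1})(x,0)=(Nx,0)$ for every $x\in\F^n$. Now pick a cyclic vector $v\in\F^n$ for $N$. The subspace $W:=\Vect\{M^k(v,0):k\in\Z\}$ is stable under $M$ and $M^{-1}$ and contains $(N^k v,0)=(M+dM^{-1})^k(v,0)$ for all $k\ge 0$, hence contains $\F^n\oplus\{0\}$; applying $M$, it also contains $\{0\}\oplus\F^n$, so $W=\F^{2n}$. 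Since $M$ is invertible, this shows that $(v,0)$ is a cyclic vector of $M$ (see the remark on cyclic vectors of invertible matrices in Section \ref{additionalNotSection}).

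Putting the two steps together, $M$ is cyclic with $\chi_M(t)=t^n\,r(t+dt^{-1})$, hence $M\simeq C(\chi_M)=C\bigl(t^n\,r(t+dt^{-1})\bigr)$, which is the desired conclusion. I expect the characteristic-polynomial identity over a general field to be the only place requiring care; once the relation $(M+dM^{-1})(x,0)=(Nx,0)$ is observed, the cyclicity — and therefore the similarity — is entirely formal.
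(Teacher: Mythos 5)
Your proof is correct. Note that the paper does not actually give its own argument for this lemma: it is imported directly from \cite{dSPprod2} (lemma 4.5 there), so there is no in-paper proof to compare against. That said, your self-contained argument is sound, and both halves hold up under scrutiny. The Schur-complement computation
$$\chi_M(t)=t^n\,\det\bigl((t+dt^{-1})I_n-N\bigr)=t^n\,r(t+dt^{-1})$$
is carried out over $\F(t)$ but, as you point out, both sides lie in $\F[t]$ (the right-hand side expands to $(t^2+d)^n-\sum_{k=0}^{n-1}a_k\,t^{n-k}(t^2+d)^k$, a monic polynomial of degree $2n$ with constant term $d^n$), and equality of polynomials inside $\F(t)$ descends to $\F[t]$. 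The cyclicity step hinges on the clean identity $(M+dM^{-1})(x,0)=(Nx,0)$, which lets the cyclic vector $v$ of $N$ lift to the cyclic vector $(v,0)$ of $M$; combined with the remark in Section~\ref{additionalNotSection} that for invertible $M$ one may test cyclicity with the $\Z$-span, this closes the argument. This is a natural and arguably more conceptual route than an explicit change-of-basis computation, since it separates cleanly the two ingredients (characteristic polynomial, cyclicity) that together force similarity to a companion matrix.
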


\begin{proof}[Proof of Lemma \ref{basicblocklemma}]
We start from an arbitrary monic polynomial $r \in \F[t]$, which we will adjust afterwards.

Set
$$A:=\begin{bmatrix}
\gamma I_n & 0_n \\
\alpha^{-1} I_n & \delta I_n
\end{bmatrix} \quad \text{and} \quad
B:=\begin{bmatrix}
\alpha I_n & C(r) \\
0_n & \beta I_n
\end{bmatrix}.$$
Then,
$$AB=\begin{bmatrix}
\alpha \gamma I_n & \gamma\,C(r) \\
I_n & \delta\beta I_n+ \alpha^{-1} C(r)
\end{bmatrix}.$$
Taking $P:=\begin{bmatrix}
I_n & -\alpha\gamma I_n \\
0_n & I_n
\end{bmatrix}$, we deduce that
$$P (AB) P^{-1}=\begin{bmatrix}
0_n & -\pi I_n \\
I_n & \alpha^{-1}C(r)+(\delta \beta+\alpha \gamma) I_n
\end{bmatrix}.$$
Set now $s(t):=\bigl(t-(x+\pi x^{-1})\bigr)^n$, so that $t^ns(t+\pi t^{-1})=(t-x)^n(t-\pi x^{-1})^n$.
The matrix $\alpha(C(s)- (\delta \beta+\alpha \gamma) I_n)$ is obviously cyclic.
Hence, if we choose $r$ as its characteristic polynomial, we deduce from Lemma \ref{blockmatrixlemma} that
$$AB \simeq C\bigl((t-x)^n(t-\pi x^{-1})^n\bigr).$$
Next, it is easily checked that $(t-\alpha)(t-\beta)$ annihilates $B$, and $(t-\alpha)^n(t-\beta)^n$
is the characteristic polynomial of $B$. As $\alpha \neq \beta$, we deduce that $B$ is diagonalisable and
its eigenspaces have dimension $n$, whence $B=Q(\alpha I_n\oplus \beta I_n)Q^{-1}$ for some $Q \in \GL_{2n}(\F)$.
Finally, taking $S:=Q^{-1} A Q$, we obtain
$$S\,(\alpha I_n\oplus \beta I_n)=Q^{-1}(AB) Q \simeq AB \simeq C\bigl((t-x)^n(t-\pi x^{-1})^n\bigr).$$
The conclusion follows because one checks that the polynomial $(t-\gamma)(t-\delta)$
annihilates $A$ (and hence it also annihilates $S$).
\end{proof}

\begin{lemma}\label{C1basic}
Let $\alpha,\beta,\gamma,\delta,x$ be nonzero scalars, with $\alpha \neq \beta$.
Set $\pi:=\alpha\beta\gamma\delta$ and assume that $x^2 \neq \pi$.
Then, there is a matrix $S \in \GL_2(\F)$ that is annihilated by $(t-\gamma)(t-\delta)$ and such that
$$S\,(\alpha I_1 \oplus \beta I_1) \simeq C_1(x) \oplus C_1(\pi x^{-1}).$$
\end{lemma}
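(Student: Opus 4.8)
The plan is to deduce this at once from Lemma~\ref{basicblocklemma} applied with $n=1$. All the hypotheses of that lemma are in force here: $\alpha,\beta,\gamma,\delta$ are nonzero with $\alpha\neq\beta$, and $x\in\F\setminus\{0\}$. Hence Lemma~\ref{basicblocklemma} (taking $n=1$) yields a matrix $S\in\GL_2(\F)$ that is annihilated by $(t-\gamma)(t-\delta)$ and such that $S\,(\alpha I_1\oplus\beta I_1)\simeq C\bigl((t-x)(t-\pi x^{-1})\bigr)$.

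It then remains only to identify the right-hand side. The polynomial $(t-x)(t-\pi x^{-1})$ has roots $x$ and $\pi x^{-1}$, and the assumption $x^2\neq\pi$ says exactly that $x\neq\pi x^{-1}$, so these roots are distinct. Therefore $C\bigl((t-x)(t-\pi x^{-1})\bigr)$ is a $2\times 2$ companion matrix whose characteristic polynomial splits with simple roots; it is thus diagonalisable with spectrum $\{x,\pi x^{-1}\}$, hence similar to $\Diag(x,\pi x^{-1})=C_1(x)\oplus C_1(\pi x^{-1})$. Combining with the similarity above gives $S\,(\alpha I_1\oplus\beta I_1)\simeq C_1(x)\oplus C_1(\pi x^{-1})$, as wanted. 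Note that the hypothesis $x^2\neq\pi$ is precisely what is needed at this point: without it the two eigenvalues would coincide and the companion matrix would be a single Jordan cell $C_2(x)$ rather than $C_1(x)\oplus C_1(x)$.

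There is essentially no obstacle here, the substance being contained in Lemma~\ref{basicblocklemma}. If one wishes a self-contained argument, one may instead exhibit $S$ directly: let $(a,d)$ be the unique solution of the linear system $a+d=\gamma+\delta$ and $\alpha a+\beta d=x+\pi x^{-1}$ (solvable since the coefficient determinant $\beta-\alpha$ is nonzero), and set $S:=\begin{bmatrix} a & ad-\gamma\delta\\ 1 & d\end{bmatrix}$. Then $\tr S=\gamma+\delta$ and $\det S=\gamma\delta\neq 0$, so $S$ is invertible and, by the Cayley--Hamilton theorem, annihilated by $(t-\gamma)(t-\delta)$; moreover $S\,(\alpha I_1\oplus\beta I_1)$ has trace $x+\pi x^{-1}$ and determinant $\alpha\beta\gamma\delta=\pi$, hence characteristic polynomial $(t-x)(t-\pi x^{-1})$, which is similar to $C_1(x)\oplus C_1(\pi x^{-1})$ exactly as above because $x^2\neq\pi$.
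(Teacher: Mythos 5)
Your main argument is exactly the paper's: apply Lemma~\ref{basicblocklemma} with $n=1$, then observe that $x^2\neq\pi$ forces the two roots of $(t-x)(t-\pi x^{-1})$ to be distinct, so the companion matrix splits into $C_1(x)\oplus C_1(\pi x^{-1})$. The appended self-contained construction of $S$ (via trace/determinant matching and Cayley--Hamilton) is correct as well, but the substance of your proof is identical to the one in the paper.
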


\begin{proof}
As $x^2 \neq \pi$ we have $x \neq \pi x^{-1}$ and hence $C\bigl((t-x)(t-\pi x^{-1})\bigr) \simeq C(t-x)\oplus C(t-\pi x^{-1})$.
Thus, the result follows from Lemma \ref{basicblocklemma} applied to $n=1$.
\end{proof}

\begin{lemma}\label{C2basic}
Let $\alpha,\beta,\gamma,\delta,x$ be nonzero scalars, with $\alpha \neq \beta$. Set $\pi:=\alpha\beta\gamma\delta$.
Then, there is a matrix $S \in \GL_4(\F)$ that is annihilated by $(t-\gamma)(t-\delta)$ and such that
$$S\,(\alpha I_2 \oplus \beta I_2) \simeq C_2(x) \oplus C_2(\pi x^{-1}).$$
\end{lemma}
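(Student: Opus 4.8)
The plan is to derive this from Lemma~\ref{basicblocklemma}, distinguishing two cases according to whether $x^2=\pi$ or not. In the generic case $x^2\neq\pi$ I would simply apply Lemma~\ref{basicblocklemma} with $n=2$: it produces $S\in\GL_4(\F)$ annihilated by $(t-\gamma)(t-\delta)$ with $S\,(\alpha I_2\oplus\beta I_2)\simeq C\bigl((t-x)^2(t-\pi x^{-1})^2\bigr)$. Since $x\neq\pi x^{-1}$, the monic polynomials $(t-x)^2$ and $(t-\pi x^{-1})^2$ are coprime, so the primary canonical form gives $C\bigl((t-x)^2(t-\pi x^{-1})^2\bigr)\simeq C_2(x)\oplus C_2(\pi x^{-1})$; combining the two similarities settles this case.

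The point requiring attention is the degenerate case $x^2=\pi$, i.e.\ $\pi x^{-1}=x$, for which the target reads $C_2(x)\oplus C_2(x)$. Here a direct use of Lemma~\ref{basicblocklemma} with $n=2$ is useless, since it only yields a matrix similar to $C\bigl((t-x)^4\bigr)=C_4(x)$, which is \emph{not} similar to $C_2(x)\oplus C_2(x)$ (their minimal polynomials differ). Instead I would reorganise the diagonal matrix: $\alpha I_2\oplus\beta I_2$ is similar to $D':=(\alpha I_1\oplus\beta I_1)\oplus(\alpha I_1\oplus\beta I_1)$, and since conjugating both $S$ and $\alpha I_2\oplus\beta I_2$ by a fixed invertible matrix preserves the statement ``$(t-\gamma)(t-\delta)$ annihilates $S$'' as well as the similarity type of the product, it suffices to find a suitable matrix for $D'$. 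Applying Lemma~\ref{basicblocklemma} with $n=1$ yields $S_1\in\GL_2(\F)$ annihilated by $(t-\gamma)(t-\delta)$ with $S_1\,(\alpha I_1\oplus\beta I_1)\simeq C\bigl((t-x)(t-\pi x^{-1})\bigr)=C\bigl((t-x)^2\bigr)=C_2(x)$; then $S_1\oplus S_1$ is annihilated by $(t-\gamma)(t-\delta)$ and satisfies $(S_1\oplus S_1)\,D'=\bigl(S_1(\alpha I_1\oplus\beta I_1)\bigr)\oplus\bigl(S_1(\alpha I_1\oplus\beta I_1)\bigr)\simeq C_2(x)\oplus C_2(x)=C_2(x)\oplus C_2(\pi x^{-1})$, which transports back to $\alpha I_2\oplus\beta I_2$.

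Apart from spotting and circumventing this degenerate case, there is no real difficulty: both cases are immediate consequences of Lemma~\ref{basicblocklemma}, and no extra hypothesis on $x$ (beyond $x\neq 0$) or on whether $\gamma=\delta$ is needed, since Lemma~\ref{basicblocklemma} imposes none.
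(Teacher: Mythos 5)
Your proof is correct and follows essentially the same route as the paper's: in the generic case $x^2\neq\pi$ you invoke Lemma~\ref{basicblocklemma} with $n=2$ and split the companion matrix into a direct sum via coprimality, and in the degenerate case $x^2=\pi$ you apply Lemma~\ref{basicblocklemma} with $n=1$, take a direct sum of two copies, and conjugate by a permutation to reorganize $\alpha I_1\oplus\beta I_1\oplus\alpha I_1\oplus\beta I_1$ into $\alpha I_2\oplus\beta I_2$ — exactly the paper's argument.
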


\begin{proof}
As in the previous proof, if $x \neq \pi x^{-1}$ the result follows directly
from Lemma \ref{basicblocklemma}  applied to $n=2$.
Assume now that $x=\pi x^{-1}$. Then, Lemma \ref{basicblocklemma} yields a
matrix $S' \in \GL_2(\F)$ that is annihilated by $(t-\gamma)(t-\delta)$
and such that
$$S'\,(\alpha I_1\oplus \beta I_1) \simeq C\bigl((t-x)(t-\pi x^{-1})\bigr)=C\bigl((t-x)^2\bigr)
=C\bigl((t-\pi x^{-1})^2\bigr).$$
Hence
$$(S' \oplus S')\, (\alpha I_1\oplus \beta I_1 \oplus \alpha I_1 \oplus \beta I_1) \simeq
C_2(x) \oplus C_2(\pi x^{-1}).$$
We can find a permutation matrix $P \in \GL_4(\F)$ such that
$$\alpha I_1\oplus \beta I_1 \oplus \alpha I_1 \oplus \beta I_1=P (\alpha I_2 \oplus \beta I_2)P^{-1}.$$
Hence, the matrix $S:=P^{-1}(S' \oplus S') P$ is annihilated by $(t-\gamma)(t-\delta)$ and satisfies
$$S\,(\alpha I_2 \oplus \beta I_2) \simeq (S' \oplus S')\, (\alpha I_1\oplus \beta I_1 \oplus \alpha I_1 \oplus \beta I_1)
\simeq  C_2(x) \oplus C_2(\pi x^{-1}).$$
\end{proof}

\subsection{Cycles of cyclic matrices}

The following notation will be extremely useful in the remainder of the article:

\begin{Not}
Let $n$ be a positive integer, and let $\pi \in \F^*$ and $d \in \N^*$.
We set
$$\calC_{n,d}(\pi):=\underset{k=-(n-1)}{\overset{n}{\bigoplus}} C_d(\pi^k),$$
a matrix that is similar to
$$\underset{k=0}{\overset{n-1}{\bigoplus}} \bigl(C_d(\pi^{-k}) \oplus C_d(\pi^{k+1})\bigr).$$
\end{Not}

\begin{lemma}\label{C2cycle}
Let $n$ be a positive integer, and $\alpha,\beta,\gamma,\delta$ be nonzero scalars with $\alpha \neq \beta$.
Set $\pi:=\alpha\beta\gamma\delta$.
Then, there is a matrix $S \in \GL_{4n}(\F)$ that is annihilated by $(t-\gamma)(t-\delta)$ and such that
$$S\,(\alpha I_{2n}\oplus \beta I_{2n}) \simeq \calC_{n,2}(\pi).$$
\end{lemma}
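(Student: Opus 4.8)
The plan is to obtain $S$ by a ``cycle assembly'' of the $4$-by-$4$ matrix furnished by Lemma \ref{C2basic}, in the same spirit as the end of the proof of Lemma \ref{C2basic}, where a direct sum was re-indexed by a permutation matrix. First I would apply Lemma \ref{C2basic} $n$ times: for each $k \in \lcro 0,n-1\rcro$, using the scalars $\alpha,\beta,\gamma,\delta$ together with $x:=\pi^{-k}$, I obtain a matrix $S_k \in \GL_4(\F)$ annihilated by $(t-\gamma)(t-\delta)$ such that
$$S_k\,(\alpha I_2 \oplus \beta I_2) \simeq C_2(\pi^{-k}) \oplus C_2(\pi^{k+1}),$$
since $\pi\,x^{-1} = \pi^{k+1}$ for that choice of $x$.

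Next, I would set $S':=\underset{k=0}{\overset{n-1}{\bigoplus}} S_k \in \GL_{4n}(\F)$. This matrix is again annihilated by $(t-\gamma)(t-\delta)$ (a direct sum of matrices annihilated by a given polynomial is annihilated by that polynomial), and because $S'$ is block-diagonal and similarity is compatible with direct sums,
$$S'\,\Bigl(\underset{k=0}{\overset{n-1}{\bigoplus}} (\alpha I_2 \oplus \beta I_2)\Bigr) = \underset{k=0}{\overset{n-1}{\bigoplus}} \bigl(S_k\,(\alpha I_2 \oplus \beta I_2)\bigr) \simeq \underset{k=0}{\overset{n-1}{\bigoplus}} \bigl(C_2(\pi^{-k}) \oplus C_2(\pi^{k+1})\bigr).$$
As $k$ runs through $\lcro 0,n-1\rcro$, the exponents $-k$ and $k+1$ together exhaust the list $-(n-1),\dots,n$, so the right-hand side is exactly the matrix displayed in the Notation defining $\calC_{n,2}(\pi)$, hence is similar to $\calC_{n,2}(\pi)$.

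Finally, $\underset{k=0}{\overset{n-1}{\bigoplus}} (\alpha I_2 \oplus \beta I_2)$ and $\alpha I_{2n} \oplus \beta I_{2n}$ differ only by a permutation of their diagonal entries, so there is a permutation matrix $P \in \GL_{4n}(\F)$ with $\underset{k=0}{\overset{n-1}{\bigoplus}} (\alpha I_2 \oplus \beta I_2) = P\,(\alpha I_{2n} \oplus \beta I_{2n})\,P^{-1}$. I would then set $S:=P^{-1} S' P$, which is annihilated by $(t-\gamma)(t-\delta)$ (being conjugate to $S'$) and satisfies
$$S\,(\alpha I_{2n} \oplus \beta I_{2n}) = P^{-1}\,S'\,\Bigl(\underset{k=0}{\overset{n-1}{\bigoplus}} (\alpha I_2 \oplus \beta I_2)\Bigr)\,P \simeq \calC_{n,2}(\pi),$$
which is the desired conclusion. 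I do not expect any genuine obstacle here: the result is a mechanical iteration of Lemma \ref{C2basic}, and the only points deserving a line of care are the exponent bookkeeping just mentioned and the stability of the property ``annihilated by $(t-\gamma)(t-\delta)$'' under direct sums and conjugation.
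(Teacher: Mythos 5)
Your proof is correct and follows exactly the same route as the paper's (which is just a one-line appeal to Lemma \ref{C2basic} after noting that $\alpha I_{2n} \oplus \beta I_{2n}$ is similar to $n$ copies of $\alpha I_2 \oplus \beta I_2$). You have simply spelled out the bookkeeping that the paper leaves implicit.
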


\begin{proof}
Noting that $\alpha I_{2n}\oplus \beta I_{2n}$ is similar to the direct sum of
$n$ copies of $\alpha I_2 \oplus \beta I_2$, it suffices to apply Lemma \ref{C2basic}.
\end{proof}

\begin{lemma}\label{C1cycle}
Let $n$ be a positive integer, and $\alpha,\beta,\gamma,\delta$ be nonzero scalars, with $\alpha \neq \beta$.
Let $\varepsilon \in \{-1,1\}$.
Set $\pi:=\alpha\beta\gamma\delta$.
Assume that $\pi^{2k+1} \neq 1$ for all $k \in \lcro 0,n-1\rcro$.
Then, there is a matrix $S \in \GL_{2n}(\F)$ that is annihilated by $(t-\gamma)(t-\delta)$ and such that
$$S\,(\alpha I_n\oplus \beta I_n) \simeq \underset{k=0}{\overset{n-1}{\bigoplus}}\bigl(C_1(\varepsilon \pi^{-k}) \oplus C_1(\varepsilon \pi^{k+1})\bigr).$$
\end{lemma}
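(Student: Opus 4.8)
The plan is to reduce to Lemma \ref{C1basic} one block at a time, exactly along the lines of the proof of Lemma \ref{C2cycle}. First I would note that $\alpha I_n \oplus \beta I_n$ is similar, via a suitable permutation matrix $P \in \GL_{2n}(\F)$, to the direct sum of $n$ copies of $\alpha I_1 \oplus \beta I_1$; that is, $\underset{k=0}{\overset{n-1}{\bigoplus}}(\alpha I_1 \oplus \beta I_1) = P\,(\alpha I_n \oplus \beta I_n)\,P^{-1}$.

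Next, for each $k \in \lcro 0,n-1\rcro$ I would apply Lemma \ref{C1basic} with $x := \varepsilon\,\pi^{-k}$. The hypothesis $x^2 \neq \pi$ of that lemma reads $\varepsilon^2 \pi^{-2k} \neq \pi$, i.e.\ $\pi^{-2k}\neq\pi$ (since $\varepsilon^2=1$), i.e.\ $\pi^{2k+1}\neq 1$, which is precisely what we assumed. Moreover $\pi x^{-1} = \varepsilon^{-1}\pi^{k+1} = \varepsilon\,\pi^{k+1}$ because $\varepsilon^{-1}=\varepsilon$. Hence Lemma \ref{C1basic} produces a matrix $S_k \in \GL_2(\F)$ annihilated by $(t-\gamma)(t-\delta)$ such that $S_k\,(\alpha I_1 \oplus \beta I_1) \simeq C_1(\varepsilon\pi^{-k}) \oplus C_1(\varepsilon\pi^{k+1})$.

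Finally I would set $T := \underset{k=0}{\overset{n-1}{\bigoplus}} S_k \in \GL_{2n}(\F)$, which is annihilated by $(t-\gamma)(t-\delta)$ and satisfies $T\,\bigl(\underset{k=0}{\overset{n-1}{\bigoplus}}(\alpha I_1\oplus \beta I_1)\bigr) \simeq \underset{k=0}{\overset{n-1}{\bigoplus}}\bigl(C_1(\varepsilon\pi^{-k}) \oplus C_1(\varepsilon\pi^{k+1})\bigr)$, and then take $S := P^{-1}TP$. Conjugating by $P$ preserves the property of being annihilated by $(t-\gamma)(t-\delta)$, and one gets $S\,(\alpha I_n \oplus \beta I_n) = P^{-1}\,T\,\bigl(\underset{k=0}{\overset{n-1}{\bigoplus}}(\alpha I_1\oplus \beta I_1)\bigr)\,P \simeq \underset{k=0}{\overset{n-1}{\bigoplus}}\bigl(C_1(\varepsilon\pi^{-k}) \oplus C_1(\varepsilon\pi^{k+1})\bigr)$, as desired. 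There is essentially no obstacle in this argument; the only points that need the slightest care are the bookkeeping identity $\varepsilon = \varepsilon^{-1}$ and the verification that the non-degeneracy hypothesis $x^2 \neq \pi$ of Lemma \ref{C1basic} is exactly the stated condition $\pi^{2k+1}\neq 1$ over the relevant range of $k$.
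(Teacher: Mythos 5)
Your proof is correct and follows essentially the same route as the paper's: the paper simply remarks that the argument mirrors that of Lemma \ref{C2cycle} (split $\alpha I_n\oplus\beta I_n$ into $n$ copies of $\alpha I_1\oplus\beta I_1$ via a permutation, then apply Lemma \ref{C1basic} blockwise), noting that the hypothesis $\pi^{2k+1}\neq 1$ ensures $\varepsilon\pi^{-k}\neq\varepsilon\pi^{k+1}$. You have merely spelled out the details, including the bookkeeping $\varepsilon^{-1}=\varepsilon$ and the translation of $x^2\neq\pi$ into $\pi^{2k+1}\neq 1$.
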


\begin{proof}
The proof is similar to the one of Lemma \ref{C2cycle}, however we use Lemma \ref{C1basic} this time around.
This works because our assumptions show that $\varepsilon \pi^{-k} \neq \varepsilon \pi^{k+1}$ for
all $k \in \lcro 0,n-1\rcro$.
\end{proof}

The next result is a consequence of the classification of products of two $U_2$-matrices:

\begin{lemma}\label{U2cycles}
Let $n$ be a positive integer, and $\pi$ be a nonzero scalar.
Then, $C_2(\pi^{-n})\oplus \calC_{n,2}(\pi)$
is the product of two $U_2$-matrices.

Moreover, if $\pi^{2n}=1$, then $\calC_{n,2}(\pi)$ is also the product of two $U_2$-matrices.
\end{lemma}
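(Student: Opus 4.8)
The plan is to deduce both statements directly from the characterization of products of two $U_2$-matrices in Theorem \ref{theo2}(ii): a matrix is the product of two $U_2$-matrices exactly when it is similar to its inverse and, in characteristic different from $2$, every Jordan cell attached to the eigenvalue $-1$ is even-sized. So the task reduces to checking these two conditions for $M:=C_2(\pi^{-n})\oplus \calC_{n,2}(\pi)$, and, under the extra hypothesis $\pi^{2n}=1$, for $\calC_{n,2}(\pi)$ itself.

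The condition on the eigenvalue $-1$ is trivially satisfied in both cases. Indeed, each of these matrices is by construction a direct sum of blocks of the form $C_2(\mu)$ with $\mu \in \F^*$, and $C_2(\mu)$ is similar to a single Jordan cell of size $2$. Hence every Jordan cell of either matrix has size $2$; in particular, any Jordan cell for the eigenvalue $-1$ is even-sized.

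For the similarity-to-inverse condition I would first record the elementary fact that $C_2(\mu)^{-1}\simeq C_2(\mu^{-1})$ for every $\mu \in \F^*$ (the inverse of a $2$ by $2$ Jordan cell is again a non-diagonalisable $2$ by $2$ matrix, with sole eigenvalue $\mu^{-1}$). Consequently, for any finite set $K\subset \Z$ one has $(\bigoplus_{k\in K}C_2(\pi^k))^{-1}\simeq \bigoplus_{k\in K}C_2(\pi^{-k})=\bigoplus_{k\in -K}C_2(\pi^k)$, the last equality holding up to a permutation of the blocks. Now $M=\bigoplus_{k\in K}C_2(\pi^k)$ with $K=\lcro -n,n\rcro$, which is symmetric about $0$, so $-K=K$ and $M\simeq M^{-1}$; Theorem \ref{theo2}(ii) then gives the first assertion. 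For the ``moreover'' part, $\calC_{n,2}(\pi)=\bigoplus_{k\in K'}C_2(\pi^k)$ with $K'=\lcro -(n-1),n\rcro$, so $-K'=\lcro -n,n-1\rcro$; the two index sets differ only in that $K'$ contains $n$ whereas $-K'$ contains $-n$. Since $\pi^{2n}=1$ forces $\pi^n=\pi^{-n}$, the blocks $C_2(\pi^n)$ and $C_2(\pi^{-n})$ coincide, so $\bigoplus_{k\in K'}C_2(\pi^k)$ and $\bigoplus_{k\in -K'}C_2(\pi^k)$ are the same matrix up to the order of blocks. Hence $\calC_{n,2}(\pi)\simeq \calC_{n,2}(\pi)^{-1}$, and Theorem \ref{theo2}(ii) concludes once more.

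There is no real obstacle here: the only points requiring care are the bookkeeping of the exponent multisets and the (standard) observation on inverting a Jordan cell, both of which are routine. The lemma is essentially an immediate corollary of Theorem \ref{theo2}(ii), which is precisely why the excerpt presents it as a consequence of the classification of products of two $U_2$-matrices.
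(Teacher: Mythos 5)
Your proof is correct and follows essentially the same route as the paper: both reduce to Theorem \ref{theo2}(ii), observing that all Jordan cells have size $2$ (so the eigenvalue $-1$ condition is automatic) and that the exponent multiset is symmetric under negation (using $\pi^n=\pi^{-n}$ in the second case), which is exactly the paper's ``reorganizing the terms'' into pairs $C_2(\pi^{-k})\oplus C_2(\pi^{k})$.
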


\begin{proof}
Reorganizing the terms, we find
$$C_2(\pi^{-n})\oplus  \calC_{n,2}(\pi)
\simeq C_2(1) \oplus \underbrace{\underset{k=1}{\overset{n}{\bigoplus}} \,\bigl(C_2(\pi^{-k}) \oplus C_2(\pi^{k})\bigr)}_M.$$
By Theorem \ref{theo2}, the matrix $M$ is the product of two $U_2$-matrices, and so is $C_2(1)$
(indeed, here all the Jordan cells have size $2$).

Assume now that $\pi^{2n}=1$. Then $\varepsilon:=\pi^n$ belongs to $\{1,-1\}$, and
we can reorganize
$$\calC_{n,2}(\pi) \simeq
C_2(1) \oplus C_2(\varepsilon) \oplus \underset{k=1}{\overset{n-1}{\bigoplus}}\,\bigl(C_2(\pi^{-k}) \oplus C_2(\pi^{k})\bigr).$$
The conclusion then follows again from Theorem \ref{theo2}.
\end{proof}

The following result is proved in a similar fashion, using the characterization of products of
two involutions instead of the one of products of two $U_2$-matrices:

\begin{lemma}\label{I2cycles}
Let $n$ be a positive integer, let $\pi$ be a nonzero scalar and let $\varepsilon \in \{-1,1\}$.
Then, for every positive integer $d$, the matrix
$$C_d(\varepsilon \pi^{-n}) \oplus \underset{k=0}{\overset{n-1}{\bigoplus}}\,\bigl(C_d(\varepsilon \pi^{-k}) \oplus C_d(\varepsilon \pi^{k+1})\bigr)$$
is the product of two involutions, and
if $\pi^n=\pm 1$, then $\calC_{n,d}(\pi)$ is also the product of two involutions.
\end{lemma}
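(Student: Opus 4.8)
The plan is to derive both statements from Theorem~\ref{theo2}(i)---a matrix $M\in\GL_n(\F)$ is a product of two involutions if and only if $M\simeq M^{-1}$---combined with the elementary fact that the inverse of a Jordan cell is a Jordan cell with the reciprocal eigenvalue: for all $\lambda\in\F^*$ and $d\in\N^*$, one has $C_d(\lambda)^{-1}\simeq C_d(\lambda^{-1})$. The whole argument then reduces to reindexing direct sums of cells so that passing to the inverse merely permutes the summands.

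For the first assertion, I would write
$$N:=C_d(\varepsilon\pi^{-n}) \oplus \bigoplus_{k=0}^{n-1}\bigl(C_d(\varepsilon\pi^{-k}) \oplus C_d(\varepsilon\pi^{k+1})\bigr)$$
and collect the exponents occurring: the family $\bigl(C_d(\varepsilon\pi^{-k})\bigr)_{0\le k\le n-1}$ contributes the exponents $0,-1,\dots,-(n-1)$, the family $\bigl(C_d(\varepsilon\pi^{k+1})\bigr)_{0\le k\le n-1}$ contributes $1,\dots,n$, and the extra cell contributes $-n$; hence each exponent in $\lcro -n,n\rcro$ occurs exactly once and
$$N \simeq C_d(\varepsilon) \oplus \bigoplus_{k=1}^{n}\bigl(C_d(\varepsilon\pi^{-k}) \oplus C_d(\varepsilon\pi^{k})\bigr).$$
Since $\varepsilon^{-1}=\varepsilon$, the cell $C_d(\varepsilon)$ is similar to its inverse, while inversion carries each pair $C_d(\varepsilon\pi^{-k}) \oplus C_d(\varepsilon\pi^{k})$ to $C_d(\varepsilon\pi^{k}) \oplus C_d(\varepsilon\pi^{-k})$, that is, to itself; therefore $N\simeq N^{-1}$, and Theorem~\ref{theo2}(i) gives the desired decomposition.

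For the second assertion, I would assume $\pi^{n}=\varepsilon'$ with $\varepsilon'\in\{1,-1\}$, start from $\calC_{n,d}(\pi)=\bigoplus_{k=-(n-1)}^{n} C_d(\pi^k)$, isolate the summands for $k=0$ and $k=n$, and pair the exponents $k$ and $-k$ for $1\le k\le n-1$, so that, using $\pi^{n}=\varepsilon'$,
$$\calC_{n,d}(\pi) \simeq C_d(1) \oplus C_d(\varepsilon') \oplus \bigoplus_{k=1}^{n-1}\bigl(C_d(\pi^{-k}) \oplus C_d(\pi^{k})\bigr).$$
Here $C_d(1)$ and $C_d(\varepsilon')$ are each similar to their inverse (as $1^{-1}=1$ and $(\varepsilon')^{-1}=\varepsilon'$), and inversion again permutes each remaining pair with itself, whence $\calC_{n,d}(\pi)\simeq\calC_{n,d}(\pi)^{-1}$ and Theorem~\ref{theo2}(i) applies once more.

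I expect no serious obstacle: the only points requiring care are getting the index ranges exactly right in the two reorganizations---in particular that adjoining the single cell $C_d(\varepsilon\pi^{-n})$ is precisely what closes the exponent list to the symmetric interval $\lcro -n,n\rcro$---and the remark, in the second part, that $\pi^{n}=\pm1$ forces $\pi^{-n}=\pi^{n}$, which is exactly what makes the otherwise unbalanced list of exponents $\lcro -(n-1),n\rcro$ behave symmetrically under negation.
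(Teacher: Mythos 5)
Your proof is correct and matches the paper's intended argument: the paper proves this lemma "in a similar fashion" to Lemma~\ref{U2cycles}, i.e.\ by exactly the reorganization of Jordan cells into self-inverse pairs that you carry out, followed by Theorem~\ref{theo2}(i). Your index bookkeeping and the use of $C_d(\lambda)^{-1}\simeq C_d(\lambda^{-1})$ are both sound.
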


\subsection{A general result on simple diagonal matrices}

\begin{lemma}\label{superdiagonalLemma}
Let $\alpha$ and $\beta$ be distinct nonzero scalars.
Let $\varepsilon \in \{1,-1\}$ be such that $(\varepsilon \alpha\beta)^p =1$.
Set $A:=\alpha I_p \oplus \beta I_p$.
\begin{enumerate}[(i)]
\item If $\varepsilon=1$ then $\alpha I_p \oplus \beta I_p$ is the product
of three $U_2$-matrices, and also of one $U_2$-matrix and two involutions.
\item If $\varepsilon=-1$ then $\alpha I_p \oplus \beta I_p$ is the product
of three involutions, and also of one involution and two $U_2$-matrices.
\end{enumerate}
\end{lemma}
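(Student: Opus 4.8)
The idea is to premultiply $A=\alpha I_p\oplus\beta I_p$ by a single matrix from the relevant class and thereby reach a matrix whose decompositions are already controlled by the material of this section, exploiting that the hypothesis $(\varepsilon\alpha\beta)^p=1$ turns the relevant auxiliary scalar into a root of unity of order dividing $p$. Put $(\gamma,\delta):=(1,1)$ if $\varepsilon=1$ and $(\gamma,\delta):=(1,-1)$ if $\varepsilon=-1$, and set $\pi:=\alpha\beta\gamma\delta=\varepsilon\alpha\beta$, so that $\pi^p=1$. A matrix annihilated by $(t-\gamma)(t-\delta)$ is a $U_2$-matrix when $\varepsilon=1$ and an involution when $\varepsilon=-1$, and in either case so is its inverse; moreover any product of two $U_2$-matrices is also a product of two involutions (see the remark after Theorem \ref{theo2}). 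Hence it suffices to produce a matrix $S$ annihilated by $(t-\gamma)(t-\delta)$ together with a matrix $B$ that is a product of two $U_2$-matrices and satisfies $SA\simeq B$: writing $SA=Q^{-1}BQ$ and then $B=V_1V_2$ (with $V_1,V_2$ unipotent of index $2$) or $B=W_1W_2$ (with $W_1,W_2$ involutions), we get $A=S^{-1}\,(Q^{-1}V_1Q)(Q^{-1}V_2Q)=S^{-1}\,(Q^{-1}W_1Q)(Q^{-1}W_2Q)$, which gives all the decompositions claimed in (i) (if $\varepsilon=1$) or in (ii) (if $\varepsilon=-1$). The case $p=1$ is immediate: there $A$ is cyclic with characteristic polynomial $(t-\alpha)(t-\beta)$, whose norm is $\alpha\beta=\varepsilon\in\{1,-1\}$, so Proposition \ref{cyclicdecomp} applies. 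We assume $p\geq2$ from now on.

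If $p=2n$ is even, Lemma \ref{C2cycle} (with our $\alpha,\beta,\gamma,\delta$) yields a matrix $S\in\GL_{4n}(\F)$ annihilated by $(t-\gamma)(t-\delta)$ with $S\,(\alpha I_{2n}\oplus\beta I_{2n})\simeq\calC_{n,2}(\pi)$. As $\pi^{2n}=\pi^p=1$, Lemma \ref{U2cycles} shows that $\calC_{n,2}(\pi)$ is a product of two $U_2$-matrices, and we are done.

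If $p=2m+1$ is odd, then $m\geq1$, and I write $A\simeq(\alpha I_{2m}\oplus\beta I_{2m})\oplus(\alpha I_1\oplus\beta I_1)$; since the property sought is invariant under similarity, I may replace $A$ by the right-hand side and handle its two summands while keeping the polynomial $(t-\gamma)(t-\delta)$ fixed. On the first summand, Lemma \ref{C2cycle} provides $S_1$ annihilated by $(t-\gamma)(t-\delta)$ with $S_1\,(\alpha I_{2m}\oplus\beta I_{2m})\simeq\calC_{m,2}(\pi)$. On the second, the key point is that $\pi^{2m+1}=1$ forces the scalar $x:=\pi^{-m}$ to satisfy $x^2=\pi^{-2m}=\pi$ and $\pi x^{-1}=\pi^{m+1}=\pi^{-m}=x$; hence Lemma \ref{basicblocklemma}, applied with $n=1$ and this $x$, gives $S_2\in\GL_2(\F)$ annihilated by $(t-\gamma)(t-\delta)$ with $S_2\,(\alpha I_1\oplus\beta I_1)\simeq C\bigl((t-x)(t-\pi x^{-1})\bigr)=C\bigl((t-\pi^{-m})^2\bigr)=C_2(\pi^{-m})$. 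Then $S:=S_1\oplus S_2$ is annihilated by $(t-\gamma)(t-\delta)$ and $SA\simeq C_2(\pi^{-m})\oplus\calC_{m,2}(\pi)$, which is a product of two $U_2$-matrices by Lemma \ref{U2cycles}. This completes the proof.

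The delicate case is $p$ odd: there $A$ has an odd number of $2\times2$ ``slots'', so premultiplying by a matrix of the chosen class can never land on a direct sum of $2\times2$ Jordan cells whose eigenvalues pair up into inverses unless one cell is itself self-inverse, whereas landing on $1\times1$ cells is useless because $\alpha I_1\oplus\beta I_1$ cannot be made similar to its inverse by a $U_2$- or an involutory adjacency once $\pi\neq1$. The resolution is the arithmetic remark that a root of unity of odd order is a square, with $\pi^{-m}$ an explicit square root of $\pi$: this lets the leftover $\alpha I_1\oplus\beta I_1$ be turned into the single cell $C_2(\pi^{-m})$ via Lemma \ref{basicblocklemma}, restoring parity. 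The even case, by contrast, is immediate.
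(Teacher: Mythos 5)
Your proof is correct and follows essentially the same route as the paper: both use Lemma \ref{C2cycle} to turn the bulk of $\alpha I_p\oplus\beta I_p$ into $\calC_{q,2}(\pi)$, handle the odd leftover block by sending $\alpha I_1\oplus\beta I_1$ to $C_2(\pi^{-q})$ via the identity $\pi^{-q}=\pi^{q+1}$, and conclude with Lemma \ref{U2cycles}. The only cosmetic differences are your separate treatment of $p=1$ and your more explicit unpacking of why a single adjacency to a product of two $U_2$-matrices yields all four claimed decompositions.
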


\begin{proof}
Set $\pi:=\varepsilon\alpha\beta$ and $q:=\lfloor p/2\rfloor$.
Assume that $\varepsilon=1$ (respectively, $\varepsilon=-1$).
By Lemma \ref{C2cycle}, the matrix
$\alpha I_{2q}\oplus \beta I_{2q}$ is u-adjacent (respectively, i-adjacent) to
$\calC_{q,2}(\pi)$.

If $p$ is even then the last statement of Lemma \ref{U2cycles} shows that $\calC_{q,2}(\pi)$
is the product of two $U_2$-matrices.

Assume now that $p$ is odd, so that $\pi^{-q}=\pi^{q+1}$.
Then, $\alpha I_1 \oplus \beta I_1$ is u-adjacent (respectively, i-adjacent) to $C_2(\pi^{-q})$.
Hence, $\alpha I_p \oplus \beta I_p$ is u-adjacent (respectively, i-adjacent) to $B':=C_2(\pi^{-q}) \oplus \calC_{q,2}(\pi)$.
Once more, by Lemma \ref{U2cycles}, the matrix $B'$ is both the product of two involutions and the product of two $U_2$-matrices.

The conclusions follow.
\end{proof}

\subsection{More general decompositions}\label{generaldecompsection}

\begin{prop}\label{3U2decompProp}
Let $N \in \GL_n(\F)$ be an invertible well-partitioned matrix with $n \geq 3$.
Let $q$ be a positive integer and $\alpha,\beta$ be distinct nonzero scalars.
Set $M:=N \oplus \alpha I_q \oplus \beta I_q$ and assume that $\det M=1$.
Assume also that $(\alpha \beta)^k \neq 1$ for all $k \in \lcro 1,q\rcro$.
Then, $M$ is the product of three $U_2$-matrices.
\end{prop}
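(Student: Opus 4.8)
The plan is to exhibit a single $U_2$-matrix $U$ with $UM$ similar to a product of two $U_2$-matrices; then $M$ is u-adjacent to a product of two $U_2$-matrices, hence a product of three. I will take $U=U_N\oplus S$ block-diagonally along $M=N\oplus(\alpha I_q\oplus\beta I_q)$, using the Adaptation Theorem on the $N$-block and the adjacency lemmas of this section on the scalar block. Write $\pi:=\alpha\beta$; from $\det M=\det N\cdot\pi^q=1$ we get $\det N=\pi^{-q}$, and the hypothesis says exactly that $\pi^k\neq1$ for $1\le k\le q$ --- in particular $\pi\neq1$, and, since $\pi=-1$ would give $\pi^2=1$, also $\pi\neq-1$ whenever $q\ge2$.

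For the scalar block put $m:=\lfloor q/2\rfloor$. Lemma \ref{C2cycle} with $\gamma=\delta=1$ gives a $U_2$-matrix $S_1$ with $S_1(\alpha I_{2m}\oplus\beta I_{2m})\simeq\calC_{m,2}(\pi)$, and when $q$ is odd Lemma \ref{basicblocklemma} (with its $n$ equal to $1$, $\gamma=\delta=1$, and the free parameter $x$) turns the leftover $\alpha I_1\oplus\beta I_1$ into a $U_2$-image similar to $C((t-x)(t-\pi x^{-1}))$; so there is a $U_2$-matrix $S$ with $S(\alpha I_q\oplus\beta I_q)$ similar to $\calC_{m,2}(\pi)$ (for $q$ even) or to $\calC_{m,2}(\pi)\oplus C((t-x)(t-\pi x^{-1}))$ (for $q$ odd). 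On the other side, since $N$ is well-partitioned with $\det N=\pi^{-q}$, the Adaptation Theorem (part (a)) provides, for every monic $r$ of degree $n$ with $N(r)=\pi^{-q}$, a $U_2$-matrix $U_N$ with $U_N N\simeq C(r)$. With $U:=U_N\oplus S$ we are reduced to choosing $r$ (and $x$, when $q$ is odd) so that $C(r)$ together with the scalar image is a product of two $U_2$-matrices.

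Here I use, all as instances of Theorem \ref{theo2}(ii): $C_2(\pi^{-m})\oplus\calC_{m,2}(\pi)$ is a product of two $U_2$-matrices (Lemma \ref{U2cycles}); so is $C_d(1)$, $C_d(-1)$ with $d$ even, and $C_d(\mu)\oplus C_d(\mu^{-1})$ with $\mu\neq-1$; and direct sums of such are again such. For $q=2m$ even, take $r:=(t-\pi^{-m})^2(t-1)^{n-2}$: since $1\le m\le q$, $\pi^{-m}\neq1$, so $C(r)\simeq C_2(\pi^{-m})\oplus C_{n-2}(1)$, $N(r)=\pi^{-2m}=\pi^{-q}$, and $UM\simeq(C_2(\pi^{-m})\oplus\calC_{m,2}(\pi))\oplus C_{n-2}(1)$. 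For $q=2m+1$ with $m\ge2$, take $x:=1$ (leftover $\to C_1(1)\oplus C_1(\pi)$) and $r:=(t-\pi^{-m})^2(t-\pi^{-1})(t-1)^{n-3}$; here $\pi^{m-1}\neq1$ (as $1\le m-1\le q$), so $C(r)\simeq C_2(\pi^{-m})\oplus C_1(\pi^{-1})\oplus C_{n-3}(1)$, $N(r)=\pi^{-2m-1}=\pi^{-q}$, and $UM\simeq(C_2(\pi^{-m})\oplus\calC_{m,2}(\pi))\oplus(C_1(\pi)\oplus C_1(\pi^{-1}))\oplus C_1(1)\oplus C_{n-3}(1)$. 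For $q=3$ I instead apply Lemma \ref{basicblocklemma} to all of $\alpha I_3\oplus\beta I_3$ with $x:=\pi$, getting $C_3(\pi)\oplus C_3(1)$, and take $r:=(t-\pi^{-1})^3(t-1)^{n-3}$, so $UM\simeq(C_3(\pi)\oplus C_3(\pi^{-1}))\oplus C_3(1)\oplus C_{n-3}(1)$. For $q=1$: the scalar block is u-adjacent, via a $U_2$-matrix, to $C(p)$ for a monic $p$ of degree $2$ with $N(p)=\pi$ and no eigenvalue $-1$ (take $p=(t-1)(t-\pi)$ if $\pi\neq-1$, and $p=(t-x)(t-\pi x^{-1})$ with $x\notin\{0,\pm1\}$ if $\pi=-1$, replacing $p$ by an irreducible quadratic in the one residual small-field situation, essentially $\F=\F_3$); then with $p^*$ the reciprocal polynomial of $p$ (its roots the inverses of those of $p$, so $N(p^*)=\pi^{-1}$ and $p^*$ has no eigenvalue $-1$) take $r:=p^*(t)\,(t-1)^{n-2}$, giving $UM\simeq(C(p)\oplus C(p^*))\oplus C_{n-2}(1)$, in which $C(p)\oplus C(p^*)$ is similar to its inverse and has no cell at $-1$. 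In every case $UM$ is a product of two $U_2$-matrices, so $M$ is a product of three.

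The step I expect to fight with is the odd-$q$ parity correction. For $q$ even the scalar block becomes a full $2$-periodic cycle that Lemma \ref{U2cycles} completes by a single block $C_2(\pi^{-m})$ supplied by $C(r)$, leaving the remaining $n-2\ge1$ dimensions of $C(r)$ as harmless $C_d(1)$ filler; but for $q$ odd the unavoidable extra $2\times2$ scalar block contributes a ``half-cycle'' whose two eigenvalues must then be balanced by further roots of $r$, and this collides with the twin constraints $N(r)=\pi^{-q}$ and ``$C(r)$ cyclic, hence at most one Jordan block per eigenvalue'' --- which is why $q=1$ and $q=3$ must be peeled off and handled by the variant constructions above (the choice $x=\pi$ for $q=3$, and the reciprocal-polynomial trick for $q=1$), and why a handful of degenerate instances (essentially $q=1$ with $\alpha\beta=-1$ over $\F_3$) are finished by substituting irreducible polynomials for the generic parameter. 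The hypothesis $(\alpha\beta)^k\neq1$ for $1\le k\le q$ enters precisely where I need $\pi^{-m}\neq1$ (and $\pi^{m-1}\neq1$ for $m\ge2$) so that the primary decompositions of $C(r)$ hold as stated, and to force $\pi\neq-1$ as soon as $q\ge2$, which is what keeps the blocks $C_d(\pi)\oplus C_d(\pi^{-1})$ free of odd-sized cells at the eigenvalue $-1$.
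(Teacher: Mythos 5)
Your proof is correct and follows essentially the same route as the paper's: u-adjacency to a product of two $U_2$-matrices via the Adaptation Theorem on $N$ and Lemmas \ref{basicblocklemma}, \ref{C2cycle}, \ref{U2cycles} on the scalar block, with the same case split (even $q$; odd $q\geq 5$; $q=3$; $q=1$). The only cosmetic divergence is at $q=1$, $\pi=-1$, where the paper uniformly uses $C(t^2-t-1)\oplus C(t^2+t-1)$ via Proposition \ref{cyclicfit2} over every field, whereas you use a split quadratic $(t-x)(t+x^{-1})$ plus an irreducible-quadratic fallback for $\F_3$ — both work.
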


\begin{proof}
We will prove that $M$ is u-adjacent to the product of two $U_2$-matrices. To this end, we set $\pi:=\alpha \beta$.

\noindent \textbf{Case 1:}  $q$ is even. We write $q=2p$. \\
By Lemma \ref{C2cycle},
$$\alpha I_{2p} \oplus \beta I_{2p} \uadj \calC_{p,2}(\pi).$$
Note that $\pi^{-p} \neq 1$ and $\det N=\pi^{-2p}$.
Hence, the Adaptation Theorem yields
$$N \uadj C\bigl((t-1)^{n-2}(t-\pi^{-p})^2\bigr) \simeq C_{n-2}(1) \oplus C_2(\pi^{-p}),$$
whence
$$M \uadj  C_{n-2}(1) \oplus \bigl[C_2(\pi^{-p}) \oplus \calC_{p,2}(\pi)\bigr]$$
and the latter matrix is the product of two $U_2$-matrices by Lemma \ref{U2cycles} and Theorem \ref{theo2}.

\vskip 3mm
\noindent \textbf{Case 2:}  $q$ is odd. We write $q=2p+1$. \\
\noindent \textbf{Subcase 2.1:} $q>3$. \\
Then, $\det N=\pi^{-1}(\pi^{-p})^2$ and we note that the assumptions show that $\pi^{-1}$,
$\pi^{-p}$ and $1$ are pairwise distinct (indeed $p \geq 2$).
As $n \geq 3$, the Adaptation Theorem yields
$$N \uadj C\bigl((t-1)^{n-3}(t-\pi^{-1})(t-\pi^{-p})^2\bigr)
\simeq C_{n-3}(1) \oplus C_1(\pi^{-1}) \oplus C_2(\pi^{-p}).$$
On the other hand, $\alpha I_{q-1} \oplus \beta I_{q-1} \uadj \calC_{p,2}(\pi)$
and $\alpha I_1 \oplus \beta I_1 \uadj C_1(1) \oplus C_1(\pi)$.
Hence,
$$M \uadj M_1:= C_{n-3}(1) \oplus C_1(1) \oplus \bigl[C_1(\pi^{-1}) \oplus C_1(\pi)\bigr]
 \oplus \bigl[C_2(\pi^{-p}) \oplus \calC_{p,2}(\pi)\bigr].$$
The matrix $M_1$ is similar to its inverse. Moreover, the assumptions show that
$\pi^2 \neq 1$ (as $q \geq 2$) and hence $\pi$ is distinct from $-1$. It follows from Theorem \ref{theo2}
that $M_1$ is the product of two $U_2$-matrices.

\noindent \textbf{Subcase 2.2:} $q=3$. \\
In particular, the assumptions show that $\pi^2 \neq 1$.
By Lemma \ref{basicblocklemma},
$$\alpha I_q \oplus \beta I_q \uadj C\bigl((t-1)^3(t-\pi)^3\bigr)
\simeq C_3(1) \oplus C_3(\pi),$$
where the last similarity comes from having $\pi \neq 1$.
Note that $\det N=\pi^{-3}$ and $\pi^{-1} \neq 1$.
Hence, as $n \geq 3$ the Adaptation Theorem shows that
$$N \uadj C_{n-3}(1) \oplus C_3(\pi^{-1}).$$
It follows that
$$M \uadj C_{n-3}(1) \oplus C_3(1) \oplus C_3(\pi^{-1}) \oplus C_3(\pi).$$
We note that the latter matrix is similar to its inverse and $-1$ is no eigenvalue of it,
and we conclude that it is the product of two $U_2$-matrices.

\noindent \textbf{Subcase 2.3:} $q=1$ and $\pi \neq -1$. \\
Note that the assumptions show that $\pi \neq 1$. Moreover, $\det N=\pi^{-1}$.
The Adaptation Theorem shows that $N \uadj C_{n-1}(1) \oplus C_1(\pi^{-1})$,
whereas $\alpha I_q \oplus \beta I_q \uadj C_1(1) \oplus C_1(\pi)$.
Therefore,
$$M \uadj C_{n-1}(1) \oplus C_1(1) \oplus C_1(\pi) \oplus C_1(\pi^{-1}),$$
and as $\pi \neq -1$ the latter matrix is the product of two $U_2$-matrices.

\noindent \textbf{Subcase 2.4:} $q=1$ and $\pi=-1$. \\
Note that $\det N=-1$.
The matrix $\alpha I_1 \oplus \beta I_1$ is cyclic with characteristic polynomial $t^2-(\alpha+\beta)t-1$,
whence Proposition \ref{cyclicfit2} yields
$$\alpha I_1 \oplus \beta I_1 \uadj C(t^2+t-1).$$
Besides, since $1$ is not a root ot $t^2-t-1$, the Adaptation Theorem yields
$$N \uadj C\bigl((t-1)^{n-2}(t^2-t-1)\bigr) \simeq C_{n-2}(1) \oplus C(t^2-t-1).$$
Therefore,
$$M \uadj  C_{n-2}(1) \oplus C(t^2-t-1) \oplus C(t^2+t-1).$$
The latter matrix is similar to its inverse and $-1$ is no eigenvalue of it: hence it is the product of two $U_2$-matrices.

In any case, we have shown that $M$ is u-adjacent to the product of two $U_2$-matrices, and hence it is the product of three $U_2$-matrices.
\end{proof}

\begin{prop}\label{not3U2decompProp}
Assume that the characteristic of $\F$ is not $2$.
Let $N \in \GL_n(\F)$ be an invertible very-well-partitioned matrix.
Let $q$ be a positive integer and $\alpha,\beta$ be distinct nonzero scalars.
Set $M:=N \oplus \alpha I_q \oplus \beta I_q$ and assume that $\det M=\pm 1$.
Assume finally that $(\alpha \beta)^k \neq \pm 1$ for all $k \in  \lcro 1,q\rcro$.
Then, $M$ is the product of three involutions, but also of one involution and two $U_2$-matrices,
and also of one $U_2$-matrix and two involutions.
\end{prop}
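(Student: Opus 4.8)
The plan is to follow the blueprint of Proposition \ref{3U2decompProp}: write $M=N\oplus(\alpha I_q\oplus\beta I_q)$, dispose of each summand by a single involution or $U_2$-matrix, and recombine. Set $\pi:=\alpha\beta$; the identity $\det M=\det N\cdot\pi^q=\pm 1$ forces $\det N=\pm\pi^{-q}$, and one readily checks from the definition that a very-well-partitioned matrix has size $n\geq 3$. Since a product of two $U_2$-matrices is a fortiori a product of two involutions (Theorem \ref{theo2}), it suffices to establish two claims: (A) $M$ is i-adjacent to a product of two $U_2$-matrices — this yields both the ``three involutions'' and the ``one involution and two $U_2$-matrices'' assertions — and (B) $M$ is u-adjacent to a product of two involutions — this yields the ``one $U_2$-matrix and two involutions'' assertion.

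For (A), I produce an involution of the form $S_1\oplus S_2$: part (c) of the Adaptation Theorem gives $N\iadj C(r)$ for any monic $r$ of degree $n$ with $N(r)=\pm\det N$, while Lemmas \ref{C2cycle}, \ref{C1basic} and \ref{basicblocklemma} — applied with $\{\gamma,\delta\}=\{1,-1\}$, so that $S_2$ is an involution and the scalar $\pi':=\alpha\beta\gamma\delta$ equals $-\pi$ — express $\alpha I_q\oplus\beta I_q$ as i-adjacent to a suitable ``cycle-type'' matrix. As i-adjacency respects direct sums, $M$ is then i-adjacent to $C(r)$ plus that cycle, and I choose $r$ so that this matrix is similar to a direct sum of blocks of three recognizable types, each a product of two $U_2$-matrices: a block $C_2((\pi')^{-p})\oplus\calC_{p,2}(\pi')$ (by Lemma \ref{U2cycles}), blocks $C_d(\lambda)\oplus C_d(\lambda^{-1})$ with $\lambda\neq -1$ (by Theorem \ref{theo2}), and a unipotent block. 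The case split is on $q$: for $q=2p$ even I use $\calC_{p,2}(\pi')$ and a factor $(t-(\pi')^{-p})^2$ in $r$; for $q=2p+1\geq 5$ odd I additionally peel off $\alpha I_1\oplus\beta I_1\iadj C_1(1)\oplus C_1(\pi')$ and include a factor $(t-(\pi')^{-1})$ in $r$; for $q=3$, the coincidence $(\pi')^{-1}=(\pi')^{-p}$ makes the companion matrix $C(r)$ merge the expected $C_1$ and $C_2$ blocks into a single $C_3$, which destroys the pairing, so I instead use $\alpha I_3\oplus\beta I_3\iadj C_3(1)\oplus C_3(\pi')$ and a factor $(t-(\pi')^{-1})^3$ in $r$; for $q=1$ only $\alpha I_1\oplus\beta I_1\iadj C_1(1)\oplus C_1(\pi')$ is needed, with a factor $(t-(\pi')^{-1})$ in $r$. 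The hypothesis $(\alpha\beta)^k\neq\pm 1$ for $k\in\lcro 1,q\rcro$ is exactly what forces the relevant powers $(\pi')^{j}$ to be pairwise distinct, different from $1$ (so that $C(r)$ splits as intended) and different from $-1$ (so that the $U_2$-criterion of Theorem \ref{theo2} applies).

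For (B) the construction is the same, except that $S_1\oplus S_2$ must be a $U_2$-matrix, so I take $\gamma=\delta=1$ (hence $\alpha\beta\gamma\delta=\pi$), invoke part (a) of the Adaptation Theorem, and now only require that $C(r)$ plus the cycle be a product of two involutions (Theorem \ref{theo2} together with Lemma \ref{I2cycles}). The genuine complication is that part (a) gives $N\uadj C(r)$ only when $N(r)=\det N$ exactly, whereas $\det N$ may equal the ``wrong'' sign $-\pi^{-q}$. I absorb that sign either by inserting a factor $(t+1)$ in $r$, contributing a self-inverse-similar block $C_1(-1)$ — possible whenever $n$ is large enough — or, in the boundary case $n=3$ where there is no room for this, by running the $\alpha I_1\oplus\beta I_1$ piece (resp.\ the $\alpha I_3\oplus\beta I_3$ piece when $q=3$) through Lemma \ref{C1basic} (resp.\ Lemma \ref{basicblocklemma}) with the parameter $x=-1$ in place of $x=1$, which multiplies the required norm of $r$ by $-1$.

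What then remains is bookkeeping: for each of the above cases, verifying that the assembled matrix is indeed similar to its inverse with the announced Jordan form and, in the $U_2$-target situations, that no Jordan cell lies at the eigenvalue $-1$. I expect the main obstacle to be precisely the interaction, in the minimal case $n=3$, between the sign of $\det N$ forced by the hypotheses and the restriction $N(r)=\det N$ imposed by part (a) of the Adaptation Theorem; this is what makes the $x=-1$ variant unavoidable and forces the value $q=3$ to be handled separately from the other odd values.
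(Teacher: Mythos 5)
Your proposal is correct in outline, but it follows a genuinely different route from the paper's own proof. The paper does not case-split on the parity of $q$: instead it sets $\pi:=\varepsilon\alpha\beta$ with $\varepsilon\in\{-1,1\}$ encoding which adjacency is sought, introduces an auxiliary sign $\eta$ (equal to $1$ when $\varepsilon=-1$, and to $\det M$ otherwise) to absorb the sign of $\det N$ in the u-adjacency case, and then splits on whether $\pi^{2k+1}=1$ for some $k\in\lcro 0,q-1\rcro$ — i.e.\ on whether Lemma~\ref{C1cycle} applies to the full block $\alpha I_q\oplus\beta I_q$. When it does (Case~1), a single $C_1$-cycle suffices, with a one-block factor $C_1(\eta\pi^{-q})$ appended via the Adaptation Theorem. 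When it does not (Case~2), the paper splits $q=a+b$ where $a$ is the least index with $\pi^{2a+1}=1$ and builds two $C_1$-cycles of lengths $a$ and $b$, resorting to a $C_2$-cycle only in the degenerate subcase $b=a$, $\eta=1$. By contrast, you transplant the $C_2$-cycle machinery of Proposition~\ref{3U2decompProp} almost verbatim, with the sign $-1$ injected through $\{\gamma,\delta\}=\{1,-1\}$ in the i-adjacency target and fixed up either by a $(t+1)$ factor in $r$ or by the $x=-1$ variant of Lemmas~\ref{C1basic}/\ref{basicblocklemma}. Your version avoids the $\pi^{2k+1}=1$ dichotomy entirely (Lemma~\ref{C2cycle} has no such restriction) at the cost of splitting on parity and handling $q=1$, $q=3$, and the tight boundary $n=3$ separately; the paper's version treats all $q$ uniformly but must control the odd-order behaviour of $\pi$. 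Both are sound; each trades one source of casework for another.

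One small imprecision worth flagging in your write-up: the ``boundary case $n=3$'' is a problem for the $(t+1)$ insertion only when $q\geq 3$ is odd, since the polynomial $r$ then needs three distinct non-unit roots (or a triple root) before any sign-absorbing factor can be added; for $q$ even or $q=1$, the $(t+1)$ factor already fits when $n=3$. The $x=-1$ device you invoke does resolve this, but the delineation of when it is truly needed should be stated a bit more carefully. Also check, in the $q$ odd $\geq 5$ case, that the three scalars $1$, $(\pi')^{-1}$ and $(\pi')^{-p}$ (resp.\ $1$, $-\pi^{-1}$, $\pi^{-p}$ on the u-adjacency side) are pairwise distinct: this needs $p-1\geq 1$, which is exactly what $q\geq 5$ buys you, and is the reason $q=3$ requires its own treatment.
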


\begin{proof}
Let $\varepsilon \in \{1,-1\}$, and define $\eta:=1$ if $\varepsilon=-1$, and $\eta:=\det M$ otherwise.
Set $\pi:=\varepsilon \alpha \beta$, and note that $\pi^k \neq \pm 1$ for all $k \in \lcro 1,q\rcro$.

\noindent \textbf{Case 1:} There is no integer $k \in \lcro 1,q\rcro$ for which $\pi^{2k+1}=1$.

Assume that $\varepsilon=-1$ (respectively, $\varepsilon=1$). Then, by Lemma \ref{C1cycle} the matrix
$\alpha I_q \oplus \beta I_q$ is i-adjacent (respectively, u-adjacent) to
$$\underset{k=0}{\overset{q-1}{\bigoplus}} \,\bigl(C_1(\eta \pi^{-k}) \oplus C_1(\eta \pi^{k+1})\bigr),$$
whereas the Adaptation Theorem shows that $N$ is i-adjacent (respectively, u-adjacent) to
$$C\bigl((t-1)^{n-1}(t-\eta \pi^{-q})\bigr) \simeq C_{n-1}(1) \oplus C_1(\eta \pi^{-q}).$$
Hence, $M$ is i-adjacent (respectively u-adjacent) to
$$M':=C_{n-1}(1) \oplus C_1(\eta \pi^{-q}) \oplus \underset{k=0}{\overset{q-1}{\bigoplus}}\,\bigl(C_1(\eta \pi^{-k}) \oplus C_1(\eta \pi^{k+1})\bigr).$$
The matrix $M'$ is obviously similar to its inverse and, if in addition $\eta=1$ then $-1$ is no eigenvalue of it.
Hence, $M'$ is the product of two involutions, and it also the product of two $U_2$-matrices if $\varepsilon=-1$.
This yields the claimed result for $M$.

\vskip 3mm
\noindent \textbf{Case 2:} There is an integer $k \in \lcro 0,q-1\rcro$ for which $\pi^{2k+1}=1$.
We take the least such integer $a$. Then, our starting assumptions show that $2a+1>q$, whence $a \geq \frac{q}{2}\cdot$
Set $b:=q-a$, so that $1 \leq b \leq a \leq q$.

\noindent \textbf{Subcase 2.1:} $b<a$ or $\eta=-1$. \\
Then, $\pi^a$ and $\eta \pi^b$ are distinct. Indeed, $\pi^a \neq \pi^b$ if $b<a$,
and on the other hand $-1$ does not belong to the group $\langle \pi\rangle$ because $\pi$ has odd order.
Assume that $\varepsilon=-1$ (respectively, $\varepsilon=1$). By Lemma \ref{C1cycle},
the matrix $\alpha I_a \oplus \beta I_a$ is i-adjacent (respectively, u-adjacent) to
$\calC_{a,1}(\pi)$, whereas $\alpha I_b \oplus \beta I_b$ is i-adjacent (respectively, u-adjacent) to
$$K:=\underset{k=0}{\overset{b-1}{\bigoplus}} \,\bigl(C_1(\eta \pi^{-k}) \oplus C_1(\eta \pi^{k+1})\bigr).$$

Finally, since $1,\pi^{-a},\eta \pi^{-b}$ are pairwise distinct, the Adaptation Theorem shows that $N$ is i-adjacent
(respectively, u-adjacent) to
$$C\bigl((t-1)^{n-2}(t-\pi^{-a})(t-\eta \pi^{-b})\bigr) \simeq C_{n-2}(1) \oplus C_1(\pi^{-a}) \oplus C_1(\eta \pi^{-b}).$$
Hence, $M$ is i-adjacent (respectively, u-adjacent) to
$$M':=C_{n-2}(1) \oplus \bigl[C_1(\pi^{-a}) \oplus \calC_{a,1}(\pi)\bigr] \oplus \bigl[C_1(\eta \pi^{-b}) \oplus K\bigr].$$

The matrix $M'$ is obviously similar to its inverse, and if $\eta=1$ then $-1$ is no eigenvalue of $M'$.
Hence, $M'$ is the product of two involutions, and it is also the product of two $U_2$-matrices if $\varepsilon=-1$.

\noindent \textbf{Subcase 2.2:} $b=a$ and $\eta=1$. \\
Hence, $\det N=\pm (\pi^{-a})^2$, and more precisely $\eta \det N=(\pi^{-a})^2$ if $\varepsilon=1$.
Assume that $\varepsilon=-1$ (respectively, $\varepsilon=1$).
Then, the Adaptation Theorem shows that $N$ is i-adjacent (respectively, u-adjacent) to
$$C\bigl((t-1)^{n-2}(t-\pi^{-a})^2\bigr) \simeq C_{n-2}(1) \oplus C_2(\pi^{-a}).$$
Moreover, since $q=2a$, we find that $\alpha I_q \oplus \beta I_q$ is i-adjacent (respectively, u-adjacent)
to $\calC_{a,2}(\pi)$.
Hence, $M$ is i-adjacent (respectively, u-adjacent) to
$$M':=C_{n-2}(1) \oplus \bigl[C_2(\pi^{-a}) \oplus \calC_{a,2}(\pi)\bigr],$$
a matrix which is the product of two $U_2$-matrices.

Hence, in any case we deduce that $M$ is the product of one involution and two $U_2$-matrices, as well as the product
of one $U_2$-matrix and two involutions. By the former, $M$ is also the product of three involutions.
\end{proof}

We finish with two variations of the previous two results that are relevant to unnatural extensions.

\begin{prop}\label{3U2decompPropskew}
Assume that the field $\F$ does not have characteristic $2$.
Let $p,q$ be integers with $p>0$ and $q\in \{p-1,p\}$. Let $\alpha \in \F \setminus \{0,-1\}$.
Let $N \in \GL_n(\F)$ be a well-partitioned matrix with $n \geq 3$. Set $M:=N \oplus \alpha I_p \oplus (-I_q)$.
Assume that $\det M=1$ and that there is no integer $k \in \lcro 1,q\rcro$ such that $\alpha^k=\pm 1$.
Then, $M$ is the product of three $U_2$-matrices.
\end{prop}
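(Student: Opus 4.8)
The plan is to prove, just as in the proof of Proposition~\ref{3U2decompProp}, that $M$ is u-adjacent to a product of two $U_2$-matrices; since the set of products of three $U_2$-matrices is stable under conjugation, this gives the statement. The case $p=q$ needs no new work: there $M=N\oplus\alpha I_q\oplus(-I_q)$ is literally of the form handled by Proposition~\ref{3U2decompProp} with $\beta:=-1$, which is legitimate since $\alpha\neq-1$ forces $\alpha\neq\beta$, and since the hypothesis $\alpha^k\neq\pm 1$ for $k\in\lcro 1,q\rcro$ forces $(\alpha\beta)^k=(-\alpha)^k\neq 1$ for those $k$. So from now on I assume $p=q+1$, and I set $\pi:=-\alpha$; note that $\alpha\neq-1$, and that $\pi^k=(-1)^k\alpha^k\neq\pm 1$ for every $k\in\lcro 1,q\rcro$ when $q\geq 1$.

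The idea is to peel the scalar summand as $\alpha I_p\oplus(-I_q)=\alpha I_1\oplus\bigl(\alpha I_q\oplus(-I_q)\bigr)$, to turn $\alpha I_q\oplus(-I_q)$ into a short cycle of $2\times 2$ Jordan cells, and to let the Adaptation Theorem replace $N$ by a companion matrix whose spectrum provides the missing ``inverse'' blocks. When $q=2r$ is even ($r\geq 1$): Lemma~\ref{C2cycle}, applied with $\gamma=\delta=1$ so that the conjugating matrix is a $U_2$-matrix, gives $\alpha I_{2r}\oplus(-I_{2r})\uadj\calC_{r,2}(\pi)$; one has trivially $\alpha I_1\uadj C_1(\alpha)$; and, by the Adaptation Theorem, $N\uadj C(f)$ for the monic $f$ of degree $n$ with $f=(t-1)^{n-3}(t-\alpha^{-1})(t-\pi^{-r})^2$, whose norm $\alpha^{-1}\pi^{-2r}=\alpha^{-2r-1}$ equals $\det N$, and which factors as indicated because $1$, $\alpha^{-1}$ and $\pi^{-r}$ are pairwise distinct (this uses $\charac(\F)\neq 2$ and $\alpha^k\neq\pm 1$ for $k\in\lcro 1,q\rcro$). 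Combining these u-adjacencies through Remark~(v) gives
$$M\uadj C_{n-3}(1)\oplus\bigl[C_1(\alpha)\oplus C_1(\alpha^{-1})\bigr]\oplus\bigl[C_2(\pi^{-r})\oplus\calC_{r,2}(\pi)\bigr],$$
and the right-hand side is a product of two $U_2$-matrices: the bracketed summands are such by Lemma~\ref{U2cycles} and Theorem~\ref{theo2}, while $C_{n-3}(1)$ is similar to its inverse with no eigenvalue $-1$, so the whole direct sum is similar to its inverse and all its Jordan cells for $-1$ (which sit inside the cycle part) are even-sized.

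When $q=2r+1$ is odd the bookkeeping is tighter, because carrying the extra $\alpha I_1$ by itself would demand one ``inverse'' slot more inside $C(f)$ than its degree allows when $n=3$. The fix is not to isolate the extra $\alpha I_1$, but to group it with the leftover block and treat the $3\times 3$ matrix $\alpha I_2\oplus(-I_1)\simeq\alpha I_1\oplus\bigl(\alpha I_1\oplus(-I_1)\bigr)$ at once: with $\alpha I_1\uadj C_1(\alpha)$ and, from Lemma~\ref{C1basic} again with $\gamma=\delta=1$, the relation $\alpha I_1\oplus(-I_1)\uadj C_1(x)\oplus C_1(\pi x^{-1})$ for any nonzero $x$ with $x^2\neq\pi$, the choice $x:=\alpha^{-1}$ (valid when $\alpha^3\neq-1$) yields $\alpha I_2\oplus(-I_1)\uadj C_1(\alpha)\oplus C_1(\alpha^{-1})\oplus C_1(-\alpha^2)$, in which two of the three blocks already balance each other. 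Together with $\alpha I_{2r}\oplus(-I_{2r})\uadj\calC_{r,2}(\pi)$ (absent when $q=1$) and $N\uadj C(f)$ for $f=(t-1)^{n-3}(t+\alpha^{-2})(t-\pi^{-r})^2$, one obtains
$$M\uadj C_{n-3}(1)\oplus\bigl[C_1(\alpha)\oplus C_1(\alpha^{-1})\bigr]\oplus\bigl[C_1(-\alpha^2)\oplus C_1(-\alpha^{-2})\bigr]\oplus\bigl[C_2(\pi^{-r})\oplus\calC_{r,2}(\pi)\bigr],$$
again a product of two $U_2$-matrices by the same reasoning (no $C_1$-block above equals $-1$, since $\alpha\notin\{1,-1\}$). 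The degenerate instances---$q=0$ (where $M=N\oplus\alpha I_1$ and it suffices to take $N\uadj C(f)$ with $C(f)\simeq C_{n-1}(1)\oplus C_1(\alpha^{-1})$), and the sub-case $\alpha^3=-1$ of $q=1$ (which calls for a different choice of $x$, used then to balance $C_1(x)\oplus C_1(\pi x^{-1})$ directly against part of the spectrum of $C(f)$)---are handled by the same kind of minor modifications used in the proof of Proposition~\ref{3U2decompProp}.

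I expect the main obstacle to be precisely this tight degree count in the odd case for small $n$: the unnatural extension carries one extra copy of $\alpha$ compared with the balanced situation of Proposition~\ref{3U2decompProp}, so the ``inverse'' blocks must be squeezed into a companion matrix of degree only $n\geq 3$; it is to make this possible that one is led to the slightly unusual grouping $\alpha I_2\oplus(-I_1)$ and to the exploitation of the freedom in Lemma~\ref{C1basic}. The remaining work---checking that the relevant scalars (the powers of $\pi$ together with $\alpha^{-1}$, $-\alpha^{\pm 2}$ and $1$) are pairwise distinct so that each companion matrix splits into the claimed Jordan cells, and disposing of the handful of small-$q$ exceptions---is routine but must be carried out with care, using $\charac(\F)\neq 2$ and the hypothesis $\alpha^k\neq\pm 1$ for $k\in\lcro 1,q\rcro$.
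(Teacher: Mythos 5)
Your proposal is correct, and for the case $q=p-1$ with $q$ odd it takes a genuinely different route from the paper's. The reduction of $p=q$ to Proposition \ref{3U2decompProp}, the case $q=0$, and the even case $q=2r$ (peel off $\alpha I_1\uadj C_1(\alpha)$, turn $\alpha I_{2r}\oplus(-I_{2r})$ into $\calC_{r,2}(\pi)$, and let the Adaptation Theorem supply $C_1(\alpha^{-1})\oplus C_2(\pi^{-r})$) coincide with what the paper does. For odd $q=2r+1$, however, the paper splits further according to whether $\pi^{2k+1}=1$ for some $k\in\lcro 0,q-1\rcro$: if not it uses the $C_1$-cycle $\calC_{q,1}(\pi)$, and if so it writes $q=a+b$ and uses two shorter $C_1$-cycles. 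Your device --- grouping $\alpha I_2\oplus(-I_1)$ and choosing $x=\alpha^{-1}$ in Lemma \ref{C1basic} to produce the already-balanced pair $C_1(\alpha)\oplus C_1(\alpha^{-1})$ together with $C_1(-\alpha^2)$, whose inverse $-\alpha^{-2}$ is then planted in $C(f)$ --- removes that case distinction entirely, since the remaining $2r$ copies always fit into a $C_2$-cycle regardless of the multiplicative order of $\pi$. The verifications do go through: for $q\geq 3$ the hypothesis gives $\alpha^2\neq\pm 1$ and $\alpha^3\neq\pm 1$, so $x=\alpha^{-1}$ is admissible, the roots $1$, $-\alpha^{-2}$, $\pi^{-r}$ of $f$ are pairwise distinct, and no $C_1$-eigenvalue can be $-1$. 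The two instances you defer are genuinely minor: for $q=1$ (including $\alpha^3=-1$) the choice $x=1$ works, yielding $M\uadj C_{n-2}(1)\oplus C_1(1)\oplus\bigl[C_1(\alpha)\oplus C_1(\alpha^{-1})\bigr]\oplus\bigl[C_1(-\alpha)\oplus C_1(-\alpha^{-1})\bigr]$, which is essentially the paper's first case specialized to $q=1$. What the paper's route buys is that its first case consumes only two linear factors of $C(f)$; what yours buys is a uniform treatment with no discussion of the order of $\pi$.
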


\begin{proof}
If $p=q$, the result follows directly from Proposition \ref{3U2decompProp}.
Hence, in the remainder of the proof we only consider the case when $q=p-1$.

Assume first that $\alpha=1$. Then $M \simeq (N \oplus (-I_q) \oplus I_q) \oplus I_1$.
Moreover, $\det(N \oplus (-I_q) \oplus I_q)=1$, and hence, either by Proposition \ref{3U2decompProp}
if $q>0$, or by Proposition \ref{wellpartdecomp} otherwise, $N \oplus (-I_q) \oplus I_q$
is the product of three $U_2$-matrices. Therefore, so is $M$.

In the remainder of the proof, we assume that $\alpha \neq 1$.

If $p=1$, we have $\det N=\alpha^{-1}$; then, as $\alpha^{-1} \neq 1$, the Adaptation Theorem shows that
$N \uadj C_{n-1}(1) \oplus C_1(\alpha^{-1})$; hence,
$M$ is u-adjacent to $C_{n-1}(1) \oplus C_1(\alpha^{-1}) \oplus C_1(\alpha)$,
which is the product of two $U_2$-matrices because $\alpha \neq -1$.

In the remainder of the proof, we assume further that $p>1$ (and hence $q>0$).
We set $\pi:=-\alpha$. We shall prove that $M$ is u-adjacent to a matrix that is the product of two $U_2$-matrices.
Note that $\pi^k \neq -1$ for all $k \in \lcro 1,q\rcro$.

\vskip 3mm
\noindent \textbf{Case 1:} There is no integer $k \in \lcro 0,q-1\rcro$ such that $\pi^{2k+1}=1$. \\
Then, by Lemma \ref{C1cycle},
$$\alpha I_p \oplus (-I_q) \uadj C_1(\alpha) \oplus \calC_{q,1}(\pi).$$
As $q>0$, we have $\pi^q \neq \alpha$ otherwise $\alpha^{q-1}=(-1)^q$, and then $q-1>0$ and we contradict our assumptions on $\alpha$.
Hence, $1$, $\alpha^{-1}$ and $\pi^{-q}$ are pairwise distinct.
Since $\det N=\alpha^{-1} \pi^{-q}$, the Adaptation Theorem shows that
$$N \uadj C\bigl((t-1)^{n-2}(t-\alpha^{-1})(t-\pi^{-q})\bigr) \simeq C_{n-2}(1) \oplus C_1(\alpha^{-1}) \oplus C_1(\pi^{-q}).$$
It follows that
$$M \uadj M':=C_{n-2}(1) \oplus \bigl[C_1(\alpha^{-1}) \oplus C_1(\alpha)\bigr] \oplus \bigl[C_1(\pi^{-q}) \oplus \calC_{q,1}(\pi)\bigr].$$
By Lemma \ref{I2cycles}, the matrix $M'$ is similar to its inverse. Moreover, as
$\alpha \neq -1$ and $\pi^k \neq -1$ for all $k \in \lcro -q,q\rcro$,
we see that $-1$ is no eigenvalue of $M'$. Hence, Theorem \ref{theo2} shows that $M'$ is the product of two $U_2$-matrices.

\vskip 3mm
\noindent \textbf{Case 2:} $q$ is even. \\
We write $q=2a$ for some integer $a$.
Then, by Lemma \ref{C2cycle},
$$\alpha I_q \oplus (-I_q) \uadj \calC_{a,2}(\pi).$$
Note that $\det N=\alpha^{-1} (\pi^{-a})^2$ and that $\pi^a \neq 1$ due to our assumptions.
If $\pi^a=\alpha$ then $\alpha^{a-1}=(-1)^a$, which yields $a-1=0$ (because $0 \leq a-1 \leq q$) and
we obtain a contradiction. Hence, $1,\alpha^{-1},\pi^{-a}$ are pairwise distinct, and we deduce from the Adaptation Theorem that
$$N \uadj C\bigl((t-1)^{n-3}(t-\alpha^{-1})(t-\pi^{-a})^2\bigr) \simeq C_{n-3}(1) \oplus C_1(\alpha^{-1}) \oplus C_2(\pi^{-a}).$$
Hence,
$$M \uadj  C_{n-3}(1) \oplus \bigl[C_1(\alpha^{-1}) \oplus C_1(\alpha)\bigr] \oplus \bigl[C_2(\pi^{-a}) \oplus \calC_{a,2}(\pi)\bigr].$$
Remembering that $\alpha \neq -1$, we see that the latter matrix is the product of two $U_2$-matrices.
\vskip 3mm
\noindent \textbf{Case 3:} $q$ is odd and there is an integer $k \in \lcro 0,q-1\rcro$ such that $\pi^{2k+1}=1$. \\
We take the least such integer $a$. Note that $2a+1>q$ due to our assumptions.
Hence, $a\geq \frac{q}{2}$. Setting $b:=q-a$, we deduce that $1 \leq b<a<q$ because $q$ is odd.
It ensues that $1$, $\alpha^{-1}$, $\pi^{-a}$ and $\pi^{-b}$ are pairwise distinct.
Note that $\det N=\alpha^{-1} \pi^{-q}=\alpha^{-1} \pi^{-a}\pi^{-b}$.
Thus, the Adaptation Theorem yields
$$N \uadj C\bigl((t-1)^{n-3}(t-\alpha^{-1})(t-\pi^{-a})(t-\pi^{-b})\bigr) \simeq
C_{n-3}(1) \oplus C_1(\alpha^{-1})\oplus  C_1(\pi^{-a}) \oplus C_1(\pi^{-b}).$$
On the other hand, we note that $\pi^{2k+1} \neq 1$ for all $k \in \lcro 0,a-1\rcro$, and hence Lemma
\ref{C1cycle} shows that
$$\alpha I_a \oplus (-I_a) \uadj \calC_{a,1}(\pi) \quad \text{and} \quad
\alpha I_b \oplus (-I_b) \uadj \calC_{b,1}(\pi).$$
Combining the above two adjacency results yields that $M$ is u-adjacent to
$$M':= C_{n-3}(1) \oplus \bigl[C_1(\alpha^{-1}) \oplus C_1(\alpha)\bigr] \oplus \bigl[C_1(\pi^{-a}) \oplus \calC_{a,1}(\pi)\bigr]
\oplus \bigl[C_1(\pi^{-b}) \oplus \calC_{b,1}(\pi)\bigr].$$
By Theorem \ref{theo2} and Lemma \ref{I2cycles}, the matrix $M'$ is similar to its inverse.
Moreover, $-1$ is not a power of $\pi$: indeed, as $\pi^{2a+1}=1$ we see that $\pi$ has odd order.
In addition $\alpha \neq -1$, and hence $-1$ is no eigenvalue of $M'$. Therefore, Theorem \ref{theo2}
yields that $M'$ is the product of two $U_2$-matrices.

Hence, in any case $M$ is u-adjacent to the product of two $U_2$-matrices, and we conclude that $M$ is the product of three $U_2$-matrices.
\end{proof}

\begin{prop}\label{3I2decomppropskew}
Assume that $\F$ has characteristic not $2$ and let $i \in \F$ satisfy $i^2=-1$.
Let $q$ be a positive integer, $N \in \GL_n(\F)$ be a very-well-partitioned invertible matrix,
and let $\alpha$ and $\beta$ be distinct nonzero scalars such that
$(\alpha\beta)^q \det N=\pm i$. Assume furthermore that $(\alpha \beta)^k \not\in \{\pm 1,\pm i\}$ for all
$k \in \lcro 1,q\rcro$. Then, $M:=N \oplus \alpha I_q \oplus \beta I_q \oplus i I_1$ is the
product of three involutions, and it is also the product of one involution and two $U_2$-matrices.
\end{prop}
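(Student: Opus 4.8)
The strategy is to prove that $M$ is i-adjacent to a matrix that is a product of two $U_2$-matrices; since every product of two $U_2$-matrices is also a product of two involutions (Theorem \ref{theo2}), this at once yields both assertions (that $M$ is a product of three involutions, and that $M$ is a product of one involution and two $U_2$-matrices). Set $\pi:=-\alpha\beta$. Because the set $\{\pm1,\pm i\}$ is stable under multiplication by $-1$, the hypothesis $(\alpha\beta)^k\notin\{\pm1,\pm i\}$ for $k\in\lcro1,q\rcro$ is equivalent to $\pi^k\notin\{\pm1,\pm i\}$ for those same $k$, and $(\alpha\beta)^q\det N=\pm i$ rewrites as $\det N=\pm i\,\pi^{-q}$. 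The argument follows the template of the proof of Proposition \ref{not3U2decompProp} in its $\varepsilon=-1$ incarnation, the one genuinely new feature being the absorption of the summand $iI_1$: one routes the ``$\pm i$'' part of $\det N$ into an extra Jordan cell $C_1(i)$ produced through the Adaptation Theorem, and pairs it with the trivial adjacency $iI_1\iadj C_1(-i)$ obtained by acting with the $1\times1$ involution $-1$. Note that $C_1(i)\oplus C_1(-i)\simeq C(t^2+1)$ is a product of two $U_2$-matrices by Theorem \ref{theo2}(ii), since $-1$ is not one of its eigenvalues.

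First I would treat the case where $\pi^{2k+1}\neq1$ for every $k\in\lcro0,q-1\rcro$. Lemma \ref{C1cycle}, applied with $\gamma=1$ and $\delta=-1$ (so that the matrix it produces is an involution) and with its free sign taken equal to $1$, gives $\alpha I_q\oplus\beta I_q\iadj\bigoplus_{k=0}^{q-1}\bigl(C_1(\pi^{-k})\oplus C_1(\pi^{k+1})\bigr)$, while part (c) of the Adaptation Theorem, applied to the very-well-partitioned matrix $N$ (and using that $N(r)=\pm\det N$ is enough), gives $N\iadj C_{n-2}(1)\oplus C_1(\pi^{-q})\oplus C_1(i)$; here one needs $1$, $\pi^{-q}$ and $i$ pairwise distinct, which follows from $\pi^q\notin\{\pm1,\pm i\}$ and $i^2=-1\neq1$, and one needs $n\geq3$, which holds automatically for very-well-partitioned matrices. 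Since i-adjacency is compatible with direct sums, combining these adjacencies with $iI_1\iadj C_1(-i)$ shows that $M$ is i-adjacent to
$$M':=C_{n-2}(1)\oplus\Bigl[C_1(\pi^{-q})\oplus\bigoplus_{k=0}^{q-1}\bigl(C_1(\pi^{-k})\oplus C_1(\pi^{k+1})\bigr)\Bigr]\oplus\bigl[C_1(i)\oplus C_1(-i)\bigr].$$
The first summand is trivially a product of two $U_2$-matrices; the second is one by Lemma \ref{I2cycles} together with the remark that $-1$ is not a power of $\pi$ (so Theorem \ref{theo2}(ii) applies); the third by Theorem \ref{theo2}(ii). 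Hence $M'$ is a product of two $U_2$-matrices, and we are done in this case.

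In the remaining case there is a least $a\in\lcro0,q-1\rcro$ with $\pi^{2a+1}=1$; then $\pi$ has order exactly $2a+1$, whence $a\geq q/2$, $q\geq2$, and $-1\notin\langle\pi\rangle$. Writing $b:=q-a$ (so $1\leq b\leq a$) and splitting $\alpha I_q\oplus\beta I_q=(\alpha I_a\oplus\beta I_a)\oplus(\alpha I_b\oplus\beta I_b)$, I would mirror Subcases 2.1 and 2.2 of the proof of Proposition \ref{not3U2decompProp}: if $b<a$, one obtains $M$ i-adjacent to $C_{n-3}(1)\oplus[C_1(\pi^{-a})\oplus\calC_{a,1}(\pi)]\oplus[C_1(\pi^{-b})\oplus K]\oplus[C_1(i)\oplus C_1(-i)]$ with $K:=\bigoplus_{k=0}^{b-1}(C_1(\pi^{-k})\oplus C_1(\pi^{k+1}))$; if $b=a$ (so $q=2a$), one obtains $M$ i-adjacent to $C_{n-3}(1)\oplus[C_2(\pi^{-a})\oplus\calC_{a,2}(\pi)]\oplus[C_1(i)\oplus C_1(-i)]$. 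In both branches the bracketed blocks are products of two $U_2$-matrices by Lemmas \ref{U2cycles} and \ref{I2cycles} and Theorem \ref{theo2}(ii) (using $-1\notin\langle\pi\rangle$), and the Adaptation Theorem is legitimately invoked because $1,\pi^{-a},\pi^{-b},i$ (respectively $1,\pi^{-a},i$) are pairwise distinct — which is again exactly what $\pi^k\notin\{\pm1,\pm i\}$ provides, since $\langle\pi\rangle$ has odd order and hence avoids $\pm i$, while $\pi^{-a}\neq\pi^{-b}$ for $0\leq b<a<\operatorname{ord}(\pi)$.

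The routine content consists of the pairwise-distinctness checks that let the companion matrices split into the stated direct sums of Jordan cells, and the determinant bookkeeping needed to apply the Adaptation Theorem with the correct norm. The step calling for genuine care is verifying that in every branch the eigenvalue $-1$ never occurs among the cells of $M'$, so that Theorem \ref{theo2}(ii) certifies $M'$ as a product of two $U_2$-matrices and not merely of two involutions; this, together with the need to keep $\pm i$ away from the powers of $\pi$ so that the cell $C_1(i)$ coming from $iI_1$ can be paired, is precisely why the hypothesis excludes all four values $\pm1,\pm i$ rather than just $\pm1$.
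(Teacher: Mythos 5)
Your proof is correct, but it routes the factor $i$ differently from the paper, and this changes the case division. The paper's proof absorbs $\pm i$ into \emph{every} size-one cell: it applies Lemma \ref{C1basic} with $x=-i\pi^{-k}$ to get blocks $C_1(-i\pi^{-k})\oplus C_1(i\pi^{k+1})$, so the nondegeneracy condition $x^2\neq\pi$ becomes $\pi^{2k+1}\neq -1$, and the cases are split accordingly (in the second case $-1\in\langle\pi\rangle$, so $\pi$ automatically has even order and the plain cycle $\calC_{b,1}(\pi)$ is available for the $b$-part). You instead keep the $i$ confined to a single pair $C_1(i)\oplus C_1(-i)$ — obtained by feeding the extra root $i$ into the Adaptation Theorem's target polynomial for $N$ and pairing it with the trivial adjacency $iI_1\iadj C_1(-i)$ — and use untwisted blocks $C_1(\pi^{-k})\oplus C_1(\pi^{k+1})$ throughout, so your split is on $\pi^{2k+1}\neq 1$, exactly as in Proposition \ref{not3U2decompProp}. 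Both are legitimate: your determinant bookkeeping ($N(r)=i\pi^{-q}\cdot(\text{roots})^{\,}=\pm\det N$, using part (c) for the very-well-partitioned $N$) checks out, the distinctness verifications are all covered by $\pi^k\notin\{\pm1,\pm i\}$ for $k\in\lcro 1,q\rcro$ (together with $i\notin\langle\pi\rangle$ when $\pi$ has odd order), and the sizes match. Your approach buys a more direct reuse of the template of Proposition \ref{not3U2decompProp}; the paper's buys a slightly more uniform treatment in which $iI_1$ is just the $k=0$ endpoint of the twisted cycle. One small imprecision: in your first case the parenthetical claim that ``$-1$ is not a power of $\pi$'' need not hold (nothing prevents some high power of $\pi$ from equalling $-1$ there); what you actually need, and what the hypothesis does give, is only that $\pi^j\neq -1$ for $|j|\leq q$, so that $-1$ is not an eigenvalue of the displayed matrix $M'$ and Theorem \ref{theo2}(ii) applies.
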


\begin{proof}
Set $\pi:=-\alpha \beta$. We will prove that $M$ is i-adjacent to a matrix that is
the product of two $U_2$-matrices. This will yield the claimed results.

\noindent \textbf{Case 1:} There is no integer $k \in \lcro 0,q-1\rcro$ for
which $-i \pi^{-k} =i \pi^{k+1}$, i.e.\ $\pi^{2k+1}=-1$.

Then, by Lemma \ref{C1basic},
$$\alpha I_q \oplus \beta I_q \iadj \underset{k=0}{\overset{q-1}{\bigoplus}} \,\bigl(C_1(-i\pi^{-k}) \oplus C_1(i \pi^{k+1})\bigr).$$
Besides, since $-i \pi^{-q} \neq 1$, the Adaptation Theorem shows that
$$N \iadj C\bigl((t-1)^{n-1}(t+i \pi^{-q})\bigr) \simeq C_{n-1}(1) \oplus C_1(-i \pi^{-q}).$$
Hence,
$$M  \iadj M':=C_{n-1}(1) \oplus \bigl[C_1(i) \oplus C_1(-i)\bigr] \oplus
\underset{k=1}{\overset{q}{\bigoplus}} \bigl(C_1(-i\pi^{-k}) \oplus C_1(i \pi^{k})\bigr).$$
The matrix $M'$ is obviously similar to its inverse. Moreover, the assumptions show that $-1$ is no eigenvalue of $M'$.
Hence, $M'$ is the product of two $U_2$-matrices.

\noindent \textbf{Case 2:} There is an integer $k \in \lcro 0,q-1\rcro$ for
which $-i \pi^{-k} =i \pi^{k+1}$. \\
Let us take the least such integer $a$. Then, $1\leq a<q$.
Setting $b:=q-a$, we have $1\leq b<q$.
Note that $\pi^{2k+1} \neq -1$ for all $k \in \lcro 0,a-1\rcro$. Moreover, $\pi$ does not have odd order
because $-1 \in \langle \pi\rangle$, whence $\pi^{2k+1}\neq 1$ for all $k \in \lcro 0,b-1\rcro$.
Hence, it follows from Lemma \ref{C1basic} that
$$\alpha I_a \oplus \beta I_a \iadj \underset{k=0}{\overset{a-1}{\bigoplus}} \bigl(C_1(-i\pi^{-k}) \oplus C_1(i \pi^{k+1})\bigr)
\quad \text{and} \quad
\alpha I_b \oplus \beta I_b \iadj \calC_{b,1}(\pi).$$
Besides, $-i \pi^{-a}$, $\pi^{-b}$, and $1$ are pairwise distinct and hence
the Adaptation Theorem shows that $N$ is i-adjacent to
$$C\bigl((t-1)^{n-2}(t+i \pi^{-a})(t-\pi^{-b})\bigr) \simeq C_{n-2}(1) \oplus C_1(-i\pi^{-a})
\oplus C_1(\pi^{-b}).$$
We conclude that $M$ is i-adjacent to
\begin{multline*}
M':= C_{n-2}(1)\oplus C_1(i) \oplus
\biggl[C_1(-i\pi^{-a}) \oplus \underset{k=0}{\overset{a-1}{\bigoplus}} \bigl(C_1(-i\pi^{-k}) \oplus C_1(i \pi^{k+1})\bigr)\biggr] \\
\oplus
\bigl[C_1(\pi^{-b}) \oplus \calC_{b,1}(\pi)\bigr],
\end{multline*}
which is similar to
$$C_{n-2}(1)\oplus \bigl[C_1(i) \oplus C_1(-i)\bigr] \oplus
\biggl[\underset{k=1}{\overset{a}{\bigoplus}} \bigl(C_1(-i\pi^{-k}) \oplus C_1(i \pi^{k})\bigr)\biggr] \\
\oplus
\bigl[C_1(\pi^{-b}) \oplus \calC_{b,1}(\pi)\bigr].$$

Hence, $M'$ is similar to its inverse. We claim that $-1$ is no eigenvalue of it.
Indeed, the starting assumptions on $\pi$ show that $\pi^k\neq -1$ for all $k \in \lcro -b,b\rcro$,
and $\pi^k \neq \pm i$ for all $k \in \lcro -a,a\rcro$.

We conclude that $M$ is the product of one involution and two $U_2$-matrices, and by Theorem \ref{theo2} it is also the product
of three involutions.
\end{proof}

\section{Products of three unipotent matrices of index $2$}\label{3U2Section}

\subsection{Additional results on diagonal matrices}

\begin{lemma}\label{-I23U2lemma}
The matrix $-I_2$ is the product of three $U_2$-matrices.
\end{lemma}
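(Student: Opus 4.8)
The plan is to reduce everything to the classification of products of two $U_2$-matrices recalled in Theorem \ref{theo2}, via a single $u$-adjacency step. First I would dispose of the trivial case where $\F$ has characteristic $2$: there $-I_2=I_2=I_2\cdot I_2\cdot I_2$, which settles the claim. So assume from now on that $\charac(\F)\neq 2$.

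The key observation is that $-I_2$ is $u$-adjacent to $C_2(-1)$. Indeed, $C_2(1)$ is a $U_2$-matrix, since its minimal polynomial is $(t-1)^2$, so that $(C_2(1)-I_2)^2=0_2$; moreover $C_2(1)\cdot(-I_2)=-C_2(1)$ is not scalar and has minimal polynomial $(t+1)^2$, hence it is similar to $C_2(-1)$. Thus $-I_2 \uadj C_2(-1)$.

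It then remains to check that $C_2(-1)$ is the product of two $U_2$-matrices, for which I would invoke Theorem \ref{theo2}(ii): the matrix $C_2(-1)$ is similar to its inverse (as is every single Jordan cell), and its sole Jordan cell for the eigenvalue $-1$ has even size $2$. Combining the two steps: writing $C_2(1)\cdot(-I_2)=P^{-1}C_2(-1)P=P^{-1}V_1V_2P$ with $V_1,V_2$ two $U_2$-matrices, we obtain $-I_2=C_2(1)^{-1}\,(P^{-1}V_1P)\,(P^{-1}V_2P)$, a product of three $U_2$-matrices, since $C_2(1)^{-1}$ is a $U_2$-matrix and the set of $U_2$-matrices is stable under conjugation.

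I do not expect any real obstacle here; the only points requiring a modicum of care are the degenerate characteristic $2$ case and the routine verification that $-C_2(1)$ is similar to $C_2(-1)$ (e.g.\ by conjugating a Jordan form by $\Diag(1,-1)$).
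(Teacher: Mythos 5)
Your proof is correct and follows essentially the same route as the paper's: observe that $C_2(1)\cdot(-I_2)=-C_2(1)\simeq C_2(-1)$, so $-I_2\uadj C_2(-1)$, then invoke Theorem \ref{theo2}(ii) to see that $C_2(-1)$ is the product of two $U_2$-matrices. Your separate treatment of characteristic $2$ is harmless but unnecessary, since the Jordan cell of $C_2(-1)$ for the eigenvalue $-1$ has even size and Theorem \ref{theo2}(ii) applies uniformly.
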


\begin{proof}
Noting that $-C_2(1) \simeq C_2(-1)$ and that $C_2(1)$ is a $U_2$-matrix, we see that $-I_2$ is u-adjacent to $C_2(-1)$.
Besides, $C_2(-1)$ is the product of two $U_2$-matrices, by Theorem \ref{theo2}. This yields the claimed result.
\end{proof}

\begin{lemma}\label{lastU2lemma1}
Assume that $\F$ does not have characteristic $2$.
Let $\alpha$ belong to $\F \setminus \{0,-1\}$.
Let $n \in \N^*$ be an odd integer such that $\alpha^n=1$.
Then, $\alpha I_n \oplus (-I_{n-1})$ is the product of three $U_2$-matrices.
\end{lemma}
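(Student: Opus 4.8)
The plan is to reduce to a single ``scalar block'' in which the scalar has multiplicative order equal to the block size, and to peel off the rest as a $C_2$-cycle. Write $d$ for the order of $\alpha$ in $\F^*$; since $\alpha^n=1$ and $n$ is odd, $d$ divides $n$ and is odd, so $n=de$ with $e$ odd. After permuting the diagonal entries,
$$\alpha I_n\oplus(-I_{n-1})\simeq\Bigl(\bigoplus_{i=1}^{(e-1)/2}\bigl(\alpha I_{2d}\oplus(-I_{2d})\bigr)\Bigr)\oplus\bigl(\alpha I_d\oplus(-I_{d-1})\bigr),$$
the counts matching because there are $(e-1)d+d=de$ entries equal to $\alpha$ and $(e-1)d+(d-1)=de-1$ entries equal to $-1$. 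Since a block-diagonal matrix whose diagonal blocks are each products of three $U_2$-matrices is again the product of three $U_2$-matrices, it suffices to treat the two kinds of block.

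For a block $\alpha I_{2d}\oplus(-I_{2d})$: it is similar to the direct sum of $d$ copies of $\alpha I_2\oplus(-I_2)$, so by Lemma~\ref{C2cycle} applied with $\gamma=\delta=1$ (so that the matrix it produces, being annihilated by $(t-1)^2$ and invertible, is unipotent of index $2$) it is u-adjacent to $\calC_{d,2}(-\alpha)$; since $(-\alpha)^{2d}=\alpha^{2d}=1$, Lemma~\ref{U2cycles} gives that $\calC_{d,2}(-\alpha)$ is the product of two $U_2$-matrices, hence $\alpha I_{2d}\oplus(-I_{2d})$ is the product of three. (When $\alpha=1$ this is the only kind of nontrivial block, alongside a final $I_1$.)

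The block $\alpha I_d\oplus(-I_{d-1})$ is the crux. It equals $I_1$ when $d=1$, so assume $d\ge 3$ and write it as $(\alpha I_{d-1}\oplus(-I_{d-1}))\oplus\alpha I_1$. Because $\alpha$ has odd order, $-1\notin\langle\alpha\rangle$, so $(-\alpha)^{2k+1}=-\alpha^{2k+1}\neq1$ for all $k$; hence Lemma~\ref{C1cycle} with $\gamma=\delta=1$ and $\varepsilon=1$ gives $\alpha I_{d-1}\oplus(-I_{d-1})\uadj\calC_{d-1,1}(-\alpha)$, and therefore $\alpha I_d\oplus(-I_{d-1})\uadj\calC_{d-1,1}(-\alpha)\oplus C_1(\alpha)$. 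Now $d$ is odd and $\alpha^d=1$, so $(-\alpha)^{-(d-1)}=\alpha^{-(d-1)}=\alpha$, and this last matrix is $\bigoplus_{j=-(d-1)}^{d-1}C_1\bigl((-\alpha)^j\bigr)$: a diagonal matrix with $2d-1$ distinct eigenvalues whose set of exponents is symmetric about $0$, hence similar to its inverse, and which does not have $-1=(-\alpha)^d$ among its eigenvalues (that would require $j\equiv d\pmod{2d}$ with $|j|\le d-1$). By Theorem~\ref{theo2}(ii) it is the product of two $U_2$-matrices, so $\alpha I_d\oplus(-I_{d-1})$ is the product of three, and the proof is complete.

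The only real difficulty is a parity obstruction. Since $n$ is odd there is always one ``extra'' eigenvalue $\alpha$; the direct manoeuvres — sending the $-1$'s into a $C_2$-cycle, or pairing each $\alpha$ with a $-1$ — leave that $\alpha$ either unbalanced for the ``similar to its inverse'' test or paired with $-1$ in a Jordan cell of odd size, both of which prevent being a product of two $U_2$-matrices. The reduction in the first step forces $\alpha$ to have order exactly the size $d$ of its block, which is precisely what makes $\calC_{d-1,1}(-\alpha)\oplus C_1(\alpha)$ close up (pairing $\alpha$ with $\alpha^{-1}=(-\alpha)^{-(d-1)}$) while missing the eigenvalue $-1$; once that is arranged the argument is a direct application of Lemmas~\ref{C2cycle}, \ref{C1cycle} and~\ref{U2cycles} with the diagonal multiplier chosen unipotent of index $2$, plus bookkeeping of exponents modulo $2d$.
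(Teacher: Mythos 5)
Your proof is correct and follows essentially the same route as the paper's: reduce to the block $\alpha I_d\oplus(-I_{d-1})$ where $d$ is the order of $\alpha$, close up the $C_1$-cycle of $-\alpha$ with the extra eigenvalue $\alpha=(-\alpha)^{-(d-1)}$, and check that $-1$ is not an eigenvalue; the only (immaterial) difference is that you dispose of the even-sized remainder by splitting it into blocks $\alpha I_{2d}\oplus(-I_{2d})$ and applying Lemmas \ref{C2cycle} and \ref{U2cycles} directly, whereas the paper treats it as a single block $\alpha I_{n-d}\oplus(-I_{n-d})$ via Lemma \ref{superdiagonalLemma} (which rests on the same two lemmas).
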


\begin{proof}
We note that $\alpha$ has odd order, and we denote by $m$ its order.
We start by proving that $\alpha I_m \oplus (-I_{m-1})$ is the product of three $U_2$-matrices.

Note that $-1$ has even order (because $\F$ does not have characteristic $2$), and hence
it cannot be a power of $\alpha$. It follows that $-\alpha$ has even order.
 Therefore, Lemma \ref{C1cycle} yields
$$\alpha I_{m-1} \oplus (-I_{m-1}) \uadj  \underset{k=0}{\overset{m-2}{\bigoplus}}\,\bigl(C_1((-\alpha)^{-k}) \oplus C_1((-\alpha)^{k+1})\bigr),$$
whence
$$\alpha I_m  \oplus (-I_{m-1}) \uadj B:=
C_1(\alpha) \oplus \underset{k=0}{\overset{m-2}{\bigoplus}}\,\bigl(C_1((-\alpha)^{-k}) \oplus C_1((-\alpha)^{k+1})\bigr).$$
Noting that $\alpha (-\alpha)^{m-1}=\alpha^m=1$, we obtain $B \simeq B^{-1}$.
Moreover, we claim that $-1$ is no eigenvalue of $B$. Indeed,
assume that $(-\alpha)^k =-1$ for some integer $k$ such that $|k|<m$.
Then, $\alpha^k =(-1)^{k+1}$, and since $-1$ is not a power of $\alpha$ this yields $\alpha^k=1$.
Then, $k=0$ because of the definition of $m$, which is absurd. Hence, $B$ is the product of two $U_2$-matrices,
and we conclude that $\alpha I_m  \oplus (-I_{m-1})$ is the product of three such matrices.

If $n=m$, we are done. Otherwise we write
$$\alpha I_n \oplus (-I_{n-1}) \simeq \bigl(\alpha I_m \oplus (-I_{m-1})\bigr) \oplus (\alpha I_{n-m} \oplus (-I_{n-m})\bigr).$$
Note that $(-\alpha)^{n-m}=1$. Therefore, by Lemma \ref{superdiagonalLemma}, $\alpha I_{n-m} \oplus (-I_{n-m})$ is the product of three $U_2$-matrices.
Therefore, so is $\alpha I_n \oplus (-I_{n-1})$.
\end{proof}

\begin{lemma}\label{lastU2lemma2}
Assume that $\F$ does not have characteristic $2$.
Let $\alpha \in \F \setminus \{0,-1\}$.
Let $n \in \N^*$ be an even integer such that $\alpha^n=-1$.
Then, $\alpha I_n \oplus (-I_{n-1})$ is the product of three $U_2$-matrices.
\end{lemma}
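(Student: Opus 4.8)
The plan is to reduce this even-exponent case to the odd-exponent case already treated in Lemma \ref{lastU2lemma1}. Since $\alpha^n=-1$ with $n$ even, the element $-\alpha$ satisfies $(-\alpha)^n = (-1)^n\alpha^n = \alpha^n = -1$ as well, but more importantly $\alpha$ has finite order, necessarily $2n$ (indeed $\alpha^n=-1\neq 1$ forces the order to be exactly $2n$ unless a proper divisor already gives $-1$, but any such divisor $d\mid 2n$ with $\alpha^d=-1$ must have $2n/d$ odd, so writing things in terms of the order is cleaner). Let me instead work directly: set $m$ to be the order of $-\alpha$. First I would observe that $m$ is even — indeed $-\alpha$ cannot have odd order, because $(-\alpha)^n=-1$ is not a power of an element of odd order (an element of odd order generates a group of odd order, which cannot contain $-1$ since $-1$ has order $2$ as $\charac \F\neq 2$).

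Next I would mimic exactly the first half of the proof of Lemma \ref{lastU2lemma1}, but arranging the signs for the value $-1$ instead of $1$. Concretely: since $-\alpha$ has even order $m$, Lemma \ref{C1cycle} (applied with $\alpha,\beta,\gamma,\delta$ chosen so that $\alpha\beta\gamma\delta=-\alpha$, and with $\varepsilon=1$, using that $(-\alpha)^{2k+1}\neq 1$ for all relevant $k$ because $-\alpha$ has even order) gives
$$\alpha I_{m-1}\oplus(-I_{m-1})\;\uadj\;\underset{k=0}{\overset{m-2}{\bigoplus}}\bigl(C_1((-\alpha)^{-k})\oplus C_1((-\alpha)^{k+1})\bigr),$$
whence, adjoining one more block,
$$\alpha I_m\oplus(-I_{m-1})\;\uadj\;B:=C_1(-\alpha^{m})\oplus\underset{k=0}{\overset{m-2}{\bigoplus}}\bigl(C_1((-\alpha)^{-k})\oplus C_1((-\alpha)^{k+1})\bigr),$$
where the extra eigenvalue is $\alpha$ itself, and $\alpha\cdot(-\alpha)^{m-1}=(-1)^{m-1}\alpha^m=-\alpha^m$; since $m$ is even and $\alpha$ has order $2n$ with $2n\mid 2m$... — the point to nail down is that $\alpha\cdot(-\alpha)^{m-1}$ equals either $1$ or $-1$, making $B\simeq B^{-1}$ up to the lone block, and more precisely that $B$ is similar to its inverse. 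Here I would just compute $(-\alpha)^{m}=1$ by definition of $m$, so $\alpha(-\alpha)^{m-1}=-\alpha\cdot(-\alpha)^{-1}\cdot(-\alpha)^m = \alpha/\alpha = 1$ — wait, $-\alpha\cdot(-\alpha)^{m-1}=(-\alpha)^m=1$, so the lone eigenvalue is $-\alpha^m$? Let me recompute: the lone block should have eigenvalue $\alpha$, and pairing it against $(-\alpha)^{m-1}$ gives $\alpha(-\alpha)^{m-1}$. Since $(-\alpha)^m=1$, this is $\alpha(-\alpha)^{-1}=\alpha\cdot(-\alpha^{-1})=-1$. So the lone eigenvalue $\alpha$ pairs with $(-\alpha)^{m-1}$ to give $-1$, meaning $C_1(\alpha)$ is the inverse of $C_1(-(-\alpha)^{-(m-1)})=C_1(-\alpha)$... this sign-chasing is exactly the routine part, and I would set it up so that $B\simeq B^{-1}$ holds. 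The clean statement: reorganizing, $B\simeq C_1(\alpha)\oplus C_1(-\alpha)\oplus\underset{k=1}{\overset{m-2}{\bigoplus}}(\cdots)$ with the remaining blocks closed under inversion, so indeed $B\simeq B^{-1}$.

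Then I would check that $-1$ is not an eigenvalue of $B$: if $(-\alpha)^k=-1$ for some $|k|<m$, then $\alpha^k=(-1)^{k+1}$, and since $-1$ is not a power of $\alpha$ (as $\alpha^k\in\{\pm 1\}$ would force $\alpha^k=1$, hence $2n\mid k$, impossible for $0<|k|<m\leq 2n$ — or directly: $-\alpha$ has order $m$ so $(-\alpha)^k=-1$ with $|k|<m$ is impossible unless... actually $-1=(-\alpha)^{m/2}$ would need $m/2\cdot 2 = m$, fine, but then $k=\pm m/2$, so I need to also exclude $k=\pm m/2$; here $(-\alpha)^{m/2}=-1$ is genuinely possible, so I must be more careful). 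The resolution: $(-\alpha)^{m/2}$ is a square root of $1$, so it equals $\pm 1$; if it equals $-1$ that's a real eigenvalue to worry about. So I would instead argue as in Lemma \ref{lastU2lemma1}: suppose $(-\alpha)^k=-1$; then $\alpha^k=(-1)^{k+1}$; if $k$ is odd this gives $\alpha^k=1$, forcing $k=0$ (contradiction with $k$ odd) — wait $\alpha^k=1$ with $k$ odd is possible only if the order of $\alpha$ is odd, but $\alpha$ has order $2n$ which is even, so $\alpha^k=1$ with $k$ odd is impossible for $k\neq 0$; if $k$ is even this gives $\alpha^k=-1$, so $(-\alpha)^k=\alpha^k=-1$ is consistent, and we'd need $2n\mid (2k-...)$... — here I use $\alpha^k=-1=\alpha^n$ so $\alpha^{k-n}=1$ with $k-n$ even minus even... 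Actually the honest thing: $\alpha^k=-1$ and $k$ even means $\alpha^{k/2}$ is a square root of $-1$, but also $(-\alpha)^k = \alpha^k = -1$ with $k$ even, and $-\alpha$ has order $m$, and $m$ is even with $(-\alpha)^m=1$. Hmm — I think the cleanest route is: $(-\alpha)^k=-1$ implies $(-\alpha)^{2k}=1$, so $m\mid 2k$; combined with $\alpha^k=(-1)^{k+1}$ and the order of $\alpha$ being $2n$, one derives a contradiction from divisibility. I would carry out this elementary number-theoretic check (the main obstacle, such as it is) and conclude $B$ has no eigenvalue $-1$, hence by Theorem \ref{theo2} $B$ is a product of two $U_2$-matrices and $\alpha I_m\oplus(-I_{m-1})$ is a product of three.

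Finally I would handle the passage from $m$ to $n$. If $n>m$ — but wait, $\alpha$ has order $2n$ and $-\alpha$ has order $m$, and $m\mid 2n$ with $m$ even; also from $(-\alpha)^n = -1$ we get $m\nmid n$, so $n$ is odd? No: $n$ is even by hypothesis. From $(-\alpha)^n=-1$ and $(-\alpha)^m=1$: $m\mid 2n$ but $m\nmid n$, so $2n/m$ is odd, i.e. $n/(m/2)$ is odd (as $m/2$ is an integer). Write $n = (m/2)\cdot(2\ell+1)$ for some $\ell\geq 0$; wait that has wrong parity unless $m/2$ is even. This is getting delicate. The safe formulation: $\alpha I_n\oplus(-I_{n-1}) \simeq \bigl(\alpha I_m\oplus(-I_{m-1})\bigr)\oplus\bigl(\alpha I_{n-m}\oplus(-I_{n-m})\bigr)$, valid whenever $n\geq m$; then since $(-\alpha)^{n-m} = (-\alpha)^n(-\alpha)^{-m} = (-1)\cdot 1 = -1$... that's not $1$, so I cannot directly invoke Lemma \ref{superdiagonalLemma}. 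Instead: $(-\alpha)^{n-m}$ — hmm. I would instead split off a chunk of size $2m$ (or the right multiple) on which $(-\alpha)$ raised to that size gives $1$: since $2n/m$ is odd, $n-m$ might not work but $n$ is a multiple of... Let me just say: write $n = m\cdot j + s$ won't respect structure. The correct reduction, I believe, parallels Lemma \ref{lastU2lemma1} with the extra wrinkle that here we peel off $\alpha I_{2m}\oplus(-I_{2m})$ blocks (on which $(-\alpha)^{2m}=1$, handled by Lemma \ref{superdiagonalLemma}) until the residual size is $m$; this requires $n\equiv m\pmod{2m}$, which follows from $2n/m$ odd. With that arithmetic pinned down, the residual $\alpha I_m\oplus(-I_{m-1})$ case (done above) plus repeated application of Lemma \ref{superdiagonalLemma}(i) (since $(-\alpha)^{2m}=1$, so with $\varepsilon=1$ there, $\alpha I_{2m}\oplus(-I_{2m})$ is a product of three $U_2$-matrices) completes the proof. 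I expect the arithmetic bookkeeping relating $n$, $m$, and the order of $\alpha$ to be the only genuinely fiddly part; everything else is a sign-flipped transcription of Lemma \ref{lastU2lemma1}.
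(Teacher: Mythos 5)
Your overall strategy (build a cycle of $C_1$-blocks via Lemma \ref{C1cycle}, check similarity to the inverse and absence of the eigenvalue $-1$, then peel off blocks handled by Lemma \ref{superdiagonalLemma}) is the right one, but the base case is set up with the wrong size, and this is not a fixable sign issue. You take $m$ to be the order of $-\alpha$ and work with $\alpha I_m\oplus(-I_{m-1})$. Since $n$ is even and $\alpha^n=-1$, the order of $\alpha$ is a multiple of $4$, say $4q$, with $\alpha^{2q}=-1$; one checks that $-\alpha$ then also has order $4q$, so your $m$ equals $4q$. With that choice your matrix
$$B=C_1(\alpha)\oplus\underset{k=0}{\overset{m-2}{\bigoplus}}\bigl(C_1((-\alpha)^{-k})\oplus C_1((-\alpha)^{k+1})\bigr)$$
genuinely has $-1$ as an eigenvalue: the block with $k=2q-1\leq m-2$ contributes $C_1((-\alpha)^{2q})=C_1(\alpha^{2q})=C_1(-1)$, an odd-sized Jordan cell for $-1$, so by Theorem \ref{theo2}(ii) $B$ is \emph{not} a product of two $U_2$-matrices. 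You half-notice this (``$(-\alpha)^{m/2}=-1$ is genuinely possible'') but then assert that a divisibility argument will rule it out; it cannot, because the statement is true. Moreover $B\not\simeq B^{-1}$: writing $\alpha=(-\alpha)^{2q+1}$, the multiset of exponents of $-\alpha$ occurring in $B$ is $\{0,1\}$ once each and $\{2,\dots,m-1\}$ twice each, plus $2q+1$ once more, and this is not stable under $j\mapsto -j\bmod m$ (the exponent $1$ occurs once but $m-1$ occurs twice, for every $q$). The claimed reorganization $B\simeq C_1(\alpha)\oplus C_1(-\alpha)\oplus(\text{inversion-closed part})$ does not hold.

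The missing idea is that the correct base-case size is $2q$, i.e.\ \emph{half} the order of $\alpha$, not the full order of $-\alpha$. With $\alpha I_{2q}\oplus(-I_{2q-1})\uadj B:=C_1(\alpha)\oplus\bigoplus_{k=0}^{2q-2}\bigl(C_1((-\alpha)^{-k})\oplus C_1((-\alpha)^{k+1})\bigr)$, the lone block pairs correctly since $\alpha(-\alpha)^{2q-1}=-\alpha^{2q}=1$, and the eigenvalues of $B$ are exactly the elements $(-\alpha)^j$ for $j\not\equiv 2q\pmod{4q}$, each with multiplicity one: this set is closed under inversion and omits precisely $(-\alpha)^{2q}=-1$. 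Hence $B$ is a product of two $U_2$-matrices. The arithmetic for the remainder, which you leave unresolved, is then immediate: $\alpha^{n-2q}=\alpha^n\alpha^{-2q}=(-1)(-1)^{-1}=1$ forces $4q\mid n-2q$, so $n-2q$ is even and $(-\alpha)^{n-2q}=1$, and Lemma \ref{superdiagonalLemma}(i) applies directly to $\alpha I_{n-2q}\oplus(-I_{n-2q})$ with no need to peel off $2m$-blocks.
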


\begin{proof}
Since $n$ is even, one of the powers of $\alpha$ is a square root of $-1$, which has order $4$,
and hence the order of $\alpha$ is a multiple of $4$.

Denote by $m$ the order of $\alpha$, and write $m=4q$, so that $\alpha^{2q}=-1$.
Note that $4q \leq 2n$, whence $n \geq 2q$.
The element $-\alpha$ does not have odd order otherwise $\alpha^{2k}=1$ for some odd integer $k$,
and $4q$ would then divide $2k$!
Hence, with the same line of reasoning as in the previous lemma, we find
$$\alpha I_{2q}  \oplus (-I_{2q-1}) \uadj B:=C_1(\alpha)
\oplus \underset{k=0}{\overset{2q-2}{\bigoplus}}\,\bigl(C_1((-\alpha)^{-k}) \oplus C_1((-\alpha)^{k+1})\bigr),$$
and, as $\alpha (-\alpha)^{2q-1}=1$, we note that $B$ is similar to its inverse.
Assume now that $-1$ is an eigenvalue of $B$. Then, $(-\alpha)^k=-1$ for some integer $k$ such that $|k|<2q$,
and hence $\alpha^{2k}=1$ with $|2k|<m$. It ensues that $k=0$, which leads to a contradiction. Therefore, $-1$ is no eigenvalue of $B$,
and we conclude that $B$ is the product of two $U_2$-matrices.

From there, by splitting
$$\alpha I_n \oplus (-I_{n-1}) \simeq \bigl(\alpha I_m \oplus (-I_{m-1})\bigr) \oplus \bigl(\alpha I_{n-m} \oplus (-I_{n-m})\bigr),$$
one concludes with exactly the same line of reasoning as in the proof of Lemma \ref{lastU2lemma1}.
\end{proof}

\subsection{Natural extensions}

Here, we prove Theorem \ref{theo3U2}.
Let $A \in \SL_n(\F)$.
We wish to prove that the matrix $M:=A \oplus I_n$ is the product of three $U_2$-matrices.

We start by applying Proposition \ref{VWPcor}:
there exist non-negative integers $p,q,r$ such that $p+q+r=2n$, a matrix $N \in \GL_p(\F)$ and a scalar $\alpha \in \F \setminus \{1\}$
such that
$$M\simeq N \oplus \alpha\,I_{q}\oplus I_{r} \quad \text{and $q \geq r$},$$
and either $N$ is very-well-partitioned, or $N-I_p$ is nilpotent and $q=0$, or $N$ is void.
Since $I_{r-q}$ is the product of three $U_2$-matrices, it suffices to prove that
$M':=N \oplus \alpha I_q\oplus I_q$, which has determinant $1$, is the
product of three $U_2$-matrices.

If $q=0$, then $\det N=1$ and the result follows directly from Theorem \ref{theo2} if $N-I_p$ is nilpotent,
whereas it follows from Proposition \ref{wellpartdecomp} if $N$ is very-well-partitioned.

In the rest of the proof, we assume that $q>0$.

If $N$ is void then $\alpha^q=\det M'=1$ and the result follows directly from Lemma \ref{superdiagonalLemma}.

Assume finally that $N$ is very-well-partitioned and that $q>0$. Note that $\alpha \not\in \{0,1\}$ since $M'$ is invertible.
Note also that $p \geq 3$ because $N$ is very-well-partitioned. If there is no integer $k \in \lcro 1,q\rcro$ such that $\alpha^k=1$,
then the result follows directly from Proposition \ref{3U2decompProp}.
Assume finally that there is an integer $k \in \lcro 1,q\rcro$ such that $\alpha^k=1$, and denote by $a$ the greatest such integer.
Then, $\alpha^k \neq 1$ for all $k \in \lcro 1,q-a\rcro$.
We split
$$M' \simeq \underbrace{\bigl[N \oplus \alpha I_{q-a} \oplus I_{q-a}\bigr]}_{M'_1} \oplus \underbrace{\bigl[\alpha I_a \oplus I_a\bigr]}_{M'_2}$$
and we note that $\det M'_2=\alpha^a=1$ and hence $\det M'_1=1$.
By Proposition \ref{3U2decompProp} if $q-a>0$, and by Proposition \ref{wellpartdecomp} otherwise,
the matrix $M'_1$ is the product of three $U_2$-matrices; so is $M'_2$ by Lemma \ref{superdiagonalLemma}.
We conclude that $M'$ is the product of three $U_2$-matrices.

This completes the proof of Theorem \ref{theo3U2}.

\subsection{Unnatural extensions: additional results}\label{skewunipotent}

In this section and in the following one, we assume that the field $\F$ does not have characteristic $2$.
Here, we establish preliminary results for the proof of Theorem \ref{skew3unipotent} (which will be performed in
the next section).

\begin{lemma}\label{skewU3lemma1}
Let $N \in \Mat_n(\F)$ be nilpotent. Denote by $s$ the number of its Jordan cells of odd size.
Then, $(-I_n+N)\oplus (-I_s)$ is the product of three $U_2$-matrices.
\end{lemma}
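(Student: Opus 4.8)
The plan is to decompose everything into Jordan cells and then to pair each odd-sized cell of $N$ with one copy of $-I_1$. Replacing $N$ by a similar matrix (which affects neither the hypothesis nor the conclusion), we may assume $N=\bigoplus_j C_{d_j}(0)$, so that $-I_n+N\simeq\bigoplus_j C_{d_j}(-1)$ and $s=\card\{j:\ d_j\text{ is odd}\}$. Since a direct sum of matrices, each a product of at most three $U_2$-matrices, is again a product of three $U_2$-matrices (complete the shorter factorizations with identity matrices, which are $U_2$-matrices, and take direct sums of corresponding factors), it suffices to prove that: (a) $C_d(-1)$ is a product of three $U_2$-matrices for every even $d$; and (b) $C_e(-1)\oplus(-I_1)$ is a product of three $U_2$-matrices for every odd $e$. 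Indeed one then groups together the even-sized cells of $N$ and pairs each of the $s$ odd-sized cells with one of the $s$ copies of $-I_1$.

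Point (a) is immediate: $C_d(-1)$ is a single Jordan cell for the eigenvalue $-1$, so $C_d(-1)\simeq C_d(-1)^{-1}$, and that cell has even size; by Theorem \ref{theo2}(ii), $C_d(-1)$ is a product of two $U_2$-matrices, hence of three.

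For point (b), which is the crux, I will show that $C_e(-1)\oplus(-I_1)$ is u-adjacent to $C_{e+1}(-1)$; since $e+1$ is even, Theorem \ref{theo2}(ii) shows that $C_{e+1}(-1)$ is a product of two $U_2$-matrices, and then $C_e(-1)\oplus(-I_1)$ is a product of three $U_2$-matrices. Working up to similarity, represent $C_e(-1)$ by the Jordan block $J=-I_e+N_e$ in a basis $(f_1,\dots,f_e)$ (with $N_ef_i=f_{i-1}$ for $i\geq 2$ and $N_ef_1=0$), and let $f_{e+1}$ span the summand carrying $-I_1$. Take $U:=I_{e+1}+n$, where $n$ maps $f_1$ to $f_{e+1}$ and annihilates $f_2,\dots,f_{e+1}$; since $n^2=0$, the matrix $U$ is a $U_2$-matrix. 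A direct computation of $U\bigl(J\oplus(-I_1)\bigr)$ then shows that $U\bigl(J\oplus(-I_1)\bigr)+I_{e+1}$ is nilpotent of index $e+1$, with $f_e$ a cyclic vector (with $f_1$ a cyclic vector in the degenerate case $e=1$); hence $U\bigl(J\oplus(-I_1)\bigr)\simeq C_{e+1}(-1)$, as wanted.

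The one nontrivial step is this last verification, i.e.\ checking that the single extra coefficient supplied by $U$ suffices to fuse the size-$e$ and size-$1$ nilpotent blocks of $J\oplus(-I_1)+I_{e+1}$ into a single nilpotent block of size $e+1$; everything else is routine bookkeeping. As a sanity check, for $e=1$ this argument reproves Lemma \ref{-I23U2lemma}: $C_1(-1)\oplus(-I_1)=-I_2$ is u-adjacent to $C_2(-1)$.
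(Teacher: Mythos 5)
Your proof is correct and follows essentially the same route as the paper: reduce to Jordan cells, dispatch the even-sized cells for the eigenvalue $-1$ via Theorem \ref{theo2}(ii), and fuse each odd-sized cell with a copy of $-I_1$ by left-multiplying by a transvection-type $U_2$-matrix to produce a single even-sized Jordan cell $C_{e+1}(-1)$. The paper's version of this last step is the identity $U\,\bigl((-I_1)\oplus J_{2k+1}(-1)\bigr)=J_{2k+2}(-1)$ for the transvection $U$ with entry $-1$ at the $(2,1)$-spot, which is your computation up to a choice of basis ordering.
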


\begin{proof}
For a scalar $\lambda$ and a positive integer $k$, we denote by $J_k(\lambda):=\lambda I_k+C(t^k)$
the (transposed) Jordan cell of size $k$ associated with the eigenvalue $\lambda$.

For every non-negative integer $k$, the matrix $J_{2k}(-1)$ is the product of two $U_2$-matrices
(by Theorem \ref{theo2}), and hence it is also the product of three such matrices.

In order to conclude, it suffices to prove that for every non-negative integer $k$, the matrix
$(-I_1) \oplus J_{2k+1}(-1)$ is the product of three $U_2$-matrices.
Let $k$ be such an integer, and denote by $U$ the transvection matrix of $\GL_{2k+2}(\F)$ with entry $-1$ at the $(2,1)$-spot. One checks that
$U\,((-I_1) \oplus J_{2k+1}(-1))=J_{2k+2}(-1)$, and the latter matrix is the product of two $U_2$-matrices.
Noting that $U$ is a $U_2$-matrix, we conclude that $(-I_1) \oplus J_{2k+1}(-1)$ is the product of three $U_2$-matrices.
\end{proof}

\begin{lemma}\label{skewU3lemma2}
Let $A \in \GL_n(\F)$ be such that $\det A=\pm 1$, and $k$ be a positive integer such that $(-1)^k=\det A$.
Assume that $A$ is similar to a direct sum of companion matrices, all with size greater than $1$, and that
$-1$ is no eigenvalue of $A$. Then, $A \oplus (-I_k)$ is the product of three $U_2$-matrices.
\end{lemma}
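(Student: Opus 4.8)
The plan is to show that $A \oplus (-I_k)$ is u-adjacent to a product of two $U_2$-matrices; since a matrix u-adjacent to a product of two $U_2$-matrices is itself a product of three $U_2$-matrices, this finishes the proof. Write $A \simeq C(p_1) \oplus \cdots \oplus C(p_r)$ with $d_i := \deg p_i \geq 2$ and $\nu_i := N(p_i) = \det C(p_i) \neq 0$, so that $\prod_{i} \nu_i = \det A = (-1)^k$, and since $-1$ is not an eigenvalue of $A$ each $p_i$ is coprime to $t+1$. If $r = 0$ then $A$ is void, $(-1)^k = 1$, and $A \oplus (-I_k) = -I_k$ is a product of three $U_2$-matrices by Lemma \ref{-I23U2lemma}; so we may assume $r \geq 1$.

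First I would produce an auxiliary u-adjacent matrix by a single block-diagonal $U_2$-matrix. Put $j := 0$ if $k$ is even and $j := 1$ if $k$ is odd. When $j = 1$, absorb one copy of $C_1(-1)$ into the first block: as $p_1$ is coprime to $t+1$ one has $C(p_1) \oplus C_1(-1) \simeq C\bigl(p_1(t)(t+1)\bigr)$, an invertible companion of size $d_1 + 1$ and norm $-\nu_1$. The remaining $k - j$ copies of $-1$ form an even number; group them into $\frac{k-j}{2}$ blocks $C_1(-1)^{\oplus 2} = -I_2$, each of which satisfies $-I_2 \uadj C_2(-1)$ (because $C_2(1)\cdot(-I_2) = -C_2(1) \simeq C_2(-1)$ and $C_2(1)$ is a $U_2$-matrix). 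Then apply Proposition \ref{cyclicfit2} to each of the invertible cyclic companions so obtained (all of size $\geq 2$). Combining all of this into one $U_2$-matrix and using that u-adjacency is compatible with similarity and with direct sums, we obtain
$$A \oplus (-I_k) \;\uadj\; \Bigl[\,\bigoplus_{i=1}^{r} C(s_i)\Bigr] \oplus C_2(-1)^{\oplus (k-j)/2},$$
where each $s_i$ is an arbitrary monic polynomial of prescribed degree $e_i$ (with $e_1 = d_1 + j$ and $e_i = d_i$ for $i \geq 2$, so $e_i \geq 2$) and prescribed norm $\nu_i'$ (with $\nu_1' = (-1)^j \nu_1$ and $\nu_i' = \nu_i$ for $i \geq 2$), and $\prod_i \nu_i' = (-1)^j (-1)^k = 1$.

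It remains to choose the $s_i$ so that $\bigoplus_i C(s_i)$ is a product of two $U_2$-matrices; since the summands $C_2(-1)$ contribute only even-sized Jordan cells at the eigenvalue $-1$, the full right-hand matrix will then be a product of two $U_2$-matrices by Theorem \ref{theo2}(ii). I would use a telescoping choice of roots: fix a nonzero scalar $c_1 \in \F$, set $c_l := c_1 \,(\nu_1' \cdots \nu_{l-1}')^{-1}$ and $d_l := \nu_l' c_l^{-1}$ for $l = 1, \dots, r$, and take $s_l := (t-1)^{e_l - 2}(t - c_l)(t - d_l)$, which is monic of degree $e_l$ with norm $c_l d_l = \nu_l'$. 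Because $\prod_i \nu_i' = 1$ one gets $d_r = c_1^{-1}$, so the multiset $\{c_l, d_l : 1 \leq l \leq r\}$ is the union of $\{c_1, c_1^{-1}\}$ with the pairs $\{d_l, d_l^{-1}\}$ for $1 \leq l \leq r-1$ (note $d_l^{-1} = c_{l+1}$); hence it is stable under $x \mapsto x^{-1}$, and $\bigoplus_l C(s_l)$ is similar to its inverse. Choosing $c_1$ outside the finitely many values for which some $c_l$ or $d_l$ equals $-1$ (possible when $|\F|$ is large enough), $\bigoplus_l C(s_l)$ has no eigenvalue $-1$, and is therefore a product of two $U_2$-matrices by Theorem \ref{theo2}(ii). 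This completes the argument.

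The main obstacle is the small-field case, where no admissible $c_1$ exists and the scheme above would force an odd-sized Jordan cell at $-1$. In that regime all the $\nu_i'$ lie in a small subgroup of $\F^*$, and one must argue more carefully: replace some of the $s_i$ by products of the (few) monic irreducible polynomials available over $\F$ of the required norm, so that the eigenvalues still fall into reciprocal orbits avoiding $-1$, and invoke the cycle lemmas (Lemmas \ref{U2cycles} and \ref{I2cycles}) to recognize the resulting blocks as products of two $U_2$-matrices; the purely $\pi$-primary configurations, in which the companion blocks cannot be re-split, are dispatched separately by means of Lemma \ref{superdiagonalLemma}. Apart from these technicalities the construction above is the whole proof.
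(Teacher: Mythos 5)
Your route is genuinely different from the paper's: you renormalize each companion block separately via Proposition \ref{cyclicfit2} and then exhibit one global choice of characteristic polynomials through a telescoping assignment of roots, whereas the paper first reduces to $k\in\{1,2\}$ (by peeling off copies of $-I_2$), groups the blocks into two well-partitioned halves $(-I_1)\oplus B_1$ and $(B_2\oplus\cdots\oplus B_p)\oplus(-I_1)$, and applies the Adaptation Theorem to each half. Over a sufficiently large field your construction does work, with one small caveat you should add to the genericity conditions: besides avoiding $c_l=-1$ and $d_l=-1$, you must also avoid $c_l=d_l$ (each such coincidence excludes at most two values of $c_1$). Indeed, if $c_l=d_l=\mu$ for a single $l$ while $\mu^{-1}$ is spread over two different blocks, the sum acquires one Jordan cell $C_2(\mu)$ against two cells $C_1(\mu^{-1})$, and similarity to the inverse fails; multiset-stability of the roots under inversion is not by itself enough, because the partition of the roots into the blocks $s_l$ matters.

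The genuine gap is the small-field case, which you acknowledge but do not actually resolve, and which is exactly where the content of the lemma lies. Over $\F_3$, for instance, $\F^*=\{1,-1\}$, so every split choice of $c_l,d_l$ forces roots in $\{1,-1\}$, and whenever some $\nu_l'=-1$ you are forced to place a $C_1(-1)$ somewhere — an odd-sized cell at $-1$, which Theorem \ref{theo2}(ii) forbids. Fixing this requires pairing the blocks with $\nu_l'=-1$ and inserting reciprocal pairs of irreducible quadratics of norm $-1$ (e.g.\ $t^2+t+2$ and $t^2+2t+2$ over $\F_3$); this is doable but must be carried out, and your sketch does not do it. Worse, the tools you invoke for it do not apply: Lemmas \ref{U2cycles} and \ref{I2cycles} concern direct sums $\calC_{n,d}(\pi)$ of Jordan cells at consecutive powers of a fixed $\pi$, which is not the configuration arising here, and Lemma \ref{superdiagonalLemma} concerns matrices $\alpha I_p\oplus\beta I_p$, whereas under the hypotheses of this lemma $A$ has no scalar summand at all (all companion blocks have size $\geq 2$), so there is no ``purely $\pi$-primary configuration'' to dispatch with it. The paper's detour through the Adaptation Theorem is precisely what removes this field-size dependence: by merging all the blocks of one half into a single companion matrix whose characteristic polynomial can be prescribed freely subject to one norm condition, it replaces your $r$ separate norm constraints by two, which can always be met by polynomials of the form $(t-1)^s(t-\alpha)$ and $(t-1)^r(t-\alpha^{-1})$.
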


\begin{proof}
Since $-I_2$ is the product of three $U_2$-matrices, it suffices to consider the case when $k \in \{1,2\}$.

If $k=1$, we see that $A \oplus (-I_1)$ is similar to a well-partitioned matrix, and we deduce from
Proposition \ref{wellpartdecomp} that $A \oplus (-I_1)$ is the product of three $U_2$-matrices.

Assume now that $k=2$, so that $\det A=1$. If $A$ is cyclic then Proposition \ref{cyclicdecomp} shows that it is the product of three $U_2$-matrices.
Since so is $-I_2$ (see Lemma \ref{-I23U2lemma}), so is $A \oplus (-I_2)$.
Assume finally that $A$ is non-cyclic. The assumptions allow us to split $A \simeq B_1 \oplus \cdots \oplus B_p$,
where $B_1,\dots,B_p$ are companion matrices with size at least $2$, and $p \geq 2$. If all the $B_i$'s have determinant $1$, then
they are all products of three $U_2$-matrices, and hence so is $A$. Assuming otherwise, we lose no generality in further assuming that
$\det B_1 \neq 1$. Then, we set $A_1:=(-I_1) \oplus B_1$ and $A_2:=(B_2 \oplus \cdots \oplus B_p) \oplus (-I_1)$
and we note that $\det A_1=-\det B_1 \neq -1$ and $\det A_2=(\det A_1)^{-1}$. Set $\alpha:=\det A_1$.
We also note that $A_1$ and $A_2$ are both well-partitioned. The Adaptation Theorem
shows that $A_1 \uadj C\bigl((t-1)^{s}(t-\alpha)\bigr)$ and $A_2 \uadj C\bigl((t-1)^{r}(t-\alpha^{-1})\bigr)$
for some positive integers $r$ and $s$.
Then, $A \oplus (-I_k)$ is u-adjacent to $C\bigl((t-1)^{s}(t-\alpha)\bigr) \oplus C\bigl((t-1)^{r}(t-\alpha^{-1})\bigr)$, a matrix
that is similar to its inverse and of which $-1$ is no eigenvalue. Hence
$A \oplus (-I_k)$ is the product of three $U_2$-matrices.
\end{proof}

\begin{lemma}\label{skewU3lemma3}
Let $A \in \GL_n(\F)$ be a very-well-partitioned matrix such that $\det A=-1$.
Then, $A \oplus (-I_1)$ is the product of three $U_2$-matrices.
\end{lemma}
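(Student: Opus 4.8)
The plan is to reduce $A \oplus (-I_1)$, up to similarity, to a matrix that is recognizably a product of three $U_2$-matrices — either a well-partitioned matrix of determinant $1$ (covered by Proposition \ref{wellpartdecomp}), or the direct sum of $-I_2$ with such a matrix, or with a matrix covered by Lemma \ref{skewU3lemma2}. Note throughout that $\det(A \oplus (-I_1)) = -\det A = 1$. I would write $A = P \oplus Q$, where $P$ and $Q$ are the two chains of companion matrices in the very-well-partitioned structure of $A$: their characteristic polynomials are coprime, every block of $P$ other than the first has degree $\geq 2$, every block of $Q$ other than the last has degree $\geq 2$, and at most one of "the first block of $P$", "the last block of $Q$" has degree $1$. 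Since $P$ and $Q$ have coprime characteristic polynomials, $-1$ is an eigenvalue of at most one of them, so I may assume it is not an eigenvalue of $Q$.

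Suppose first that the first block of $P$ has degree $\geq 2$. Then I prepend $C(t+1)$ as a new first block of $P$: the resulting chain still meets the degree constraints, and its cross-coprimality with $Q$ holds because $-1$ is not an eigenvalue of $Q$; thus $A \oplus (-I_1)$ is similar to a well-partitioned matrix, of determinant $1$, and Proposition \ref{wellpartdecomp} finishes this case. Suppose next that the first block of $P$ has degree $1$ but $-1$ is not an eigenvalue of $A$. By the very-well-partitioned hypothesis the last block of $Q$ then has degree $\geq 2$, and since $-1$ is an eigenvalue of neither chain I may append $C(t+1)$ as a new last block of $Q$, again producing a well-partitioned matrix of determinant $1$; Proposition \ref{wellpartdecomp} again concludes.

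There remains the case where the first block of $P$ has degree $1$, say it equals $C_1(\lambda_0)$, and $-1$ is an eigenvalue of $P$; by the very-well-partitioned hypothesis every block of $Q$ has degree $\geq 2$. Write $P = C_1(\lambda_0) \oplus P''$ with $P'' := C(p_2) \oplus \cdots \oplus C(p_r)$, each block of $P''$ of degree $\geq 2$. If $\lambda_0 \neq -1$, then $-1$ — an eigenvalue of $P$ but not of $C_1(\lambda_0)$ — is an eigenvalue of $P''$, so $r \geq 2$; moreover $C_1(\lambda_0) \oplus C_1(-1) \simeq C\bigl((t-\lambda_0)(t+1)\bigr)$, a degree-$2$ block both of whose roots avoid the eigenvalues of $Q$. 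Hence $\bigl[P'' \oplus C\bigl((t-\lambda_0)(t+1)\bigr)\bigr] \oplus Q$ is a well-partitioned matrix of determinant $1$ similar to $A \oplus (-I_1)$, and Proposition \ref{wellpartdecomp} applies. If instead $\lambda_0 = -1$, then $A \oplus (-I_1) \simeq (-I_2) \oplus A'$ with $A' := P'' \oplus Q$; since $-I_2$ is a product of three $U_2$-matrices by Lemma \ref{-I23U2lemma}, it suffices that $A'$ be one. When $r \geq 2$ this follows from Proposition \ref{wellpartdecomp}, as $A'$ is then well-partitioned of determinant $1$; when $r = 1$, $A' = Q$ is a direct sum of companion matrices all of size $\geq 2$, with $-1$ not among its eigenvalues and $\det Q = 1 = (-1)^2$, so Lemma \ref{skewU3lemma2} (with $k = 2$) shows that $Q \oplus (-I_2)$, hence $A \oplus (-I_1)$, is a product of three $U_2$-matrices.

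The only delicate point — hence what I regard as the main obstacle — is the routine but careful verification that each block-diagonal matrix obtained above is genuinely well-partitioned: the degree conditions are immediate from the "very-well-partitioned" hypothesis, but one must check the cross-coprimality condition at each insertion or regrouping of $C(t+1)$, which is exactly where the observation that $-1$ lies in at most one of the two chains, together with the bookkeeping of the roots $\lambda_0$ and $-1$, is used.
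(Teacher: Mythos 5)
Your proof is correct and follows essentially the same strategy as the paper's, with the same ingredients (Proposition \ref{wellpartdecomp}, Lemma \ref{skewU3lemma2}, Lemma \ref{-I23U2lemma}) and the same maneuvers (absorb $C(t+1)$ into one chain, merge it with a degree-$1$ block into a degree-$2$ companion block, or peel off $-I_2$). The only superficial difference is that the paper normalizes so that $-1$ avoids the $p$-chain and then splits cases on $\deg q_s$ and whether $q_s=t+1$, whereas you normalize so that $-1$ avoids $Q$ and split on $\deg p_1$ (together with whether $-1$ is an eigenvalue at all and whether $\lambda_0=-1$); by the obvious symmetry of the well-partitioned structure these are mirror images of one another, and your extra subcase where $\deg p_1=1$ but $-1$ is not an eigenvalue is folded into the paper's single case $\deg q_s=1$, $q_s\neq t+1$.
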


\begin{proof}
Set $M:=A \oplus (-I_1)$.
We denote by $p_1,\dots,p_r,q_1,\dots,q_s$ the polynomials that are attached to $A$ as a well-partitioned matrix.
Without loss of generality, we can assume that $p_1,\dots,p_r$ are all coprime with $t+1$.
If $\deg(q_s)>1$, then $M$ is well-partitioned with determinant $1$ and we deduce from Proposition \ref{wellpartdecomp}
that it is the product of three $U_2$-matrices. Assume now that $\deg(q_s)=1$ and $q_s \neq t+1$.
Then, $C(q_s) \oplus (-I_1) \simeq C\bigl((t+1)q_s\bigr)$ and hence $M$ is similar to a well-partitioned matrix
(with attached polynomials $p_1,\dots,p_r,q_1,\dots,q_{s-1},(t+1)q_s$). Again, $M$ is the product of three $U_2$-matrices in that case.

Assume that $q_s=t+1$ and $s>1$. As $A$ is very-well-partitioned, the matrix $B:=C(p_1) \oplus \cdots C(p_r) \oplus C(q_1) \oplus \cdots \oplus
C(q_{s-1})$ is well-partitioned with determinant $1$. Hence, $B$ is the product of three $U_2$-matrices, and
so is $M=B \oplus (-I_2)$.

Assume finally that $q_s=t+1$ and $s=1$. Then, $B:=C(p_1) \oplus \cdots \oplus C(p_r)$ is the direct sum of companion matrices with size at least $2$
and $-1$ is no eigenvalue of $B$. We deduce from Lemma \ref{skewU3lemma2} that $M=B \oplus (-I_2)$ is the product of three $U_2$-matrices.
\end{proof}

\subsection{Unnatural extensions: proof of Theorem \ref{theo3U2}}

Here, we complete the proof of Theorem \ref{theo3U2}. We assume that $\F$ does not have characteristic $2$.
Let $A \in \GL_n(\F)$ be such that $\det A=\pm 1$, and let $m \geq n$ be an integer such that
$(-1)^m \det A=1$. We wish to prove that $M:=A \oplus (-I_m)$ is the product of three $U_2$-matrices.

By Proposition \ref{VWPcor}, there are non-negative integers $p,q,r$, a matrix $N \in \GL_p(\F)$,
and a scalar $\alpha \in \F \setminus \{-1\}$ such that
$$M \simeq N \oplus \alpha I_q \oplus (-I_r) \quad \text{and} \quad r \geq q,$$
and either $N+I_p$ is nilpotent and $q=0$, or $N$ is void, or
$N$ is very-well-partitioned.
Moreover, when $q=0$ and $N+I_p$ is nilpotent, we can assume that $N$ has no Jordan cell of size $1$
(otherwise we put all those cells in the last $-I_r$ block).

Assume first that $N$ is void. If $q=0$, then $r$ is even and it follows directly from Lemma \ref{-I23U2lemma}
that $M$ is the product of three $U_2$-matrices. Assume now that $q>0$. If $r-q$ is even, we
write that $M$ is similar to the direct sum of $\alpha I_q \oplus (-I_q)$ and of copies of $-I_2$,
and we conclude by combining Lemmas \ref{superdiagonalLemma} and \ref{-I23U2lemma}.
If $r-q$ is odd, we write that $M$ is similar to the direct sum of $\alpha I_q \oplus (-I_{q-1})$ and of copies of $-I_2$,
and we conclude by combining Lemma \ref{-I23U2lemma} with one of Lemmas \ref{lastU2lemma1} and \ref{lastU2lemma2}.

Assume now that $q=0$ and that $N+I_p$ is nilpotent. Then, $r$ is greater than or equal to the number $s$ of Jordan cells of odd
size of $N$, and $r-s$ is even because $\det M=1$. It follows from Lemmas \ref{-I23U2lemma} and \ref{skewU3lemma1} that $M$ is the product of three $U_2$-matrices.

Assume finally that $N$ is very-well-partitioned. If $q=0$ and $r$ is even, then $\det N=1$
and we combine Proposition \ref{wellpartdecomp} with Lemma \ref{-I23U2lemma} to obtain that $M$ is the product of three $U_2$-matrices.
If $q=0$ and $r$ is odd, the same conclusion is reached by combining Lemmas \ref{skewU3lemma3} and \ref{-I23U2lemma}.
In the remainder of the proof, we assume that $q>0$. Using Lemma \ref{-I23U2lemma} once more, we
choose $m \in \{q-1,q\}$ that equals $r$ modulo $2$, and we find that it suffices to prove that
$$M':=N \oplus \alpha I_q \oplus (-I_m),$$
which has determinant $1$, is the product of three $U_2$-matrices.

If $\alpha^k \neq \pm 1$ for all $k \in \lcro 1,m\rcro$, then Proposition \ref{3U2decompPropskew}
directly yields that $M$ is the product of three $U_2$-matrices.
Assume now that $\alpha^k=\pm 1$ for some $k \in \lcro 1,m\rcro$, and denote by $a$ the greatest such integer.
Hence, $\alpha^k \neq \pm 1$ for all $k \in \lcro 1,m-a\rcro$.

\begin{itemize}
\item Assume first that $(-\alpha)^a=1$. Then, we resplit
$$M' \simeq \underbrace{\bigl[N \oplus \alpha I_{q-a} \oplus (-I_{m-a})\bigr]}_{M_1} \oplus \underbrace{\bigl[\alpha I_a \oplus (-I_a)\bigr]}_{M_2}$$
and we note that $\det M_1=1$.
If $q-a>0$ then Proposition \ref{3U2decompPropskew} shows that
$M_1$ is the product of three $U_2$-matrices. If $q-a=0$ then Proposition \ref{wellpartdecomp} shows that
$M_1$ is the product of three $U_2$-matrices. Moreover, $M_2$ is also the product of three $U_2$-matrices, by Lemma
\ref{superdiagonalLemma}. Hence, so is $M'$.

\item If $(-\alpha)^a=-1$ and $m=q-1$, then we resplit
$$M' \simeq \bigl[N \oplus \alpha I_{q-a} \oplus (-I_{q-a})\bigr] \oplus \bigl[\alpha I_a \oplus (-I_{a-1})\bigr],$$
and this time around we conclude by combining one of Propositions \ref{wellpartdecomp} and \ref{3U2decompProp} with one of
Lemmas \ref{lastU2lemma1} and \ref{lastU2lemma2}.

\item If $(-\alpha)^a=-1$, $m=q$ and $q>a$, then we resplit
$$M' \simeq (-I_2) \oplus
\bigl[N \oplus \alpha I_{q-a} \oplus (-I_{q-a-1})\bigr] \oplus \bigl[\alpha I_a \oplus (-I_{a-1})\bigr],$$
and we conclude as in the preceding case, with Proposition \ref{3U2decompPropskew} instead of Proposition \ref{3U2decompProp},
and by using Lemma \ref{-I23U2lemma}.

\item Assume finally that $(-\alpha)^a=-1$ and $m=q=a$. Then, we split
$$M' \simeq
\bigl[N \oplus (-I_1)\bigr] \oplus \bigl[\alpha I_a \oplus (-I_{a-1})\bigr],$$
and we combine one of Lemmas  \ref{lastU2lemma1} and \ref{lastU2lemma2} with Lemma \ref{skewU3lemma3}
to conclude that $M'$ is the product of three $U_2$-matrices.
\end{itemize}

The proof of Theorem \ref{theo3U2} is now complete.

\section{Products of three involutions}\label{3invSection}

If $\F$ has characteristic $2$ then the involutions in $\GL_n(\F)$ are the $U_2$-matrices,
and our results are just consequences of Theorem \ref{theo3U2}. Hence, in the present section (and also in the following two), we assume that
the characteristic of $\F$ is not $2$.

\subsection{Natural extensions}

We start with an additional preliminary lemma:

\begin{lemma}\label{diagonal3invollemmafinal}
Let $\alpha$ and $\beta$ be distinct nonzero scalars, and let
$q$ be a positive integer such that $(-\alpha\beta)^q =\pm 1$. Then, the matrix
$\alpha I_q \oplus \beta I_q$ is the product of three involutions.
\end{lemma}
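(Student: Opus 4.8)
The plan is to exhibit, in each case, an involution $S$ such that $S\,(\alpha I_q\oplus\beta I_q)$ is the product of two involutions; by the basic adjacency principle recalled in the strategy subsection, this makes $\alpha I_q\oplus\beta I_q$ the product of three involutions. I would split the argument according to the sign of $(-\alpha\beta)^q$. If $(-\alpha\beta)^q=1$, there is nothing new to do: this is exactly point (ii) of Lemma \ref{superdiagonalLemma}, applied with $\varepsilon=-1$ and $p=q$.

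The substantial case is $(-\alpha\beta)^q=-1$. Put $\pi:=-\alpha\beta$, so that $\pi^q=-1\neq 1$ while $\pi^{2q}=1$. The one real ingredient is a parity observation: the multiplicative order of $\pi$ divides $2q$ but not $q$, and since the characteristic is not $2$ this order cannot be odd (an odd divisor of $2q$ already divides $q$), hence it is even; consequently $\pi^{2k+1}\neq 1$ for every integer $k$. I would then invoke Lemma \ref{C1cycle} with $n=q$, $\varepsilon=1$, $\gamma=1$ and $\delta=-1$ (so that $\gamma\delta=-1$ and $\alpha\beta\gamma\delta=\pi$): it provides a matrix $S\in\GL_{2q}(\F)$ annihilated by $(t-1)(t+1)$, i.e.\ an involution, such that
\[ S\,(\alpha I_q\oplus\beta I_q)\ \simeq\ \bigoplus_{k=0}^{q-1}\bigl(C_1(\pi^{-k})\oplus C_1(\pi^{k+1})\bigr)\ \simeq\ \calC_{q,1}(\pi). \]
Since $\pi^q=-1=\pm 1$, Lemma \ref{I2cycles} (with $n=q$ and $d=1$) shows that $\calC_{q,1}(\pi)$ is the product of two involutions, and the conclusion follows.

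The only genuine subtlety — the "hard part", insofar as there is one — is deciding which route to take in which case: the $\calC_{q,1}$-construction through Lemma \ref{C1cycle} is available precisely when $\pi$ has even multiplicative order, which is automatic as soon as $(-\alpha\beta)^q=-1$; by contrast, when $(-\alpha\beta)^q=1$ the element $\pi$ may have odd order and Lemma \ref{C1cycle} can genuinely fail, which is why that case has to be routed through the ready-made Lemma \ref{superdiagonalLemma} instead. Everything else reduces to bookkeeping with the notation $\calC_{q,1}(\pi)$ and to direct appeals to the cycle lemmas of Section \ref{commonSection}.
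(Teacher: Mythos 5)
Your proposal is correct and follows essentially the same route as the paper's own proof: reduce the case $(-\alpha\beta)^q=1$ to Lemma \ref{superdiagonalLemma}, and in the case $(-\alpha\beta)^q=-1$ observe that $\pi=-\alpha\beta$ has even order so that Lemma \ref{C1cycle} applies to produce an i-adjacency to $\calC_{q,1}(\pi)$, which is a product of two involutions by the last statement of Lemma \ref{I2cycles}. No gaps.
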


\begin{proof}
Set $\pi:=-\alpha\beta$.
If $\pi^q=1$, the result is already known by Lemma \ref{superdiagonalLemma}.

Assume now that $\pi^q=-1$. As $\F$ does not have characteristic $2$, this yields that $\pi$ has even order in the group $\F^*$,
and it ensues that $\pi^{2k+1} \neq 1$ for every integer $k$. Hence, by Lemma \ref{C1cycle},
$\alpha I_q \oplus \beta I_q$ is i-adjacent to $\calC_{q,1}(\pi)$. Besides, $\pi^{q}=\pm 1$, and hence the last statement in Lemma \ref{I2cycles}
shows that $\calC_{q,1}(\pi)$ is the product of two involutions. We conclude that $\alpha I_q \oplus \beta I_q$ is the product of three involutions.
\end{proof}

Now, we can prove Theorem \ref{theo3invol}.
Let $A \in \GL_n(\F)$ have determinant $\pm 1$.
Then, $M:=A \oplus I_n$ satisfies the conditions of Proposition \ref{VWPlemma}, and hence we have
non-negative integers $p,q,r$, a matrix $N \in \GL_p(\F)$ and a scalar $\alpha \in \F \setminus \{1\}$
such that
$$M\simeq N \oplus \alpha\,I_{q}\oplus I_{r}, \quad r \geq q,$$
and either $N$ is very-well-partitioned, or $N-I_p$ is nilpotent and $q=0$, or $N$ is void.
Noting that $I_{r-q}$ is the product of three involutions (say, three copies of itself),
we see that it suffices to consider the case when $r=q$.

If $N$ is void and $q>0$, then $\alpha^q=\det M=\pm 1$ and we directly deduce from Lemma \ref{diagonal3invollemmafinal}
that $M$ is the product of three involutions. If $N$ is void and $q=0$, then the result is obviously true.

If $N-I_p$ is nilpotent and $q=0$, then $M$ is triangularizable with sole eigenvalue $1$, and we deduce from
Theorem \ref{theo2} that it is the product of two involutions, and hence it is also the product of three involutions.

\vskip 3mm
In the rest of the proof, we assume that $N$ is very-well-partitioned.
If there is no integer $k \in \lcro 1,q\rcro$ such that $\alpha^k=\pm 1$,
then Proposition \ref{not3U2decompProp} readily yields that $M$ is the product of three involutions.

Assume now that there is a integer $k \in \lcro 1,q\rcro$ such that $\alpha^k=\pm 1$, and take the greatest such integer $a$.
Note that $\alpha^k \neq \pm 1$ for all $k \in \lcro 1,q-a\rcro$.
Then, split
$$M \simeq \underbrace{(N \oplus \alpha I_{q-a} \oplus I_{q-a})}_{M_1} \oplus \underbrace{(\alpha I_a \oplus I_a)}_{M_2}.$$
Note that $\det M_2=\pm 1$, and hence $\det M_1=\pm 1$.
By Lemma \ref{diagonal3invollemmafinal}, the matrix $M_2$ is the product of three involutions.
By Proposition \ref{not3U2decompProp} if $q-a>0$, and by Proposition \ref{wellpartdecomp} otherwise,
$M_1$ is the product of three involutions. We conclude that $M$ is the product of three involutions.

Theorem \ref{theo3invol} is now established.

\subsection{Unnatural extensions : additional results on simple matrices}

Here, we assume that there exists an element $i$ of $\F$ such that $i^2=-1$, and we fix such an element.

\begin{lemma}\label{iI2lemma1}
Let $k$ be a positive integer. Then, $i I_1 \oplus C_{2k-1}(i)$ is the product of three involutions, and also the product
of one involution and two $U_2$-matrices.
\end{lemma}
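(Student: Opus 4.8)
The plan is to prove that $M:=iI_1\oplus C_{2k-1}(i)$ is i-adjacent to a matrix that is the product of two $U_2$-matrices. This yields both assertions at once: it shows that $M$ is the product of one involution and two $U_2$-matrices, and, since every product of two $U_2$-matrices is also a product of two involutions (Theorem \ref{theo2}), that $M$ is the product of three involutions.

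Since adjacency is invariant under similarity, I would replace $M$ by a convenient similar matrix. Let $N$ be the endomorphism of $\F^{2k}$ which, in the standard basis $(e_1,\dots,e_{2k})$, satisfies $Ne_1=ie_1$, $Ne_2=ie_2$ and $Ne_j=ie_j+e_{j-1}$ for $3\le j\le 2k$; then the block on $\Vect(e_2,\dots,e_{2k})$ is a single Jordan cell for $i$, so $N\simeq iI_1\oplus C_{2k-1}(i)$, and $\Ker(N-iI_{2k})=\Vect(e_1,e_2)$. I then introduce the involution $S$ given by $Se_1=e_2$, $Se_2=e_1$ and $Se_j=c_je_j$ for $3\le j\le 2k$, where $(c_3,\dots,c_{2k})\in\{1,-1\}^{2k-2}$ is a fixed sequence in which exactly $k-1$ entries equal $1$ and $k-1$ equal $-1$ (for $k=1$ there is nothing to pick). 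Clearly $S^2=I_{2k}$.

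Next I would compute $SN$: one gets $SN\,e_1=ie_2$, $SN\,e_2=ie_1$, $SN\,e_3=ic_3e_3+e_1$ and $SN\,e_j=ic_je_j+c_{j-1}e_{j-1}$ for $4\le j\le 2k$. In the reordered basis $(e_3,e_4,\dots,e_{2k},e_1,e_2)$ the matrix $SN$ is block lower-triangular, with diagonal blocks the upper-bidiagonal $(2k-2)\times(2k-2)$ matrix with diagonal $(ic_3,\dots,ic_{2k})$ and the $2\times2$ matrix $\begin{bmatrix}0&i\\i&0\end{bmatrix}$; hence $\chi_{SN}(t)=(t^2+1)\prod_{j=3}^{2k}(t-ic_j)=(t-i)^k(t+i)^k=(t^2+1)^k$ by the choice of the $c_j$. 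Then I would check that $e_{2k}$ is a cyclic vector for $SN$: iterating, $(SN)^me_{2k}$ has a nonzero coefficient on $e_{2k-m}$ (with support in $\Vect(e_{2k-m},\dots,e_{2k})$) for $0\le m\le 2k-3$, then $(SN)^{2k-2}e_{2k}$ acquires a nonzero coefficient on $e_1$ (through the entry $SN\,e_3=ic_3e_3+e_1$) and $(SN)^{2k-1}e_{2k}$ a nonzero coefficient on $e_2$; a staircase argument shows $\bigl(e_{2k},SN\,e_{2k},\dots,(SN)^{2k-1}e_{2k}\bigr)$ is a basis. Therefore $SN\simeq C\bigl((t^2+1)^k\bigr)$.

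Finally, $C\bigl((t^2+1)^k\bigr)$ is cyclic with all eigenvalues in $\{i,-i\}$, and its inverse is cyclic with the same characteristic polynomial $(t^2+1)^k$ (the polynomial is palindromic, and $i^{-1}=-i$, $(-i)^{-1}=i$), so $SN\simeq(SN)^{-1}$; moreover $-1$ is not a root of $(t^2+1)^k$ since $(-1)^2+1=2\ne 0$, so $-1$ is not an eigenvalue of $SN$. By Theorem \ref{theo2}(ii), $SN$ is the product of two $U_2$-matrices. Since $S$ is an involution, $N\iadj SN$, and as $N\simeq iI_1\oplus C_{2k-1}(i)$ we conclude that $iI_1\oplus C_{2k-1}(i)$ is i-adjacent to a product of two $U_2$-matrices, which finishes the proof. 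The only step requiring a genuine (though entirely routine) computation is the verification that $e_{2k}$ is cyclic for $SN$; everything else is immediate once the block form of $SN$ is written down, and the sole combinatorial input is the balancing of the signs $c_j$.
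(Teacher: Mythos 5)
Your proof is correct, but it takes a genuinely different route from the paper's. The paper leaves the block $iI_1$ untouched and applies Proposition \ref{cyclicfit1} to the cyclic block $C_{2k-1}(i)$ alone: since its size $2k-1$ is odd, that proposition gives $C_{2k-1}(i) \iadj C\bigl((t-1)^{2k-2}(t+i)\bigr)$, whence $iI_1 \oplus C_{2k-1}(i) \iadj C_{2k-2}(1)\oplus C_1(i)\oplus C_1(-i)$, a matrix visibly similar to its inverse without eigenvalue $-1$; Theorem \ref{theo2}(ii) then finishes exactly as in your last paragraph. You instead build a single explicit involution that couples the two blocks — swapping the two eigenvectors for $i$ and distributing balanced signs along the rest of the Jordan chain — and land on the cyclic matrix $C\bigl((t^2+1)^k\bigr)$, which is likewise similar to its inverse with $-1$ excluded as an eigenvalue (here one uses $\charac(\F)\neq 2$, which is in force throughout this section). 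What the paper's route buys is brevity: the cyclicity of the perturbed matrix is already packaged inside Proposition \ref{cyclicfit1} (via good cyclic matrices), so no hand verification is needed, at the cost of a small determinant-sign discussion. What your route buys is self-containedness and an explicit involution; the price is the staircase check that $e_{2k}$ is cyclic for $SN$, which you carry out correctly (the key points being that the $e_1$-coefficient of $(SN)^{2k-2}e_{2k}$ and the $e_2$-coefficient of $(SN)^{2k-1}e_{2k}$ each arise from a single source, so no cancellation can occur). Your argument is essentially a hands-on instance of the block-quasi-companion mechanism of Lemma \ref{BQClemma}.
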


\begin{proof}
Note that $\det  C_{2k-1}(i)=\pm i$.
By Proposition \ref{cyclicfit1},
$$C_{2k-1}(i) \iadj C\bigl((t-1)^{2k-2}(t+i)\bigr) \simeq C_{2k-2}(1) \oplus C_1(- i),$$
and hence
$$i I_1 \oplus C_{2k-1}(i) \iadj C_{2k-2}(1) \oplus C_1(i) \oplus C_1(- i).$$
The latter matrix is obviously similar to its inverse and $-1$ is no eigenvalue of it, and hence
it is both the product of two involutions and the product of  two $U_2$-matrices. The conclusion ensues.
\end{proof}

\begin{cor}\label{iI2cor}
The matrix $i I_2$ is the product of three involutions, and also the product of
one involution and two $U_2$-matrices.
\end{cor}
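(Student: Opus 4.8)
The plan is to obtain this as the special case $k=1$ of Lemma \ref{iI2lemma1}. Recall that $C_{2k-1}(i)=C\bigl((t-i)^{2k-1}\bigr)$, so for $k=1$ we have $C_{2k-1}(i)=C_1(i)=C(t-i)$, which is simply the $1$ by $1$ matrix whose sole entry is $i$; in other words, $C_1(i)=i I_1$. Consequently $i I_1 \oplus C_{2k-1}(i)=i I_1 \oplus i I_1=i I_2$ when $k=1$. Applying Lemma \ref{iI2lemma1} with $k=1$ then immediately yields that $i I_2$ is the product of three involutions, and also the product of one involution and two $U_2$-matrices (in any prescribed order, by the remarks on the irrelevance of the order of the factors in the introduction).

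There is essentially no obstacle here: the corollary is a direct specialization, and the only thing to check is the identification $C_1(i)=i I_1$, which is immediate from the definition of $C_n(\alpha)$. The reason it is worth stating separately is purely for later use, namely in the proof of Proposition \ref{infinitedimi}, where the block $H_2$ is broken up into $2$-dimensional pieces on each of which $i\,\id$ must be realized as a product of three involutions (respectively of one involution and two $U_2$-matrices); Corollary \ref{iI2cor} is exactly the input needed for those pieces.
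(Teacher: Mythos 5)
Your proposal is correct and matches the paper's implicit proof: the corollary is stated immediately after Lemma \ref{iI2lemma1} precisely as the specialization $k=1$, using the identification $C_1(i)=iI_1$ (indeed $C_{2k-2}(1)$ is void and $iI_1 \oplus C_1(i) \iadj C_1(i)\oplus C_1(-i)$, which is both a product of two involutions and a product of two $U_2$-matrices). Nothing more needs to be said.
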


\begin{lemma}\label{iI2lemma2}
Let $k$ be a positive integer. Then, $C_{2k}(i)$ is the product of three involutions, and also the product
of one involution and two $U_2$-matrices.
\end{lemma}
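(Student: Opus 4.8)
The approach I would take is the one used for the neighbouring lemmas: starting from $C_{2k}(i)$, multiply by a single well-chosen involution (resp.\ $U_2$-matrix) so as to reach a companion matrix whose roots are only $\pm 1$, and then recognize that matrix as a product of two involutions (resp.\ of a $U_2$-matrix and an involution) via Theorems \ref{theo2} and \ref{theo2mixedbis}. First I would record that $C_{2k}(i)=C\bigl((t-i)^{2k}\bigr)$ is invertible (as $i\neq 0$) and cyclic, and that $\det C_{2k}(i)=i^{2k}=(-1)^k$.

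For the first assertion I would set $q(t):=(t-1)^{k+1}(t+1)^{k-1}$, a monic polynomial of degree $2k$ whose norm is the product of its roots, namely $N(q)=(-1)^{k-1}=-(-1)^k=-\det C_{2k}(i)$. Proposition \ref{cyclicfit1}, in its case $N(q)=-\det A$, then yields $C_{2k}(i)\iadj C(q)$. Being cyclic with characteristic polynomial $(t-1)^{k+1}(t+1)^{k-1}$, the matrix $C(q)$ is similar to $C_{k+1}(1)\oplus C_{k-1}(-1)$ (the second summand being absent when $k=1$), which is plainly similar to its inverse, hence by Theorem \ref{theo2}(i) the product of two involutions. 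Unfolding the definition of i-adjacency (as in the Remarks following it) then exhibits $C_{2k}(i)$ as a product of three involutions.

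For the second assertion I would instead take $p(t):=(t^2-1)^k=(t-1)^k(t+1)^k$, still monic of degree $2k$, with $N(p)=1^k(-1)^k=(-1)^k=\det C_{2k}(i)$. Proposition \ref{cyclicfit2} gives $C_{2k}(i)\uadj C(p)$, and as above $C(p)\simeq C_k(1)\oplus C_k(-1)$, that is, $C(p)$ is similar to the direct sum of a Jordan cell of size $k$ for the eigenvalue $1$ and a Jordan cell of size $k$ for the eigenvalue $-1$. Since $|k-k|=0\le 2$, Theorem \ref{theo2mixedbis} shows that $C(p)$ is the product of a $U_2$-matrix and an involution; unfolding u-adjacency then gives $C_{2k}(i)$ as the product of one involution and two $U_2$-matrices.

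I do not expect any genuine obstacle here: the argument rests only on choosing $q$ and $p$ so that their norms match $\det C_{2k}(i)=(-1)^k$ with the sign required by Propositions \ref{cyclicfit1} and \ref{cyclicfit2}, together with the routine passage from an adjacency relation to an actual three-term factorization. The one mildly delicate point is that the naive choice $q=(t-1)^{2k}$ has norm $1$, which is $-\det C_{2k}(i)$ only when $k$ is odd; this is why I use $q=(t-1)^{k+1}(t+1)^{k-1}$, whose norm is $-\det C_{2k}(i)$ regardless of the parity of $k$, so that a single uniform computation handles both cases.
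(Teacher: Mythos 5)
Your proof is correct, but it proceeds by a genuinely different route than the paper. The paper gives an explicit $2\times 2$ block construction: it exhibits an involution $A$ (a direct sum of $k$ copies of $\begin{bsmallmatrix}1&i\\0&-1\end{bsmallmatrix}$) and a matrix $B$ that is a product of two $U_2$-matrices, such that $AB$ is upper triangular with constant diagonal $i$ and nonzero superdiagonal, hence similar to $C_{2k}(i)$; it then deduces the three-involution statement as an afterthought from Theorem~\ref{theo2}. You instead run the standard adjacency machinery directly: since $C_{2k}(i)$ is cyclic with determinant $i^{2k}=(-1)^k$, you pick target polynomials $q=(t-1)^{k+1}(t+1)^{k-1}$ and $p=(t-1)^k(t+1)^k$, whose norms are $-(-1)^k$ and $(-1)^k$ respectively, and apply Propositions~\ref{cyclicfit1} and \ref{cyclicfit2} to land on $C_{k+1}(1)\oplus C_{k-1}(-1)$ (similar to its inverse, hence a product of two involutions by Theorem~\ref{theo2}) and on $C_k(1)\oplus C_k(-1)$ (a product of a $U_2$-matrix and an involution by Theorem~\ref{theo2mixedbis}). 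The sign and degree bookkeeping checks out, including the degenerate case $k=1$. Worth noting: your argument is essentially a re-run of the proof of Proposition~\ref{cyclicdecomp} specialized to $p=(t-i)^{2k}$, and since that proposition already applies to any companion matrix of determinant $\pm 1$, one could in fact cite it directly and skip the polynomial computation entirely; what the paper's more hands-on proof buys is a completely explicit factorization that avoids invoking the adjacency framework.
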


\begin{proof}
Set
$$K:=\begin{bmatrix}
1 & i \\
0 & -1
\end{bmatrix} \quad \text{and} \quad
L:=\begin{bmatrix}
-i & 1 \\
0 & i
\end{bmatrix},$$
and define
$$A:=K \oplus \cdots \oplus K \quad \text{and} \quad B:=i I_1 \oplus L \oplus \cdots \oplus L \oplus (-iI_1),$$
with $k$ copies of $K$ in the definition of $A$, and $k-1$ copies of $L$ in the one of $B$.
Then, one sees that $AB$ is upper-triangular with all its diagonal entries equal to $i$,
and for every pair $(u,v)\in \lcro 1,2k\rcro^2$ such that $v=u+1$, the entry of $AB$ at the $(u,v)$-spot is nonzero.
Hence, $AB-i I_{2k}$ is nilpotent with rank $2k-1$, and we deduce that $AB\simeq C_{2k}(i)$.
Obviously, $A$ is an involution, and $B$ is similar to its inverse and $-1$ is no eigenvalue of $B$.
Hence, $B$ is the product of two $U_2$-matrices.
The conclusion ensues that $C_{2k}(i)$ is the product of one involution and two
$U_2$-matrices, and hence it is also the product of three involutions.
\end{proof}

\begin{lemma}\label{wellpartdecompskew}
Let $N \in \GL_n(\F)$ be a very-well-partitioned matrix such that $\det N=\pm i$.
Then, $N \oplus i I_1$ is the product of three involutions, and it is also the product of one involution and two $U_2$-matrices.
\end{lemma}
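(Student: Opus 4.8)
The plan is to reduce everything to the Adaptation Theorem (Theorem \ref{AdaptationTheorem}) together with the characterizations of products of two involutions and of two $U_2$-matrices (Theorem \ref{theo2}). Write $M:=N \oplus iI_1$; it has size $n+1$ and $\det M=\pm i^2=\pm 1$. The idea is to transport $M$, by a single involution, to a block-diagonal matrix whose only eigenvalues are $1$, $i$ and $-i$, arranged so that the matrix is conjugate to its inverse and has no eigenvalue $-1$.

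Concretely, I would set $r(t):=(t+i)(t-1)^{n-1}$, a monic polynomial of degree $n$. A short computation of its constant term gives $N(r)=(-1)^n r(0)=(-1)^n i(-1)^{n-1}=-i$, hence $N(r)=\pm\det N$ since $\det N=\pm i$. Because $N$ is a very-well-partitioned invertible matrix of size $n$, point (c) of the Adaptation Theorem yields $N \iadj C(r)$. As $iI_1 \iadj iI_1$ trivially (take the identity involution), Remarks (iii) on i-adjacency give $M=N \oplus iI_1 \iadj C(r) \oplus iI_1$. Now $t+i$ and $(t-1)^{n-1}$ are coprime, so $C(r) \simeq C_1(-i) \oplus C_{n-1}(1)$, and therefore
$$C(r) \oplus iI_1 \simeq C_{n-1}(1) \oplus C_1(i) \oplus C_1(-i).$$

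This last matrix is similar to its inverse: $C_{n-1}(1)$ is unipotent, hence similar to $C_{n-1}(1)^{-1}$, while $C_1(i)$ and $C_1(-i)$ are exchanged by inversion; moreover $-1$ is not among its eigenvalues $\{1,i,-i\}$, so the condition on the Jordan cells at $-1$ in Theorem \ref{theo2}(ii) is vacuous. By Theorem \ref{theo2}, this matrix is thus simultaneously the product of two involutions and the product of two $U_2$-matrices. Unwinding the i-adjacency, there is an involution $S \in \GL_{n+1}(\F)$ such that $SM$ is similar to $C_{n-1}(1) \oplus C_1(i) \oplus C_1(-i)$, so $SM$ is at once a product of two involutions and a product of two $U_2$-matrices; consequently $M=S\,(SM)$ is a product of three involutions and also a product of one involution and two $U_2$-matrices, which is the assertion of the lemma.

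The only point needing care — and the reason the explicit polynomial $r$ is taken with the factor $t+i$ — is the balancing of eigenvalues: the block $iI_1$ forces the eigenvalue $i$ into the picture, so to keep the target matrix conjugate to its inverse one must feed a compensating root $-i$ through $r$, and it is exactly the hypothesis $\det N=\pm i$ that makes the required value $N(r)=-i$ compatible with the norm constraint $N(r)=\pm\det N$ of the Adaptation Theorem. Beyond this, the argument is routine; in particular no case distinction on the internal structure of $N$, nor on the sign of $\det N$, is necessary.
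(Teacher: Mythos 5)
Your proposal is correct and follows essentially the same route as the paper's own proof: both apply point (c) of the Adaptation Theorem with the polynomial $(t-1)^{n-1}(t+i)$ to get $N\oplus iI_1$ i-adjacent to $C_{n-1}(1)\oplus C_1(i)\oplus C_1(-i)$, and then invoke Theorem \ref{theo2} after noting this matrix is similar to its inverse and has no eigenvalue $-1$. Your version merely spells out the norm computation and the unwinding of the adjacency in more detail.
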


\begin{proof}
Indeed, the Adaptation Theorem shows that $N$ is i-adjacent to $C\bigl((t-1)^{n-1}(t+i)\bigr)$, and hence
$N \oplus i I_1$ is i-adjacent to $B:=C_{n-1}(1) \oplus C_1(-i) \oplus C_1(i)$, a matrix which is similar to its inverse
and of which $-1$ is no eigenvalue. Hence, $B$ is both the product of two $U_2$-matrices and the product of two involutions, and
the conclusion ensues.
\end{proof}

\begin{lemma}\label{skew3invollastlemma1}
Let $\alpha,\beta$ be distinct nonzero scalars. Assume that, in the group $\F^*$, the element $(-\alpha\beta)$ has order $4q$ for some $q>0$.
Then, $\alpha I_q \oplus \beta I_q \oplus (iI_1)$ is the product of three involutions,
and it is also the product of one involution and two $U_2$-matrices.
\end{lemma}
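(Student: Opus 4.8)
The plan is to show that $\alpha I_q\oplus\beta I_q\oplus iI_1$ is i-adjacent to a diagonal matrix whose spectrum is stable under inversion and avoids $-1$, and then to invoke Theorem~\ref{theo2}. Throughout, set $\pi:=-\alpha\beta$. Since $\pi$ has order $4q$ in $\F^*$, it has even order, so $\pi^{2k+1}\neq 1$ for every integer $k$; moreover $\pi^{2q}\neq 1$ while $\pi^{4q}=1$, so $\pi^q$ is an element of order $4$, hence $\pi^q\in\{i,-i\}$ and $\pi^{-q}=(\pi^q)^{-1}\in\{i,-i\}$ as well.

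First I would apply Lemma~\ref{C1cycle} with $n=q$, $(\gamma,\delta)=(1,-1)$ and $\varepsilon=1$ (so that the ``$\pi$'' of that lemma equals $\alpha\beta\cdot 1\cdot(-1)=\pi$; the hypothesis $\pi^{2k+1}\neq1$ for $k\in\lcro 0,q-1\rcro$ holds by the above). This yields an involution $S\in\GL_{2q}(\F)$ with $S\,(\alpha I_q\oplus\beta I_q)\simeq\calC_{q,1}(\pi)=\bigoplus_{k=-(q-1)}^{q}C_1(\pi^k)$, i.e.\ $\alpha I_q\oplus\beta I_q\iadj\calC_{q,1}(\pi)$. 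Next, since $\pi^{-q}\in\{i,-i\}$, the $1$ by $1$ matrix $\bigl[\pi^{-q}/i\bigr]$ has entry in $\{1,-1\}$ and squares to the identity (because $\pi^{-2q}=-1=i^2$), so it is an involution; multiplying $iI_1$ by it shows $iI_1\iadj C_1(\pi^{-q})$. Combining the two adjacencies (direct sums of i-adjacent matrices are i-adjacent), I obtain
$$\alpha I_q\oplus\beta I_q\oplus iI_1\;\iadj\;\calC_{q,1}(\pi)\oplus C_1(\pi^{-q})=\bigoplus_{k=-q}^{q}C_1(\pi^k)=:M'.$$

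Finally I would note that $M'$ is diagonal with spectrum $\{\pi^k:\ -q\leq k\leq q\}$, a set of $2q+1$ distinct scalars (as $2q<4q$) that is manifestly stable under $x\mapsto x^{-1}$; hence $M'\simeq M'^{-1}$. Also $-1=\pi^{2q}$ is not an eigenvalue of $M'$, because $\pm 2q\notin\lcro -q,q\rcro$. By Theorem~\ref{theo2}, $M'$ is therefore the product of two involutions and also the product of two $U_2$-matrices; since $\alpha I_q\oplus\beta I_q\oplus iI_1$ is i-adjacent to $M'$, it is the product of three involutions, and also the product of one involution and two $U_2$-matrices. There is no serious obstacle here: the only things to be careful about are the arithmetic of $\pi^q$ (using that the order is \emph{exactly} $4q$) and the bookkeeping ensuring that the extra ``$iI_1$'' block is precisely what completes the spectrum of $\calC_{q,1}(\pi)$ into an inversion-symmetric set that misses $-1$ --- this is the conceptual heart of the argument, and it is what forces the hypothesis on the order of $-\alpha\beta$.
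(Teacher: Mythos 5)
Your proof is correct and follows essentially the same route as the paper: after normalizing, both arguments show that $\alpha I_q\oplus\beta I_q\oplus iI_1$ is i-adjacent to a diagonal matrix with spectrum $\{\pi^k:\ -q\le k\le q\}$ (the paper writes it as $C_1(\varepsilon i)\oplus\bigoplus_k\bigl(C_1(-\varepsilon i\pi^{-k})\oplus C_1(\varepsilon i\pi^{k+1})\bigr)$ with $\pi^q=-\varepsilon i$, which is the same matrix up to similarity), and then invoke Theorem \ref{theo2} using inversion-stability and the absence of the eigenvalue $-1$. Your bookkeeping with $\calC_{q,1}(\pi)\oplus C_1(\pi^{-q})$ is, if anything, slightly cleaner than the paper's $\varepsilon$-parametrization.
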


\begin{proof}
Set $\pi=-\alpha\beta$, which has order $4q$. In particular $\pi^{2q}=-1$ (otherwise the order of $\pi$ would divide $2q$),
and hence $\pi^q=-\varepsilon i$ for some $\varepsilon \in \{-1,1\}$. Moreover,
there is no odd integer $l$ such that $\pi^l=-1$ (otherwise the order of $\pi$ would divide $2l$), and hence
$i \pi^{k+1} \neq -i \pi^{-k}$ for every integer $k$.
By Lemma \ref{C1cycle}, it follows that
$$\alpha I_q \oplus \beta I_q \iadj \underset{k=0}{\overset{q-1}{\bigoplus}}\,\bigl(C_1(-\varepsilon i \pi^{-k}) \oplus C_1(\varepsilon i \pi^{k+1})\bigr),$$
and hence
$$\alpha I_q \oplus \beta I_q \oplus iI_1 \iadj
B:=C_1(\varepsilon i) \oplus \underset{k=0}{\overset{q-1}{\bigoplus}}\,\bigl(C_1(-\varepsilon i \pi^{-k}) \oplus C_1(\varepsilon i \pi^{k+1})\bigr).$$
Noting that $\varepsilon i \pi^q=1$, we extract two blocks and we obtain
$$B \simeq C_1(1) \oplus \bigl[C_1(\varepsilon i) \oplus C_1(-\varepsilon i)\bigr] \oplus
\underset{k=1}{\overset{q-1}{\bigoplus}}\,\bigl(C_1(-\varepsilon i \pi^{-k}) \oplus C_1(\varepsilon i \pi^{k})\bigr).$$
This shows that $B$ is similar to its inverse.
Moreover, $-1$ is no eigenvalue of $B$: indeed, there can be no integer $k$ such that $|k|<q$ and $\pi^k=\pm i$,
otherwise $\pi^{4k}=1$ would yield $k=0$ which is absurd.
Hence, $B$ is the product of two $U_2$-matrices. The conclusion ensues.
\end{proof}

For the case of products of three involutions, we can generalize the previous result as follows:

\begin{lemma}\label{skew3invollastlemma2}
Let $\alpha,\beta$ be distinct nonzero scalars, and $p$ be a non-negative integer such that
$(-\alpha \beta)^p=\pm i$.
Then, $\alpha I_p \oplus \beta I_p \oplus i I_1$ is the product of three involutions.
\end{lemma}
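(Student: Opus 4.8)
The plan is to reduce this statement to Lemma \ref{skew3invollastlemma1} together with Lemma \ref{diagonal3invollemmafinal}, the only genuine work being an elementary computation with the multiplicative order of $\pi:=-\alpha\beta$. First I would analyze this order. Since $\pi^p=\pm i$ and $(\pm i)^4=1$, the element $\pi$ has finite order $d$ in $\F^*$, with $d\mid 4p$; moreover $\pm i$ has order $4$ in $\F^*$, so the order of $\pi^p$, namely $d/\gcd(d,p)$, equals $4$. Putting $q:=\gcd(d,p)$, this yields $d=4q$ and $q\mid p$, and since $p\geq 1$ (because $\pi^0=1\neq\pm i$) we get $q\geq 1$. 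Writing $p=qm$, the relation $\pi^{2p}=(\pi^p)^2=-1\neq 1$ shows that $d=4q$ does not divide $2p=2qm$, hence $m$ is odd.

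Next I would invoke Lemma \ref{skew3invollastlemma1}: since $\pi=-\alpha\beta$ has order $4q$ in $\F^*$, the matrix $\alpha I_q\oplus\beta I_q\oplus iI_1$ is the product of three involutions. To handle the leftover block $\alpha I_{p-q}\oplus\beta I_{p-q}$, I would compute $(-\alpha\beta)^{p-q}$. If $p=q$ this block is void and there is nothing to do; otherwise $p-q=q(m-1)$ is a positive even integer (as $m$ is odd). The order of $\pi^q$ equals $4q/\gcd(4q,q)=4$, so $(\pi^q)^2$ is the unique element of order $2$ in $\F^*$, i.e.\ $(\pi^q)^2=-1$ and $\pi^q=\pm i$; therefore
$$(-\alpha\beta)^{p-q}=(\pi^q)^{m-1}=(\pm i)^{m-1}=\bigl((\pm i)^2\bigr)^{(m-1)/2}=(-1)^{(m-1)/2}\in\{1,-1\}.$$
By Lemma \ref{diagonal3invollemmafinal}, $\alpha I_{p-q}\oplus\beta I_{p-q}$ is the product of three involutions.

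Finally I would conclude by writing $\alpha I_p\oplus\beta I_p\oplus iI_1\simeq(\alpha I_q\oplus\beta I_q\oplus iI_1)\oplus(\alpha I_{p-q}\oplus\beta I_{p-q})$ and using the elementary fact that a direct sum of two matrices, each a product of three involutions, is itself a product of three involutions (if $A=S_1S_2S_3$ and $B=T_1T_2T_3$ with all $S_i,T_j$ involutions, then $A\oplus B=(S_1\oplus T_1)(S_2\oplus T_2)(S_3\oplus T_3)$, each factor being an involution). Thus $\alpha I_p\oplus\beta I_p\oplus iI_1$ is the product of three involutions. I do not anticipate any real obstacle here: this lemma is essentially an immediate corollary of Lemmas \ref{skew3invollastlemma1} and \ref{diagonal3invollemmafinal}, the main point being the observation that $\pi$ necessarily has order a multiple of $4$, say $4q$ with $q\mid p$ and $p/q$ odd, which lets us peel off the block covered by Lemma \ref{skew3invollastlemma1} and leaves a block with $(-\alpha\beta)^{p-q}=\pm 1$.
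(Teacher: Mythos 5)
Your proof is correct and follows essentially the same route as the paper: both establish that $\pi=-\alpha\beta$ has order $4q$ with $q\mid p$ and $p/q$ odd, peel off the block $\alpha I_q\oplus\beta I_q\oplus iI_1$ via Lemma \ref{skew3invollastlemma1}, and absorb the remainder using Lemma \ref{diagonal3invollemmafinal}. The only (immaterial) difference is that the paper splits the remainder into copies of $\alpha I_{2q}\oplus\beta I_{2q}$, each with $(-\alpha\beta)^{2q}=-1$, whereas you keep it as a single block of size $p-q$ and verify directly that $(-\alpha\beta)^{p-q}=\pm 1$.
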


\begin{proof}
Set $\pi:=-\alpha \beta$. Since the subgroup generated by $\pi$ contains an element of order $4$,
the order of $\pi$ is a multiple of $4$, which we write $4q$ for some $q>0$.
Since $\pi^{4p}=1$ and $\pi^{2p}=-1$, we find that $p$ is a multiple of $q$ but not of $2q$.
Hence, $p=2qm+q$ for some integer $m \geq 0$, and we deduce that
$\alpha I_p \oplus \beta I_p \oplus i I_1$ is similar to the direct sum of
$\alpha I_q \oplus \beta I_q \oplus i I_1$ and of $m$ copies of
$\alpha I_{2q} \oplus \beta I_{2q}$. Since $(-\alpha \beta)^q=\pm i$ and $(-\alpha \beta)^{2q}=-1$,
all those summands are products of three involutions (by Lemma \ref{skew3invollastlemma1} for the first summand, and
by Lemma \ref{diagonal3invollemmafinal} for the remaining ones), and the conclusion ensues.
\end{proof}

\subsection{Unnatural extensions: completing the proof}

We are ready to conclude the proof of Theorem \ref{skew3involutions}.
Assume that $\F$ contains an element $i$ such that $i^2=-1$.
Let $A \in \GL_n(\F)$ and $k \geq n$. Set $M:=A \oplus i I_k$ and assume that $\det M=\pm 1$.
We shall prove that $M$ is the product of three involutions.

By Proposition \ref{VWPcor}, there are non-negative integers $p,q,r$, a matrix $N \in \GL_p(\F)$,
and a scalar $\alpha \in \F \setminus \{i\}$ such that
$$M \simeq N \oplus \alpha I_q \oplus i I_r \quad \text{and} \quad r \geq q,$$
and either $N-iI_p$ is nilpotent and $q=0$, or $N$ is void, or
$N$ is very-well-partitioned.
Moreover, when $q=0$ and $N-i I_p$ is nilpotent, we can assume that $N$ has no Jordan cell of size $1$
(otherwise we put all those cells in the last $i I_r$ block).

Assume first that $q=0$ and that $N-iI_p$ is nilpotent with no Jordan cell of size $1$.
By the construction of $M$, we see that $r$ is greater than or equal to the number $s$ of Jordan cells of odd size of $N$.
Then, $1=\det M=\pm i^{r+s}$, and hence $r-s$ is even. By Corollary \ref{iI2cor}, the matrix $i I_{r-s}$
is the product of three involutions. We note that $M$ is similar to the direct sum of $i I_{r-s}$, of $s$ matrices of the form $C_{2k+1}(i)\oplus iI_1$
for some positive integer $k$, and of Jordan cells of even size for the eigenvalue $i$.
By Lemmas \ref{iI2lemma1} and \ref{iI2lemma2}, each one of those matrices is the product of three involutions, and hence so is $M$.

In the remainder of the proof, we assume that $N$ is either void or very-well-partitioned.
Since $i I_2$ is the product of three involutions, we further reduce the situation to the one where $r \in \{q,q+1\}$.
Assume that $N$ is void. Then, either $r=q$ and $(-i\alpha)^q =\pm 1$, in which case we use Lemma
\ref{diagonal3invollemmafinal} to see that $M$ is the product of three involutions, or
$r=q+1$ and $(-i\alpha)^q =\pm i$, in which case the same conclusion is reached by applying
Lemma \ref{skew3invollastlemma2}.

\vskip 3mm
It remains to deal with the case when $N$ is very-well-partitioned and $r\in \{q,q+1\}$.

Assume first that $q=0$. If $r=0$ then we deduce from Proposition \ref{wellpartdecomp} that $N$ is the product of three involutions.
If $r=1$, we get from the Adaptation Theorem that $N \iadj C\bigl((t-1)^{p-1}\,(t+i)\bigr) \simeq C_{p-1}(1) \oplus C_1(-i)$,
and hence $A \iadj C_{p-1}(1) \oplus C_1(-i) \oplus C_1(i)$. The latter matrix is the product of two involutions.

\vskip 3mm
Assume finally that $N$ is very-well-partitioned, $r\in \{q,q+1\}$ and $q>0$.
We split the discussion into two cases, whether $r=q$ or $r=q+1$.

\noindent \textbf{Case 1: $r=q$.} \\
If $(-i\alpha)^k \neq \pm 1$ for all $k \in \lcro 1,q\rcro$, then we readily deduce from Proposition \ref{not3U2decompProp} that
$M$ is the product of three involutions.
Assume now that $(-i\alpha)^k=\pm 1$ for some $k \in \lcro 1,q\rcro$, and denote by $a$ the greatest such integer.
Note that $(-i\alpha)^l \neq \pm 1$ for all $l \in \lcro 1,q-a\rcro$.
Let us split
$$M \simeq \underbrace{\bigl[N \oplus \alpha I_{q-a} \oplus i I_{q-a}\bigr]}_{M_1} \oplus \underbrace{\bigl[\alpha I_a \oplus i I_a\bigr]}_{M_2}$$
and note that $\det M_2=\pm 1$, and hence $\det M_1=\pm 1$. Then,
by Proposition \ref{not3U2decompProp} if $q-a>0$, and by Proposition \ref{wellpartdecomp} otherwise, we find that $M_1$
is the product of three involutions. Lemma \ref{diagonal3invollemmafinal} shows that $M_2$ is the product of three involutions,
and we conclude that so is $M$.

\vskip 3mm
\noindent \textbf{Case 2: $r=q+1$.} \\
If $(-i\alpha)^{4k} \neq 1$ for all $k \in \lcro 1,q\rcro$, then we directly deduce from Proposition \ref{3I2decomppropskew} that
$M$ is the product of three involutions.
Assume now that $(-i\alpha)^{4k}=1$ for some $k \in \lcro 1,q\rcro$, and denote by $a$ the greatest such integer.
Note then that  $(-i\alpha)^{4l} \neq 1$ for all $l \in \lcro 1,q-a\rcro$.
\begin{itemize}
\item Assume that $(-i\alpha)^a=\pm 1$. Then, we split
$$M \simeq \underbrace{\bigl[N \oplus \alpha I_{q-a} \oplus i I_{q-a} \oplus i I_1\bigr]}_{M_1} \oplus
\underbrace{\bigl[\alpha I_a \oplus i I_a\bigr]}_{M_2}.$$
Note that $\det M_2=\pm 1$ and hence $\det M_1=\pm 1$.
Then, $M_1$ is the product of three involutions, by Proposition \ref{3I2decomppropskew} if $q-a>0$, and
by Lemma \ref{wellpartdecompskew} otherwise. Besides, $M_2$ is the product of three involutions by Lemma \ref{diagonal3invollemmafinal}.

\item Assume that $(-i\alpha)^a=\pm i$. Then, we split
$$M \simeq \underbrace{\bigl[N \oplus \alpha I_{q-a} \oplus i I_{q-a}\bigr]}_{M_3} \oplus
\underbrace{\bigl[\alpha I_a \oplus i I_a \oplus i I_1\bigr]}_{M_4}.$$
Again, $\det M_4=\pm 1$ and $\det M_3=\pm 1$.
Then, $M_3$ is the product of three involutions, by Proposition \ref{not3U2decompProp} if $q-a>0$, and
by Proposition \ref{wellpartdecomp} otherwise. Besides, $M_4$ is the product of three involutions by Lemma \ref{skew3invollastlemma2}.
\end{itemize}
In any case, we conclude that $M$ is the product of three involutions. This completes the proof of
Theorem \ref{skew3involutions}.

\section{Products of two involutions and one unipotent matrix of index $2$}\label{1U22invSection}

In this short section, we assume that the field $\F$ does not have characteristic $2$,
and we prove Theorem \ref{theo3mixed1}, which we restate below:

\begin{quote}
Let $A \in \GL_n(\F)$ be such that $\det A=\pm 1$.
Then, the matrix $A \oplus I_n$ is the product of one $U_2$-matrix and two involutions.
\end{quote}

The strategy is identical to the one of the proof of Theorem \ref{theo3U2} given in Section \ref{3U2Section}, and hence we see that it suffices to prove the following result:

\begin{lemma}
Let $\alpha \in \F \setminus \{0,1\}$ and $q$ be a positive integer such that
$\alpha^q=\pm 1$. Then, $\alpha I_q \oplus I_q$ is the product of one $U_2$-matrix and two involutions.
\end{lemma}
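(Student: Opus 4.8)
The plan is to follow the proof of Lemma~\ref{diagonal3invollemmafinal}, adjusting it so that the extra factor produced is a $U_2$-matrix rather than a third involution. Write $M:=\alpha I_q\oplus I_q$; since $\alpha\notin\{0,1\}$, this is a direct sum of scalar blocks attached to the two distinct nonzero scalars $\alpha$ and $1$, so the adjacency lemmas of Section~\ref{commonSection} apply with $\beta:=1$. When $\alpha^q=1$ there is essentially nothing to do: Lemma~\ref{superdiagonalLemma}(i), applied with $\beta:=1$ and $\varepsilon:=1$ (so that $(\varepsilon\alpha\beta)^q=\alpha^q=1$ and $\alpha\neq\beta$), directly gives that $\alpha I_q\oplus I_q$ is the product of one $U_2$-matrix and two involutions.

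So I would concentrate on the case $\alpha^q=-1$. Since $\F$ has characteristic different from $2$, from $\alpha^{2q}=1$ and $\alpha^q\neq 1$ one deduces that $\alpha$ has even order in $\F^*$; in particular $\alpha^{2k+1}\neq 1$ for every $k\in\lcro 0,q-1\rcro$. I then apply Lemma~\ref{C1cycle} with $(\alpha,\beta,\gamma,\delta):=(\alpha,1,1,1)$, $n:=q$ and $\varepsilon:=1$: the relevant scalar is $\pi:=\alpha\beta\gamma\delta=\alpha$, and the resulting matrix $S$, being annihilated by $(t-1)(t-1)=(t-1)^2$, is a $U_2$-matrix. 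This yields
$$\alpha I_q\oplus I_q\uadj\bigoplus_{k=0}^{q-1}\bigl(C_1(\alpha^{-k})\oplus C_1(\alpha^{k+1})\bigr)\simeq\calC_{q,1}(\alpha).$$
Since $\alpha^q=\pm 1$, the last assertion of Lemma~\ref{I2cycles} shows that $\calC_{q,1}(\alpha)$ is a product of two involutions; hence $\alpha I_q\oplus I_q$ is u-adjacent to a product of two involutions, and therefore is itself the product of one $U_2$-matrix and two involutions.

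The only point that needs genuine care is the realization that the case $\alpha^q=-1$ escapes Lemma~\ref{superdiagonalLemma}: applying the latter would require $(-\alpha)^q=1$, i.e.\ the parity restriction ``$q$ odd'', and even then it would only produce ``one involution and two $U_2$-matrices'' rather than ``one $U_2$-matrix and two involutions''. One must instead route through the $U_2$-adjacency to a cycle of $1\times 1$ cells, which is precisely why $\gamma=\delta=1$ is chosen in Lemma~\ref{C1cycle} (to make the auxiliary matrix a $U_2$-matrix), and one must first verify that $\alpha$ has even order so that Lemma~\ref{C1cycle} is applicable at all.
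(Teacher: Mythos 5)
Your proof is correct and follows essentially the same route as the paper's: treat $\alpha^q=1$ via Lemma~\ref{superdiagonalLemma}, and for $\alpha^q=-1$ note that $\alpha$ has even order, apply Lemma~\ref{C1cycle} with $\gamma=\delta=1$ to get $\alpha I_q\oplus I_q\uadj\calC_{q,1}(\alpha)$, and finish with the last statement of Lemma~\ref{I2cycles}. Your added explanation of the parameter choices and of why the $\alpha^q=-1$ case escapes Lemma~\ref{superdiagonalLemma} is accurate but does not change the argument.
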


\begin{proof}
If $\alpha^q=1$, the result is already known as part of Lemma \ref{superdiagonalLemma}.
In the rest of the proof, we assume that $\alpha^q=-1$.

Since $-1$ is a power of $\alpha$, we find that $\alpha$ has even order in the group $\F^*$.
In particular $\alpha^{k+1} \neq \alpha^{-k}$ for every integer $k$.
Hence, Lemma \ref{C1cycle} shows that
$$\alpha I_q \oplus I_q \uadj \calC_{q,1}(\alpha).$$
Since $\alpha^q=-1$, Lemma \ref{I2cycles} shows that $\calC_{q,1}(\alpha)$ is the product of two involutions. The conclusion ensues.
\end{proof}

Hence, Theorem \ref{theo3mixed1} is proved.

\section{Products of one involution and two unipotent matrices of index $2$}\label{1inv2U2Section}

In this section, we assume that the field $\F$ does not have characteristic $2$.

\subsection{Natural extensions}

Before we can prove Theorem \ref{theo3mixed2}, we need two consecutive lemmas.

\begin{lemma}\label{mixed2lastlemma1}
Let $\alpha \in \F \setminus \{0,1\}$. Assume that $-\alpha$ has even order $2q$
in the group $\F^*$.
Then, $\alpha I_q \oplus I_q$  is the product of one involution and two $U_2$-matrices.
\end{lemma}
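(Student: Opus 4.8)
Set $\pi:=-\alpha$; by hypothesis $\pi$ has order $2q$ in $\F^*$, so $\pi^q=-1$, and $q\geq 2$ since $q=1$ would force $\alpha=1$. The plan is to produce an involution $S\in\GL_{2q}(\F)$ such that $S\,(\alpha I_q\oplus I_q)$ is a product of two $U_2$-matrices; by the standard reduction for length-$3$ problems this yields that $\alpha I_q\oplus I_q$ is a product of one involution and two $U_2$-matrices. Since $\charac(\F)\neq 2$, Theorem \ref{theo2}(ii) reduces the task to arranging that $S\,(\alpha I_q\oplus I_q)$ be similar to its inverse with every Jordan cell for the eigenvalue $-1$ of even size.

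The first observation is that any matrix i-adjacent to $\alpha I_q\oplus I_q$ obtained by splitting $\alpha I_q\oplus I_q\simeq\bigoplus_j(\alpha I_{m_j}\oplus I_{m_j})$ and applying Lemma \ref{basicblocklemma} (or Lemmas \ref{C1basic}, \ref{C2basic}, \ref{C1cycle}, \ref{C2cycle}) block by block has its eigenvalues grouped into pairs whose product is $\pi=-\alpha$, each such pair carrying equal-sized Jordan cells $C_d(x),C_d(\pi x^{-1})$, plus single cells $C_{2d}(x)$ whenever $x^2=\pi$ — which can occur only if $\pi$ is a square in $\F^*$. A short generating-function computation then shows that if such a matrix is moreover similar to its inverse and all its cells have size $1$, its spectrum must be a single coset of $\langle\pi\rangle$. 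This isolates the favourable case: if $\pi$ is a square in $\F^*$, equivalently $\F^*/\langle\pi\rangle$ has an element of order $2$, then there is a scalar $c\notin\langle\pi\rangle$ with $c^2\in\langle\pi\rangle$, and the choices $x_j=c\,\pi^{-(j-1)}$ make $S\,(\alpha I_q\oplus I_q)$ similar to the diagonal matrix whose spectrum is the coset $c\langle\pi\rangle$ (each eigenvalue once); this matrix is similar to its inverse, and since $-1=\pi^q\in\langle\pi\rangle$ while $c\notin\langle\pi\rangle$ it has no eigenvalue $-1$, hence is a product of two $U_2$-matrices by Theorem \ref{theo2}. (Alternatively one keeps the spectrum inside $\langle\pi\rangle$ and uses a square root of $\pi$ to place a $C_2$-block at $-1$ via the cycle lemmas \ref{C1cycle}, \ref{C2cycle} and Lemma \ref{U2cycles}.)

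The remaining case is that $\pi$ is not a square in $\F^*$ (for finite $\F$ this means $|\F^*|=2q\cdot(\text{odd})$), where the coset trick is unavailable. Here I would use that the assertion is insensitive to enlarging $\F$, reduce to the subfield generated by $\alpha$, and split according to the parity of $q$ (equivalently, whether $\operatorname{ord}(\alpha)$ equals $q$ or $2q$); in the residual small-field situations one argues directly, writing $q=q'+q''$ and combining the lemma of Section \ref{1U22invSection} (which gives $\alpha I_{q'}\oplus I_{q'}$ as a product of one $U_2$-matrix and two involutions whenever $\alpha^{q'}=\pm 1$) with the base facts that $-I_2$ is a product of three $U_2$-matrices (Lemma \ref{-I23U2lemma}) and that $iI_2$, $C_{2k}(i)$ and $iI_1\oplus C_{2k-1}(i)$ are each a product of one involution and two $U_2$-matrices (Corollary \ref{iI2cor}, Lemmas \ref{iI2lemma1}, \ref{iI2lemma2}), thereby reducing the scalar block to already-settled pieces.

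The step I expect to be the main obstacle is precisely the control of the eigenvalue $-1$: because $-1=\pi^q$ always lies in $\langle\pi\rangle$, the "obvious" matrix i-adjacent to $\alpha I_q\oplus I_q$ — with spectrum all $2q$-th roots of unity, each simple — is never a product of two $U_2$-matrices in characteristic different from $2$. Forcing $-1$ off the spectrum (when $\pi$ is a square) or into an even Jordan cell (which again requires a square root of $\pi$), together with dispatching by hand the leftover fields where neither option is directly available, is where the real work concentrates.
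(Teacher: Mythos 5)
Your construction in the \textbf{favorable case} (where $\pi$ is a square, with $c^2=\pi$ and $c\notin\langle\pi\rangle$) does not actually produce a matrix similar to its inverse. From each block $\alpha I_1\oplus I_1$, the adjacency lemmas produce the pair of eigenvalues $\{x,\pi x^{-1}\}$; on the coset $c\langle\pi\rangle$, the map $x\mapsto\pi x^{-1}$ is $c\pi^k\mapsto c\pi^{-k}$, which is an involution with exactly two fixed points, $c$ and $c\pi^q=-c$, both of which square to $\pi$. One therefore cannot cover the coset once each with $q$ such pairs: there are only $q-1$ two-element orbits, and the indices $j$ with $x_j^2=\pi$ violate the proviso of Lemma \ref{C1basic} and force (via Lemma \ref{basicblocklemma}) a $C_2(x_j)$ cell rather than two $C_1$'s. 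Even repairing this, the result is \emph{not} similar to its inverse: a $C_2(c)$ cell would require a matching $C_2(c^{-1})=C_2(c\pi^{-1})$, but $c\pi^{-1}$ carries only a $C_1$ cell in your construction. Meanwhile the \textbf{non-square case} is left entirely as a sketch: the appeal to field-extension insensitivity is unproved (``product of one involution and two $U_2$-matrices over $\F$'' asks for factors \emph{in} $\GL_n(\F)$, and descent from an extension to $\F$ is not automatic), and the parity split with ``arguing directly in residual small-field situations'' is not carried out. So the proof as written has a genuine gap in both branches.

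The paper avoids the square/non-square dichotomy entirely by a simpler trick: write $\alpha I_q\oplus I_q=(\alpha I_{q-1}\oplus I_{q-1})\oplus(\alpha I_1\oplus I_1)$, apply Lemma \ref{C1cycle} (with $\beta=\gamma=1$, $\delta=-1$, $\varepsilon=1$) to the first summand to make it i-adjacent to $\calC_{q-1,1}(\pi)$, and leave the second summand untouched, contributing $C_1(1)\oplus C_1(\alpha)$. Since $\alpha=-\pi=\pi^{-(q-1)}$ (as $\pi^q=-1$), the resulting matrix $B$ has all its eigenvalue exponents in $\{-(q-1),\dots,q-1\}$ with $0$ doubled; it is manifestly similar to its inverse (via the pairing $k\leftrightarrow-k$), and the exponents $\pm q$ never occur, so $-1=\pi^q$ is not an eigenvalue. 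Lemma \ref{I2cycles} and Theorem \ref{theo2} then finish. Restricting the cycle to the first $q-1$ blocks --- rather than trying to populate a full coset --- is the key insight your proposal misses, and it removes all need for the case analysis.
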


\begin{proof}
Set $\pi:=-\alpha$ and note that $\pi^q=-1$. Since $\pi$ has even order, we have $\pi^{2k+1}\neq 1$ for every integer $k$.
Hence, Lemma \ref{C1cycle} yields that $\alpha I_{q-1} \oplus I_{q-1}$ is i-adjacent to $\calC_{q-1,1}(\pi)$.
Noting that $\pi^{-(q-1)}=-\pi=\alpha$, we deduce that
$$\alpha I_q \oplus I_q \iadj B:=C_1(1) \oplus C_1(\pi^{-(q-1)}) \oplus \calC_{q-1,1}(\pi).$$
We know from Lemma \ref{I2cycles} that $C_1(\pi^{-(q-1)}) \oplus \calC_{q-1,1}(\pi)$
is similar to its inverse, and hence so is $B$. Moreover, we see that $-1$ is no eigenvalue of $B$:
indeed, otherwise $\pi^k=-1$ for some $k$ such that $|k|<q$, which would yield $\pi^{2k}=1$
and then $k=0$ because $|2k|<2q$, leading to a contradiction. Hence, $B$ is the product of two $U_2$-matrices.
The conclusion ensues.
\end{proof}

\begin{lemma}\label{mixed2lastlemma2}
Let $\alpha \in \F \setminus \{0,1\}$, and $q$ be a positive integer such that $(-\alpha)^q=\pm 1$.
Then, the matrix $\alpha I_q \oplus I_q$ is the product of one involution and two $U_2$-matrices.
\end{lemma}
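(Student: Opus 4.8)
The plan is to reduce the statement to the two results that immediately precede it: Lemma \ref{superdiagonalLemma}, which settles the case where a suitable power equals $1$, and Lemma \ref{mixed2lastlemma1}, which settles the case where $-\alpha$ has even order equal to twice the size of the blocks. Since the hypothesis here is only $(-\alpha)^q=\pm 1$, I would split the argument according to the sign of $(-\alpha)^q$.

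First, if $(-\alpha)^q=1$, I would simply invoke point (ii) of Lemma \ref{superdiagonalLemma} with $\beta:=1$, $\varepsilon:=-1$ and $p:=q$: the hypothesis $(\varepsilon\alpha\beta)^p=(-\alpha)^q=1$ holds, and $\alpha\neq\beta$ because $\alpha\neq 1$, so that lemma yields directly that $\alpha I_q\oplus I_q$ is the product of one involution and two $U_2$-matrices.

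Second, if $(-\alpha)^q=-1$, I would begin by checking that $-\alpha$ cannot have odd order: indeed, if its order $m$ were odd, then $m$ would divide $q$ (since $m$ divides $2q$ and is coprime to $2$), forcing $(-\alpha)^q=1$, a contradiction. Hence $-\alpha$ has even order, say $2d$, and since $(-\alpha)^{2q}=1$ we get $d\mid q$. Applying Lemma \ref{mixed2lastlemma1} with its integer taken to be $d$, I would write $\alpha I_d\oplus I_d=SU_1U_2$ for some involution $S$ and some $U_2$-matrices $U_1,U_2$. Finally, since $\alpha I_q\oplus I_q$ is similar to the direct sum of $q/d$ copies of $\alpha I_d\oplus I_d$, and since a direct sum of involutions (respectively, of $U_2$-matrices) is again an involution (respectively, a $U_2$-matrix), block-diagonalizing the three factors gives the desired decomposition of $\alpha I_q\oplus I_q$.

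I do not expect a real obstacle in this argument; the only point requiring a little care is establishing, in the second case, that $-\alpha$ genuinely has even order, so that Lemma \ref{mixed2lastlemma1} is applicable — but this is forced by $(-\alpha)^q=-1$ together with the standing assumption that $\F$ has characteristic not $2$ (which guarantees $-1\neq 1$, hence that $-1$ has order $2$).
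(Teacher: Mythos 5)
Your proof is correct and follows essentially the same route as the paper: the case $(-\alpha)^q=1$ is dispatched by Lemma \ref{superdiagonalLemma}, and the case $(-\alpha)^q=-1$ is reduced to Lemma \ref{mixed2lastlemma1} by noting that $-\alpha$ then has even order $2d$ with $d$ dividing $q$, and by taking direct sums of copies of $\alpha I_d\oplus I_d$. Your explicit verification that $-\alpha$ cannot have odd order is a detail the paper leaves implicit, but the argument is the same.
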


\begin{proof}
Set $\pi:=-\alpha$. If $\pi^q=1$, then the result readily follows from Lemma \ref{superdiagonalLemma}.

Assume now that $\pi^q=-1$. Hence, $\pi$ has even order, which we denote by $2p$,
and as $\pi^{2q}=1$ we find that $p$ divides $q$.
Hence, $\alpha I_q \oplus I_q$ is similar to the direct sum of copies of
$\alpha I_p \oplus I_p$, a matrix which is the product of one involution and two $U_2$-matrices
by Lemma \ref{mixed2lastlemma1}.
Hence, $\alpha I_q \oplus I_q$ is the product of one involution and two $U_2$-matrices.
\end{proof}

From there, the proof of Theorem \ref{theo3mixed2} is essentially similar to the one of Theorem \ref{theo3invol}.
The only difference is that one uses Lemma \ref{mixed2lastlemma2} instead of Lemma \ref{diagonal3invollemmafinal}.

\subsection{Unnatural extensions}

Here, we let $i$ be an element of $\F$ such that $i^2=-1$.
In order to prove Theorem \ref{skew1involution2unipotents}, we can adapt the strategy of the proof of Theorem
\ref{skew3involutions}, and we see that it suffices to prove the following result.

\begin{lemma}\label{mixed2skewlastlemma1}
Let $\alpha \in \F \setminus \{0,i\}$, and let $q$ be a positive integer such that
$(-i\alpha)^q=-1$. Then, $\alpha I_q \oplus i I_q$ is the product of one involution and two $U_2$-matrices.
\end{lemma}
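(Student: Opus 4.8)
Set $\pi:=-i\alpha$. Since $(-i\alpha)^q=-1$ and $\charac(\F)\neq 2$, the element $\pi$ has even order in $\F^*$, so $\pi^{2k+1}\neq 1$ for every integer $k$. The plan is to produce an involution $S\in\GL_{2q}(\F)$ such that $S\,(\alpha I_q\oplus iI_q)$ is similar to a matrix $B$ that is the product of two $U_2$-matrices; by Theorem \ref{theo2}(ii) it suffices that $B\simeq B^{-1}$ and that the eigenvalue $-1$ occur in $B$ only inside even-sized Jordan cells, and then $\alpha I_q\oplus iI_q$ is the product of one involution and two $U_2$-matrices. Because $\alpha\neq i$, whenever one of the block lemmas (Lemmas \ref{C1basic}, \ref{C2basic}, \ref{C1cycle}, \ref{C2cycle}) is applied to a summand $\alpha I_r\oplus iI_r$ with the choice $\gamma=1$, $\delta=-1$, the transforming matrix is annihilated by $(t-1)(t+1)$, hence is an involution, and the relevant norm parameter equals $\alpha\cdot i\cdot 1\cdot(-1)=-i\alpha=\pi$; we shall only use these lemmas in this way, so every partial transformation is an involution and so is their direct sum.

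First I would reduce to the case where $\pi$ has order \emph{exactly} $2q$. If $2p$ denotes the order of $\pi$, then $\pi^{2q}=1$ forces $p\mid q$, while $\pi^q=-1\neq 1$ forces $q/p$ to be odd; hence $\alpha I_q\oplus iI_q$ is similar to the direct sum of $\alpha I_p\oplus iI_p$ with finitely many copies of $\alpha I_{2p}\oplus iI_{2p}$. Each such copy satisfies $(-i\alpha)^{2p}=\pi^{2p}=1$ and is therefore the product of one involution and two $U_2$-matrices by Lemma \ref{superdiagonalLemma}; so it is enough to treat $\alpha I_q\oplus iI_q$ under the extra hypothesis that $\pi$ has order $2q$ (whence $\pi^q=-1$). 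The very small cases $q\leq 2$ are disposed of directly: $q=1$ forces $\alpha=-i$, so $\alpha I_1\oplus iI_1$ is already similar to its inverse with $-1$ not an eigenvalue; and $q=2$ forces $\alpha\in\{1,-1\}$, so one reduces to Corollary \ref{iI2cor} together with the fact that both $I_2$ and $-I_2$ are involutions.

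The core of the argument is the following tension. Applying Lemma \ref{C1cycle} with $n=q$ shows that $\alpha I_q\oplus iI_q$ is i-adjacent, via an involution, to the cycle $\calC_{q,1}(\pi)$, which — because $\pi^q=\pm 1$ makes the cycle close up — is similar to its inverse and is in fact the product of two involutions (Lemma \ref{I2cycles}); the sole obstruction to its being a product of two $U_2$-matrices is the single \emph{odd}-sized cell $C_1(\pi^q)=C_1(-1)$. To remove it, the plan is to split $\alpha I_q\oplus iI_q$ as $(\alpha I_2\oplus iI_2)\oplus(\alpha I_{q-2}\oplus iI_{q-2})$, send the first summand via Lemma \ref{C2basic} with $x=-1$ to $C_2(-1)\oplus C_2(-\pi)$ — so that $-1$ now lives inside a size-$2$ cell — and then route the remaining $\alpha I_{q-2}\oplus iI_{q-2}$, by combining Lemma \ref{C1cycle} with one further application of Lemma \ref{C1basic} or \ref{C2basic}, to a direct sum of cells that supplies the inverse-partner $C_2(-\pi^{-1})$ of $C_2(-\pi)$ and is otherwise a cycle whose eigenvalue-exponents form a set symmetric under inversion and missing the exponent of $-1$. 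A bookkeeping of parities — whether $q$ is even or odd, and which square root of $-1$ the relevant power of $\pi$ equals — then lets one choose these exponents so that the total matrix $B$ is similar to its inverse with $-1$ confined to the cell $C_2(-1)$; Theorem \ref{theo2}(ii) then finishes the proof.

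The hard part is precisely this last step. Because the transforming matrix must be a single involution, every $2\times 2$-block produced by the basic lemmas is forced to have the fixed eigenvalue-product $\pi$, and a finite family of such pairs can be made invariant under inversion only through the overlapping structure of a genuine cycle; reconciling that rigidity with the requirement that $-1$ land inside an even-sized cell — while matching dimensions exactly — is what makes the delicate case distinction on the parity of $q$ (and on whether $\pi$ is a square in $\F$) unavoidable.
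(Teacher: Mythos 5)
Your preliminary reductions are fine: the reduction to the case where $\pi:=-i\alpha$ has order exactly $2q$ (via Lemma \ref{superdiagonalLemma} applied to the $\alpha I_{2p}\oplus iI_{2p}$ pieces) is correct and is a tidy observation the paper does not make, and the cases $q\leq 2$ are handled correctly. The genuine gap is the core step, which you yourself flag as ``the hard part'' and leave as a plan; and the plan cannot be completed with the tools you allow yourself. If every transformation comes from Lemmas \ref{C1basic}, \ref{C2basic}, \ref{C1cycle}, \ref{C2cycle} (or \ref{basicblocklemma}) applied to summands $\alpha I_r\oplus iI_r$ with $\{\gamma,\delta\}=\{1,-1\}$, then the cells of the resulting matrix $B$ come in pairs whose eigenvalues multiply to $\pi$; so the multiset of cell eigenvalues is invariant under $\sigma:y\mapsto\pi y^{-1}$, and the requirement $B\simeq B^{-1}$ makes it invariant under $\tau:y\mapsto y^{-1}$ as well. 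Since $\sigma\tau$ is multiplication by $\pi$, the support of that multiset is a union of cosets of $\langle\pi\rangle$, each of cardinality $2q$. Take $q=2p$ even, so $\pi$ has order $4p$ and $-1=\pi^{2p}\in\langle\pi\rangle$: a coset avoiding $-1$ must avoid $\langle\pi\rangle$, hence its elements $y$ satisfy $y^2\in\langle\pi\rangle$ with odd exponent, forcing $\sigma$-fixed points ($y^2=\pi$) and hence size-$2$ cells, which then fail to have inverse partners of the same size; while a coset containing $-1$ is $\langle\pi\rangle$ itself and already has $4p$ elements, so size-$2$ cells on it would need dimension $8p>4p$. Concretely, your proposed chain $-1,-\pi,-\pi^{-1},-\pi^{2},\dots,-\pi^{p}$ never closes, because $(-\pi^{p})^{-1}=-\pi^{-p}$ equals $-\pi^{j}$ with $|j|\leq p$ only if $\pi^{2p}=1$, whereas $\pi^{2p}=-1$. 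No ``bookkeeping of parities'' rescues this.

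What is missing is an \emph{asymmetric} adjacency that breaks the product-$\pi$ rigidity, and this is exactly what the paper supplies as Lemma \ref{lastadjacencylemma}: an explicit $3\times 3$ computation showing $\alpha I_2\oplus iI_1\iadj C_1(-i)\oplus C_2(\alpha)$, where a two-dimensional piece of the $\alpha$-eigenspace is matched against a one-dimensional piece of the $i$-eigenspace. The leftover $C_1(i)$ from the $i$-eigenspace then pairs with $C_1(-i)$ (harmless odd cells, since $\pm i\neq-1$), and $C_2(\alpha)=C_2(i\pi)$ closes the $C_2$-cycle against $C_2(\varepsilon\pi^{p-1})$, because $(\varepsilon\pi^{p-1})(i\pi)=1$ when $\varepsilon i=\pi^{-p}$; for odd $q$ the paper additionally uses Lemma \ref{basicblocklemma} to produce the self-paired cell $C_2(i\pi^{p+1})$ with $(i\pi^{p+1})^2=\pi$. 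You would need to discover this lemma (or an equivalent device mixing the two eigenspaces in unequal dimensions) to complete your argument; as written, the proof is incomplete.
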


In order to prove this result, a basic lemma is required:

\begin{lemma}\label{lastadjacencylemma}
Let $\alpha \in \F \setminus \{0,i\}$. Then,
$\alpha I_2 \oplus i I_1$ is i-adjacent to $C_1(-i) \oplus C_2(\alpha)$.
\end{lemma}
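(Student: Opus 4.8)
The plan is to reduce the statement to the construction of a single $3\times 3$ involution. Set $D:=\Diag(\alpha,\alpha,i)=\alpha I_2\oplus iI_1$; we must find an involution $S\in\GL_3(\F)$ with $SD\simeq C_1(-i)\oplus C_2(\alpha)$. Since $\det D=i\alpha^2$ whereas $\det\bigl(C_1(-i)\oplus C_2(\alpha)\bigr)=-i\alpha^2$, such an $S$ necessarily has $\det S=-1$, and as $\charac(\F)\neq 2$ it is conjugate to $\Diag(1,1,-1)$; hence it has the shape $S=I_3-vw^T$ for column vectors $v,w\in\F^3$ with $w^Tv=2$ (and conversely any such $S$ is automatically an involution). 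So it will be enough to exhibit suitable $v$ and $w$ and then to check, by bounded matrix computations, that $S^2=I_3$ and that $SD$ has the required rational canonical form.

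To find $v$ and $w$, the key observation is that $SD=D-v(Dw)^T$ is a rank-one perturbation of the diagonal matrix $D$, so by the rank-one update formula for determinants its characteristic polynomial equals $(t-\alpha)^2(t-i)\,\bigl(1+z^T(tI_3-D)^{-1}v\bigr)$ with $z:=Dw$; forcing this to be $(t-\alpha)^2(t+i)$ and using $\alpha\neq i$ pins down the simple conditions $z_1v_1+z_2v_2=0$ and $z_3v_3=2i$, which in turn make $w^Tv=2$ hold automatically. A convenient solution when $\alpha\neq -i$ is
$$S:=\begin{bmatrix}1 & 0 & 0\\ -\alpha^{-1} & 1 & -2\\ -\alpha^{-1} & 0 & -1\end{bmatrix},\qquad\text{for which}\qquad SD=\begin{bmatrix}\alpha & 0 & 0\\ -1 & \alpha & -2i\\ -1 & 0 & -i\end{bmatrix}.$$
One checks $S^2=I_3$ directly, and that the characteristic polynomial of $SD$ is $(t-\alpha)^2(t+i)$. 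Moreover the last two rows of $SD-\alpha I_3$, namely $(-1,0,-2i)$ and $(-1,0,-\alpha-i)$, are linearly independent precisely because $\alpha\neq i$, so $\rk(SD-\alpha I_3)=2$; hence the eigenvalue $\alpha$ contributes a single Jordan cell, of size $2$, while $-i\neq\alpha$ is simple, and therefore $SD\simeq C_2(\alpha)\oplus C_1(-i)$, as wanted.

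The only point that needs special care is the degenerate case $\alpha=-i$, where the target becomes $C_1(-i)\oplus C_2(-i)$ and the eigenvalue $-i$ must now have geometric multiplicity $2$; the matrix displayed above would instead produce a single Jordan cell of size $3$, so a different choice of $S$ is required. There I would take
$$S:=\begin{bmatrix}1 & 0 & 0\\ 0 & 1 & 0\\ -i & 0 & -1\end{bmatrix},\qquad SD=\begin{bmatrix}-i & 0 & 0\\ 0 & -i & 0\\ -1 & 0 & -i\end{bmatrix},$$
and check that $S^2=I_3$, that $SD$ has characteristic polynomial $(t+i)^3$, and that $SD+iI_3$ has rank $1$ and square $0$, so that $SD\simeq C_2(-i)\oplus C_1(-i)$. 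In either case this yields an involution $S$ with $SD\simeq C_1(-i)\oplus C_2(\alpha)$, that is, $\alpha I_2\oplus iI_1\iadj C_1(-i)\oplus C_2(\alpha)$. The main (and rather mild) obstacle is recognising that $\alpha=-i$ must be treated separately; apart from that, every verification is over $3\times 3$ matrices and routine.
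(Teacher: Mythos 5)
Your proof is correct, and it follows the same general strategy as the paper: exhibit an explicit $3\times 3$ involution $S$ and verify by direct computation that $S\,(\alpha I_2\oplus iI_1)$ has the required canonical form. The paper uses a different pair of matrices, namely $S=\left[\begin{smallmatrix}1&0&0\\1&-1&0\\0&0&1\end{smallmatrix}\right]$ and $A=\left[\begin{smallmatrix}\alpha&0&0\\0&i&0\\0&1&\alpha\end{smallmatrix}\right]\simeq\alpha I_2\oplus iI_1$, and argues that $SA$ is a (good) cyclic matrix with characteristic polynomial $(t-\alpha)^2(t+i)$, hence similar to $C_1(-i)\oplus C_2(\alpha)$. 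Your rank-one-update heuristic for locating $S$ is only motivation (and the claim that an involution of determinant $-1$ must be conjugate to $\Diag(1,1,-1)$ overlooks $-I_3$, but nothing rests on it since you verify $S^2=I_3$ and the Jordan structure of $SD$ directly, and those computations check out). The genuinely valuable point in your write-up is the separate treatment of $\alpha=-i$: there the target $C_1(-i)\oplus C_2(-i)$ is \emph{not} cyclic, so any argument that produces a cyclic matrix with characteristic polynomial $(t+i)^3$ — as the paper's does — yields $C_3(-i)$ instead and does not prove the stated similarity; the paper's one-line conclusion silently uses the coprimality of $(t-\alpha)^2$ and $(t+i)$, which fails exactly when $\alpha=-i$, a value permitted by the hypotheses. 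Your second construction, with $SD+iI_3$ of rank one and square zero, correctly settles that case, so your proof is in fact more complete than the paper's.
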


\begin{proof}[Proof of Lemma \ref{lastadjacencylemma}]
Set
$$S:=
\begin{bmatrix}
1 & 0 & 0 \\
1 & -1 & 0 \\
0 & 0 & 1
\end{bmatrix} \quad \text{and} \quad
A:=\begin{bmatrix}
\alpha & 0 & 0 \\
0 & i & 0 \\
0 & 1 & \alpha
\end{bmatrix}.$$
We see that $S$ is an involution and that $A \simeq \alpha I_2 \oplus i I_1$. Moreover,
$$SA =\begin{bmatrix}
\alpha & 0 & 0 \\
\alpha & -i & 0 \\
0 & 1 & \alpha
\end{bmatrix}$$
is obviously cyclic with characteristic polynomial $(t-\alpha)^2(t+i)$, to the effect that
$$SA \simeq C_1(-i) \oplus C_2(\alpha).$$
This yields the claimed result.
\end{proof}

\begin{proof}[Proof of Lemma \ref{mixed2skewlastlemma1}]
Set $\pi:=-i \alpha$, so that $\pi^q=-1$. Set $M:=\alpha I_q \oplus i I_q$.
We split the discussion into two cases, whether $q$ is even or odd.

\vskip 3mm
\noindent \textbf{Case 1: $q$ is even.} \\
We write $q=2p$. As $(\pi^{-p})^2=-1$, we find
$\pi^{-p}=\varepsilon i$ for some $\varepsilon \in \{1,-1\}$.
By Lemma \ref{C2basic}, we have
$$\alpha I_{2p-2} \oplus i I_{2p-2} \iadj \underset{k=0}{\overset{p-2}{\bigoplus}}\bigl(C_2(\varepsilon \pi^{-k}) \oplus C_2(\varepsilon \pi^{k+1})\bigr).$$
Combining this with Lemma \ref{lastadjacencylemma}, we deduce that
$$M \iadj B:= \bigl[C_1(i)\oplus C_1(-i)\bigr] \oplus C_2(i\pi) \oplus
\underset{k=0}{\overset{p-2}{\bigoplus}}\bigl(C_2(\varepsilon \pi^{-k}) \oplus C_2(\varepsilon \pi^{k+1})\bigr).$$
Noting that $(\varepsilon \pi^{p-1})(i\pi)=1$, we see that $B$ is similar to its inverse. Moreover,
$B$ has no Jordan cell of odd size for the eigenvalue $-1$, and hence it is the product of two $U_2$-matrices.
Therefore, $M$ is the product of one involution and two $U_2$-matrices.

\vskip 3mm
\noindent \textbf{Case 2: $q$ is odd.} \\
If $q=1$, then $\alpha=-i$ and $M$ is the product of two $U_2$-matrices.
In the remainder of the proof, we assume that $q>1$.

Let us write $q=2p+1$ for some positive integer $p$.
Note that $(i \pi^{p+1})^2=\pi$. Hence,
Lemma \ref{basicblocklemma} shows that $i I_1 \oplus \alpha I_1 \iadj C_2(i \pi^{p+1})$.
Moreover, Lemma \ref{C2basic} shows that
$$i I_{2p-2} \oplus \alpha I_{2p-2} \iadj \underset{k=1}{\overset{p-1}{\bigoplus}}\bigl(C_2(-i \pi^{-k}) \oplus C_2(i \pi^{k+1})\bigr).$$
It follows from Lemma \ref{lastadjacencylemma} that
$$M \iadj B:=C_1(i)\oplus C_1(-i) \oplus C_2(i\pi) \oplus C_2(i\pi^{p+1})\oplus
\underset{k=1}{\overset{p-1}{\bigoplus}}\bigl(C_2(-i \pi^{-k}) \oplus C_2(i \pi^{k+1})\bigr).$$
Reorganizing the terms and noting that $i \pi^{p+1}=-i \pi^{-p}$, we obtain
$$B \simeq \bigl[C_1(i)\oplus C_1(-i)\bigr] \oplus \underset{k=1}{\overset{p}{\bigoplus}}\bigl(C_2(-i \pi^{-k}) \oplus C_2(i \pi^{k})\bigr).$$
Hence, $B$ is similar to its inverse and has no Jordan cell of odd size for the eigenvalue $-1$ (in fact, one can prove that $-1$ is no eigenvalue of $B$).
Thus, $B$ is the product of two $U_2$-matrices, and hence $M$ is the product of one involution and two $U_2$-matrices.
\end{proof}

This completes the proof of Theorem \ref{skew1involution2unipotents}.

\section{Optimality issues}\label{optimalitySection}

Here, we briefly discuss the optimality of our results.
For example, if we refer to Theorem \ref{theo3invol}, the problem is the following one:
Given a positive integer $n$, what is the minimal integer $k \geq 0$ such that,
for any field $\F$ and any matrix $A \in \GL_n(\F)$ with determinant $\pm 1$, the matrix $A \oplus I_k$
is the product of three involutions?
It turns out that the solution $n$ is not optimal but very close to optimality.
This is due to the fact that there is room for improvement in the lemmas that deal with matrices of the form
$\alpha I_q \oplus \beta I_q$: there, we tried to keep things as general as possible and in particular we seldom cared
about the value of $\beta$. Yet, if we assign a specific value to $\beta$, say $\beta=1$ for Theorem \ref{theo3invol},
then there is room for improvement, as we will now see.

Here, we state the optimal results without proof:

\begin{itemize}
\item For every integer $n \geq 2$ and every $A \in \SL_n(\F)$, the matrix $A \oplus I_{n-2}$
is the product of three $U_2$-matrices. However, if $n \geq 3$ then for every $\alpha \in \F \setminus \{0\}$
of order $n$, the matrix $\alpha I_n \oplus I_{n-3}$ is not the product of three $U_2$-matrices.

\item For every integer $n \geq 2$ and every $A \in \GL_n(\F)$ having determinant $\pm 1$, the matrix $A \oplus I_{n-1}$
is the product of one $U_2$-matrix and two involutions. However, for every $\alpha \in \F \setminus \{0\}$ of order $2n$,
the matrix $\alpha I_n \oplus I_{n-2}$ is not the product of one $U_2$-matrix and two involutions.

\item For every even integer $n \geq 4$ and every $A \in \GL_n(\F)$ having determinant $\pm 1$, the matrix $A \oplus I_{n-2}$
is the product of one involution and two $U_2$-matrices; however for every $\alpha \in \F \setminus \{0\}$ of order $2n$,
the matrix $\alpha I_n \oplus I_{n-3}$ is not the product of three involutions.

\item For every odd integer $n \geq 3$ and every $A \in \GL_n(\F)$ having determinant $\pm 1$, the matrix $A \oplus I_{n-1}$
is the product of one involution and two $U_2$-matrices; however for every $\alpha \in \F \setminus \{0\}$ of order $2n$,
the matrix $\alpha I_n \oplus I_{n-2}$ is not the product of three involutions.
\end{itemize}

In those results, the positive statement can be proved by the same techniques we have resorted to in the present article
(using cycles of small companion matrices), whereas the negative statement requires a deep understanding of the structure of products
of two quadratic matrices (see \cite{dSPprod2}).

\vskip 3mm
Now, let us turn to unnatural extensions.
Assume that $\F$ does not have characteristic $2$, let $n>2$ be an integer, and let $A \in \GL_n(\F)$.
We start with decompositions into the product of three $U_2$-matrices.

\begin{itemize}
\item If $\det A=1$ and $n$ is even, then $A \oplus (-I_{n-2})$ is the product of three $U_2$-matrices.
If $n$ is even and not a multiple of $4$, then for any $\alpha \in \F \setminus \{0\}$ of order $\frac{n}{2}$,
the matrix $\alpha I_n \oplus (-I_{n-4})$ is not the product of three $U_2$-matrices.
If $n$ is a multiple of $4$, then for any $\alpha \in \F \setminus \{0\}$ of order $n$,
the matrix $\alpha I_n \oplus (-I_{n-4})$ is not the product of three $U_2$-matrices.
\item If $\det A=-1$ and $n$ is odd, then $A \oplus (-I_{n-2})$ is the product of three $U_2$-matrices.
If $n$ is odd then, for any $\alpha \in \F \setminus \{0\}$ of order $2n$, the matrix
$\alpha I_n \oplus (-I_{n-4})$ is not the product of three $U_2$-matrices.
\item If $\det A=-1$ and $n$ is even, then $A \oplus (-I_{n-1})$ is the product of three $U_2$-matrices.
If $n$ is even, then for any $\alpha \in \F \setminus \{0\}$ of order $2n$, the matrix $\alpha I_n \oplus (-I_{n-3})$ is not the product of three $U_2$-matrices.
\item If $\det A=1$ and $n$ is odd, then $A \oplus (-I_{n-1})$ is the product of three $U_2$-matrices.
If $n$ is odd then, for any $\alpha \in \F \setminus \{0\}$ of order $n$, the matrix $\alpha I_n \oplus (-I_{n-3})$ is
not the product of three $U_2$-matrices.
\end{itemize}

\vskip 2mm
We finish with decompositions into the product of one involution and two $U_2$-matrices (or three involutions).
To this end, we let $i$ be an element of order $4$ in $\F \setminus \{0\}$.
\begin{itemize}
\item If $\det A=\pm i$ and $n$ is odd, then $A \oplus i I_n$ is the product of one involution and two $U_2$-matrices.
However, if $n$ is odd, then for any $\alpha \in \F \setminus \{0\}$ of order $4n$ such that $-i\alpha$ is of order $2n$, the matrix $\alpha I_n \oplus i I_{n-2}$
is not the product of three involutions. Note that such a scalar $\alpha$ exists in the field of complex numbers:
it suffices to choose a complex number $\pi$ of order $2n$, and to take $\alpha:=i\pi$.

\item If $\det A=\pm i$ and $n$ is even, then $A \oplus i I_{n-1}$ is the product of one involution and two $U_2$-matrices.
However, if $n$ is even, then for any $\alpha \in \F \setminus \{0\}$ of order $4n$, the matrix $\alpha I_n \oplus i I_{n-3}$
is not the product of three involutions.

\item If $\det A=\pm 1$ and $n$ is odd, then $A \oplus i I_{n-1}$ is the product of one involution and two $U_2$-matrices.
However, if $n$ is odd, then for any $\alpha \in \F \setminus \{0\}$ of order $2n$, the matrix $\alpha I_n \oplus i I_{n-3}$
is not the product of three involutions.

\item If $\det A=\pm 1$ and $n$ is even, then $A \oplus i I_{n-2}$
is the product of one involution and two $U_2$-matrices.
However, if $n$ is even and greater than $2$, then for any $\alpha \in \F \setminus \{0\}$ of order $2n$ such that
$-i\alpha$ is not of order $\frac{n}{2}$, the matrix $\alpha I_n \oplus i I_{n-4}$
is not the product of three involutions.
Note that such a scalar $\alpha$ exists in the field of complex numbers:  either $\frac{n}{2}$ is odd, and
then it suffices to start from an element $\pi$ of order $n$ and to take $\alpha:=i\pi$, or $\frac{n}{2}$ is even and
it suffices to choose $\alpha$ of order $2n$.
\end{itemize}


\begin{thebibliography}{1}
\bibitem{Ballantine}
C.S. Ballantine,
{Products of involutory matrices I,}
Linear Multilinear Algebra
{\bf 5} (1977), 53--62.

\bibitem{Bothaunipotent}
J.D. Botha,
{Product of two unipotent matrices of index $2$,}
Linear Algebra Appl.
{\bf 433} (2010), 1447--1451.

\bibitem{Djokovic}
D. \v{Z}. Djokovic,
{Products of two involutions,}
Arch. Math. (Basel)
{\bf 18} (1967), 582--584.

\bibitem{Gustafsonetal}
W.H. Gustafson, P.R. Halmos, H. Radjavi,
{Products of involutions,}
Linear Algebra Appl.
{\bf 13} (1976), 157--162.

\bibitem{HalmosKakutani}
P.R. Halmos, S. Kakutani,
{Products of symmetries,}
Bull. Amer. Math. Soc.
{\bf 64} (1958), 77--78.

\bibitem{HoffmanPaige}
F. Hoffman, E.C. Paige,
{Products of two involutions in the general linear group,}
Indiana Univ. Math. J.
{\bf 20} (1971), 1017--1020.

\bibitem{Liu}
K.-M. Liu,
{Decomposition of matrices into three involutions,}
Linear Algebra Appl.
{\bf 111} (1988), 1--24.

\bibitem{Roth}
W. Roth,
{The equations $AX-YB=C$ and $AX-XB=C$ in matrices,}
Proc. Amer. Math. Soc.
{\bf 3} (1952), 392--396.

\bibitem{dSP3square}
C. de Seguins Pazzis,
{A note on sums of three square-zero matrices,}
Linear Multilinear Algebra.
{\bf 65} (2017), 787--805.

\bibitem{dSPidempotentLC}
C. de Seguins Pazzis,
{On decomposing any matrix as a linear combination of three idempotents,}
Linear Algebra Appl.
{\bf 433} (2010), 843--855.

\bibitem{dSPprod2}
C. de Seguins Pazzis,
{The sum and the product of two quadratic matrices,}
preprint, arXiv:
https://arxiv.org/abs/1703.01109, (2017).

\bibitem{Sourour}
A.R. Sourour,
{A factorization theorem for matrices,}
Linear Multilinear Algebra.
{\bf 19} (1986), 141--147.

\bibitem{Wang1}
J.-H. Wang,
{Sums and products of two quadratic matrices,}
Linear Algebra Appl.
{\bf 129-1} (1995), 127--149.

\bibitem{WangWu}
J.-H. Wang, P.Y. Wu,
{Products of unipotent matrices with index $2$,}
Linear Algebra Appl.
{\bf 149} (1991), 111--123.

\bibitem{Wonenburger}
M.J. Wonenburger,
{Transformations which are products of two involutions,}
J. Math. Mech.
{\bf 65} (1966), 327--338.

\end{thebibliography}
\end{document}